\def\rr{{\mathbb{R}}}
\def\cc{{\mathbb{C}}}
\def\rn{{\mathbb{R}^n}}
\def\zz{{\mathbb Z}}
\def\nn{{\mathbb N}}
\def\cx{{\mathbb{X}}}
\def\cm{{\mathcal M}}
\def\cg{{\mathcal G}}
\def\ca{{\mathcal A}}
\DeclareMathOperator{\diam}{diam}
\def\rmf{ M^+(f)}
\def\ntm{M_{\theta}(f)}
\def\fz{\infty }
\def\az{\alpha}
\def\bz{\beta}
\def\dz{\delta}
\def\lz{\lambda}
\def\oz{\omega}
\def\ez{\epsilon}
\def\gz{\gamma}
\def\tz{\theta}
\def\Oz{\Omega}
\def\lf{\left}
\def\r{\right}
\def\ls{\lesssim}
\def\supp{\mathop\mathrm{\,supp\,}}
\def\gs{\mathcal{G}^{\eta}_0(\bz,\gz)}
\def\fs{f^{\star}}
\newtheorem{theorem}{Theorem}[section]
\newtheorem{lemma}[theorem]{Lemma}
\newtheorem{proposition}[theorem]{Proposition}
\theoremstyle{definition}
\newtheorem{remark}[theorem]{Remark}
\newtheorem{definition}[theorem]{Definition}
\newtheorem{assumption}[theorem]{Assumption}
\renewcommand{\appendix}{\par
   \setcounter{section}{0}%
   \setcounter{subsection}{0}%
   \setcounter{subsubsection}{0}%
   \gdef\thesection{\@Alph\c@section}%
   \gdef\thesubsection{\@Alph\c@section.\@arabic\c@subsection}%
   \gdef\theHsection{\@Alph\c@section.}%
   \gdef\theHsubsection{\@Alph\c@section.\@arabic\c@subsection}%
   \csname appendixmore\endcsname
}
\numberwithin{equation}{section}
\begin{document}
	
\arraycolsep=1pt

\title{\bf\Large Weak Hardy Spaces Associated
with Ball Quasi-Banach Function Spaces on Spaces
of Homogeneous Type:
Decompositions, Real Interpolation,
and Calder\'{o}n--Zygmund Operators
\footnotetext{\hspace{-0.35cm} 2020
{\it Mathematics Subject Classification}. Primary 42B30;
Secondary 42B25, 42B20, 42B35, 46E36, 30L99.\endgraf
{\it Key words and phrases.} space of homogeneous type,
ball quasi-Banach function space,
weak Hardy space, maximal function, atom, real interpolation,
Calder\'{o}n--Zygmund operator. \endgraf
This project is partially supported by the National Key Research
and Development Program of China (Grant No.
2020YFA0712900) and the National
Natural Science Foundation of China (Grant Nos. 11971058,
12071197, 12122102 and 11871100).}}
\author{Jingsong Sun, Dachun Yang\footnote{Corresponding author,
E-mail: \texttt{dcyang@bnu.edu.cn}/{\color{red}January 23, 2022}/Final version.}
\ and Wen Yuan}
\date{}
\maketitle
\vspace{-0.8cm}

\begin{center}
\begin{minipage}{13cm}
{\small {\bf Abstract}\quad
Let $(\mathbb{X},d,\mu)$ be a space of homogeneous type
in the sense of R. R. Coifman and
G. Weiss, and $X(\mathbb{X})$ a ball quasi-Banach
function space on $\mathbb{X}$. In this article,
the authors introduce the weak Hardy space
$WH_X(\mathbb{X})$ associated with $X(\cx)$ via the
grand maximal function, and characterize $WH_X(\mathbb{X})$
by other maximal functions and atoms.
The authors then apply these characterizations to obtain
the real interpolation and the boundedness of
Calder\'{o}n--Zygmund operators in the critical case.
The main novelties of this article exist in that
the authors use the Aoki--Rolewicz theorem and both the dyadic system
and the exponential decay of approximations of the
identity on $\cx$, which closely connect with the geometrical
properties of $\cx$, to overcome the difficulties caused by
the absence of both the triangle inequality of $\|\cdot\|_{X(\mathbb{X})}$ and the
reverse doubling assumption of the measure $\mu$ under consideration,
and also use the relation
between the convexification of $X(\mathbb{X})$ and the
weak space $WX(\mathbb{X})$ associated with
$X(\mathbb{X})$ to prove that the infinite summation
of atoms converges in the space of distributions on $\mathbb{X}$.
Moreover, all these results have a wide range of generality and, particularly,
even when they are applied to the weighted Lebesgue space, the Orlicz space,
and the variable Lebesgue space, the obtained results are also new
and, actually, some of them are new even on RD-spaces
(namely, spaces of homogeneous type satisfying
the additional reverse doubling condition).
}
\end{minipage}
\end{center}

\vspace{0.2cm}

\tableofcontents

\vspace{0.2cm}

\section{Introduction}
Hardy spaces
play a vital role in harmonic
analysis and partial differential equations.
The classical Hardy space was originally introduced by
Stein and Weiss \cite{sw60} and developed by
Fefferman and Stein \cite{fs72}. Later,
Fefferman and Soria \cite{fs86} introduced
the weak Hardy space $WH^1(\rn)$
which proves the largest space $Z$ such that
the Hilbert transform is bounded from $Z$
to the weak Lebesgue space $WL^1(\rn)$
and, in the same article, they
also obtained the boundedness of some
Calder\'{o}n--Zygmund operators from
$WH^1(\rn)$ to $WL^1(\rn)$.
Furthermore, Fefferman et al.
\cite{frs74} found that the weak Hardy
space $WH^p(\rn)$ is the intermediate space of
the real interpolation between
the Hardy space $H^p(\rn)$ and the Lebesgue space $L^\fz(\rn)$;
Liu \cite{l91} obtained the boundedness of any $\dz$-type
Calder\'{o}n--Zygmund operator $T$ from
the Hardy space $H^{p_\dz}(\rn)$ to
the weak Hardy space $WH^{p_\dz}(\rn)$,
where $\dz\in(0,1]$ and
$p_{\dz}:=n/(n+\dz)$. Recall that $p_\dz$ is called the
\emph{critical case} or the \emph{endpoint case}
because the boundedness of $T$ succeeds on
$H^p(\rn)$ with $p\in(p_\dz,1]$ but fails on
$H^{p_\dz}(\rn)$; see, for instance, \cite{am86}.
From then on, (weak) Hardy spaces
have attracted a lot of attention; see, for instance,
\cite{st89,qy00,h17b} for the weighted (weak) Hardy space,
\cite{j80,jy10,ns14} for the (weak) Orlicz--Hardy space,
\cite{ns12,s13,yyyz16} for the variable (weak)
Hardy space, \cite{s06,jw09,h13,h15,h17a} for
the (weak) Hardy--Morrey space, and \cite{k14,lhy12,lyj16,ylk17}
for the (weak) Musielak--Orlicz
Hardy space.

Recently, Sawano et al. \cite{shyy17} introduced
the ball quasi-Banach function space $X$ and investigated
the Hardy space $H_X(\rn)$ associated with $X$.
In fact, the Hardy space $H_X(\rn)$ contains
many well-known Hardy-type spaces on $\rn$ such as the classical
Hardy space, the weighted Hardy space, the Orlicz--Hardy
space, the variable Hardy space, and the
Musielak--Orlicz Hardy space, which makes the
results obtained in \cite{shyy17} have
wide applications. Later, Zhang et al. \cite{zyyw20}
and Wang et al. \cite{wyyz21} introduced
the weak Hardy-type space associated with the ball
quasi-Banach function space on $\rn$ and obtained various maximal
function, atomic, molecular, and various Littlewood--Paley
function characterizations
and their applications to both the real interpolation and the
boundedness of Calder\'{o}n--Zygmund operators.
Actually, more and more articles related to the ball quasi-Banach
function space $X$ on $\rn$ spring up;
see, for instance, \cite{cwyz20,wyy20,yyy20}.

On the other hand, recall that Coifman and Weiss \cite{cw71,cw77}
introduced the concept of the space of homogeneous type, $(\cx,d,\mu)$,
which is a general underlying space
covering the Euclidean space $\rn$ and many other known
underlying spaces and, under this general framework,
they studied the boundedness of Calder\'{o}n--Zygmund
operators and introduced atomic
Hardy spaces. Later, there was a
surge of interest in the analysis on $\cx$;
see, for instance,
\cite{ms79a,ms79b,dy03,hyz09,l10,l13}.
However, when generalizing some results
from $\rn$ to $\cx$, the predecessors added
some essential additional assumptions on $\cx$
and hence only investigated
some special cases of $\cx$. Among them,
an important case is the RD-space, which is a space of
homogeneous type, $(\cx,d,\mu)$,
with $\mu$ satisfying the additional reverse
doubling condition; we refer the reader
to \cite{hmy06,gly08,hmy08,my09,dw10,
kyz10,kyz11,yz11,ww12,zsy16} for the real-variable
theory of various function spaces on RD-spaces,
and its applications.

In what follows, we always use $(\cx,d,\mu)$ or $\cx$
to denote a given space
of homogeneous type in the sense of Coifman and Weiss.
A recent inspiring breakthrough about
the analysis on $\cx$ comes from Auscher and
Hyt\"{o}nen \cite{ah13}. To be precise, using a
randomized dyadic structure, Auscher and
Hyt\"{o}nen \cite{ah13} constructed
an orthonormal wavelet basis, with the H\"{o}lder regularity,
of $L^2(\cx)$. This outcome boosts
the rapid development of the real-variable
theory of function spaces on $\cx$.
For instance, Han et al. \cite{hlw18}
introduced Hardy spaces over $\cx$ via the
wavelet bases constructed by Auscher and Hyt\"{o}nen;
Bui et al. \cite{bdl18} obtained
various maximal function characterizations
of local Hardy spaces associated
with operators over $\cx$;
He et al. \cite{hlyy19} introduced
approximations of the identity with
exponential decay and constructed
Calder\'{o}n reproducing formulae on $\cx$;
He et al. \cite{hhllyy,hyy19} established a complete
real-variable theory of (local) Hardy spaces on $\cx$;
Zhou et al. \cite{zhy20} studied Hardy--Lorentz spaces on $\cx$,
which contain weak Hardy spaces on $\cx$.
Furthermore, we refer the reader to
\cite{bdl18,bdl20} for various maximal function
characterizations of Hardy spaces on $\cx$,
to \cite{llw16,dhl19,bdk20} for other characterizations
of Hardy-type spaces on $\cx$, to \cite{n17,bd21,dgklwy}
for the boundedness of operators on function spaces over $\cx$,
and to \cite{whhy21,hwyy21,whyy21} for Besov and Triebel--Lizorkin
spaces on $\cx$.

Very recently, Yan et al. \cite{yhyy21a,yhyy21b}
introduced the Hardy space associated with the
ball quasi-Banach function space on $\cx$,
and established its various maximal function, atomic, molecular,
and various Littlewood--Paley function
characterizations to obtain both the boundedness of
Calder\'{o}n--Zygmund operators on it, and its dual space.
In order to have a deeper understanding of
Hardy spaces and put the weak Hardy spaces
into a general framework, in this article, we introduce
weak Hardy spaces associated with ball
quasi-Banach function spaces on $\cx$.
Moreover, we use various maximal function
and atomic characterizations to obtain both the real
interpolation and the boundedness
of Calder\'{o}n--Zygmund operators
in the critical case.
The Littlewood--Paley function characterizations
of these weak Hardy spaces on $\cx$ will be investigated
in a forthcoming article \cite{syy}.

To be precise, let $X(\cx)$ be a ball quasi-Banach
function space on $\cx$. We first introduce the weak Hardy space
$WH_X(\cx)$ associated with $X(\cx)$ via the
grand maximal function, and characterize $WH_X(\cx)$
by the radial maximal function, the non-tangential
maximal function, and the atom.
We then apply these characterizations to obtain both the
real interpolation and the boundedness of
Calder\'{o}n--Zygmund operators in the critical case.
Furthermore, the atomic decomposition of $WH_X(\cx)$
is also used to establish various Littlewood--Paley function
characterizations of $WH_X(\cx)$ in a forthcoming article \cite{syy}.
The main novelties of this article exist in that
we use the Aoki--Rolewicz theorem and both the dyadic system
and the exponential decay of approximations of the
identity on $\cx$, which closely connect with the geometrical
properties of $\cx$, to overcome the difficulties caused by
the absence of both the triangle inequality of $\|\cdot\|_{X(\mathbb{X})}$ and the
reverse doubling assumption of the measure $\mu$ under consideration,
and also use the relation
between the convexification of $X(\mathbb{X})$ and the
weak space $WX(\mathbb{X})$ associated with
$X(\mathbb{X})$ to prove that the infinite summation
of atoms converges in the space of distributions on $\mathbb{X}$.
Moreover, all these results have a wide range of generality and, particularly,
even when they are applied to the weighted Lebesgue space, the Orlicz space,
and the variable Lebesgue space, the obtained results are also new
and, actually, some of them are new even on RD-spaces.

The remainder of this article is organized as follows.

For the simplicity of the representation,
throughout this article,
we always use the symbol $X(\cx)$ to denote
a ball quasi-Banach function space on $\cx$,
and the symbol $WX(\cx)$ to denote the weak ball
quasi-Banach function space
associated with $X(\cx)$.

Section \ref{sp} is divided into two subsections
to give some  basic concepts and properties.
In Subsection \ref{ssht}, we recall the concept
of spaces of homogeneous type and
mainly give a basic property about the representation
of the action of an approximation of the identity
with exponential decay and integration 1 on a
distribution (see Proposition \ref{prdc} below).
Proposition \ref{prdc} is important and was
implicitly used in the literature,
but its proof is nontrivial and was always neglected.
In Subsection \ref{ssbb}, we introduce the weak
ball quasi-Banach function space. Besides, we establish
a modified version of the Aoki--Rolewicz theorem
for ball quasi-Banach function spaces
in Proposition \ref{prtz} below,
and some upper bound estimates about
the truncation of functions in $WX^p(\cx)$
[the $p$-convexification of $WX(\cx)$]
in Proposition \ref{prin} below.

In Section \ref{sm}, we introduce the weak Hardy space
$WH_X(\cx)$ associated with $X(\cx)$ via the grand
maximal function; see Definition \ref{de} below.
Assuming that the Hardy--Littlewood
maximal operator is bounded on the convexification of
$WX(\cx)$, we find that various
maximal functions are equivalent in terms of the
quasi-norm of $WX(\cx)$, and that
$WH_X(\cx)$ is independent of the choice of the
spaces of distributions under consideration
(see Theorem \ref{thm} below);
we show that $WH_X(\cx)=WX(\cx)$ with equivalent quasi-norms
(see Theorem \ref{thmw} below).

In Section \ref{sa}, we  first propose
two crucial assumptions (Assumptions
\ref{asfs} and \ref{asas} below).
These assumptions relate to the
boundedness of the Hardy--Littlewood maximal operator
on the convexification of $X(\cx)$, and its associate space.
If Assumption \ref{asfs} holds true,
by an interpolation result (see Lemma \ref{lewi}), we prove that
the Fefferman--Stein vector-valued maximal
inequality also holds true on $WX(\cx)$; see Theorem \ref{thwfs} below.
Then we deduce a key Proposition \ref{prfs} (resp.,
Proposition \ref{pras}) from Assumption \ref{asfs}
(resp., Assumption \ref{asas}). These assumptions and the
results induced by them are repeatedly used in
later Sections \ref{sat}, \ref{sr}, and \ref{sc}.

Section \ref{sat} is divided into the reconstruction
and the decomposition subsections
to establish the atomic characterization.
If $X(\cx)$ satisfies Assumptions \ref{asfs} and \ref{asas},
via borrowing some ideas from the
proof of \cite[Theorem 4.7]{zyyw20},
we can reconstruct elements in $WH_X(\cx)$
by infinite sums of multiples of atoms;
see Theorem \ref{tham} below.
The novelty of the proof of Theorem \ref{tham}
is that we  use the Aoki--Rolewicz theorem
to replace the assumption that the convexification
of $X(\cx)$ is  a Banach space and we
consider the index $q\in (p_0,\fz]\cap[1,\fz]$
rather than $q\in (p_0,\fz]\cap(1,\fz]$.
By means of the Calder\'{o}n--Zygmund decomposition,
we show that, if $X(\cx)$ satisfies Assumption
\ref{asfs}, we can decompose any
element in $WH_X(\cx)$ into an infinite sum of
multiples of atoms; see Theorem \ref{thma} below.
The novelty appearing in the proof of Theorem \ref{thma}
is that we use the relation between $WX(\cx)$ and
the convexification of $X(\cx)$ to prove the
convergence of the infinite summation of atoms.

We obtain some real interpolation results in Section \ref{sr}.
Borrowing some ideas from the proof of \cite[Theorem 4.1]{kv14},
we establish a real interpolation theorem between $X(\cx)$
and the Lebesgue space $L^\fz(\cx)$; see Theorem
\ref{thin} below. By Theorem \ref{thin} and
the atomic decomposition in Theorem \ref{thma},
we conclude that the real
interpolation intermediate space between the
Hardy space, associated with $X(\cx)$,
and the Lebesgue space $L^\fz(\cx)$
is the weak Hardy space associated with
$X(\cx)$; see Theorem \ref{thhin} below.

The main purpose of Section \ref{sc} is
to establish the boundedness of Calder\'{o}n--Zygmund
operators in the critical case. If $X(\cx)$ has an
absolutely continuous quasi-norm and satisfies
Assumptions \ref{asfs}, \ref{asas}, and \ref{asw}, using some
estimates from the proof of \cite[Theorem 8.13]{zhy20},
we show that a given Calder\'{o}n--Zygmund
operator has the unique bounded extension
in the critical case; see Theorem \ref{thcz} below.
If $X(\cx)$ does not have an absolutely continuous quasi-norm
but embeds into a weighted Lebesgue space,
we use this embedding to extend a given Calder\'{o}n--Zygmund operator to
a bounded linear operator in the critical case;
see Theorem \ref{thc} below.

We apply the main results obtained in the
above sections, respectively, to some specific ball
quasi-Banach function spaces in Section \ref{sap}.
We discuss Lebesgue spaces,
 weighted Lebesgue spaces, Orlicz spaces,
and variable Lebesgue spaces in turn.

At the end of this section, we make some conventions
on notation. Throughout this article,
the symbol $(\cx,d,\mu)$ or $\cx$ always denotes a
\emph{space of homogeneous type}.
Let $A_0$ be as in \eqref{eqA0},
$\oz$ the \emph{upper dimension} as in \eqref{eqoz},
$\dz_0$ as in Lemma \ref{ledy}, and $\eta$ the
\emph{H\"{o}lder regularity exponent}
as in \eqref{eqwa}. For any $r\in (0,\fz)$,
let $\dz_0^r:=(\dz_0)^r$
and $A^r_0:=(A_0)^r$.
For any $r\in (0,\fz)$ and $x,\,y\in\cx$,
let
$$
V_r(x):=\mu(B(x,r))
\quad \mathrm{and}\quad
V(x,y):=\mu(B(x,d(x,y))).
$$
For any given $r$, $\gz\in (0,\fz)$ and any
$x$, $y\in\cx$, let
\begin{equation}\label{eqg}
G_{r,\gz}(x,y):=
\frac{1}{V_r(x)+V(x,y)}
\lf[\frac{r}{r+d(x,y)}\r]^{\gz}.
\end{equation}
For any subset $E\subset\cx$, the symbol
$\mathbf{1}_E$ denotes its \emph{characteristic function},
and the symbol $E^\complement$ the complementary
set of $E$ in $\cx$.
For any $x\in\cx$ and $\Oz\subset\cx$, let
$d(x,\Oz):=\inf_{z\in\Oz}\{d(x,z)\}$.
The symbol $C$ always denotes a
positive constant independent
of the main parameters involved,
but may vary from line to line.
If $f\le Cg$,
we then write $f\ls g$ or $g\gtrsim f$;
if $f\ls g\ls f$, we then write $f\sim g$.
Moreover, if $f\ls g$ and $g=h$, or
$f\ls g$ and $g\le h$,
we then write $f\ls g\sim h$ or $f\ls g\ls h$,
\emph{rather than} $f\ls g=h$ or $f\ls g\le h$.
Let $\nn:=\{1,2,3,\ldots\}$, $\zz_+:=\nn\cup\{0\}$,
and $\zz$ be the set of all integers.
For any $p\in[1,\fz]$, the symbol $p'$ denotes
the \emph{conjugate index} of $p$, namely, $1/p'+1/p=1$.
For any set $E$, the symbol $\# E$ denotes its
\emph{cardinality}.

\section{Preliminaries\label{sp}}
This section is devoted to
some common symbols and basic concepts.
For this purpose, we divide this section into two subsections:
in Subsection \ref{ssht}, we recall the definition
of spaces of homogeneous type; in Subsection \ref{ssbb},
we introduce the weak ball quasi-Banach function space.

\subsection{Spaces of Homogeneous Type\label{ssht}}
For a non-empty set $\cx$,
$d$ is called a \emph{quasi-metric} on $\cx$
if $d$ is a non-negative function
on $\cx\times\cx$ such that,
for any $x$, $y$, $z\in\cx$,
\begin{enumerate}
\item[(i)] $d(x,y)=0$ if and only if $x=y$;
\item[(ii)] $d(x,y)=d(y,x)$;
\item[(iii)] there exists a constant $A_0\in [1,\fz)$,
independent of $x$, $y$, and $z$,
such that
\begin{equation}\label{eqA0}
d(x,z)\le A_0\lf[d(x,y)+d(y,z)\r].
\end{equation}
\end{enumerate}
Moreover, if $A_0=1$ in \eqref{eqA0},
then $d$ is called a \emph{metric} on $\cx$.

If $d$ is a quasi-metric on  a non-empty set $\cx$,
then $(\cx,d)$ is called a \emph{quasi-metric space}.
Let
$$
\diam\cx:=\sup_{x,\, y\in\cx}d(x,y).
$$
For any $x\in\cx$ and $\lz$, $r\in (0,\fz)$,
the set $\lz B:=\lz B(x,r):=\{z\in\cx:\ d(x,z)<\lz r\}$
is called a \emph{ball} of $\cx$
with the \emph{center} $x$ and the \emph{radius} $\lz r$.
A set $\Oz\subset\cx$ is said to be
\emph{open} if, for any $x\in\Oz$,
there exists a ball $B$ such that
$x\in B\subset \Oz$.
The \emph{Borel $\sigma$-algebra} of $\cx$
is the smallest $\sigma$-algebra that contains
all the open sets of $\cx$, and any set in
the Borel $\sigma$-algebra is called a
\emph{Borel set} of $\cx$.

Let $(\cx,d)$ be a quasi-metric space and
$\mu$ a non-negative measure on $\cx$.
A triple $(\cx,d,\mu)$ is called a
\emph{space of homogeneous type}
if $\mu$ satisfies the \emph{doubling condition}:
there exists a constant $C\in [1,\fz)$ such that,
for any ball $B\subset\cx$,
\begin{equation}\label{eqcu}
\mu(2B)\leq C\mu(B).
\end{equation}
Let $C_{(\mu)}$ denote the infimum of all the
positive constants $C$ satisfying \eqref{eqcu}.
This implies that,
for any ball $B\subset\cx$ and $\lz\in [1,\fz)$,
\begin{equation}\label{eqoz}
\mu(\lz B)\leq C_{(\mu)}\lz^{\oz}\mu(B),
\end{equation}
where $\oz:=\log_2C_{(\mu)}$ is called the
\emph{upper dimension} of $ (\cx,d,\mu)$.

Now, we recall the definition of the RD-space
which is a special case of the space of homogeneous type.
A triple $(\cx,d,\mu)$ is called an RD-\emph{space}
if $d$ is a metric on the non-empty set $\cx$,
and $\mu$ is a measure satisfying both
the doubling condition \eqref{eqcu}
and the \emph{reverse doubling condition}
that there exist constants $C\in(0,1]$
and $\kappa\in(0,\oz]$ such that,
for any $x\in\cx$, $r\in(0,\frac{\diam\cx}{2})$,
and $\lz\in[1,\frac{\diam\cx}{2r})$,
$$
C\lz^\kappa\mu(B(x,r))\le \mu(B(x,\lz r)).
$$

Throughout this article, let $(\cx,d,\mu)$
be a space of homogeneous type satisfying
the following mild assumptions:
\begin{enumerate}
\item[(i)]
for any $x\in\cx$, $\{B(x,r)\}_{r\in (0,\fz)}$
form a basis of open neighborhoods of $x$;
\item[(ii)]
$\mu$ is \emph{Borel regular}
which means that all open sets are $\mu$-measurable
and every set $A\subset\cx$ is
contained in a Borel set $E$ such that $\mu(A)=\mu(E)$;
\item[(iii)]
for any $x\in\cx$ and $r\in (0,\fz)$,
the ball $B(x,r)$ is $\mu$-measurable,
$\mu(B(x,r))\in (0,\fz)$, and $\mu(\{x\})=0$;
\item[(iv)]
$\diam\cx=\fz$.
\end{enumerate}
From \cite[Lemma 8.1]{ah13} (or \cite[Lemma 5.1]{ny97})
and (iv), it follows that
$\mu(\cx)=\fz$.
Here and thereafter, unless necessary,
we always use $\cx$ to denote $(\cx,d,\mu)$.

Now, we recall some symbols on $\cx$ needed in this article.
The symbol $\mathscr{M}(\cx)$ denotes
the set of all $\mu$-measurable functions on $\cx$.
For any $\mu$-measurable set $E\subset\cx$,
the \emph{Lebesgue space} $L^p(E)$ with
$p\in(0,\fz)$ is defined by setting
$$
L^p(E):=\lf\{f\in\mathscr{M}(\cx):\,\|f\|_{L^p(E)}:
=\lf[\int_E|f(z)|^p\,d\mu(z)\r]^{1/p}<\fz\r\}
$$
and the \emph{Lebesgue space}
$L^{\fz}(E)$ is defined by setting
$L^{\fz}(E):=\{f\in\mathscr{M}(\cx):
\,\|f\|_{L^\fz(E)}<\fz\}$,
where $\|f\|_{L^\fz(E)}$ denotes
the essential supremum of $f$ on $E$.
For any $p\in(0,\fz)$ and any non-negative $\mu$-measurable function
$w$ on $\cx$, the \emph{weighted Lebesgue space}
$L^p_w(\cx)$ is defined by setting
\begin{equation}\label{eqweight}
L^p_w(\cx):=\lf\{f\in\mathscr{M}(\cx):\,\|f\|_{L^p_w(\cx)}:
=\lf[\int_\cx|f(z)|^pw(z)\,d\mu(z)\r]^{1/p}<\fz\r\}.
\end{equation}
The \emph{Hardy--Littlewood maximal operator} $\cm$
is defined by setting, for any $f\in \mathscr{M}(\cx)$
and $x\in\cx$,
\begin{equation*}
\cm(f)(x):=\sup_{B\ni x}\lf\{\frac{1}{\mu(B)}
\int_B|f(z)|\,d\mu(z)\r\},
\end{equation*}
where the supremum is taken over all the balls
$B$ containing $x$.

The following lemma shows some estimates
widely used in this article; we omit its proof
here and refer the reader to \cite[Lemma 2.1]{hmy06}
for the details.
\begin{lemma}\label{lees}
\begin{enumerate}
\item[\textup{(i)}]
Let $x$, $y\in\cx$ and $r \in (0,\fz)$.
Then $V(x, y)\sim V (y, x)$ and
$$
V_r(x)+V_r(y)+V(x,y)\sim V_r(x)+V (x,y)
\sim V_r(y)+V(x,y)\sim \mu(B(x,r+d(x,y))).
$$
If $d(x,y)\leq r$, then $V_r(x)\sim V_r(y)$.
Here, all the positive
equivalence constants are
independent of $x$, $y$, and $r$.
\item[\textup{(ii)}]
Let $\gz\in(0,\fz)$. Then
there exists a positive constant $C$ such that,
for any $r\in(0,\fz)$ and $x\in\cx$,
$$
\int_\cx G_{r,\gz}(x,y)\,d\mu(y)\le C
$$
with $G_{r,\gz}$ as in \eqref{eqg}.
\item[\textup{(iii)}]
Let $\oz$ be as in \eqref{eqoz}.
Then there exists a positive constant $C$ such that,
for any ball $B:=B(x,r)$, with $x\in\cx$ and
$r\in(0,\fz)$, and
any $y\in B^{\complement}$,
$$
\lf[\frac{r}{d(x,y)}\r]^\oz\le C\cm(\mathbf{1}_B)(y)
\quad\mathrm{and}\quad
\frac{\mu(B)}{V(x,y)}\le C\cm(\mathbf{1}_B)(y).
$$
\item[\textup{(iv)}]
Let $s\in (0,1]$. Then, for any sequence
$\{\lz_j\}^\fz_{j=1}\subset [0,\fz)$,
$$
\lf(\sum^\fz_{j=1}\lz_j\r)^s
\le \sum^\fz_{j=1}\lf(\lz_j\r)^s.
$$
\end{enumerate}
\end{lemma}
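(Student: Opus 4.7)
The plan is to establish the four items separately. Parts (i), (ii), and (iii) rest almost entirely on the quasi-triangle inequality \eqref{eqA0} together with the doubling/upper-dimension estimate \eqref{eqoz}, while (iv) is purely elementary.

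For (i), I would first establish $V(x,y)\sim V(y,x)$ by showing the containment $B(x,d(x,y))\subset B(y,2A_0d(x,y))$: for any $z\in B(x,d(x,y))$, \eqref{eqA0} gives $d(y,z)\le A_0[d(x,y)+d(x,z)]<2A_0d(x,y)$, and then \eqref{eqoz} upgrades this to $V(x,y)\le C V(y,x)$, after which symmetry closes the loop. For the four-way equivalence, I would prove simultaneously that each of $B(x,r)$, $B(y,r)$, and $B(x,d(x,y))$ embeds into $B(x,A_0(r+d(x,y)))$, and conversely that $B(x,r+d(x,y))\subset B(x,2\max\{r,d(x,y)\})$ so that, by \eqref{eqoz}, $\mu(B(x,r+d(x,y)))$ is controlled by $V_r(x)+V(x,y)$; doubling then turns every such containment into an equivalence of measures. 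The statement for $d(x,y)\le r$ is immediate from $B(y,r)\subset B(x,2A_0 r)$ followed by \eqref{eqoz}.

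For (ii), I would decompose $\cx$ into $B(x,r)$ and the dyadic annuli $A_k:=B(x,2^{k+1}r)\setminus B(x,2^kr)$ for $k\in\zz_+$. On $B(x,r)$, the trivial bound $G_{r,\gz}(x,y)\le 1/V_r(x)$ yields a contribution of at most $1$. On $A_k$, part (i) gives $V_r(x)+V(x,y)\sim\mu(B(x,2^kr))$ and $r/[r+d(x,y)]\sim 2^{-k}$, so $G_{r,\gz}(x,y)\ls 2^{-k\gz}/\mu(B(x,2^kr))$; integrating over $A_k$ returns $\ls 2^{-k\gz}$, and summing the geometric series finishes the estimate. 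For the first inequality in (iii), noting that $y\in B^{\complement}$ with $B:=B(x,r)$ forces $d(x,y)\ge r$, I would apply \eqref{eqoz} to $B(x,r)\subset B(x,d(x,y))$ to obtain $V(x,y)\le C(d(x,y)/r)^\oz\mu(B)$, which rearranges to the desired bound once it is combined with the second inequality. For the second inequality, $B\subset B(y,2A_0d(x,y))$ (again via \eqref{eqA0} and $d(x,y)\ge r$) gives $\cm(\mathbf{1}_B)(y)\ge\mu(B)/\mu(B(y,2A_0d(x,y)))$, and this last measure is $\sim V(y,x)\sim V(x,y)$ by (i) and doubling.

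Part (iv) is the elementary subadditivity of $t\mapsto t^s$ on $[0,\fz)$ for $s\in(0,1]$, first verified for two summands by the concavity of $t\mapsto t^s$, then extended to finite sums by induction, and finally to infinite sums by monotone convergence. The main obstacle across the whole lemma is the bookkeeping of constants in (i): each subsequent part uses the equivalences from (i) as a black box, so once the geometric comparisons there are cleanly organised, everything downstream reduces to routine annular decomposition or a direct application of the definition of $\cm$.
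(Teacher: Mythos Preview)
Your proposal is correct and follows the standard route. Note that the paper itself does not prove this lemma: it explicitly omits the proof and refers the reader to \cite[Lemma 2.1]{hmy06} for the details, so there is no in-paper argument to compare against. Your outline---quasi-triangle plus doubling for (i), dyadic annular decomposition for (ii), the containment $B\subset B(y,2A_0d(x,y))$ together with \eqref{eqoz} for (iii), and concavity/induction for (iv)---is exactly the expected argument and matches what one finds in the cited reference.
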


Now, we present a dyadic structure on $\cx$
in the following lemma
which is a part of \cite[Theorem 2.2]{hk12}.

\begin{lemma}\label{ledy}
Fix constants $0<c_0\le C_0<\fz$ and
$\delta_0\in (0,1)$ such that
$12A_0^3C_0\dz_0\leq c_0$. Suppose that,
for any $k\in\zz$,
$\ca_k$ is a countable set of indices, and
$\mathcal{Z}_k:=\{z^k_\az\}_{\az\in\ca_k}\subset\cx$
has the following properties:
\begin{enumerate}
\item[\textup{(i)}]
for any $\az,\ \bz\in\ca_k$ with $\az\neq \bz$,
$d(z^k_{\az},z^k_{\bz})\geq c_0\dz_0^k$;
\item[\textup{(ii)}]
for any $x\in\cx$,
$\min_{\az\in\ca_k}d(x,z^k_\az)\le C_0\dz_0^k$.
\end{enumerate}
Then there exists a system of dyadic cubes,
$\{Q^k_{\az}\}_{k\in\zz,\ \az\in\ca_k}$,
such that
\begin{enumerate}
\item[\textup{(iii)}]
for any $k\in\zz$, $\bigcup_{\az\in\ca_k}Q^k_{\az}=\cx$
and $\{Q^k_{\az}\}_{\az\in\ca_k}$
is a set of pairwise disjoint cubes;
\item[\textup{(iv)}]
if $k,\ l\in\zz$ with $k\le l$, $\az\in\ca_k$,
and $\bz\in\ca_l$, then either
$Q^l_{\bz}\subset Q^k_{\az}$ or
$Q^l_{\bz}\cap Q^k_{\az}=\emptyset$;
\item[\textup{(v)}]
for any $k\in\zz$ and $\az\in\ca_k$,
$$
B\lf(z^k_{\az},c_s\dz_0^k\r)\subset
Q^k_{\az}\subset B\lf(z^k_{\az},C_L\dz_0^k\r),
$$
where $c_s:=(3A^2_0)^{-1}c_0$ and $C_L:=2A_0C_0$.
The point $z^k_{\az}$ is called
the ``center" of $Q^k_{\az}$.
\end{enumerate}
\end{lemma}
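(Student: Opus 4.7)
The plan is to follow the Hyt\"{o}nen--Kairema tree construction. First, for each $k\in\zz$ and each $\bz\in\ca_{k+1}$, I would fix a \emph{parent} index $\pi_k(\bz)\in\ca_k$ chosen to minimize $d(z^{k+1}_\bz,z^k_\az)$ over $\az\in\ca_k$, with an arbitrary but fixed rule to break ties. Property (ii) at level $k$ guarantees $d(z^{k+1}_\bz,z^k_{\pi_k(\bz)})\le C_0\dz_0^k$. Composing these maps produces a partial order $\preceq$ on $\bigsqcup_{k\in\zz}\ca_k$ compatible with the levels: write $\bz\preceq\az$ whenever $\bz\in\ca_l$ for some $l\ge k$ and iterated application of the parent maps brings $\bz$ to $\az$.

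Next, for each $x\in\cx$, I would select, consistently across all levels, a tree path $\{\az_k(x)\}_{k\in\zz}$ with $\az_{l}(x)\preceq\az_k(x)$ for all $l\ge k$, subject to which $d(x,z^k_{\az_k(x)})$ is as small as possible; existence follows from a diagonal/compactness argument combined with property (ii). The candidate dyadic cube is then
\[
Q^k_\az:=\bigl\{x\in\cx:\ \az_k(x)=\az\bigr\}.
\]
Properties (iii) and (iv) are then built in: at each level $k$ the family $\{Q^k_\az\}_{\az\in\ca_k}$ partitions $\cx$ because each $x$ has a unique $\az_k(x)$, and nestedness across levels comes directly from the compatibility $\az_{l}(x)\preceq\az_k(x)$.

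The substantive work is the sandwich property (v). For the outer inclusion $Q^k_\az\subset B(z^k_\az,C_L\dz_0^k)$, iterating the quasi-triangle inequality \eqref{eqA0} along a tree path descending from $\az$ and combining it with the level-wise bound $d(z^{j+1}_{\pi_j\bz},z^j_\bz)\le C_0\dz_0^j$ produces a geometric series of ratio $A_0\dz_0$, whose sum, after absorbing the quasi-metric constants, is dominated by $2A_0C_0\dz_0^k=C_L\dz_0^k$. For the inner inclusion $B(z^k_\az,c_s\dz_0^k)\subset Q^k_\az$, I would argue by contradiction: if $x\in B(z^k_\az,c_s\dz_0^k)$ but $\az_k(x)=\bz\neq\az$, then the separation $d(z^k_\az,z^k_\bz)\ge c_0\dz_0^k$ from (i), together with \eqref{eqA0}, forces $d(x,z^k_\bz)\ge c_0\dz_0^k/A_0-c_s\dz_0^k$, which (since $c_s=c_0/(3A_0^2)$) strictly exceeds $d(x,z^k_\az)\le c_s\dz_0^k$, contradicting the minimality in the definition of $\az_k(x)$. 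The hypothesis $12A_0^3C_0\dz_0\le c_0$ is precisely what reconciles these two inclusions with the parent construction across infinitely many levels: it guarantees that, when one propagates the above minimality argument up and down the tree, the accumulated quasi-metric errors never swamp the inner ball or overspill the outer ball. Calibrating this balance of constants uniformly over all levels is the main obstacle.
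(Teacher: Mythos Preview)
The paper does not prove this lemma; it simply quotes \cite[Theorem 2.2]{hk12}. So the relevant comparison is between your sketch and the Hyt\"{o}nen--Kairema argument itself.

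Your outline has the right architecture (parent maps, tree order, sandwich of balls), but the definition of the assignment $x\mapsto\az_k(x)$ is where the real content sits, and as written it is not well-posed. You propose to choose, for each $x$, an entire tree path $\{\az_k(x)\}_{k\in\zz}$ ``subject to which $d(x,z^k_{\az_k(x)})$ is as small as possible''. Minimizing a family of distances simultaneously over all $k\in\zz$ is not a single optimization problem, and a diagonal or compactness argument will not by itself produce a path with per-level minimality. Yet your inner-ball argument for (v) explicitly invokes per-level minimality: you derive a contradiction from $d(x,z^k_\bz)>d(x,z^k_\az)$ by appealing to ``the minimality in the definition of $\az_k(x)$''. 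Nothing in your construction guarantees that $\az_k(x)$ is the nearest level-$k$ reference point to $x$; it is only the $k$th entry of some globally selected path.

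In the actual Hyt\"{o}nen--Kairema proof the logic runs in the opposite order. One first defines $\az_k(x)$ level by level as a nearest reference point (with a fixed tie-break), and then proves, as a lemma, that the parent of $\az_{k+1}(x)$ is necessarily $\az_k(x)$. This compatibility lemma is exactly where the hypothesis $12A_0^3C_0\dz_0\le c_0$ is spent: one estimates $d(x,z^k_{\pi_k(\az_{k+1}(x))})$ via the quasi-triangle inequality and shows it is strictly smaller than the separation scale $c_0\dz_0^k$ divided by the appropriate power of $A_0$, which forces $\pi_k(\az_{k+1}(x))$ to coincide with the nearest level-$k$ point. Once that is in hand, your arguments for (iii), (iv), and both inclusions in (v) go through essentially as you wrote them. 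So the missing piece is not the sandwich estimate itself but the lemma that nearest-point selection is automatically tree-compatible; you should prove that first rather than try to build compatibility into the definition of $\az_k(x)$ by an ill-defined global minimization.
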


In what follows,
we keep using the same symbols of Lemma \ref{ledy}
and, for any $k\in\zz$,
we assume that $\ca_k\subset \ca_{k+1}$,
$\mathcal{Z}_k\subset \mathcal{Z}_{k+1}$,
and
$\mathcal{Y}_k:=\mathcal{Z}_{k+1}
\setminus \mathcal{Z}_k
=:\{y^k_{\az}\}_{\az\in\ca_{k+1}\setminus \ca_k}$.
Via an adapted randomized dyadic structure,
Auscher and Hyt{\"o}nen \cite[Theorem 7.1]{ah13}
constructed an orthonormal wavelet basis
$\{\psi^k_\az\}_{k\in \zz,\ \az\in\ca_{k+1}\setminus
\ca_k}$, with the H\"{o}lder regularity,
of $L^2(\cx)$; more precisely, there exist
an $\eta\in (0,1]$, called the \emph{
H\"{o}lder regularity exponent}, and
positive constants $C$ and $v$ such that, for
any $k\in\zz$, $\az\in \ca_{k+1}\setminus\ca_k$,
and $x$, $y\in\cx$ with $d(x,y)\leq \dz^k_0$,
\begin{equation}\label{eqwa}
\lf|\psi^k_{\az}(x)-\psi^k_{\az}(y)\r|
\le\frac{C}{\sqrt{\mu(B(y^k_{\az},\dz^k_0))}}
\lf[\frac{d(x,y)}{\dz^k_0}\r]^{\eta}
\exp \lf\{-v\left[
\frac{d(y^k_{\az},x)}{\dz^k_0}\right]^a\r\},
\end{equation}
where $a:=(1+2\log_2A_0)^{-1}$ with
$A_0$ as in \eqref{eqA0}.
Motivated by this and other main results
in \cite{ah13}, He et al. \cite[Definition 2.7]{hlyy19}
and \cite[Definition 2.8]{hhllyy} introduced
two approximations of the identity with
exponential decay, which are restated
in the following definitions.

\begin{definition}\label{dee0}
Let $\dz_0$ be as in Lemma \ref{ledy}, and
$\eta$ as in \eqref{eqwa}.
A sequence $\{E_k\}_{k\in\zz}$
 of bounded linear integral operators on
$L^2(\cx)$ is called an \emph{approximation of
the identity with exponential decay}
(for short, exp-ATI) if there exist
constants $C$, $v\in (0,\fz)$ and
$a \in (0,1]$ such that,
for any $k\in\zz$, the kernel of the operator $E_k$,
which is still denoted by $E_k$, satisfying
\begin{enumerate}
\item[(i)] (the \emph{identity condition})
$\sum_{k=-\fz}^{\fz}E_k=I$ in $L^2(\cx)$,
where $I$ is the identity operator on $L^2(\cx)$;
\item[(ii)] (the \emph{size condition})
for any $x$, $y\in\cx$,
\begin{align*}
\lf|E_k(x,y)\r|
&\le C\frac{1}{\sqrt{V_{\dz_0^k}(x)V_{\dz_0^k}(y)}}
\exp\lf\{-v\lf[\frac{d(x,y)}{\dz_0^k}\r]^a\r\}\\
&\qquad\times
\exp\lf\{-v\lf[\frac{\max\{d(x,\mathcal{Y}_k),
d(y,\mathcal{Y}_k)\}}{\dz_0^k}\r]^a\r\}\\
&=:C\mathcal{E}_k(x,y);
\end{align*}
\item[(iii)] (the \emph{regularity condition})
for any $x$, $x'$, $y\in\cx$
with $d(x,x')\leq \dz_0^k$,
\begin{align*}
\lf|E_k(x,y)-E_k(x',y)\r|+\lf|E_k(y,x)-E_k(y,x')\r|
\le C\lf[\frac{d(x,x')}{\dz_0^k}\r]^{\eta}
\mathcal{E}_k(x,y);
\end{align*}
\item[(iv)]
(the \emph{second difference regularity condition})
for any $x$, $x'$, $y$, $y'\in\cx$
with $d(x,x')\le \dz_0^k$ and $d(y,y')\leq \dz_0^k$,
\begin{align*}
&\lf|\left[E_k(x,y)-E_k(x',y)\r]-\lf[E_k(x,y')
-E_k(x',y')\r]\r|\\
&\quad
\le C\lf[\frac{d(x,x')}{\dz_0^k}\r]
^{\eta}\lf[\frac{d(y,y')}{\dz_0^k}\r]^{\eta}
\mathcal{E}_k(x,y);
\end{align*}
\item[(v)] (the \emph{cancellation condition})
for any $x\in\cx$,
$\int_{\cx}E_k(x,z)\,d\mu(z)=0
=\int_{\cx}E_k(z,x)\,d\mu(z).$
\end{enumerate}
\end{definition}
As is mentioned above, the existence of such
an exp-ATI on $\cx$ is guaranteed by \cite[Theorem 7.1]{ah13}
with $\eta$ as in \cite[Theorem 3.1]{ah13} which
might be very small (see also \cite[Remark 2.8(i)]{hlyy19}).
However, if $d$ is a metric, then $\eta$ can
be taken arbitrarily close to 1
(see \cite[Corollary 6.13]{ht14}).

\begin{definition}\label{dee1}
A sequence $\{P_k\}_{k\in\zz}$ of bounded
linear integral operators on $L^2(\cx)$ is called an
\emph{approximation of the identity with exponential
decay and integration 1} (for short, 1-exp-ATI)
if $\{P_k\}_{k\in\zz}$ has the following properties:
\begin{enumerate}
\item[(i)] for any $k\in\zz$,
$P_k$ satisfies (ii) through (iv)
of Definition \ref{dee0} but without the factor
\begin{equation*}
\exp\lf\{-v\lf[\frac{
\max\{d(x,\mathcal{Y}_k),d(y,\mathcal{Y}_k)\}}
{\dz_0^k}\r]^a\r\};
\end{equation*}
\item[(ii)] for any $k\in\zz$ and $x\in\cx$,
$\int_{\cx}P_k(x,z)\,d\mu(z)=1
=\int_{\cx}P_k(z,x)\,d\mu(z);$
\item[(iii)]
$\{P_k-P_{k-1}\}_{k\in\zz}$ is an exp-ATI
as in Definition \ref{dee0}.
\end{enumerate}
\end{definition}

\begin{remark}\label{rede1}
Let $\gz\in(0,\fz)$, $\dz_0$ be as in Lemma \ref{ledy},
$\eta$ as in \eqref{eqwa}, and
$\{P_k\}_{k\in\zz}$ a 1-exp-ATI as
in Definition \ref{dee1}.
By some arguments similar to those used
in the proof of \cite[Proposition 2.10]{hlyy19},
we find that
there exists a positive constant $C$ such that
\begin{itemize}
\item[(i)]
for any $k\in\zz$ and $x$, $y\in\cx$,
$|P_k(x,y)|\le CG_{\dz_0^k,\gz}(x,y)$,
here and thereafter,
$G_{\dz_0^k,\gz}$ is as in \eqref{eqg}
with $r$ replaced by $\dz_0^k$;
\item[(ii)]
for any $k\in\zz$ and $x$, $x'$, $y\in\cx$
with $d(x,x')\le (2A_0)^{-1}[\dz_0^k+d(x,y)]$,
\begin{align*}
\lf|P_k(x,y)-P_k(x',y)\r|+\lf|P_k(y,x)-P_k(y,x')\r|
\le C\lf[\frac{d(x,x')}{\dz_0^k+d(x,y)}\r]^{\eta}
G_{\dz_0^k,\gz}(x,y);
\end{align*}
\item[(iii)]
for any $k\in\zz$ and $x$, $x'$, $y$, $y'\in\cx$
with $d(x,x')\le (2A_0)^{-2}[\dz_0^k+d(x,y)] $
and $d(y,y')\le (2A_0)^{-2}[\dz_0^k+d(x,y)]$,
\begin{align*}
&\lf|\lf[P_k(x,y)-P_k(x',y)\r]-\lf[P_k(x,y')
-P_k(x',y')\r]\r|\\
&\quad\le C\lf[\frac{d(x,x')}{\dz_0^k+d(x,y)}\r]
^{\eta}\lf[\frac{d(y,y')}{\dz_0^k+d(x,y)}\r]^{\eta}
G_{\dz_0^k,\gz}(x,y).
\end{align*}
\end{itemize}
\end{remark}

Now, we recall the concepts of both
test functions and distributions;
see, for instance, \cite[Definition 2.2]{hmy06}
and \cite[Definition 2.8]{hmy08}.

\begin{definition}\label{deg}
Let $x\in\cx$, $r\in(0,\fz)$, $\bz\in(0,\eta]$
with $\eta$ as in \eqref{eqwa},
and $\gamma\in(0,\fz)$. The \emph{space $\cg(x,r,\bz,\gz)$}
is defined to be the set of all the $\mu$-measurable functions
$f$ on $\cx$ such that there exists
a positive constant $C$ satisfying the
following conditions:
\begin{enumerate}
\item [(i)] (the \emph{size condition})
for any $y\in\cx$,
$|f(y)|\le C G_{r,\gz}(x,y)$,
here and thereafter, $G_{r,\gz}$ is
as in \eqref{eqg};
\item[(ii)] (the \emph{regularity condition})
for any $y$, $y'\in\cx$
satisfying $d(y,y')\le (2A_0)^{-1}[r+d(x,y)]$,
$$
\lf|f(y)-f(y')\r|\le C\lf[\frac{d(y,y')}
{r+d(x,y)}\r]^\bz G_{r,\gz}(x,y).
$$
\end{enumerate}
Furthermore, for any $f\in \cg(x,r,\bz,\gz)$, the norm
$\|f\|_{\cg(x,r,\bz,\gz)}$ is defined by setting
$$
\|f\|_{\cg(x,r,\bz,\gz)}:=\inf\lf\{C\in(0,\fz):
\ C\ \mathrm{satisfies}\ \mathrm{(i)}\
\mathrm{and}\ \mathrm{(ii)}\r\}.
$$
\end{definition}

Let $\bz\in(0,\eta]$ with
$\eta$ as in \eqref{eqwa},
and $\gz\in(0,\fz)$.
Next, we choose a \emph{basepoint}
$x_0\in\cx$. Observe that, for any
$x\in\cx$ and $r\in(0,\fz)$,
$\cg(x,r,\bz,\gz)=\cg(x_0,1,\bz,\gz)$
with the positive equivalence constants depending
on $x$ and $r$. In what follows, we fix an $x_0\in\cx$
and write $\cg(\bz,\gz):=\cg(x_0,1,\bz,\gz)$.
For any $\bz,\ \gz\in (0,\eta)$, the space $\gs$
is defined to be the closure of the set
$\cg(\eta,\eta)$ in the space
$\cg(\bz,\gz)$ and the norm
$\|\cdot\|_{\gs}:=\|\cdot\|_{\cg(\bz,\gz)}$.
The dual of $\gs$ is denoted by $(\gs)'$
which is equipped with the weak-$*$ topology.
In this article, we always choose $\gs$
as the \emph{space of test functions},
and $(\gs)'$ as
the \emph{space of distributions}.

Observe that $\cg(\eta,\eta)$ is a dense subset of $\gs$.
Let $\cg_{\mathrm{b}}(\eta,\eta)$ denote
the set of all the functions in $\cg(\eta,\eta)$
with bounded support.
We then show that $\cg_{\mathrm{b}}(\eta,\eta)$
is dense in $\gs$.
\begin{proposition}\label{prd}
Let $\bz$, $\gz\in(0,\eta)$ with
$\eta$ as in \eqref{eqwa}. Then the set
$\cg_{\mathrm{b}}(\eta,\eta)$ is dense in $\gs$.
\end{proposition}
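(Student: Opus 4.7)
The plan is to prove density by truncation: since $\cg(\eta,\eta)$ is dense in $\gs$ by definition of $\gs$, it suffices to approximate every $f\in\cg(\eta,\eta)$ in the $\cg(\bz,\gz)$-norm by elements of $\cg_{\mathrm{b}}(\eta,\eta)$. To this end, for each $N\in\nn$ I would construct a cutoff $\phi_N:\cx\to[0,1]$ with $\phi_N\equiv 1$ on $B(x_0,N)$, $\supp\phi_N\subset B(x_0,2N)$, and $\phi_N$ being $\eta$-H\"{o}lder with respect to $d$ at scale $N$, namely $|\phi_N(y)-\phi_N(y')|\le C[d(y,y')/N]^\eta$. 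Such a $\phi_N$ can be built by composing a one-dimensional Lipschitz bump with the distance function after passing to the Mac\'{\i}as--Segovia metric equivalent to a power of $d$, which is standard on a space of homogeneous type. Then set $f_N:=f\phi_N$, which lies in $\cg_{\mathrm{b}}(\eta,\eta)$: the size condition is inherited from $|f_N|\le|f|$, and the regularity at level $\eta$ follows from the product rule
$$f_N(y)-f_N(y')=[f(y)-f(y')]\phi_N(y)+f(y')[\phi_N(y)-\phi_N(y')],$$
by combining the $\eta$-regularity of $f$ with the boundedness of $\phi_N$ for the first summand and the size condition of $f$ with the $\eta$-regularity of $\phi_N$ for the second.

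The heart of the argument is showing that $g_N:=f-f_N=f(1-\phi_N)$ satisfies $\|g_N\|_{\cg(\bz,\gz)}\to0$ as $N\to\fz$. For the size condition, I would observe that $g_N$ is supported in $B(x_0,N)^{\complement}$ and use the pointwise identity
$$G_{1,\eta}(x_0,y)=G_{1,\gz}(x_0,y)\lf[\frac{1}{1+d(x_0,y)}\r]^{\eta-\gz},$$
so that on $B(x_0,N)^{\complement}$ one gains the factor $(1+N)^{-(\eta-\gz)}$, which tends to $0$ since $\gz<\eta$. For the regularity condition at scale $(\bz,\gz)$, I would split as above: on the set where $1-\phi_N(y)\ne 0$ one has $d(x_0,y)\ge N$, so the $\eta$-regularity bound for $f$ combined with $[d(y,y')/(1+d(x_0,y))]^{\eta-\bz}\le(2A_0)^{-(\eta-\bz)}$ (from the admissibility hypothesis $d(y,y')\le(2A_0)^{-1}[1+d(x_0,y)]$) and the gain $(1+N)^{-(\eta-\gz)}$ from the size trick yields the required smallness; for the $\phi_N$-difference summand, the admissibility hypothesis ensures $d(x_0,y)\sim d(x_0,y')$, so $G_{1,\eta}(x_0,y')\sim G_{1,\eta}(x_0,y)$, and combining $|\phi_N(y)-\phi_N(y')|\le C[d(y,y')/N]^\eta$ with $N\sim 1+d(x_0,y)$ (on the effective support of $\phi_N(y)-\phi_N(y')$) and the same trading trick between $\eta$ and $(\bz,\gz)$ again produces the factor $N^{-(\eta-\gz)}$.

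The main obstacle I anticipate is the careful bookkeeping of the regularity estimate, in particular controlling the $\phi_N$-difference term uniformly in $y,y'$: one has to argue that whenever $\phi_N(y)\ne\phi_N(y')$ the point $y$ must already lie essentially at scale $N$ from $x_0$ (because both endpoints sit in the annular region $B(x_0,2N)\setminus B(x_0,N)$, up to the tolerance allowed by the $(2A_0)^{-1}$ admissibility condition), so that $N$ and $1+d(x_0,y)$ can be exchanged with only a constant loss. A secondary technical point is the construction of $\phi_N$ with genuine $\eta$-H\"{o}lder regularity in the quasi-metric setting; this is routine via the Mac\'{\i}as--Segovia regularization of $d$, but needs to be invoked explicitly. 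Once these two points are settled, collecting the size and regularity estimates gives
$$\|f-f_N\|_{\cg(\bz,\gz)}\le C\|f\|_{\cg(\eta,\eta)}(1+N)^{-(\eta-\gz)}\xrightarrow{N\to\fz}0,$$
which completes the proof.
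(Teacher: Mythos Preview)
Your proposal is correct and follows essentially the same route as the paper's proof: reduce to approximating $f\in\cg(\eta,\eta)$, multiply by an $\eta$-H\"older cutoff at scale $N$, use the product rule to split the regularity estimate, and trade the extra $\eta-\gz$ decay on the support $B(x_0,N)^\complement$ into the factor $N^{-(\eta-\gz)}$. The paper obtains the cutoffs by quoting \cite[Corollary 4.2]{ah13} rather than the Mac\'{\i}as--Segovia regularization, and carries out your anticipated ``annular region'' step as an explicit three-case analysis on $d(x_0,y)$, but the argument is otherwise the same and yields the identical bound $\|f-f_N\|_{\cg(\bz,\gz)}\ls N^{\gz-\eta}\|f\|_{\cg(\eta,\eta)}$.
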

\begin{proof}
Let $\eta$, $\bz$, and $\gz$ be as in the present proposition.
Since, by the definition of $\gs$,
$\cg(\eta,\eta)$ is dense in $\gs$,
to prove the present proposition,
it suffices to show that, for any $f\in \cg(\eta,\eta)$,
there exists a sequence
$\{f_j\}_{j\in\nn}\subset \cg_{\mathrm{b}}(\eta,\eta)$
such that $\lim_{j\to\fz}\|f-f_j\|_{\cg(\bz,\gz)}=0.$

To this end, fix $f\in\cg(\eta,\eta)$.
Recall that $x_0\in\cx$ is the fixed basepoint.
By \cite[Corollary 4.2]{ah13}, we find that
there exist a positive constant $C$ and
a sequence $\{\varphi_j\}_{j\in\nn}$
of functions such that, for any $j\in\nn$,
\begin{equation}\label{phij}
\mathbf{1}_{B(x_0,j)}\le \varphi_j
\le \mathbf{1}_{B(x_0,2A_0j)}
\end{equation}
and,
for any $j\in\nn$ and $y$, $y'\in\cx$,
\begin{equation}\label{eqce}
\lf|\varphi_j(y)-\varphi_j(y')\r|
\le C\lf[\frac{d(y,y')}{j}\r]^\eta.
\end{equation}
Now, for any $j\in\nn$, let $f_j:=f\varphi_j$.

Next, we show that
\begin{equation}\label{eqds1}
\lim_{j\to\fz}\lf\|f-f_j\r\|_{\cg(\bz,\gz)}=0.
\end{equation}
Indeed, to prove the size condition of
$f-f_j$, from \eqref{phij},
the size condition of $f$,
and $\gz<\eta$,
it follows that, for any $j\in\nn$ and $y\in\cx$,
\begin{align}\label{alds1}
\lf|f(y)-f_j(y)\r|
&\le |f(y)|
\mathbf{1}_{(B(x_0,j))^\complement}(y)
\le \|f\|_{\cg(\eta,\eta)}
G_{1,\eta}(x_0,y)
\mathbf{1}_{(B(x_0,j))^\complement}(y)\\
&\le j^{\gz-\eta}\|f\|_{\cg(\eta,\eta)}
G_{1,\gz}(x_0,y),\notag
\end{align}
here and in the remainder of the present proof,
 $G_{1,\eta}$ is as in
\eqref{eqg} with $r=1$ and $\gz=\eta$, and
$G_{1,\gz}$ as in \eqref{eqg}
with $r=1$.
Then we consider the regularity of $f-f_j$.
Indeed, we have, for any $j\in\nn$ and $y$,
$y'\in\cx$ with $d(y,y')\le (2A_0)^{-1}
[1+d(x_0,y)]$,
\begin{align}\label{alds2}
&\lf|\lf[f(y)-f_j(y)\r]-
\lf[f(y')-f_j(y')\r]\r|\\
&\quad=\lf|f(y)\lf[1-\varphi_j(y)\r]
-f(y)\lf[1-\varphi_j(y')\r]
+f(y)\lf[1-\varphi_j(y')\r]
-f(y')\lf[1-\varphi_j(y')\r]\r|\notag\\
&\quad\le |f(y)|\lf|\varphi_j(y)-\varphi_j(y')\r|
+\lf|f(y)-f(y')\r|\lf|1-\varphi_j(y')\r|
=:I_1+I_2.\notag
\end{align}

Observe that, for any $y$, $y'\in\cx$ with
$d(y,y')\le(2A_0)^{-1}[1+d(x_0,y)]$,
\begin{equation}\label{eqds4}
(2A_0)^{-1}[d(x_0,y)-1]
\le d(x_0,y')
\le(A_0+1/2)d(x_0,y)+1/2.
\end{equation}
Now, we estimate $I_1$ by considering three cases
on $d(x_0,y)$.

Case 1)  $d(x_0,y)\ge 4A_0^2j+1$.
In this case, by both the first
inequality of \eqref{eqds4},
and \eqref{phij},
we find that, if $j\in\nn$ and
$y'\in\cx$ with
$d(y,y')\le(2A_0)^{-1}[1+d(x_0,y)]$, then
$d(x_0,y')\ge 2A_0j$ and hence
$I_1=0.$

Case 2) $d(x_0,y)< (A_0+1/2)^{-1}(j-1/2)$.
In this case, from the second inequality
of \eqref{eqds4}, and \eqref{phij},
we deduce that, if $j\in\nn$ and
$y'\in\cx$ with
$d(y,y')\le(2A_0)^{-1}[1+d(x_0,y)]$, then
$d(x_0,y')<j$ and hence also $I_1=0.$

Case 3) $(A_0+1/2)^{-1}(j-1/2)\le d(x_0,y)< 4A_0^2j+1$.
In this case,
by the size condition of $f$, $\eqref{eqce}$,
and $\max\{\bz,\gz\}<\eta$,
we find that, if $j\in\nn$ and
$y'\in\cx$
with $d(y,y')\le(2A_0)^{-1}[1+d(x_0,y)]$, then
\begin{align*}
I_1
&\ls \|f\|_{\cg(\eta,\eta)}
G_{1,\eta}(x_0,y)
\lf[\frac{d(y,y')}{j}\r]^\eta\\
&\sim j^{\gz-\eta}\|f\|_{\cg(\eta,\eta)}
\lf[\frac{d(y,y')}{1+d(x_0,y)}\r]^\bz
G_{1,\gz}(x_0,y),
\end{align*}
where all the implicit
positive constants are independent of
$j$, $y$, and $y'$.

Combining the above three cases,
we conclude that, for any $j\in\nn$ and $y$,
$y'\in\cx$ with $d(y,y')\le (2A_0)^{-1}[1+d(x_0,y)]$,
\begin{align}\label{alds3}
I_1
\ls j^{\gz-\eta}\|f\|_{\cg(\eta,\eta)}
\lf[\frac{d(y,y')}{1+d(x_0,y)}\r]^\bz
G_{1,\gz}(x_0,y).
\end{align}

Next, we estimate $I_2$.
From the regularity condition of $f$, \eqref{phij},
the second inequality of \eqref{eqds4},
and $\max\{\bz,\gz\}<\eta$,
it follows that, for any $j\in\nn$ and $y$,
$y'\in\cx$ with
$d(y,y')\le (2A_0)^{-1}[1+d(x_0,y)]$,
\begin{align}\label{alds4}
I_2
&\le \|f\|_{\cg(\eta,\eta)}
\lf[\frac{d(y,y')}{1+d(x_0,y)}\r]^\eta
G_{1,\eta}(x_0,y)
\mathbf{1}_{(B(x_0,j))^\complement}(y')\\
&\le \|f\|_{\cg(\eta,\eta)}
\lf[\frac{d(y,y')}{1+d(x_0,y)}\r]^\eta
G_{1,\eta}(x_0,y)
\mathbf{1}_{(B(x_0,(j-1/2)/(A_0+1/2)))^\complement}(y)\notag\\
&\ls j^{\gz-\eta}\|f\|_{\cg(\eta,\eta)}
\lf[\frac{d(y,y')}{1+d(x_0,y)}\r]^\bz
G_{1,\gz}(x_0,y).\notag
\end{align}

By \eqref{alds2}, \eqref{alds3},
and \eqref{alds4}, we conclude that,
for any $j\in\nn$
and $y$, $y'\in\cx$ with
$d(y,y')\le(2A_0)^{-1}[1+d(x_0,y)]$,
\begin{align*}
&\lf|\lf[f(y)-f_j(y)\r]-
\lf[f(y')-f_j(y')\r]\r|\\
&\quad\ls j^{\gz-\eta}\|f\|_{\cg(\eta,\eta)}
\lf[\frac{d(y,y')}{1+d(x_0,y)}\r]^\bz
G_{1,\gz}(x_0,y),
\end{align*}
which, combined with \eqref{alds1}, implies
that, for any $j\in\nn$,
$$
\lf\|f-f_j\r\|_{\cg(\bz,\gz)}
\ls j^{\gz-\eta}\|f\|_{\cg(\eta,\eta)}.
$$
Letting $j\to\fz$, we then complete
the proof of $\eqref{eqds1}$.

By an argument similar to that used in the proof
of \eqref{eqds1}, we find that, for any $j\in\nn$,
$f-f_j\in\cg(\eta,\eta)$, which, together with
$f\in \cg(\eta,\eta)$, implies that $f_j\in\cg(\eta,\eta)$.
Obviously, for any $j\in\nn$, $f_j$ supports in
$B(x_0,2A_0j)$. Thus,
$\{f_j\}_{j\in\nn}\subset \cg_{\mathrm{b}}(\eta,\eta)$.
This finishes the proof of Proposition \ref{prd}.
\end{proof}

Let $\bz$, $\gz\in (0,\eta)$, $f\in (\gs)'$,
and $\{P_k\}_{k\in\zz}$ be a 1-exp-ATI as in
Definition \ref{dee1}. For any $k\in\zz$, the operator
$P^*_k$ on $\gs$ is defined by setting,
for any $\varphi\in\gs$ and $x\in\cx$,
$$P^*_k(\varphi)(x):
=\int_\cx P_k(z,x)\varphi(z)\,d\mu(z).$$
Recall that, by an argument similar to that
used in the proof of \cite[Lemma 4.14]{hlyy19},
we find that, for any $k\in\zz$,
$P_k^*$ is bounded on $\gs$.
From this, it follows that, for any $k\in\zz$,
$P_k(f)$ is a bounded linear functional on $\gs$, where
$P_k(f)$ is defined by setting,
for any $\varphi\in\gs$,
$$
\lf\langle P_k(f),\varphi\r\rangle:
=\lf\langle f,P^*_k(\varphi)\r\rangle.
$$
For any $k\in\zz$, the function $\widetilde{P_k(f)}$ on $\cx$
is defined by setting, for any $x\in\cx$,
$$
\widetilde{P_k(f)}(x):=
\lf\langle f, P_k(x,\cdot)\r\rangle.
$$
Then we obtain the relation between $P_k(f)$
and $\widetilde{P_k(f)}$
as in the following proposition.

\begin{proposition}\label{prdc}
Let $\bz$, $\gz\in (0,\eta)$
with $\eta$ as in \eqref{eqwa}, and
$\{P_k\}_{k\in\zz}$ be a 1-$\exp$-\rm{ATI}. Then,
 for any $k\in\zz$,
$f\in (\gs)'$, and
$\psi\in\cg_{\mathrm{b}}(\eta,\eta)$,
$$
\lf\langle P_k(f),\psi\r\rangle
=\int_\cx \widetilde{P_k(f)}(x)
\psi(x)\,d\mu(x).
$$
\end{proposition}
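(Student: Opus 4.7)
The identity is Fubini-like: formally, $\int_\cx\widetilde{P_k(f)}(x)\psi(x)\,d\mu(x)=\langle f,\int_\cx P_k(x,\cdot)\psi(x)\,d\mu(x)\rangle=\langle f,P_k^*(\psi)\rangle=\langle P_k(f),\psi\rangle$, so the plan is to justify pulling $f$ outside the integral by approximating $P_k^*(\psi)$ in $\gs$ by Riemann-type sums whose pairing with $f$ is transparent. First I would note that, since $P_k^*$ is bounded on $\gs$ (as recalled just above the statement), $P_k^*(\psi)\in\gs$; also, for each $x\in\cx$, Remark \ref{rede1} gives $P_k(x,\cdot)\in\cg(\eta,\eta)\subset\gs$, so $\widetilde{P_k(f)}(x)=\langle f,P_k(x,\cdot)\rangle$ is a well-defined pointwise function on $\cx$.

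Using the dyadic system of Lemma \ref{ledy}, for each large $j\in\nn$ I would set
$$
S_j(y):=\sum_{\az\in\ca_j}\mu(Q^j_\az)\,\psi(z^j_\az)\,P_k(z^j_\az,y);
$$
since $\supp\psi$ is bounded, only finitely many $\az$ contribute, and each $P_k(z^j_\az,\cdot)$ lies in $\cg(\eta,\eta)\subset\gs$. The key step is to prove $\|P_k^*(\psi)-S_j\|_{\cg(\bz,\gz)}\to 0$ as $j\to\fz$. Writing
$$
P_k^*(\psi)(y)-S_j(y)=\sum_{\az\in\ca_j}\int_{Q^j_\az}\lf\{\lf[P_k(z,y)-P_k(z^j_\az,y)\r]\psi(z)+P_k(z^j_\az,y)\lf[\psi(z)-\psi(z^j_\az)\r]\r\}\,d\mu(z),
$$
the two differences are controlled, respectively, by Remark \ref{rede1}(ii) applied at scale $\dz_0^k$ with step $d(z,z^j_\az)\le C_L\dz_0^j$ (Lemma \ref{ledy}(v)) and by the $\eta$-regularity of $\psi\in\cg(\eta,\eta)$. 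Since $\bz,\gz<\eta$, the excess $\eta-\max\{\bz,\gz\}>0$ converts into a decaying prefactor $\dz_0^{j(\eta-\max\{\bz,\gz\})}$, while the remaining envelope reassembles into the $G_{1,\gz}(x_0,\cdot)$-type majorant required by the $\gs$ norm. A parallel decomposition of the double difference $[P_k^*(\psi)(y)-P_k^*(\psi)(y')]-[S_j(y)-S_j(y')]$, using Remark \ref{rede1}(iii) and the $\eta$-regularity of $\psi$, yields the matching regularity bound with factor $[d(y,y')/(1+d(x_0,y))]^\bz$.

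The conclusion then follows by the continuity of $f\in(\gs)'$:
$$
\lf\langle P_k(f),\psi\r\rangle=\lf\langle f,P_k^*(\psi)\r\rangle=\lim_{j\to\fz}\lf\langle f,S_j\r\rangle=\lim_{j\to\fz}\sum_{\az\in\ca_j}\mu(Q^j_\az)\,\psi(z^j_\az)\,\widetilde{P_k(f)}(z^j_\az),
$$
and the last expression is a Riemann sum for $\int_\cx\widetilde{P_k(f)}(x)\psi(x)\,d\mu(x)$; its convergence to the integral follows because $\widetilde{P_k(f)}$ is continuous on $\cx$ (indeed, Remark \ref{rede1}(ii) makes $x\mapsto P_k(x,\cdot)$ continuous from $\cx$ into $\gs$, and pairing with $f$ preserves continuity) and $\psi$ has bounded support. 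The main obstacle is the $\cg(\bz,\gz)$-norm convergence $S_j\to P_k^*(\psi)$: the regularity half requires a careful case analysis based on the size of $d(y,y')$ relative to $\dz_0^j$ and $\dz_0^k+d(x_0,y)$, so that Remark \ref{rede1}(iii) can be invoked at the right scale while the prefactor $\dz_0^{j(\eta-\max\{\bz,\gz\})}$ is delivered without destroying the required $G_{1,\gz}$ envelope.
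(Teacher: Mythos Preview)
Your approach is correct and essentially mirrors the paper's proof: both approximate $P_k^*(\psi)$ in $\gs$ by dyadic Riemann sums built from Lemma~\ref{ledy}, establish $\cg(\bz,\gz)$-norm convergence via Remark~\ref{rede1}, and then use the (H\"older) continuity of $x\mapsto P_k(x,\cdot)\in\gs$ to identify the limiting Riemann sum with the integral. The only cosmetic differences are that the paper samples only $P_k$ at the cube centers while keeping $\int_{Q^j_\az}\psi(x)\,d\mu(x)$ intact (so your second difference term never arises), and that the actual decay factor one obtains is $\dz_0^{j\eta}$ rather than $\dz_0^{j(\eta-\max\{\bz,\gz\})}$---the gap $\eta-\bz$ is spent reducing the regularity exponent $[\cdot]^\eta$ to $[\cdot]^\bz$, not on the prefactor.
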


\begin{proof}
We show the present proposition by
borrowing some ideas from the proof of \cite[Lemma 3.12]{hmy08}.
Let $\bz$, $\gz$, $\eta$, and
$\{P_k\}_{k\in\zz}$ be as in the
present proposition. Without loss of generality, we may only
prove that, for any $f\in (\gs)'$ and
$\psi\in\cg_{\mathrm{b}}(\eta,\eta)$,
\begin{equation*}
\lf\langle P_0(f),\psi\r\rangle=
\int_\cx \widetilde{P_0(f)}(x)
\psi(x)\,d\mu(x).
\end{equation*}

Now, we fix $f\in (\gs)'$ and
$\psi\in\cg_{\mathrm{b}}(\eta,\eta)$.
Let $L\in \nn$ be such that
$$
\supp(\psi):
=\{x\in\cx:\ \psi(x)\neq 0\}
\subset B(x_0, L),
$$
where $x_0\in\cx$
is the fixed basepoint.
By this and the size condition of $\psi$, we conclude that,
for any $\ez\in(0,\fz)$, there exists a positive constant $C$
such that, for any $x\in\cx$,
\begin{equation}\label{eqdc5}
|\psi(x)|\le CG_{1,\ez}(x_0,x),
\end{equation}
where $G_{1,\ez}$ is as in \eqref{eqg}
with $r=1$ and $\gz=\ez$.
Let $\{Q^k_\az\}_{k\in\zz,\az\in\ca_k}$ be
the dyadic cubes and $\{z^k_\az\}_{k\in\zz,\az\in\ca_k}$
their centers as in Lemma \ref{ledy}. For any $k\in\zz$, let
$$
\ca^{(L)}_k:=\lf\{\az\in\ca_k:\
Q^k_\az\cap B(x_0,L)\neq \emptyset\r\}.
$$
Obviously, for any $k\in\zz$, $\ca^{(L)}_k$
is a finite set.
For any $k\in\zz$ and $y\in\cx$, let
$$
\psi_k(y):
=\sum_{\az\in\ca^{(L)}_k}\int_{Q^k_\az}
P_0(z^k_\az,y)\psi(x)\,d\mu(x).
$$
Then we show that
\begin{equation}\label{eqdc3}
\lim_{k\to\fz}\lf\|P^*_0(\psi)
-\psi_k\r\|_{\cg(\bz,\gz)}=0.
\end{equation}

To prove \eqref{eqdc3}, we first show that
$P^*_0(\psi)-\psi_k$ satisfies
the size condition of $\cg(\bz,\gz)$.
From Lemma \ref{ledy}(v), it follows that,
for any $k\in\zz$, $\az\in\ca_k$, and $x\in Q^k_\az$,
\begin{equation}\label{eqdc2}
d(z^k_\az,x)\le C_L\dz_0^k,
\end{equation}
where $C_L$ is as in Lemma \ref{ledy}. Observe that, for any $y\in\cx$,
\begin{equation}\label{eqdc1}
\cx=\lf\{x\in\cx:\ d(x,y)\ge(2A_0)^{-1}d(x_0,y)\r\}
\bigcup\lf\{x\in\cx:\ d(x,x_0)\ge(2A_0)^{-1}d(x_0,y)\r\}.
\end{equation}
By the definition of $\psi_k$,
Remark \ref{rede1}(ii) together with \eqref{eqdc2},
\eqref{eqdc5} with $\ez=\gz$, \eqref{eqdc1}, and both
(i) and (ii) of Lemma \ref{lees}, we conclude that,
for any $k\in\zz$ with $C_L\dz_0^k\le (2A_0)^{-1}$,
and $y\in\cx$,
\begin{align}\label{aldc6}
&\lf|P^*_0(\psi)(y)-\psi_k(y)\r|\\
&\quad\le\sum_{\az\in\ca^{(L)}_k}\int_{Q^k_\az}
\lf|P_0(x,y)-P_0(z^k_\az,y)\r|
|\psi(x)|\,d\mu(x)\notag\\
&\quad\ls\sum_{\az\in\ca^{(L)}_k}
\int_{Q^k_\az}\dz_0^{k\eta}
G_{1,\gz}(x,y)|\psi(x)|\,d\mu(x)\notag\\
&\quad\ls\int_{\cx}\dz_0^{k\eta}
G_{1,\gz}(x,y)G_{1,\gz}(x_0,x)\,d\mu(x)\notag\\
&\quad\ls \int_{d(x,y)\ge (2A_0)^{-1}d(x_0,y)}
\dz_0^{k\eta}
G_{1,\gz}(x,y)G_{1,\gz}(x_0,x)\,d\mu(x)+\int_{d(x,x_0)\ge (2A_0)^{-1}d(x_0,y)}
\cdots\notag\\
&\quad\ls \dz_0^{k\eta}G_{1,\gz}(x_0,y),\notag
\end{align}
here and in the remainder of the present proof,
 $G_{1,\gz}$ is as in
\eqref{eqg} with $r=1$.
Thus, we obtain the size condition of
$P^*_0(\psi)-\psi_k$.

Next, we prove that $P^*_0(\psi)-\psi_k$ satisfies
the regularity condition of $\cg(\bz,\gz)$;
more precisely, we show that, for any $k\in\zz$
with $C_L\dz_0^k\le (2A_0)^{-2}$, and
$y$, $y'\in\cx$ with $d(y,y')<(2A_0)^{-1}[1+d(x_0,y)]$,
\begin{align}\label{eqdc7}
Y_k(y,y'):
&=\lf|\lf[P^*_0(\psi)(y)-\psi_k(y)\r]
-\lf[P^*_0(\psi)(y')-\psi_k(y')\r]\r|\\
&\ls \dz_0^{k\eta}
\lf[\frac{d(y,y')}{1+d(x_0,y)}\r]^\bz
G_{1,\gz}(x_0,y).\notag
\end{align}

Observe that, for any $y$, $y'\in\cx$ with
$d(y,y')<(2A_0)^{-1}[1+d(x_0,y)]$,
\begin{equation}\label{eqdc4}
1+d(x_0,y)\sim 1+d(x_0,y').
\end{equation}
Then we estimate $Y_k(y,y')$ by considering
the following two cases on $d(y,y')$.

Case 1) $(2A_0)^{-2}[1+d(x_0,y)]<
d(y,y')\le (2A_0)^{-1}[1+d(x_0,y)]$.
In this case,
using \eqref{aldc6} and Lemma \ref{lees}(i)
combined with \eqref{eqdc4},
we conclude that, for any $k\in\zz$ with
$C_L\dz_0^k\le (2A_0)^{-1}$,
\begin{align*}
Y_k(y,y')
&\le\lf|P^*_0(\psi)(y)-\psi_k(y)\r|
+\lf|P^*_0(\psi)(y')-\psi_k(y')\r|\\
&\ls \dz_0^{k\eta}G_{1,\gz}(x_0,y)
+\dz_0^{k\eta}G_{1,\gz}(x_0,y')
\sim\dz_0^{k\eta}G_{1,\gz}(x_0,y)\\
&\ls \dz_0^{k\eta}
\lf[\frac{d(y,y')}{1+d(x_0,y)}\r]^\bz
G_{1,\gz}(x_0,y).
\end{align*}

Case 2) $d(y,y')\le (2A_0)^{-2}[1+d(x_0,y)]$.
In this case, we first claim that,
for any $k\in\zz$ with $C_L\dz_0^k\le (2A_0)^{-2}$,
$\az\in\ca^{(L)}_k$, $x\in Q^k_\az$, and
$y$, $y'\in\cx$,
\begin{align}\label{aldc7}
Z(x,y,z^k_\az,y'):
&=\lf|\lf[P_0(x,y)-P_0(z^k_\az,y)\r]
-\lf[P_0(x,y')-P_0(z^k_\az,y')\r]\r|\\
&\ls\dz_0^{k\eta}
\lf[\frac{d(y,y')}{1+d(x,y)}\r]^\eta
\lf[G_{1,\gz}(x,y)+G_{1,\gz}(x,y')\r].\notag
\end{align}
Indeed, if $d(y,y')\le (2A_0)^{-2}[1+d(x,y)]$,
by Remark \ref{rede1}(iii) and \eqref{eqdc2}, we have
\begin{align*}
Z(x,y,z^k_\az,y')&
\ls \lf[\frac{d(x,z^k_\az)}{1+d(x,y)}\r]^\eta
\lf[\frac{d(y,y')}{1+d(x,y)}\r]^\eta
G_{1,\gz}(x,y)\\
&\ls\dz_0^{k\eta}
\lf[\frac{d(y,y')}{1+d(x,y)}\r]^\eta
\lf[G_{1,\gz}(x,y)+G_{1,\gz}(x,y')\r]
\end{align*}
with the implicit positive constants
independent of $k$, $\az$, $x$, $y$, and $y'$;
if $d(y,y')> (2A_0)^{-2}[1+d(x,y)]$,
by Remark \ref{rede1}(ii) and \eqref{eqdc2},
we find that
\begin{align*}
Z(x,y,z^k_\az,y')
&\le \lf|P_0(x,y)-P_0(z^k_\az,y)\r|
+\lf|P_0(x,y')-P_0(z^k_\az,y')\r|\\
&\ls \lf[\frac{d(x,z^k_\az)}{1+d(x,y)}\r]^\eta
G_{1,\gz}(x,y)+\lf[\frac{d(x,z^k_\az)}{1+d(x,y')}\r]^\eta
G_{1,\gz}(x,y')\\
&\ls\dz_0^{k\eta}
\lf[\frac{d(y,y')}{1+d(x,y)}\r]^\eta
\lf[G_{1,\gz}(x,y)+G_{1,\gz}(x,y')\r]
\end{align*}
with the implicit positive constants
independent of $k$, $\az$, $x$, $y$, and $y'$.
This finishes the proof of the claim \eqref{aldc7}.

Now, we estimate $Y_k(y,y')$ in Case 2).
From \eqref{aldc7}, \eqref{eqdc5} with $\ez=\eta+\gz$, and
\eqref{eqdc1},
we deduce that, for any $k\in\zz$ with $C_L\dz_0^k\le (2A_0)^{-2}$,
and any $y$, $y'\in\cx$ as in case 2),
\begin{align}\label{aldc1}
Y_k(y,y')
&\le\sum_{\az\in\ca^{(L)}_k}\int_{Q^k_\az}
Z(x,y,z^k_\az,y')|\psi(x)|\,d\mu(x)\\
&\ls\int_{\cx}\dz_0^{k\eta}
\lf[\frac{d(y,y')}{1+d(x,y)}\r]^\eta
\lf[G_{1,\gz}(x,y)+G_{1,\gz}(x,y')\r]
|\psi(x)|\,d\mu(x)\notag\\
&\ls\int_{\cx}\dz_0^{k\eta}
\lf[\frac{d(y,y')}{1+d(x,y)}\r]^\eta
\lf[G_{1,\gz}(x,y)+G_{1,\gz}(x,y')\r]
G_{1,\eta+\gz}(x_0,x)\,d\mu(x)\notag\\
&\ls \int_{d(x,y)\ge(2A_0)^{-1}d(x_0,y)}\dz_0^{k\eta}
\lf[\frac{d(y,y')}{1+d(x,y)}\r]^\eta
\lf[G_{1,\gz}(x,y)+G_{1,\gz}(x,y')\r]\notag\\
&\quad\times G_{1,\eta+\gz}(x_0,x)\,d\mu(x)
+\int_{d(x,x_0)\ge (2A_0)^{-1}d(x_0,y)}\cdots\notag\\
&=:I_1+I_2,\notag
\end{align}
where $G_{1,\eta+\gz}$ is as in
\eqref{eqg} with $r$ and $\gz$
replaced, respectively, by $1$ and $\eta+\gz$.

We first estimate $I_1$.
By \eqref{eqA0}, we find that, for any $y$, $y'\in\cx$
as in Case 2), and $x\in\cx$ with
$d(x,y)\ge(2A_0)^{-1}d(x_0,y)$,
$$
1+d(x,y')\ge 1+(A_0)^{-1}d(x,y)-d(y,y')
\ge 1-(2A_0)^{-2}+(2A_0)^{-2}d(x_0,y).
$$
From this, \eqref{eqoz}, both (i) and
(ii) of Lemma \ref{lees}, and $\bz<\eta$,
we deduce that
\begin{align*}
I_1
&\ls\dz_0^{k\eta}
\lf[\frac{d(y,y')}{1+d(x_0,y)}\r]^\eta
G_{1,\gz}(x_0,y)
\int_\cx G_{1,\eta+\gz}(x_0,x)\,d\mu(x)\\
&\ls \dz_0^{k\eta}
\lf[\frac{d(y,y')}{1+d(x_0,y)}\r]^\bz
G_{1,\gz}(x_0,y),
\end{align*}
where all the implicit positive constants
are independent of $k$, $y$, and $y'$.
As for $I_2$, using \eqref{eqoz}, both
(i) and (ii) of Lemma \ref{lees}, and
$\bz<\eta$, we have
\begin{align*}
I_2
&\ls \dz_0^{k\eta}
\lf[\frac{d(y,y')}{1+d(x_0,y)}\r]^\eta
G_{1,\gz}(x_0,y)\int_\cx
\lf[G_{1,\gz}(x,y)+G_{1,\gz}(x,y')\r]\,d\mu(x)\\
&\ls \dz_0^{k\eta}
\lf[\frac{d(y,y')}{1+d(x_0,y)}\r]^\bz
G_{1,\gz}(x_0,y),
\end{align*}
where all the implicit positive constants
are independent of $k$, $y$, and $y'$.

From \eqref{aldc1} and the estimates of
$I_1$ and $I_2$, it follows that,
for any $k\in\zz$ with $C_L\dz_0^k\le (2A_0)^{-2}$,
and any $y$, $y'\in\cx$ as in case 2),
\begin{equation*}
Y_k(y,y')
\ls \dz_0^{k\eta}
\lf[\frac{d(y,y')}{1+d(x_0,y)}\r]^\bz
G_{1,\gz}(x_0,y).
\end{equation*}
This finishes the proof of the case 2).

Using Case 1) and Case 2), we obtain \eqref{eqdc7}.
By \eqref{aldc6} and \eqref{eqdc7}, we find that,
for any $k\in\zz$ with $C_L\dz_0^k\le (2A_0)^{-2}$,
$$
\lf\|P^*_0(\psi)-\psi_k\r\|_{\cg(\bz,\gz)}
\ls \dz_0^k.
$$
Letting $k\to\fz$, we obtain \eqref{eqdc3}.

By an argument similar to that
used in the proof of \cite[Lemma 4.14]{hlyy19},
we find that $P^*_0(\psi)\in \cg(\eta,\eta)\subset \gs$.
Observe that, for any $k\in\zz$ and $\az\in\ca^{(L)}_k$,
$P_0(z^k_\az,\cdot)\in \cg(\eta,\eta)$.
Thus, for any $k\in\zz$, $\psi_k\in\cg(\eta,\eta)\subset\gs$.
From this, \eqref{eqdc3},
and the estimate that $\# \ca^{(L)}_k<\fz$
for any $k\in\zz$,
we deduce that
\begin{align*}
\lf\langle P_0(f),\psi\r\rangle
&=\lf\langle f,P^*_0(\psi)\r\rangle\\
&=\lim_{k\to\fz}\lf\langle f,
\sum_{\az\in\ca^{(L)}_k}
\int_{Q^k_\az}P_0(z^k_\az,\cdot)
\psi(x)\,d\mu(x)\r\rangle\\
&=\lim_{k\to\fz}\sum_{\az\in\ca^{(L)}_k}
\int_{Q^k_\az}
\widetilde{P_0(f)}(z^k_\az)
\psi(x)\,d\mu(x).
\end{align*}
Thus, to complete the proof of the present proposition,
it remains to show that
\begin{equation}\label{eqdc9}
\lim_{k\to\fz}\sum_{\az\in\ca^{(L)}_k}
\int_{Q^k_\az}
\widetilde{P_0(f)}(z^k_\az)
\psi(x)\,d\mu(x)
=\int_{\cx}\widetilde{P_0(f)}(x)\psi(x)\,d\mu(x).
\end{equation}

To this end, we need to prove that the right-hand side
of \eqref{eqdc9} is well defined; namely,
 we show that $\widetilde{P_0(f)}\psi$ is integrable on $\cx$.
Observe that, for any $x\in\cx$,
\begin{equation}\label{eqdc8}
\lf|\widetilde{P_0(f)}(x)\r|
=\lf|\lf\langle f,P_0(x,\cdot)\r\rangle\r|
\le \|f\|_{(\gs)'}
\lf\|P_0(x,\cdot)\r\|_{\cg(\bz,\gz)}.
\end{equation}
Then we estimate $\lf\|P_0(x,\cdot)\r\|_{\cg(\bz,\gz)}$.
Indeed, from Remark \ref{rede1}(i),
Lemma \ref{lees}(i) combined with \eqref{eqoz},
and the estimate that $d(x_0,y)\le A_0[d(x_0,x)+d(x,y)]$
for any $x$, $y\in\cx$ [which is deduced from \eqref{eqA0}],
it follows that, for any $x$, $y\in\cx$,
\begin{align}\label{aldc2}
|P_0(x,y)|
&\ls  G_{1,\gz}(x,y)\ls G_{1,\gz}(x_0,y)
\max\lf\{1,\lf[\frac{1+d(x_0,y)}
{1+d(x,y)}\r]^{\oz+\gz}\r\}\\
&\ls G_{1,\gz}(x_0,y)
\lf[1+d(x_0,x)\r]^{\oz+\gz}\notag
\end{align}
with the implicit positive constants independent of $x$ and $y$.
Thus, $P_0(x,\cdot)$
satisfies the size condition of $\cg(\bz,\gz)$.

Next, we consider the regularity condition of
$P_0(x,\cdot)$. To this end, we first claim
that, for any $x$, $y$, $y'\in\cx$ with
$d(y,y')\le (2A_0)^{-1}[1+d(x_0,y)]$,
\begin{align}\label{eqd1}
\lf|P_0(x,y)-P_0(x,y')\r|
\ls\lf[\frac{d(y,y')}{1+d(x,y)}\r]^\bz
\lf[G_{1,\gz}(x,y)+G_{1,\gz}(x,y')\r].
\end{align}
Indeed, if $d(y,y')\le (2A_0)^{-1}[1+d(x,y)]$,
by Remark \ref{rede1}(i), we have
\begin{align*}
\lf|P_0(x,y)-P_0(x,y')\r|
&\ls\lf[\frac{d(y,y')}{1+d(x,y)}\r]^\bz
G_{1,\gz}(x,y)\\
&\ls \lf[\frac{d(y,y')}{1+d(x,y)}\r]^\bz
\lf[G_{1,\gz}(x,y)+G_{1,\gz}(x,y')\r]
\end{align*}
with the implicit positive constants
independent of $x$, $y$, and $y'$;
if $d(y,y')> (2A_0)^{-1}[1+d(x,y)]$,
by Remark \ref{rede1}(ii), we obtain
\begin{align*}
\lf|P_0(x,y)-P_0(x,y')\r|
&\le \lf|P_0(x,y)\r|+\lf|P_0(x,y')\r|\\
&\ls\lf[G_{1,\gz}(x,y)+G_{1,\gz}(x,y')\r]\\
&\ls \lf[\frac{d(y,y')}{1+d(x,y)}\r]^\bz
\lf[G_{1,\gz}(x,y)+G_{1,\gz}(x,y')\r]
\end{align*}
with the implicit positive constants
independent of $x$, $y$, and $y'$.
This finishes the proof of the claim \eqref{eqd1}.

Using \eqref{eqd1},
Lemma \ref{lees}(i) together with \eqref{eqoz},
the estimates that $d(x_0,y)\le A_0[d(x_0,x)+d(x,y)]$
and $d(x_0,y')\le A_0[d(x_0,x)+d(x,y')]$
for any $x$, $y$, $y'\in\cx$
[which are deduced from \eqref{eqA0}], and
Lemma \ref{lees}(i) with \eqref{eqdc4},
we find that, for any $x$, $y$, $y'\in\cx$ with
$d(y,y')\le (2A_0)^{-1}[1+d(x_0,y)]$,
\begin{align}\label{aldc3}
&\lf|P_0(x,y)-P_0(x,y')\r|\\
&\quad\ls\lf[\frac{d(y,y')}{1+d(x,y)}\r]^\bz
\lf[G_{1,\gz}(x,y)+G_{1,\gz}(x,y')\r]\notag\\
&\quad\ls \lf[\frac{d(y,y')}{1+d(x_0,y)}\r]^\bz
\lf[\frac{1+d(x_0,y)}{1+d(x,y)}\r]^\bz
\lf[G_{1,\gz}(x_0,y)
\max\lf\{1,\lf[\frac{1+d(x_0,y)}
{1+d(x,y)}\r]^{\oz+\gz}\r\}\r.\notag\\
&\quad\quad\lf.+G_{1,\gz}(x_0,y')
\max\lf\{1,\lf[\frac{1+d(x_0,y')}
{1+d(x,y')}\r]^{\oz+\gz}\r\}\r]\notag\\
&\quad\ls \lf[\frac{d(y,y')}{1+d(x_0,y)}\r]^\bz
\lf[G_{1,\gz}(x_0,y)+G_{1,\gz}(x_0,y')\r]
\lf[1+d(x_0,x)\r]^{\bz+\oz+\gz}\notag\\
&\quad\sim \lf[\frac{d(y,y')}{1+d(x_0,y)}\r]^\bz
G_{1,\gz}(x_0,y)
\lf[1+d(x_0,x)\r]^{\bz+\oz+\gz}.\notag
\end{align}
From this and \eqref{aldc2}, we deduce that, for any $x\in\cx$,
\begin{equation*}
\|P_0(x,\cdot)\|_{\cg(\bz,\gz)}
\ls [1+d(x_0,x)]^{\bz+\oz+\gz},
\end{equation*}
which, together with \eqref{eqdc8}, implies that,
for any $x\in\cx$,
\begin{align*}
\lf|\widetilde{P_0(f)}(x)\r|
\le \|f\|_{(\gs)'}
\|P_0(x,\cdot)\|_{\cg(\bz,\gz)}
\ls \|f\|_{(\gs)'}
\lf[1+d(x_0,x)\r]^{\bz+\oz+\gz}.
\end{align*}
By this and $\psi\in\cg_{\mathrm{b}}(\eta,\eta)$,
we find that $\widetilde{P_0(f)}\psi$ is integrable on $\cx$
and the right-hand side of \eqref{eqdc9} is well defined.

Now, we prove \eqref{eqdc9}. To this end,
we show that, for any $x$, $x'\in\cx$
with $d(x,x')\le (2A_0)^{-2}$,
\begin{align}\label{aldc5}
&\lf|\widetilde{P_0(f)}(x)-
\widetilde{P_0(f)}(x')\r|
\ls \|f\|_{(\gs)'}
\lf[d(x,x')\r]^\eta
\lf[1+d(x_0,x)\r]^{\bz+\oz+\gz}.
\end{align}
Observe that, for any $x$, $x'\in\cx$,
\begin{align}\label{aldc4}
&\lf|\widetilde{P_0(f)}(x)-
\widetilde{P_0(f)}(x')\r|\\
&\quad=\lf|\langle f,P_0(x,\cdot)
-P_0(x',\cdot)\rangle\r|
\le \|f\|_{(\gs)'}
\lf\|P_0(x,\cdot)-P_0(x',\cdot)
\r\|_{\cg(\bz,\gz)}.\notag
\end{align}

Next, we estimate
$\|P_0(x,\cdot)-P_0(x',\cdot)\|_{\cg(\bz,\gz)}$.
To estimate the size condition of
$P_0(x,\cdot)-P_0(x',\cdot)$,
from Remark \ref{rede1}(ii) and some arguments
similar to those used in the estimation of \eqref{aldc2}, it follows that,
for any $x$, $x'$, $y\in\cx$
with $d(x,x')<(2A_0)^{-1}$,
\begin{align*}
&\lf|P_0(x,y)-P_0(x',y)\r|\\
&\quad\ls \lf[d(x,x')\r]^\eta
G_{1,\gz}(x,y)
\ls \lf[d(x,x')\r]^\eta
G_{1,\gz}(x_0,y)
\lf[1+d(x_0,x)\r]^{\oz+\gz}.
\end{align*}
By some arguments similar to those used
in the estimations of \eqref{aldc7} and \eqref{aldc3},
 we conclude that, for any $x$, $x'$,
$y$, $y'\in\cx$ with
$d(x,x')\le (2A_0)^{-2}$ and
$d(y,y')\le (2A_0)^{-1}[1+d(x_0,y)]$,
\begin{align*}
&\lf|\lf[P_0(x,y)-P_0(x',y)\r]
-\lf[P_0(x,y')-P_0(x',y')\r]\r|\\
&\quad\ls \lf[d(x,x')\r]^\eta
\lf[\frac{d(y,y')}{1+d(x,y)}\r]^\eta
\lf[G_{1,\gz}(x,y)+G_{1,\gz}(x,y')\r]\\
&\quad\ls \lf[d(x,x')\r]^\eta
\lf[\frac{d(y,y')}{1+d(x_0,y)}\r]^\bz
G_{1,\gz}(x_0,y)
\lf[1+d(x_0,x)\r]^{\bz+\oz+\gz}.
\end{align*}
From this and the size condition of
$P_0(x,\cdot)-P_0(x',\cdot)$, we deduce that,
for any $x$, $x'\in\cx$ with
$d(x,x')\le (2A_0)^{-2}$,
$$
\lf\|P_0(x,\cdot)-P_0(x',\cdot)
\r\|_{\cg(\bz,\gz)}
\ls \lf[d(x,x')\r]^\eta
\lf[1+d(x_0,x)\r]^{\bz+\oz+\gz}.
$$
Using this and \eqref{aldc4}, we obtain
\eqref{aldc5}.

By \eqref{eqdc2} and \eqref{aldc5},
we conclude that,
for any $k\in\zz$ with
$C_L\dz_0^k\le (2A_0)^{-2}$,
\begin{align*}
&\lim_{k\to\fz}\lf|\sum_{\az\in\ca^{(L)}_k}
\int_{Q^k_\az}\widetilde{P_0(f)}(z^k_\az)
\psi(x)\,d\mu(x)
-\int_{\cx}\widetilde{P_0(f)}(x)\psi(x)\,d\mu(x)\r|\\
&\quad\le \sum_{\az\in\ca^{(L)}_k}\int_{Q^k_\az}
\lf|\widetilde{P_0(f)}(z^k_\az)
-\widetilde{P_0(f)}(x)\r|
|\psi(x)|\,d\mu(x)\\
&\quad\ls \|f\|_{(\gs)'}
\dz_0^{k\eta}\int_\cx
\lf[1+d(x_0,x)\r]^{\bz+\oz+\gz}
|\psi(x)|\,d\mu(x).
\end{align*}
Letting $k\to\fz$ and using the fact
$\psi\in \cg_{\mathrm{b}}(\eta,\eta)$,
we obtain \eqref{eqdc9}. This finishes the proof
of Proposition \ref{prdc}.
\end{proof}
\begin{remark}\label{rewan}
Let $\{P_k\}_{k\in\zz}$ be a 1-exp-ATI as in
Definition \ref{dee1} and
$f\in(\gs)'$.
Using Propositions \ref{prd}
and \ref{prdc} together with a
standard density argument,
we find that $P_k(f)$ and $\widetilde{P_k(f)}$
coincide as the distribution of $(\gs)'$;
more precisely, for any $k\in\zz$ and $\varphi\in\gs$,
$$
\langle P_k(f),\varphi\rangle
=\lim_{n\to\fz}\langle P_k(f),\varphi_n\rangle
=\lim_{n\to\fz}\int_{\cx}\widetilde{P_k(f)}(x)
\varphi_n(x)\,d\mu(x)
=:\lf\langle \widetilde{P_k(f)},\varphi\r\rangle,
$$
where $\{\varphi_n\}_{n\in\nn}\subset \cg_{\mathrm{b}}(\eta,\eta)$
and $\lim_{n\to\fz}\|\varphi_n-\varphi\|_{\cg(\bz,\gz)}= 0$.
In what follows, by the abuse of the notation,
we identify $P_k(f)$ with $\widetilde{P_k(f)}$.
\end{remark}
\subsection{Weak Ball Quasi-Banach
Function Spaces \label{ssbb}}
Now, we recall the following
ball (quasi-)Banach function space on $\cx$, which
is just \cite[Definition 2.5]{yhyy21a} (see also
\cite[Definition 2.2 and (2.3)]{shyy17}).

\begin{definition}\label{debb}
A quasi-normed linear space
$X(\cx)\subset\mathscr{M}(\cx)$,
which is
equipped with a quasi-norm $\|\cdot\|_{X(\cx)}$,
is called a \emph{ball quasi-Banach function space} (for short, BQBF \emph{space}) on $\cx$
if it satisfies the following conditions:
\begin{enumerate}
\item [(i)]
$\|f\|_{X(\cx)}=0$ if and only if
$f=0$ $\mu$-almost everywhere.
\item [(ii)]
$|g|\le|f|$ $\mu$-almost everywhere implies
that $\|g\|_{X(\cx)}\le \|f\|_{X(\cx)}$.
\item [(iii)]
$0\le f_n\uparrow f$ $\mu$-almost everywhere
indicates that $\|f_n\|_{X(\cx)}\uparrow \|f\|_{X(\cx)}$.
\item [(iv)]
for any ball $B\subset\cx$, $\mathbf{1}_B\in X(\cx)$.
\end{enumerate}

A normed linear space $X(\cx)\subset \mathscr{M}(\cx)$
is called a \emph{ball Banach function space} (for short, BBF \emph{space}) on $\cx$
if it satisfies (i)-(iv) and
\begin{enumerate}
\item [(v)]
for any ball $B\subset\cx$,
there exists a positive constant $C$, depending
only on $B$, such that, for any $f\in X(\cx)$,
$$ \int_{B}\lf|f(x)\r|\,d\mu(x)
\le C\lf\|f\r\|_{X(\cx)}.$$
\end{enumerate}
\end{definition}

\begin{remark}\label{refz}
\begin{itemize}
\item[\textup{(i)}]
Let $X(\cx)$ be a BQBF space.
By \cite[Theorem 2]{dfmn21}, we find that both
(ii) and (iii) of Definition \ref{debb} imply
that $X(\cx)$ is complete and hence
a quasi-Banach space.
\item[\textup{(ii)}]
In Definition \ref{debb}, if we replace any ball $B$
by any $\mu$-measurable set $E$ with $\mu(E)<\fz$,
we obtain the definition of (quasi-)Banach function
spaces on $\cx$, which can be found in
\cite[p.\,3, Definition 1.3]{bs88} and
\cite[Definition 2.4]{yhyy21a}; if we replace any ball $B$
by any bounded $\mu$-measurable set $E$,
we obtain an equivalent definition of
ball (quasi-)Banach function spaces on $\cx$.
Obviously, a (quasi-)Banach function space on $\cx$
is surely a ball (quasi-)Banach function space on $\cx$.
\item[\textup{(iii)}]
Let $X(\cx)$ be a BQBF
space. If $f\in X(\cx)$,
then $\mu(\{x\in\cx:\ |f(x)|=\fz\})=0$.
Indeed, from Definition \ref{debb}(ii)
and $\|f\|_{X(\cx)}<\fz$, it follows that
\begin{equation*}
\lf\|\mathbf{1}_{\{x\in\cx:\ |f(x)|=\fz\}}\r\|_{X(\cx)}
\le \inf_{\lz\in(0,\fz)}\lf[
\lf\|\mathbf{1}_{\{x\in\cx:\ |f(x)|>\lz\}}\r\|_{X(\cx)}\r]
\le \inf_{\lz\in(0,\fz)}\lf\{
\lz^{-1}\|f\|_{X(\cx)}\r\}=0.
\end{equation*}
By this and Definition \ref{debb}(i),
we conclude that
$\mu(\{x\in\cx:\ |f(x)|=\fz\})=0$.
\end{itemize}
\end{remark}

A BQBF
space $X(\cx)$ is said to have an \emph{absolutely continuous
quasi-norm} if, for any $f\in X(\cx)$ and any sequence
$\{E_n\}_{n\in\nn}$ of $\mu$-measurable sets of $\cx$ with
$E_n\downarrow\emptyset$,
$\|f\mathbf{1}_{E_n}\|_{X(\cx)}\downarrow 0$ as $n\to\fz$;
see, for instance, \cite[p.\,14, Definition 3.1]{bs88}.
The \emph{associate space} $X'(\cx)$
of a BBF space $X(\cx)$
is defined by setting
\begin{equation*}
X'(\cx):=\lf\{f\in\mathscr{M}
(\cx):\ \|f\|_{X'(\cx)}<\fz\r\},
\end{equation*}
where, for any $f\in\mathscr{M}(\cx)$,
$$\|f\|_{X'(\cx)}:
=\sup\lf\{\|fg\|_{L^1(\cx)}: \ g\in X(\cx),\
\|g\|_{X(\cx)}=1\r\};$$
see \cite[Chapter 1, Section 2]{bs88}
for more details on associate spaces.
It is well known that, if
$X(\cx)$ is a BBF space,
then $X'(\cx)$ is also a BBF space;
see, for instance, \cite[Lemma 2.19]{yhyy21a}
or \cite[Proposition 2.3]{shyy17}.
Next, we recall the concept of the convexification,
which previously appeared, for instance,
in \cite[p.\,53]{lt79} and
\cite[Definition 2.6]{shyy17}.

\begin{definition}\label{decon}
Let $p\in(0,\fz)$.
The \emph{p-convexification}
$X^p(\cx)$ of a BQBF
space $X(\cx)$ is defined by setting
\begin{equation*}
X^p(\cx):=\lf\{f\in\mathscr{M}(\cx):\
\lf\|f\r\|_{X^p(\cx)}:=
\lf\|\lf|f\r|^p\r\|_{X(\cx)}^{1/p}<\fz\r\}.
\end{equation*}
\end{definition}
\begin{remark}\label{recon}
Let $X^p(\cx)$ be as in Definition \ref{decon}.
Observe that $X^p(\cx)$ is also a BQBF space. In what follows,
for any $p\in(0,\fz)$, we always let
\begin{equation}\label{eqsigm}
\sigma_p:=\inf\lf\{C\in[1,\fz):\
\|f+g\|_{X^p(\cx)}\le C[\|f\|_{X^p(\cx)}
+\|g\|_{X^p(\cx)}],\ \forall\,
f,\ g\in X^p(\cx)\r\}.
\end{equation}
\end{remark}

The next proposition provides a
frequently used estimate suitable for
any given BQBF space
and its convexification, which
is an immediate corollary of
the Aoki--Rolewicz theorem
(see \cite{a42,r57}, or \cite[Exercise 1.4.6]{g14}
with a detailed hint on its proof);
we omit the details here.

\begin{proposition}\label{prtz}
Let $X(\cx)$ be a \emph{BQBF} space, $p\in (0,\fz)$, and
$\tz_p:=\frac{\log 2}{\log(2\sigma_p)}$
with $\sigma_p\in[1,\fz)$ as in
\eqref{eqsigm}. Then, for any
$\{f_{j}\}_{j\in\nn}\subset\mathscr{M}(\cx)$,
\begin{equation*}
\lf\|\sum_{j\in\nn}\lf|f_j\r|\r\|_{X^p(\cx)}
\le 4^{1/\tz_p}\lf[\sum_{j\in\nn}\lf\|f_j
\r\|_{X^p(\cx)}^{\tz_p}\r]^{1/\tz_p}.
\end{equation*}
\end{proposition}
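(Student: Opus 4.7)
The plan is to first establish the inequality for finite sums by a direct application of the classical Aoki--Rolewicz theorem to the quasi-normed lattice $(X^p(\cx), \|\cdot\|_{X^p(\cx)})$, and then extend to the infinite case using the Fatou-type property built into Definition \ref{debb}(iii). The only hypothesis needed for Aoki--Rolewicz is the quasi-triangle inequality with constant $\sigma_p$, which is encoded in the very definition of $\sigma_p$ in \eqref{eqsigm}, and which makes sense because $X^p(\cx)$ is itself a BQBF space by Remark \ref{recon}.

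First I would invoke the finite version: for any $N\in\nn$ and any finite collection $\{f_j\}_{j=1}^N\subset\mathscr{M}(\cx)$,
\[
\lf\|\sum_{j=1}^N |f_j|\r\|_{X^p(\cx)}
\le 4^{1/\tz_p}\lf[\sum_{j=1}^N \lf\|f_j\r\|_{X^p(\cx)}^{\tz_p}\r]^{1/\tz_p},
\]
with $\tz_p=(\log 2)/\log(2\sigma_p)$ chosen so that $(2\sigma_p)^{\tz_p}=2$. This is precisely the content of the classical Aoki--Rolewicz theorem (see \cite{a42,r57} or \cite[Exercise 1.4.6]{g14}), whose proof is a purely combinatorial dyadic regrouping argument that uses only the quasi-triangle inequality with constant $\sigma_p$; consequently, it transports verbatim from scalar $p$-normed spaces to any quasi-normed vector lattice, in particular to $X^p(\cx)$. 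Note that only the quasi-norm property \eqref{eqsigm} is invoked here; completeness of $X^p(\cx)$ is not needed for this finite bound.

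To pass to the infinite sum, for each $N\in\nn$ set $g_N:=\sum_{j=1}^N|f_j|$. Then $0\le g_N^p\uparrow (\sum_{j\in\nn}|f_j|)^p$ pointwise $\mu$-almost everywhere, so Definition \ref{debb}(iii) applied to $X(\cx)$ gives $\|g_N^p\|_{X(\cx)}\uparrow \|(\sum_{j\in\nn}|f_j|)^p\|_{X(\cx)}$; taking $1/p$-th powers via Definition \ref{decon} yields $\|g_N\|_{X^p(\cx)}\uparrow \|\sum_{j\in\nn}|f_j|\|_{X^p(\cx)}$. Letting $N\to\fz$ in the finite inequality, together with monotone convergence on the right-hand side, then delivers the stated estimate. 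There is no serious obstacle here: the only subtlety is making sure the Fatou-type property transfers from $X(\cx)$ to the $p$-convexification $X^p(\cx)$, which is a one-line consequence of Definition \ref{decon}.
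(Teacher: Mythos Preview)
Your proposal is correct and follows exactly the approach the paper indicates: the paper itself omits the proof, stating only that the proposition is an immediate corollary of the Aoki--Rolewicz theorem (with references \cite{a42,r57} and \cite[Exercise 1.4.6]{g14}). Your write-up supplies the natural details the paper leaves implicit, namely the finite-sum bound from Aoki--Rolewicz applied to the quasi-norm on $X^p(\cx)$ and the passage to countable sums via the Fatou property in Definition~\ref{debb}(iii).
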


Now, we turn to introduce the weak BQBF space on $\cx$; see
\cite[Definition 2.8]{zyyw20} for the
Euclidean space case.

\begin{definition}\label{dewb}
Let $X(\cx)$ be a BQBF space.
Then the \emph{weak ball quasi-Banach function space} (for short, \emph{weak} BQBF \emph{space})
$WX(\cx)$ associated with $X(\cx)$
is defined to be the set of all the $\mu$-measurable
functions $f$ on $\cx$ such that
$$
\|f\|_{WX(\cx)}:=\sup_{\lz\in(0,\fz)}
\lf\{\lz\lf\|\mathbf{1}_{\{x\in\cx:
\ |f(x)|>\lz\}}\r\|_{X(\cx)}
\r\}<\fz.
$$
\end{definition}

The following proposition indicates that
the weak BQBF
space is also a BQBF space.
Its proof is similar to those of
\cite[Lemma 2.13 and Remark 2.9]{zyyw20};
we omit the details.

\begin{proposition}\label{prwb}
Let $X(\cx)$ be a \emph{BQBF} space.
Then the weak \emph{BQBF} space
$WX(\cx)$ is also a \emph{BQBF} space
and, moreover, $X(\cx)$ continuously embeds into $WX(\cx)$,
namely, if $f\in X(\cx)$, then $f\in WX(\cx)$
and $\|f\|_{WX(\cx)}\le\|f\|_{X(\cx)}$.
\end{proposition}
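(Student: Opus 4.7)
The plan is to first establish the continuous embedding $X(\cx)\hookrightarrow WX(\cx)$, because that statement immediately delivers property (iv) of Definition \ref{debb} for $WX(\cx)$, and then to verify that $\|\cdot\|_{WX(\cx)}$ is a quasi-norm satisfying the remaining properties (i)--(iii). For the embedding, fix $f\in X(\cx)$ and $\lz\in(0,\fz)$. Observe that pointwise $\mathbf{1}_{\{x\in\cx:\,|f(x)|>\lz\}}\le \lz^{-1}|f|$, so Definition \ref{debb}(ii) applied to the ambient space $X(\cx)$ yields
$$
\lf\|\mathbf{1}_{\{x\in\cx:\,|f(x)|>\lz\}}\r\|_{X(\cx)}\le \lz^{-1}\|f\|_{X(\cx)}.
$$
Multiplying by $\lz$ and taking the supremum over $\lz\in(0,\fz)$ gives $\|f\|_{WX(\cx)}\le\|f\|_{X(\cx)}$. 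In particular, $\mathbf{1}_B\in X(\cx)$ for every ball $B\subset\cx$ implies $\mathbf{1}_B\in WX(\cx)$, verifying Definition \ref{debb}(iv) for $WX(\cx)$.

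Next I would check the quasi-norm structure. Positive homogeneity follows at once from $\{|cf|>\lz\}=\{|f|>\lz/|c|\}$ and a change of variables in the supremum. For the quasi-triangle inequality, given $f,\,g\in WX(\cx)$ and $\lz\in(0,\fz)$, the inclusion $\{|f+g|>\lz\}\subset\{|f|>\lz/2\}\cup\{|g|>\lz/2\}$ combined with Definition \ref{debb}(ii) and the quasi-triangle inequality in $X(\cx)$ (with constant $\sigma_1$ from \eqref{eqsigm} at $p=1$) yields
$$
\lf\|\mathbf{1}_{\{|f+g|>\lz\}}\r\|_{X(\cx)}
\le\sigma_1\lf[\lf\|\mathbf{1}_{\{|f|>\lz/2\}}\r\|_{X(\cx)}
+\lf\|\mathbf{1}_{\{|g|>\lz/2\}}\r\|_{X(\cx)}\r].
$$
Multiplying by $\lz$ and taking the supremum over $\lz\in(0,\fz)$ gives
$\|f+g\|_{WX(\cx)}\le 2\sigma_1[\|f\|_{WX(\cx)}+\|g\|_{WX(\cx)}]$. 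Property (i) of Definition \ref{debb} is handled as follows: if $f=0$ $\mu$-a.e., then $\|f\|_{WX(\cx)}=0$ trivially; conversely, if $\|f\|_{WX(\cx)}=0$, then $\|\mathbf{1}_{\{|f|>\lz\}}\|_{X(\cx)}=0$ for every $\lz\in(0,\fz)$, whence by Definition \ref{debb}(i) applied to $X(\cx)$ one has $\mu(\{|f|>\lz\})=0$ for every $\lz>0$, and consequently $f=0$ $\mu$-a.e. Property (ii) is immediate, since $|g|\le|f|$ $\mu$-a.e.\ forces $\{|g|>\lz\}\subset\{|f|>\lz\}$ and hence the corresponding monotonicity for each $\lz$.

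The step I expect to require the most care is the Fatou-type property (iii). Assume $0\le f_n\uparrow f$ $\mu$-a.e. For each fixed $\lz\in(0,\fz)$, the sets $\{f_n>\lz\}$ increase to $\{f>\lz\}$ (modulo a null set), so $\mathbf{1}_{\{f_n>\lz\}}\uparrow \mathbf{1}_{\{f>\lz\}}$ $\mu$-a.e., and Definition \ref{debb}(iii) applied to the underlying BQBF space $X(\cx)$ gives
$$
\lz\lf\|\mathbf{1}_{\{f_n>\lz\}}\r\|_{X(\cx)}
\uparrow \lz\lf\|\mathbf{1}_{\{f>\lz\}}\r\|_{X(\cx)}
\quad\text{as}\quad n\to\fz.
$$
Setting $a_n(\lz):=\lz\|\mathbf{1}_{\{f_n>\lz\}}\|_{X(\cx)}$ and $a(\lz):=\lz\|\mathbf{1}_{\{f>\lz\}}\|_{X(\cx)}$, the sequence $\{a_n(\lz)\}_{n\in\nn}$ is nondecreasing in $n$ with pointwise (and hence supremum) limit $a(\lz)$. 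The desired conclusion is the exchange of suprema
$$
\sup_{n\in\nn}\|f_n\|_{WX(\cx)}
=\sup_{n\in\nn}\sup_{\lz\in(0,\fz)}a_n(\lz)
=\sup_{\lz\in(0,\fz)}\sup_{n\in\nn}a_n(\lz)
=\sup_{\lz\in(0,\fz)}a(\lz)=\|f\|_{WX(\cx)},
$$
which is a general (and elementary) fact about monotone double suprema; this yields $\|f_n\|_{WX(\cx)}\uparrow \|f\|_{WX(\cx)}$, completing the verification of (iii). Combining all of the above, $WX(\cx)$ is a BQBF space and $X(\cx)$ embeds continuously into it with embedding constant $1$, which finishes the proof.
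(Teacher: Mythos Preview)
Your proof is correct and follows the standard approach; the paper itself omits the details, referring to the analogous Euclidean results in \cite[Lemma 2.13 and Remark 2.9]{zyyw20}, and your argument is precisely the natural verification of Definition \ref{debb}(i)--(iv) together with the Chebyshev-type embedding that those references encode.
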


\begin{remark}
Let $p\in(0,\fz)$, $X(\cx)$ be a BQBF
space, $W(X^p)(\cx)$ the weak BQBF space
associated with $X^p(\cx)$, and $(WX)^p(\cx)$
the $p$-convexification of $WX(\cx)$. We claim that
$W(X^p)(\cx)=(WX)^p(\cx)$.
Indeed, for any $f\in\mathscr{M}(\cx)$, we have
\begin{align*}
\|f\|_{W(X^p)(\cx)}
&=\sup_{\lz\in(0,\fz)}\lf\{
\lz\lf\|\mathbf{1}_{\{x\in\cx:
\ |f(x)|>\lz\}}\r\|_{X^p(\cx)}\r\}\\
&=\sup_{\lz\in(0,\fz)}\lf\{
\lz^{1/p}\lf\|\mathbf{1}_{\{x\in\cx:
\ |f(x)|^p>\lz\}}\r\|^{1/p}_{X(\cx)}\r\}
=\|f\|_{(WX)^p(\cx)},
\end{align*}
which proves the above claim.
In what follows, we use
$WX^p(\cx)$ to denote either $W(X^p)(\cx)$
or $(WX)^p(\cx)$.
\end{remark}

Next, we establish the following useful proposition
on the estimate of the truncation of functions in $WX(\cx)$.

\begin{proposition}\label{prin}
Let $p\in(0,\fz)$ and $X(\cx)$ be a \emph{BQBF} space.
\begin{itemize}
\item[\textup{(i)}]
If $p_1\in(0,p)$,
then there exists a positive
constant $C$ such that, for any $\lz\in(0,\fz)$
and $f\in WX^p(\cx)$,
$$
\lf\|f\mathbf{1}_{\{x\in\cx:\
|f(x)|>\lz\}}\r\|^{p_1/p}_{X^{p_1}(\cx)}
\le C\lz^{p_1/p-1}\|f\|_{WX^p(\cx)}.
$$
\item[\textup{(ii)}]
If $p_2\in(p,\fz)$,
then there exists a positive
constant $C$ such that, for any $\lz\in(0,\fz)$
and $f\in WX^p(\cx)$,
$$
\lf\|f\mathbf{1}_{\{x\in\cx:\
|f(x)|\le\lz\}}\r\|^{p_2/p}_{X^{p_2}(\cx)}
\le C\lz^{p_2/p-1}\|f\|_{WX^p(\cx)}.
$$
\end{itemize}
\end{proposition}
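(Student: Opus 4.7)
The plan is to reduce both claims to estimates in the $p$-convexification $X^p(\cx)$ via the definitional identity
$$
\lf\|g\r\|^{p_i/p}_{X^{p_i}(\cx)}
=\lf\|\,|g|^{p_i/p}\,\r\|_{X^p(\cx)}
\qquad (i\in\{1,2\}),
$$
and then to exploit a dyadic decomposition of the truncation of $f$ according to the size of $|f|$. The two essential tools are Proposition \ref{prtz} applied inside $X^p(\cx)$, which compensates for the missing triangle inequality, together with the elementary weak-type bound
$$
\lf\|\mathbf{1}_{\{x\in\cx:\,|f(x)|>t\}}\r\|_{X^p(\cx)}
\le t^{-1}\|f\|_{WX^p(\cx)}\qquad(t\in(0,\fz)),
$$
which is immediate from Definition \ref{dewb} combined with the identification $WX^p(\cx)=W(X^p)(\cx)$ recorded in the remark preceding the proposition.

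For part (i), I would set $g:=f\mathbf{1}_{\{x\in\cx:\,|f(x)|>\lz\}}$ and split $\{|f|>\lz\}=\bigcup_{k\in\zz_+}\{2^k\lz<|f|\le 2^{k+1}\lz\}$ to obtain the pointwise estimate
$$
|g|^{p_1/p}\le \sum_{k=0}^{\fz}(2^{k+1}\lz)^{p_1/p}
\,\mathbf{1}_{\{x\in\cx:\,|f(x)|>2^k\lz\}}.
$$
Applying Proposition \ref{prtz} to this sum inside $X^p(\cx)$ and inserting the weak-type bound with $t=2^k\lz$ reduce everything to a single geometric series:
$$
\|g\|^{p_1/p}_{X^{p_1}(\cx)}
\ls \|f\|_{WX^p(\cx)}\,\lz^{p_1/p-1}
\lf\{\sum_{k=0}^{\fz}2^{k(p_1/p-1)\tz_p}\r\}^{1/\tz_p},
$$
which converges precisely because $p_1/p-1<0$.

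For part (ii), I would instead set $g:=f\mathbf{1}_{\{x\in\cx:\,|f(x)|\le\lz\}}$ and decompose $\{0<|f|\le\lz\}=\bigcup_{k\in\zz_+}\{2^{-k-1}\lz<|f|\le 2^{-k}\lz\}$ (the null set $\{|f|=0\}$ contributes nothing), arriving at
$$
|g|^{p_2/p}\le \sum_{k=0}^{\fz}(2^{-k}\lz)^{p_2/p}
\,\mathbf{1}_{\{x\in\cx:\,|f(x)|>2^{-k-1}\lz\}}.
$$
The same two tools produce a geometric series $\sum_{k=0}^{\fz}2^{-k(p_2/p-1)\tz_p}$ in the place of the previous one, and this now converges because $p_2/p-1>0$.

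The only non-routine ingredient is the invocation of Proposition \ref{prtz} in place of a genuine triangle inequality for $\|\cdot\|_{X^p(\cx)}$; with that tool in hand, the matching of exponents is dictated by the dyadic decomposition itself and leaves no free parameter to tune, so no serious obstacle is anticipated.
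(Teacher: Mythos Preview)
Your proposal is correct and follows essentially the same approach as the paper's proof: both arguments pass to $X^p(\cx)$ via the convexification identity, perform a dyadic decomposition of the relevant level set, invoke Proposition~\ref{prtz} to handle the resulting sum, apply the weak-type bound on each dyadic shell, and sum the resulting geometric series. The only cosmetic differences are that the paper keeps the disjoint dyadic annuli $\{2^j\lz<|f|\le 2^{j+1}\lz\}$ rather than relaxing to the full level sets $\{|f|>2^j\lz\}$ as you do, and it invokes Remark~\ref{refz}(iii) explicitly to justify that $\{|f|=\fz\}$ is $\mu$-null (which your dyadic union in part (i) tacitly assumes); neither point affects the argument.
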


\begin{proof}
Let $p$, $X(\cx)$, $p_1$, and $p_2$
be as in the present proposition.

We first prove (i).
From Remark \ref{refz}(iii) with $X(\cx)$ replaced by
$WX^p(\cx)$, Definition \ref{debb}(ii) with
$X(\cx)$ replaced by $X^p(\cx)$,
Proposition \ref{prtz},
the definition of $WX^p(\cx)$,
and $p_1/p<1$, it follows that,
for any $\lz\in(0,\fz)$
and $f\in WX^p(\cx)$,
\begin{align*}
&\lf\|f\mathbf{1}_{\{x\in\cx:\
|f(x)|>\lz\}}\r\|^{p_1/p}_{X^{p_1}(\cx)}\\
&\quad=\lf\|\sum^{\fz}_{j=0}
\lf|f\r|^{p_1/p}\mathbf{1}
_{\{x\in\cx:\ 2^j\lz<|f(x)|\le 2^{j+1}\lz\}}
\r\|_{X^p(\cx)}\\
&\quad\le \lf\|\sum^{\fz}_{j=0}
\lf(2^{j+1}\lz\r)^{p_1/p}\mathbf{1}
_{\{x\in\cx:\ 2^j\lz<|f(x)|\le 2^{j+1}\lz\}}
\r\|_{X^p(\cx)}\\
&\quad\ls \lf[\sum^{\fz}_{j=0}
\lf(2^j\lz\r)^{\tz_pp_1/p}\lf\|\mathbf{1}
_{\{x\in\cx:\ 2^j\lz<|f(x)|\le 2^{j+1}\lz\}}
\r\|^{\tz_p}_{X^p(\cx)}\r]^{1/\tz_p}\\
&\quad\ls \lf[\sum^{\fz}_{j=0}
\lf(2^j\lz\r)^{\tz_p(p_1/p-1)}\r]^{1/\tz_p}\|f\|_{WX^p(\cx)}
\sim \lz^{p_1/p-1}\|f\|_{WX^p(\cx)},
\end{align*}
where $\tz_p$ is as in Proposition \ref{prtz}.
This finishes the proof of (i).

Then we prove (ii).
By Definition \ref{debb}(ii) with $X(\cx)$
replaced by $X^p(\cx)$,
Proposition \ref{prtz}, the definition of
$WX^p(\cx)$, and $p_2/p>1$, we conclude that,
for any $\lz\in(0,\fz)$
and $f\in WX^p(\cx)$,
\begin{align*}
&\lf\|f\mathbf{1}_{\{x\in\cx:\
|f(x)|\le\lz\}}\r\|^{p_2/p}_{X^{p_2}(\cx)}\\
&\quad= \lf\|\sum^{0}_{j=-\fz}\lf|f\r|^{p_2/p}
\mathbf{1}_{\{x\in\cx:
\ 2^{j-1}\lz<|f(x)|\le 2^j\lz\}}\r\|_{X^p(\cx)}\\
&\quad\le \lf\|\sum^{0}_{j=-\fz}\lf(2^j\lz\r)^{p_2/p}
\mathbf{1}_{\{x\in\cx:
\ 2^{j-1}\lz<|f(x)|\le 2^j\lz\}}\r\|_{X^p(\cx)}\\
&\quad\ls \lf[\sum^{0}_{j=-\fz}\lf(2^j\lz\r)^{\tz_pp_2/p}
\lf\|\mathbf{1}_{\{x\in\cx:
\ 2^{j-1}\lz<|f(x)|\le 2^j\lz\}}\r\|
^{\tz_p}_{X^p(\cx)}\r]^{1/\tz_p}\\
&\quad\ls \lf[\sum^{0}_{j=-\fz}\lf(2^j\lz\r)^{
\tz_p(p_2/p-1)}\r]^{1/\tz_p}\|f\|_{WX^p(\cx)}
\sim \lz^{p_2/p-1}\|f\|_{WX^p(\cx)},
\end{align*}
where $\tz_p$ is as in Proposition \ref{prtz}.
This finishes the proof of (ii), and
hence of Proposition \ref{prin}.
\end{proof}

\section{Maximal Function Characterizations\label{sm}}

In this section, we establish various
maximal function characterizations of
the weak BQBF Hardy space associated with
the BQBF space $X(\cx)$.

Now, we recall the definitions of several
maximal functions;
see, for instance, \cite[p.\,2209]{hhllyy}.
Let $\bz$, $\gz\in(0,\eta)$
with $\eta$ as in \eqref{eqwa},
$f\in (\gs)'$, and $\{P_k\}_{k\in\zz}$
be a 1-exp-ATI as in Definition \ref{dee1}.
The \emph{radial maximal function}
$\rmf$ of $f$ is defined by setting,
for any $x\in\cx$,
\begin{equation*}
\rmf(x):=\sup_{k\in\zz}\lf|P_k(f)(x)\r|.
\end{equation*}
The \emph{non-tangential
maximal function} $\ntm$ of $f$
with $\tz\in(0,\fz)$
is defined by setting, for any $x\in\cx$,
\begin{equation*}
\ntm(x)=\sup_{k\in\zz}\sup_{y\in B(x,\tz\dz_0^k)}
\lf|P_k(f)(y)\r|.
\end{equation*}
The \emph{grand maximal function}
$f^*$ of $f$ is defined by setting,
for any $x\in\cx$,
\begin{align*}
f^*(x)&:=f^{(*,\bz,\gz)}(x)&\\
&:=\sup\lf\{|\langle f,\phi\rangle|:
\ \phi\in\gs\ \hbox{and}
\ \|\phi\|_{\cg(x.r,\bz,\gz)}
\le 1\ \hbox{for some}\ r\in(0,\fz)\r\}.
\end{align*}

Let $\tz\in(0,\fz)$.
It is well known that there exists a
positive constant $C$ such that,
for any $f\in(\gs)'$,
\begin{equation}\label{eqms}
\rmf\le\ntm\le Cf^*;
\end{equation}
see, for instance, \cite[(3.1)]{hhllyy}.
Let $X(\cx)$ be a BQBF space
and $WX(\cx)$ the weak BQBF space associated with $X(\cx)$.
From \eqref{eqms}, Proposition \ref{prwb}, and
Definition \ref{debb}(ii) with $X(\cx)$ replaced
by $WX(\cx)$, we deduce that
there exists a
positive constant $C$ such that,
for any $f\in(\gs)'$,
\begin{equation}\label{eqma}
\lf\|\rmf\r\|_{WX(\cx)}
\le\lf\|\ntm\r\|_{WX(\cx)}
\le C \lf\|f^*\r\|_{WX(\cx)}.
\end{equation}
Furthermore, if $X(\cx)$ satisfies some assumptions,
we have the following theorem, which can directly be
deduced from Proposition \ref{prwb} and
\cite[Theorems 3.14--3.16]{yhyy21a};
here we omit the details.

\begin{theorem}\label{thm}
Suppose that $X(\cx)$ is a \emph{BQBF} space
and that there exists a $\underline{p}\in
(\omega/(\omega+\eta),1]$ with
$\oz$ as in \eqref{eqoz} and
$\eta$ as in \eqref{eqwa}
such that, for any
$t\in (0,\underline{p})$, the Hardy--Littlewood
maximal operator $\cm$ is
bounded on $WX^{1/t}(\cx)$.
\begin{enumerate}
\item[\textup{(i)}]
Let $\tz\in (0,\fz)$. If $f\in(\cg^{\eta}_0(\bz,\gz))'$
with $\bz$, $\gz\in(\oz[1/\underline{p}-1],\eta)$, then
\begin{equation*}
\lf\|\rmf\r\|_{WX(\cx)}\sim
\lf\|\ntm\r\|_{WX(\cx)}
\sim \lf\|f^*\r\|_{WX(\cx)},
\end{equation*}
where the positive equivalence
constants are independent of $f$.
\item[\textup{(ii)}]
Let $\bz_1$, $\gz_1$, $\bz_2$, $\gz_2\in
(\oz[1/\underline{p}-1],\eta)$.
If $f\in (\cg^\eta_0(\bz_1,\gz_1))'$ and
$f^{(*,\bz_1,\gz_1)}\in WX(\cx)$, then
$f\in (\cg^\eta_0(\bz_2,\gz_2))'$,
$f^{(*,\bz_2,\gz_2)}\in WX(\cx)$, and
there exists a constant $C\in[1,\fz)$,
independent of $f$, such that
\begin{equation*}
C^{-1}\lf\|f^{(*,\bz_1,\gz_1)}\r\|_{WX(\cx)}
\le \lf\|f^{(*,\bz_2,\gz_2)}\r\|_{WX(\cx)}
\le C\lf\|f^{(*,\bz_1,\gz_1)}\r\|_{WX(\cx)}.
\end{equation*}
\end{enumerate}
\end{theorem}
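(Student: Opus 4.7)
The plan is to deduce the theorem directly from \cite[Theorems 3.14--3.16]{yhyy21a} by applying them with the underlying BQBF space taken to be $WX(\cx)$ itself, rather than $X(\cx)$. The crucial enabling observation is Proposition \ref{prwb}: the weak space $WX(\cx)$ is itself a BQBF space on $\cx$. Moreover, by hypothesis, for every $t\in(0,\underline{p})$ the Hardy--Littlewood maximal operator $\cm$ is bounded on $WX^{1/t}(\cx)=(WX(\cx))^{1/t}$. Hence $WX(\cx)$ satisfies exactly the structural assumption under which the maximal function characterizations of \cite{yhyy21a} were established for $H_X(\cx)$, with the same $\underline{p}\in(\oz/(\oz+\eta),1]$.

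The next step is to note that the quantities $\rmf$, $\ntm$, and $f^*$ depend only on $f\in(\gs)'$, not on the choice of target function space; and, by definition, $\|f\|_{WH_X(\cx)}=\|f^*\|_{WX(\cx)}$ coincides with the ``strong'' Hardy quasi-norm $\|f\|_{H_{WX}(\cx)}$ built from the BQBF space $WX(\cx)$. Thus \cite[Theorem 3.14]{yhyy21a} applied to $WX(\cx)$ yields the three-way equivalence in (i), and \cite[Theorems 3.15 and 3.16]{yhyy21a} applied likewise yield the independence statement in (ii). One direction of (i), namely
\[
\lf\|\rmf\r\|_{WX(\cx)}\le \lf\|\ntm\r\|_{WX(\cx)}\le C\lf\|f^*\r\|_{WX(\cx)},
\]
is already recorded in \eqref{eqma} and requires nothing beyond Proposition \ref{prwb} and Definition \ref{debb}(ii). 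The nontrivial direction is $\|f^*\|_{WX(\cx)}\ls \|\rmf\|_{WX(\cx)}$, and this is the content imported from \cite{yhyy21a}.

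The engine driving \cite[Theorems 3.14--3.16]{yhyy21a} is a pointwise bound of the form
\[
f^*(x)\ls \lf[\cm\lf((\rmf)^t\r)(x)\r]^{1/t},\qquad x\in\cx,
\]
valid for some $t\in(\oz/(\oz+\eta),\underline{p})$ chosen so that $\oz(1/t-1)<\min\{\bz,\gz\}$. This inequality is obtained via a 1-exp-ATI reproducing formula together with the size and regularity estimates from Remark \ref{rede1} and Lemma \ref{lees}, and it is independent of whether the target quasi-norm is of strong or weak type. Once this pointwise estimate is in hand, raising to the $t$-th power and invoking the assumed boundedness of $\cm$ on $WX^{1/t}(\cx)$ gives
\[
\lf\|f^*\r\|_{WX(\cx)}^{t}
=\lf\|\lf(f^*\r)^{t}\r\|_{WX^{1/t}(\cx)}
\ls \lf\|\lf(\rmf\r)^{t}\r\|_{WX^{1/t}(\cx)}
=\lf\|\rmf\r\|_{WX(\cx)}^{t},
\]
which is the missing direction. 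Part (ii) is then immediate by applying (i) to the pairs $(\bz_1,\gz_1)$ and $(\bz_2,\gz_2)$ and noting that $\|\rmf\|_{WX(\cx)}$ does not depend on the regularity/decay parameters.

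The expected main obstacle is not analytic but bookkeeping: one must verify that the arguments in \cite{yhyy21a} are not tailored to $X(\cx)$ but go through for any BQBF space on $\cx$ whose $1/t$-convexification admits the boundedness of $\cm$; here the abstract character of the proofs in \cite{yhyy21a}, together with Proposition \ref{prwb} and the hypothesis imposed directly on $WX^{1/t}(\cx)$, make the passage transparent, which justifies the claim of a direct deduction and allows one to omit the details.
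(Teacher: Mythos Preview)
Your proposal is correct and follows essentially the same approach as the paper: the paper states that the theorem ``can directly be deduced from Proposition \ref{prwb} and \cite[Theorems 3.14--3.16]{yhyy21a}'' and omits the details, which is exactly your strategy of applying those results with the BQBF space taken to be $WX(\cx)$. Your additional explanation of the pointwise estimate $f^*\ls [\cm((\rmf)^t)]^{1/t}$ and the role of the boundedness of $\cm$ on $WX^{1/t}(\cx)$ is a helpful elaboration of what the paper leaves implicit.
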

Let $\bz$, $\gz\in(0,\eta)$ with $\eta$
as in \eqref{eqwa}, and $X(\cx)$
be a given BQBF space.
Recall that, in \cite[(3.7)]{yhyy21a},
the \emph{Hardy space}
$H_X(\cx)$ associated with $X(\cx)$
is defined by setting
\begin{equation*}
H_X(\cx):=\lf\{f\in(\gs)':\
\lf\|f\r\|_{H_X(\cx)}:=\lf\|f^*\r\|_{X(\cx)}<\fz\r\}.
\end{equation*}
Then we introduce the following
weak Hardy space
associated with $X(\cx)$.
\begin{definition}\label{de}
Let $X(\cx)$ be a BQBF
space and $\bz,\,\gamma\in(0,\eta)$ with
$\eta$ as in \eqref{eqwa}. The \emph{weak Hardy
space $WH_X(\cx)$} associated with $X(\cx)$
is defined by setting
$$
WH_X(\cx):=\lf\{f\in(\gs)':\ \|f\|_{WH_X(\cx)}
:=\lf\|f^*\r\|_{WX(\cx)}<\fz\r\}.
$$
\end{definition}
\begin{remark}\label{rede}
\begin{itemize}
\item[(i)] Let $WH_X(\cx)$ and $(\gs)'$
be as in Definition \ref{de}.
By Proposition \ref{prwb} and \cite[Lemma 3.10]{yhyy21a},
we find that $WH_X(\cx)$ continuously embeds into $(\gs)'$.
\item[(ii)]
Let $X(\cx)$ and $\underline{p}$ be as in Theorem \ref{thm}
and $\bz$, $\gz\in(\oz[1/\underline{p}-1],\eta)$.
From Theorem \ref{thm},
it follows that the weak Hardy space $WH_X(\cx)$ is
independent of the choice of $(\gs)'$.
\end{itemize}
\end{remark}
Embedding $WX(\cx)$ into a weighted Lebesgue space
and borrowing some ideas from the proof of
\cite[Theorem 3.12]{yhyy21a},
we can establish the following
relation between the weak BQBF space $WX(\cx)$ and
the weak Hardy space $WH_X(\cx)$.
\begin{theorem}\label{thmw}
Suppose that $X(\cx)$ is a \emph{BQBF}
space and there exists a $t\in (1,\fz)$
such that the Hardy--Littlewood maximal
operator $\cm$ is bounded on $WX^{1/t}(\cx)$.
Let $\bz$, $\gz\in (0,\eta)$
with $\eta$ as in \eqref{eqwa}.
\begin{enumerate}
\item[\textup{(i)}]
If $f\in WX(\cx)$,
then $f\in (\gs)'\cap WH_X(\cx)$ and
$\|f\|_{WH_X(\cx)}\le C\|f\|_{WX(\cx)},$
where $C$ is a positive constant independent of $f$.
\item[\textup{(ii)}]
If $f\in(\gs)'$
satisfies $\rmf\in WX(\cx)$,
then there exists a function
$\widetilde{f}\in WX(\cx)$ such that
$f=\widetilde{f}$ in $(\gs)'$, and
$\|\widetilde{f}\|_{WX(\cx)}\le \|\rmf\|_{WX(\cx)}$.
\end{enumerate}
Furthermore, $WH_X(\cx)=WX(\cx)$ with equivalent quasi-norms.
\end{theorem}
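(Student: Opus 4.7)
My plan is to follow the template of the corresponding strong-type result \cite[Theorem 3.12]{yhyy21a}, adapted to the weak setting. Part (i) will reduce to the pointwise bound $f^{\ast} \ls \cm(f)$ plus the boundedness of $\cm$ on $WX(\cx)$, while part (ii) will be handled by a weak-compactness argument in an auxiliary weighted Lebesgue space, combined with the identification $P_k(f) = \widetilde{P_k(f)}$ from Remark \ref{rewan}. The final equivalence $WH_X(\cx) = WX(\cx)$ then follows by combining (i) with (ii) through the chain $\|f\|_{WX(\cx)} \le \|M^+(f)\|_{WX(\cx)} \le \|f^{\ast}\|_{WX(\cx)} \ls \|f\|_{WX(\cx)}$, where the first step uses part (ii) to represent $f$ pointwise, and the inequalities in between are \eqref{eqma} and part (i).

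\textbf{Auxiliary embedding.} The preliminary step is to establish a continuous embedding $WX(\cx) \hookrightarrow L^q_w(\cx)$ for a suitable exponent $q \in (1,\fz)$ and a weight of the form $w(y) := [V_1(x_0)+V(x_0,y)]^{-\az}$ with $\az$ sufficiently large. This embedding comes from lower bounds of the type $\|\mathbf{1}_{B(x_0,r)}\|_{WX(\cx)} \gtrsim \mu(B(x_0,r))^{1/q}$, obtained via Lemma \ref{lees}(iii) together with the boundedness of $\cm$ on $WX^{1/t}(\cx)$, and a layer-cake decomposition of $\int_{\cx}|f|^q w\,d\mu$. This embedding is the bridge between the distributional setting of $(\gs)'$ and the pointwise setting of functions in $L^q_w(\cx)$; it is also the reservoir supplying weak compactness in part (ii).

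\textbf{Proof of (i).} Given $f \in WX(\cx)$, the embedding into $L^q_w(\cx)$ together with the size estimate $|\phi(y)| \le \|\phi\|_{\cg(\bz,\gz)}G_{1,\gz}(x_0,y)$ for any $\phi\in\gs$ yields, by H\"older's inequality, that $f\phi \in L^1(\cx)$; hence $\langle f,\phi\rangle := \int_{\cx} f\phi\,d\mu$ defines an element of $(\gs)'$. For any $\phi$ with $\|\phi\|_{\cg(x,r,\bz,\gz)}\le 1$, decomposing $\cx$ into $B(x,r)$ and its dyadic annuli and invoking Lemma \ref{lees} gives the standard pointwise bound $|\langle f,\phi\rangle| \ls \cm(f)(x)$, and taking the supremum yields $f^{\ast}(x) \ls \cm(f)(x)$. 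To pass from boundedness of $\cm$ on $WX^{1/t}(\cx)$ to boundedness on $WX(\cx)$, I will apply the Jensen-type inequality $\cm(f) \le [\cm(|f|^t)]^{1/t}$ for $t \ge 1$ together with the identity $\|[g]^{1/t}\|_{WX(\cx)} = \|g\|_{WX^{1/t}(\cx)}^{1/t}$ and the identity $\||f|^t\|_{WX^{1/t}(\cx)} = \|f\|_{WX(\cx)}^t$; this gives $\|\cm(f)\|_{WX(\cx)} \ls \|f\|_{WX(\cx)}$, hence $\|f\|_{WH_X(\cx)} \ls \|f\|_{WX(\cx)}$.

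\textbf{Proof of (ii) and main obstacle.} Given $f \in (\gs)'$ with $M^+(f) \in WX(\cx)$, Remark \ref{rewan} lets me identify $P_k(f)$ with the function $\widetilde{P_k(f)}$, so that $|P_k(f)(x)| \le M^+(f)(x)$ for $\mu$-a.e.\ $x\in\cx$. Since $M^+(f) \in WX(\cx) \hookrightarrow L^q_w(\cx)$, the sequence $\{P_k(f)\}_{k\in\zz}$ is bounded in the reflexive space $L^q_w(\cx)$, so I extract a weakly convergent subsequence $P_{k_j}(f) \rightharpoonup \widetilde{f}$. Mazur's lemma produces convex combinations of $\{P_{k_j}(f)\}$ converging to $\widetilde{f}$ in $L^q_w(\cx)$-norm and, up to a further subsequence, $\mu$-a.e.; since each such convex combination is pointwise dominated by $M^+(f)$, the a.e.\ limit $\widetilde{f}$ inherits $|\widetilde{f}| \le M^+(f)$, which together with Definition \ref{debb}(ii) gives $\widetilde{f} \in WX(\cx)$ and $\|\widetilde{f}\|_{WX(\cx)} \le \|M^+(f)\|_{WX(\cx)}$. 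To identify $\widetilde{f}$ with $f$ in $(\gs)'$, for any $\psi \in \cg_{\mathrm{b}}(\eta,\eta)$ I write $\langle P_k(f),\psi\rangle = \langle f, P_k^{\ast}\psi\rangle = \int_{\cx}P_k(f)\psi\,d\mu$, apply the dominated convergence theorem (dominator $M^+(f)|\psi|\in L^1(\cx)$ guaranteed by the embedding and the bounded support of $\psi$) to pass to the limit on the right via the weakly convergent subsequence, and use the standard property $P_k^{\ast}\psi \to \psi$ (in a sense sufficient to evaluate against $f$) to identify the limit on the left with $\langle f,\psi\rangle$; Proposition \ref{prd} then extends the identity $\int_{\cx}\widetilde{f}\psi\,d\mu = \langle f,\psi\rangle$ from $\cg_{\mathrm{b}}(\eta,\eta)$ to all of $\gs$. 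The main obstacle is the passage from weak convergence to the a.e.\ pointwise control $|\widetilde{f}|\le M^+(f)$, since weak convergence alone does not preserve pointwise bounds; this is why the Mazur convex-combination step is essential, and the weight parameter $\az$ must be tuned so that the embedding, the reflexivity of $L^q_w(\cx)$, and the dominated convergence for $M^+(f)\psi$ all hold simultaneously.
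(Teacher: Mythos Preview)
Your approach is essentially the paper's: part (i) is reduced to the pointwise domination $f^{\ast}\lesssim\cm(f)$ together with the boundedness of $\cm$ on $WX(\cx)$ (the paper simply invokes Proposition \ref{prwb} and \cite[Theorem 3.12]{yhyy21a} for this), and part (ii) proceeds by embedding $WX(\cx)$ continuously into a weighted Lebesgue space, extracting a weak-$*$ limit $\widetilde f$ of a subsequence of $\{P_k(f)\}$, and identifying $\widetilde f$ with $f$ on $\cg_{\mathrm b}(\eta,\eta)$ via Proposition \ref{prdc} and Remark \ref{rewan}, then extending by Proposition \ref{prd}.

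Two technical points deserve attention. First, the paper takes the weight $w(y)=G_{1,\gz}(x_0,y)$ and the exponent $t$ from the hypothesis itself, rather than $[V_1(x_0)+V(x_0,y)]^{-\az}$; the extra factor $[1/(1+d(x_0,y))]^{\gz}$ contributes a geometric $2^{-k\gz}$ on dyadic annuli which makes the embedding estimate $\sum_k 2^{-k\gz}\inf_{B(x_0,1)}\cm(|g|^t)$ converge without any reverse-doubling assumption. Your weight would instead require control of $\sum_k V_{2^k}(x_0)^{1-\az}$, which is not available on a general space of homogeneous type, so you should switch to the paper's weight. Second, to obtain $|\widetilde f|\le M^+(f)$ the paper bypasses Mazur's lemma: it applies the Lebesgue differentiation theorem to $\widetilde f$, replaces $\widetilde f$ by $P_{k_j}(f)$ inside the average via the weak-$*$ convergence (testing against $h\mathbf 1_{B(x,r)}\in L^{t'}_{w^{1-t'}}(\cx)$), and then dominates by $M^+(f)$. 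Your Mazur route is correct as well, but note that for the identification step $\int P_{k_j}(f)\psi\to\int\widetilde f\psi$ you need only the weak-$*$ convergence (since $\psi\in L^{t'}_{w^{1-t'}}(\cx)$ whenever $\psi\in\cg_{\mathrm b}(\eta,\eta)$), not dominated convergence; and for the left-hand side the paper cites \cite[Lemma 3.2(iii)]{hhllyy} for $\langle P_k(f),\psi\rangle\to\langle f,\psi\rangle$.
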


\begin{proof}
By Proposition \ref{prwb} and
\cite[Theorem 3.12]{yhyy21a}, we obtain (i).
If (ii) holds true, by (i), (ii), and
\eqref{eqma}, we conclude that
$WH_X(\cx)=WX(\cx)$ with equivalent quasi-norms.
Thus, to complete the proof of the present theorem,
it remains to show (ii).
To this end, let $X(\cx)$, $t$, $\bz$, and
$\gz$ be as in the present theorem.

Let $x_0\in\cx$ be the fixed basepoint and,
for any $y\in\cx$, let $w(y):=G_{1,\gz}(x_0,y)$
be as in \eqref{eqg} with $r=1$ and $x$ replaced by $x_0$. The
space $L^t_w(\cx)$ is defined as in \eqref{eqweight}
with $p$ replaced by $t$.
We claim that
$WX(\cx)$ continuously embeds into $L^t_w(\cx)$.
Indeed, from \eqref{eqoz}, the definition of $\cm$,
$\gz>0$, Proposition \ref{prwb},
Definition \ref{debb}(ii) with $X(\cx)$ replaced by $WX(\cx)$,
and the boundedness of $\cm$ on $WX^{1/t}(\cx)$,
it follows that, for any $g\in WX(\cx)$,
\begin{align*}
\|g\|_{L^t_w(\cx)}
&\ls \lf[\frac{1}{V_1(x_0)}
\int_{B(x_0,1)}|g(z)|^t\,d\mu(z)
+\sum^\fz_{k=1}\frac{2^{-k\gz}}{V_{2^k}(x_0)}
\int_{B(x_0,2^k)\setminus B(x_0,2^{k-1})}
|g(z)|^t\,d\mu(z)\r]^{1/t}\\
&\ls\lf[\sum^\fz_{k=0}2^{-k\gz}\inf_{x\in B(x_0,1)}
\lf\{\cm\lf(|g|^t\r)(x)\r\}\r]^{1/t}
\ls\inf_{x\in B(x_0,1)}
\lf\{\lf[\cm\lf(|g|^t\r)(x)\r]^{1/t}\r\}\\
&\ls \lf\|\mathbf{1}_{B(x_0,1)}\r\|^{-1}_{WX(\cx)}
\lf\|\cm\lf(|g|^t\r)\r\|^{1/t}_{WX^{1/t}(\cx)}
\ls \|g\|_{WX(\cx)},
\end{align*}
which proves the above claim.

Let $1/t+1/t'=1$ and $L^{t'}_{w^{1-t'}}(\cx)$ be
as in \eqref{eqweight} with $p$ and
$w$ replaced, respectively, by
$t'$ and $w^{1-t'}$. Observe that the dual space
of $L^{t'}_{w^{1-t'}}(\cx)$ is $L^t_w(\cx)$; more precisely,
$\mathcal{L}$ is a bounded linear functional on
$L^{t'}_{w^{1-t'}}(\cx)$ if and
only if there exists a unique $f_{\mathcal{L}}\in L^t_w(\cx)$
such that, for any $g\in L^{t'}_{w^{1-t'}}(\cx)$,
\begin{equation}\label{eqdu}
\mathcal{L}(g)=\int_\cx f_{\mathcal{L}}(z)g(z)\,d\mu(z).
\end{equation}

Let $f\in (\gs)'$ and $\rmf\in WX(\cx)$.
Suppose that $\{P_k\}_{k\in\zz}$ is a
1-exp-ATI as in Definition \ref{dee1}.
By the definition of $\rmf$ and
the embedding $WX(\cx)\subset L^t_w(\cx)$,
we conclude that
$$
\sup_{k\in\zz}\lf\|P_k(f)\r\|_{L^t_w(\cx)}
\le\lf\|\rmf\r\|_{L^t_w(\cx)}
\ls \lf\|\rmf\r\|_{WX(\cx)}.
$$
From this, the Alaoglu theorem
(see, for instance, \cite[Theorem 3.17]{r91}),
and \eqref{eqdu},
it follows that there exist some $\widetilde{f}
\in L^t_w(\cx)$ and a sequence $\{k_j\}_{j\in\nn}
\subset\nn$ such that $k_j\to\fz$ as $j\to\fz$, and,
for any $g\in L^{t'}_{w^{1-t'}}(\cx)$,
\begin{equation}\label{tmwe1}
\int_{\cx}\widetilde{f}(z)g(z)\,d\mu(z)
=\lim_{j\to\fz}\int_{\cx}
P_{k_j}(f)(z)g(z)\,d\mu(z).
\end{equation}
By this, the Lebesgue differentiation theorem
(see, for instance, \cite[Theorem 1.8]{h01}),
and the definition of $\rmf$,
we find that, for $\mu$-almost every point $x\in\cx$,
\begin{align*}
\lf|\widetilde{f}(x)\r|
&=\lim_{r\to 0}\frac{1}{V_r(x)}\int_{B(x,r)}
\widetilde{f}(z)h(z)\mathbf{1}_{B(x,r)}(z)\,d\mu(z)\\
&=\lim_{r\to 0}\lim_{j\to\fz}
\frac{1}{V_r(x)}\int_{B(x,r)}
P_{k_j}(f)(z)h(z)\mathbf{1}_{B(x,r)}(z)\,d\mu(z)\\
&\le\lim_{r\to 0}
\frac{1}{V_r(x)}\int_{B(x,r)}
\rmf(z)\,d\mu(z)=\rmf(x),
\end{align*}
where, for any $z\in\cx$,
\begin{align*}
h(z):=
\begin{cases}
\displaystyle\frac{|\widetilde{f}(z)|}
{\widetilde{f}(z)}\displaystyle\ \
&\text{if}\ \widetilde{f}(z)\neq 0,\\
\displaystyle 0\ \
&\text{if}\  \widetilde{f}(z)= 0.
\end{cases}
\end{align*}
Using this, Proposition \ref{prwb},
and Definition \ref{debb}(ii) with $X(\cx)$
replaced by $WX(\cx)$,
we conclude that
$\|\widetilde{f}\|_{WX(\cx)}\le
\|\rmf\|_{WX(\cx)}$.

By \cite[Lemma 3.2(iii)]{hhllyy},
Proposition \ref{prdc} combined
with Remark \ref{rewan}, and
\eqref{tmwe1} with $g=\psi$,
we find that,
for any $\psi\in \cg_{\mathrm{b}}(\eta,\eta)$,
\begin{align*}
\lf\langle f,\psi\r\rangle
&=\lim_{j\to\fz}\lf\langle P_{k_j}(f),\psi\r\rangle\\
&=\lim_{j\to\fz}\int_\cx P_{k_j}(f)(z)\psi(z)\,d\mu(z)
=\int_\cx\widetilde{f}(z)\psi(z)\,d\mu(z).
\end{align*}
Using this, Proposition \ref{prd},
and a standard density argument, we conclude that
$f=\widetilde{f}$ in $(\gs)'$.
This finishes the proof of (ii),
and hence of Theorem \ref{thmw}.
\end{proof}

\begin{remark}\label{remax}
\begin{itemize}
\item[(i)]
If $\cx=\rn$, then
the corresponding result of Theorem \ref{thm}
can be found in \cite[Theorem 3.2]{zyyw20}.
In Theorem \ref{thm}, we assume that
$\underline{p}$ has a lower bound
$\oz/(\oz+\eta)$.
Since the regularity condition of test functions on $\cx$
is at most $\eta$ and there exists no ``derivatives''
and ``polynomials'' on $\cx$, it is well known that
$\oz/(\oz+\eta)$ is the known best lower bound
of $\underline{p}$ in Theorem \ref{thm}
(see, for instance, \cite{hhllyy}).

\item[(ii)]
If $\cx=\rn$, then
the corresponding result
of Theorem \ref{thmw}
can be found in \cite[Theorem 3.4]{zyyw20}
and, moreover,
we find that the assumptions needed
in Theorem \ref{thmw} coincide with
those needed in \cite[Theorem 3.4]{zyyw20}.
\end{itemize}
\end{remark}
\section{Two Crucial Assumptions\label{sa}}
The purpose of this section is to give two basic
assumptions and the related conclusions induced by them,
which play pivotal roles in the later sections,
including the atomic characterization and
the real interpolation of $WH_X(\cx)$, and the boundedness of
Calder\'{o}n--Zygmund operators from $H_X(\cx)$ to $WH_X(\cx)$.
We list these assumptions as follows.
\begin{assumption}\label{asfs}
Let $X(\cx)$ be a BQBF space.
Assume that there exists a positive constant
$p_-$ such that, for any given $t\in (0,p_-)$ and $s\in(1,\fz)$,
there exists a positive constant $C_{(t,s)}$ such that,
for any sequence $\{f_j\}_{j\in\nn}\subset \mathscr{M}(\cx)$,
\begin{equation}\label{eqbfs}
\lf\|\lf\{\sum_{j\in\nn}\lf[\cm\lf(f_j\r)\r]^s\r\}
^{1/s}\r\|_{X^{1/t}(\cx)}
\le C_{(t,s)}\lf\|\lf(\sum_{j\in\nn}\lf|f_j\r|^s\r)
^{1/s}\r\|_{X^{1/t}(\cx)}.
\end{equation}
\end{assumption}
\begin{assumption}\label{asas}
Let $X(\cx)$ be a BQBF space.
Assume that there exist constants $s_0\in(0,\fz)$
and $p_0\in(s_0,\fz)$ such that
$X^{1/s_0}(\cx)$ is a BBF space and the
Hardy--Littlewood maximal operator $\cm$ is
bounded on the $\frac{1}{(p_0/s_0)'}$-convexification
of the associate space $(X^{1/s_0})'(\cx)$,
where $\frac{1}{(p_0/s_0)'}
+\frac{1}{(p_0/s_0)}=1$.
\end{assumption}

\begin{remark}\label{reas}
Let $p\in (0,\fz)$. By \cite[Theorem 1.2]{gly09},
we find that, if $X(\cx)=L^p(\cx)$ and $p_-=p$
in Assumption \ref{asfs},
then Assumption \ref{asfs} holds true and
\eqref{eqbfs} becomes the notable
Fefferman--Stein vector-valued maximal inequality.
By  \cite[p.\,10, Theorem 2.5]{bs88}
and the boundedness of $\cm$ on
$L^q(\cx)$ for any given $q\in(1,\fz]$,
we find that,
if $X(\cx)=L^p(\cx)$, $s_0\in(0,p]$,
and $p_0\in(p,\fz)$ in Assumption \ref{asas},
then Assumption \ref{asas} holds true.
Moreover, see Section \ref{sap} for
more function spaces
satisfying Assumptions \ref{asfs}
and \ref{asas}.
\end{remark}

The next theorem shows
that, if Assumption \ref{asfs} holds true,
then the following weak-type
Fefferman--Stein vector-valued maximal
inequality is also true for weak BQBF spaces.
\begin{theorem}\label{thwfs}
Let $X(\cx)$ be a \emph{BQBF} space
satisfying Assumption \ref{asfs} with $p_-\in (0,\fz)$.
Then, for any given $t\in (0,p_-)$ and $s\in(1,\fz)$,
there exists a positive constant $C$ such that,
for any sequence $\{f_j\}_{j\in\nn}\subset \mathscr{M}(\cx)$,
\begin{equation*}
\lf\|\lf\{\sum_{j\in\nn}\lf[\cm\lf(f_j\r)\r]^s\r\}
^{1/s}\r\|_{WX^{1/t}(\cx)}
\le C\lf\|\lf(\sum_{j\in\nn}\lf|f_j\r|^s\r)
^{1/s}\r\|_{WX^{1/t}(\cx)}.
\end{equation*}
\end{theorem}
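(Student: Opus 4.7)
The plan is a Marcinkiewicz-style interpolation carried out directly from Assumption \ref{asfs}, Proposition \ref{prin}, and the Aoki--Rolewicz-type quasi-triangle estimate of Proposition \ref{prtz}. I first fix $t\in(0,p_-)$ and $s\in(1,\fz)$, and then choose auxiliary exponents $t_1,\,t_2$ with $0<t_1<t<t_2<p_-$, so that Assumption \ref{asfs} supplies the strong-type vector-valued maximal inequality on both $X^{1/t_1}(\cx)$ and $X^{1/t_2}(\cx)$. Throughout, write $F:=(\sum_{j\in\nn}|f_j|^s)^{1/s}$ and $G:=\{\sum_{j\in\nn}[\cm(f_j)]^s\}^{1/s}$; the target estimate is $\|G\|_{WX^{1/t}(\cx)}\ls\|F\|_{WX^{1/t}(\cx)}$.

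For each $\lz\in(0,\fz)$, decompose $f_j=f_j\mathbf{1}_{\{F>\lz\}}+f_j\mathbf{1}_{\{F\le\lz\}}=:f_j^{(1,\lz)}+f_j^{(2,\lz)}$ and set $G_i:=\{\sum_{j\in\nn}[\cm(f_j^{(i,\lz)})]^s\}^{1/s}$ for $i\in\{1,2\}$. Sublinearity of $\cm$ together with Minkowski's inequality in $\ell^s$ give $G\le G_1+G_2$ pointwise, so that $\{G>2\lz\}\subset\{G_1>\lz\}\cup\{G_2>\lz\}$ and hence, by the constant $\sigma_{1/t}$ from Proposition \ref{prtz},
\begin{equation*}
\lf\|\mathbf{1}_{\{G>2\lz\}}\r\|_{X^{1/t}(\cx)}
\le \sigma_{1/t}\lf[\lf\|\mathbf{1}_{\{G_1>\lz\}}\r\|_{X^{1/t}(\cx)}
+\lf\|\mathbf{1}_{\{G_2>\lz\}}\r\|_{X^{1/t}(\cx)}\r].
\end{equation*}

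The two pieces are then treated symmetrically. For the large piece, the Chebyshev-type pointwise bound $\mathbf{1}_{\{G_1>\lz\}}\le\lz^{-t/t_2}G_1^{t/t_2}$ together with the convexification identity $\|h^{t/t_2}\|_{X^{1/t}(\cx)}=\|h\|_{X^{1/t_2}(\cx)}^{t/t_2}$ reduces matters to the strong-type inequality on $X^{1/t_2}(\cx)$, producing $\|G_1\|_{X^{1/t_2}(\cx)}\ls\|F\mathbf{1}_{\{F>\lz\}}\|_{X^{1/t_2}(\cx)}$; then Proposition \ref{prin}(i) applied with $p=1/t$ and $p_1=1/t_2$ (legitimate since $t<t_2$) supplies $\|F\mathbf{1}_{\{F>\lz\}}\|_{X^{1/t_2}(\cx)}^{t/t_2}\ls\lz^{t/t_2-1}\|F\|_{WX^{1/t}(\cx)}$. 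Chaining these bounds and multiplying by $\lz$ cancels the $\lz$-powers and gives $\lz\|\mathbf{1}_{\{G_1>\lz\}}\|_{X^{1/t}(\cx)}\ls\|F\|_{WX^{1/t}(\cx)}$. The small piece is handled by the mirror-image argument, using $t_1$ in place of $t_2$, the strong-type inequality on $X^{1/t_1}(\cx)$, and Proposition \ref{prin}(ii) with $p=1/t$ and $p_2=1/t_1$ (legitimate since $t_1<t$). Summing the two pieces and taking the supremum over $\lz$, after the harmless rescaling $\lz\mapsto\lz/2$, delivers the claim.

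The heart of the argument is exponent bookkeeping rather than any substantive obstacle. The main delicacy is that the constraints $p_1<p<p_2$ demanded by the two parts of Proposition \ref{prin} translate into the choice $t_1<t<t_2<p_-$, and the identity $\|h^r\|_{X^{1/t}(\cx)}=\|h\|_{X^{r/t}(\cx)}^r$ forces the Chebyshev exponent $r=t/t_i$, which is precisely the value that makes the $\lz$-powers on each piece cancel to produce a bound independent of $\lz$. The quasi-Banach nature of $X^{1/t}(\cx)$ intervenes only through the fixed finite constant $\sigma_{1/t}$ from Proposition \ref{prtz} and causes no loss, and no property of $\cm$ beyond sublinearity is needed beyond what Assumption \ref{asfs} already packages.
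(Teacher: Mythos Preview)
Your proof is correct and is essentially the same Marcinkiewicz-style interpolation as the paper's: the paper first isolates the argument as an abstract interpolation lemma (Lemma~\ref{lewi}) for a sublinear operator from $X^{p_1}+X^{p_2}$ to $\mathscr{M}(\cx)$, proved via Proposition~\ref{prin} exactly as you do, and then applies it by normalizing $g_j:=f_j/F$ so that the vector-valued maximal function becomes a scalar operator $T(F)$. Your argument simply inlines this, splitting each $f_j$ according to the level sets of $F$ rather than passing through the normalization and the abstract lemma; the exponent bookkeeping and the use of Proposition~\ref{prin}(i)--(ii) at $p=1/t$, $p_1=1/t_2$, $p_2=1/t_1$ are identical.
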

To prove this theorem, we first establish an interpolation
result which might have independent interest.
Recall that an operator $T:\mathscr{M}(\cx)
\to\mathscr{M}(\cx)$ is said to be \emph{sublinear}
if, for any $\lz\in\cc$ and
$f$, $g\in\mathscr{M}(\cx)$,
$$
|T(\lz f)|=|\lz||T(f)|
\quad\mathrm{and}\quad
|T(f+g)|\le |T(f)|+|T(g)|.
$$
For any sets $X_1$,
$X_2\subset\mathscr{M}(\cx)$, the symbol $X_1+X_2$
denotes the set of all the functions $f\in\mathscr{M}(\cx)$
satisfying that there exist functions $f_1\in X_1$
and $f_2\in X_2$ such that $f=f_1+f_2$.
\begin{lemma}\label{lewi}
Let $0<p_1<p_2<\fz$, $X(\cx)$ be a \emph{BQBF} space,
and, for any $i\in\{1,2\}$,
$X^{p_i}(\cx)$ the $p_i$-convexification of $X(\cx)$.
Suppose that $T$ is a sublinear operator
from $X^{p_1}(\cx)+X^{p_2}(\cx)$ to
$\mathscr{M}(\cx)$ and,
for any given $i\in\{1,2\}$,
there exists a positive constant $C_i$ such that,
for any $f\in X^{p_i}(\cx)$,
\begin{equation}\label{eqt}
\lf\|T(f)\r\|_{WX^{p_i}(\cx)}
\le C_i\|f\|_{X^{p_i}(\cx)}.
\end{equation}
If $p\in(p_1,p_2)$, then $T$
is bounded on $WX^p(\cx)$ and its operator norm
$$
\|T\|_{WX^p(\cx)\to WX^p(\cx)}\le C
\lf[(C_1)^{p_1/p}+(C_2)^{p_2/p}\r],
$$
where $C$ is a positive constant depending
only on $p$, $p_1$, $p_2$, and $\sigma_p$ with
$\sigma_p\in[1,\fz)$ as in \eqref{eqsigm}.
\end{lemma}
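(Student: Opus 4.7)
The plan is to carry out a Marcinkiewicz-type real interpolation argument adapted to the quasi-norm setting and the scaling between the convexifications $X^{p_i}(\cx)$.

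Given $f\in WX^p(\cx)$, for every $\lz\in(0,\fz)$ I would decompose
\[
f=f_\lz^1+f_\lz^2,\qquad
f_\lz^1:=f\mathbf{1}_{\{x\in\cx:\,|f(x)|>\lz\}},\quad
f_\lz^2:=f\mathbf{1}_{\{x\in\cx:\,|f(x)|\le\lz\}}.
\]
By Proposition \ref{prin}(i) (applied with $p_1$ in place of $p_1$ and our $p$ in place of $p$), we have $f_\lz^1\in X^{p_1}(\cx)$ with
$\|f_\lz^1\|_{X^{p_1}(\cx)}^{p_1/p}\ls \lz^{p_1/p-1}\|f\|_{WX^p(\cx)}$,
and by Proposition \ref{prin}(ii) we have $f_\lz^2\in X^{p_2}(\cx)$ with
$\|f_\lz^2\|_{X^{p_2}(\cx)}^{p_2/p}\ls \lz^{p_2/p-1}\|f\|_{WX^p(\cx)}$.
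Then, since $T$ is well defined on $X^{p_1}(\cx)+X^{p_2}(\cx)$ and sublinear, we get
$|T(f)|\le |T(f_\lz^1)|+|T(f_\lz^2)|$, and hence the level-set inclusion
\[
\lf\{x\in\cx:\ |T(f)(x)|>\lz\r\}
\subset
\lf\{x\in\cx:\ |T(f_\lz^1)(x)|>\lz/2\r\}
\cup
\lf\{x\in\cx:\ |T(f_\lz^2)(x)|>\lz/2\r\}.
\]

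Next, I would combine Definition \ref{debb}(ii) with $X(\cx)$ replaced by $X^p(\cx)$, the quasi-triangle inequality \eqref{eqsigm} of $X^p(\cx)$, and the elementary identity $\|\mathbf{1}_E\|_{X^p(\cx)}=\|\mathbf{1}_E\|_{X^{p_i}(\cx)}^{p_i/p}$ valid for any $\mu$-measurable $E\subset\cx$ and $i\in\{1,2\}$. This yields
\[
\lz\lf\|\mathbf{1}_{\{|T(f)|>\lz\}}\r\|_{X^p(\cx)}
\le \sigma_p\sum_{i=1}^{2}\lz
\lf\|\mathbf{1}_{\{|T(f_\lz^i)|>\lz/2\}}\r\|_{X^{p_i}(\cx)}^{p_i/p}.
\]
Then, using the weak boundedness \eqref{eqt} in the form
$(\lz/2)\|\mathbf{1}_{\{|T(f_\lz^i)|>\lz/2\}}\|_{X^{p_i}(\cx)}\le \|T(f_\lz^i)\|_{WX^{p_i}(\cx)}\le C_i\|f_\lz^i\|_{X^{p_i}(\cx)}$,
raising this to the power $p_i/p$, and then absorbing the powers of $\lz$, I would arrive at
\[
\lz\lf\|\mathbf{1}_{\{|T(f_\lz^i)|>\lz/2\}}\r\|_{X^{p_i}(\cx)}^{p_i/p}
\le 2^{p_i/p}(C_i)^{p_i/p}\,\lz^{1-p_i/p}\,
\|f_\lz^i\|_{X^{p_i}(\cx)}^{p_i/p}.
\]
Plugging in the two estimates from Proposition \ref{prin} exactly cancels the powers of $\lz$, producing a bound of the form
$C\,[(C_1)^{p_1/p}+(C_2)^{p_2/p}]\,\|f\|_{WX^p(\cx)}$ with $C$ depending only on $p$, $p_1$, $p_2$, $\sigma_p$.

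Finally I would take the supremum over $\lz\in(0,\fz)$ in the definition of $\|T(f)\|_{WX^p(\cx)}$, which gives the claimed operator-norm estimate. I do not anticipate a genuine obstacle here: the only points that require care are bookkeeping of the quasi-norm constant $\sigma_p$ through the one use of the quasi-triangle inequality in $X^p(\cx)$, and correctly tracking the power $p_i/p$ when passing between $\|\mathbf{1}_E\|_{X^p(\cx)}$ and $\|\mathbf{1}_E\|_{X^{p_i}(\cx)}$ so that the $\lz$-exponents really cancel. Proposition \ref{prin} is doing all of the substantive work; the rest is Marcinkiewicz in disguise.
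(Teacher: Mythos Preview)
Your proposal is correct and follows essentially the same approach as the paper: the same level-set decomposition $f=f\mathbf{1}_{\{|f|>\lz\}}+f\mathbf{1}_{\{|f|\le\lz\}}$, the same appeal to Proposition~\ref{prin} for the two pieces, and the same passage from $X^p(\cx)$ to $X^{p_i}(\cx)$ on the indicator functions to feed into the weak-type hypothesis~\eqref{eqt}. The only cosmetic difference is that you make the identity $\|\mathbf{1}_E\|_{X^p(\cx)}=\|\mathbf{1}_E\|_{X^{p_i}(\cx)}^{p_i/p}$ explicit, whereas the paper absorbs this into the phrase ``the definitions of $WX^{p_i}(\cx)$.''
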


\begin{proof}
Let $p_1$, $p_2$, $X(\cx)$, $T$, $C_1$, and $C_2$
be as in the present lemma, $p\in(p_1,p_2)$,
and $\sigma_p$ as in \eqref{eqsigm}.
To prove the present lemma,
it suffices to show that, for any
$f\in WX^p(\cx)$ and $\lz\in(0,\fz)$,
\begin{equation}\label{eqt0}
\lz\lf\|\mathbf{1}_{\{x\in\cx:\ |T(f)(x)|
>\lz\}}\r\|_{X^p(\cx)}
\ls \lf[(C_1)^{p_1/p}+(C_2)^{p_2/p}\r]
\|f\|_{WX^p(\cx)}
\end{equation}
with the implicit positive constant depending
only on $p$, $p_1$, $p_2$, and $\sigma_p$.
Now, for any $f\in WX^p(\cx)$ and $\lz\in(0,\fz)$,
we decompose
\begin{equation*}
f=f\mathbf{1}_{\{x\in\cx:\ |f(x)|>\lz\}}
+f\mathbf{1}_{\{x\in\cx:\ |f(x)|\le\lz\}}
=:f_{1,\lambda}+f_{2,\lambda}.
\end{equation*}
By the proof of Proposition \ref{prin},
we find that, for any given
$i\in\{1,2\}$, and any
$f\in WX^p(\cx)$ and $\lz\in(0,\fz)$,
\begin{equation}\label{eqt1}
\lf\|f_{i,\lambda}\r\|^{p_i/p}_{X^{p_i}(\cx)}
\ls \lz^{p_i/p-1}\|f\|_{WX^p(\cx)},
\end{equation}
where the implicit positive constant depends only
on $p$, $p_1$, $p_2$, and $\sigma_p$.

From the sublinearity of $T$, Definition \ref{debb}(ii)
with $X(\cx)$ replaced by $X^p(\cx)$,
Remark \ref{recon}, the definitions of
$WX^{p_i}(\cx)$ with $i\in\{1,2\}$,
\eqref{eqt}, and \eqref{eqt1}, we deduce that,
for any $f\in WX^p(\cx)$ and $\lz\in(0,\fz)$,
\begin{align*}
&\lz\lf\|\mathbf{1}_{\{x\in\cx:
\ |T(f)(x)|>\lz\}}\r\|_{X^p(\cx)}\\
&\quad\le \lz\lf\|\sum^{2}_{i=1}
\mathbf{1}_{\{x\in\cx:\ |
T(f_{i,\lz})(x)|>\lz/2\}}\r\|_{X^p(\cx)}
\ls \sum^{2}_{i=1}
\lz\lf\|\mathbf{1}_{\{x\in\cx:\|
T(f_{i,\lz})(x)|>\lz/2\}}\r\|_{X^p(\cx)}\\
&\quad\ls \sum^{2}_{i=1}\lz^{1-p_i/p}
\lf\|T(f_{i,\lz})\r\|^{p_i/p}_{WX^{p_i}(\cx)}
\ls \sum^{2}_{i=1}\lz^{1-p_i/p}
(C_i)^{p_i/p}\lf\|f_{i,\lz}\r\|^{p_i/p}_{X^{p_i}(\cx)}\\
&\quad\ls \lf[(C_1)^{p_1/p}+(C_2)^{p_2/p}\r]\|f\|_{WX^p(\cx)},
\end{align*}
where all the implicit positive constants
depend only on $p$, $p_1$, $p_2$, and $\sigma_p$.
This finishes the proof of \eqref{eqt0}, and hence
of Lemma \ref{lewi}.
\end{proof}

\begin{remark}
Let all the symbols be as in Lemma \ref{lewi}.
If $X(\cx)=L^1(\cx)$ in Lemma \ref{lewi},
by the Marcinkiewicz interpolation theorem
(see, for instance, \cite[Theorem 1.3.2]{g14}),
we then conclude that, for any given $p\in(p_1,p_2)$,
$T$ is bounded on $L^p(\cx)$.
It is unclear whether or not the Marcinkiewicz interpolation
theorem still holds true for an arbitrary BQBF space $X(\cx)$ because of the lack
of the explicit expression of $\|\cdot\|_{X(\cx)}$.
\end{remark}

\begin{proof}[Proof of Theorem \ref{thwfs}]
Let $X(\cx)$ and $p_-$ be as in the present theorem,
$t\in(0,p_-)$, $t_1\in (0,t)$,
$t_2\in(t,p_-)$, $s\in(1,\fz)$,
and $\{f_j\}_{j\in\nn}\subset \mathscr{M}(\cx)$.
Without loss of generality, we may assume that
$(\sum_{j\in\nn}|f_j|^s)^{1/s}\in WX^{1/t}(\cx)$.

Now, for any $f\in \mathscr{M}(\cx)$, let
$$
T(f):=\lf\{\sum_{j\in\nn}\lf[\cm\lf(fg_j\r)\r]^s
\r\}^{1/s},
$$
where, for any $j\in\nn$ and $x\in\cx$,
\begin{align*}
g_j(x):=
\begin{cases}
\displaystyle\frac{f_j(x)}
{[\sum_{j\in\nn}|f_j(x)|^s]^{1/s}}\ \
&\text{if}\ {\displaystyle\sum_{j\in\nn}}|f_j(x)|^s\neq 0,
\\
\displaystyle 0\ \
&\text{if}\  {\displaystyle\sum_{j\in\nn}}|f_j(x)|^s= 0.
\end{cases}
\end{align*}
From Proposition \ref{prwb} with $X(\cx)$ replaced
by $X^{1/t_i}(\cx)$, Assumption \ref{asfs}
with $t$ replaced by $t_i\in(0,p_-)$,
and the definition of $\{g_j\}_{j\in\nn}$,
we deduce that, for any $i\in\{1,2\}$
and $f\in X^{1/t_i}(\cx)$,
\begin{align*}
\|T(f)\|_{WX^{1/t_i}(\cx)}
&\le\|T(f)\|_{X^{1/t_i}(\cx)}\\
&\le C_{(t_i,s)}\lf\|\lf(\sum_{j\in\nn}
\lf|fg_j\r|^s\r)^{1/s}\r\|_{X^{1/t_i}(\cx)}
\le C_{(t_i,s)}\|f\|_{X^{1/t_i}(\cx)},
\end{align*}
where $C_{(t_i,s)}$ is as in \eqref{eqbfs} with
$t$ replaced by $t_i$.
Using this and Lemma \ref{lewi} with $p=1/t$
and $p_i=1/t_i$ for any $i\in\{1,2\}$, we find that
$T$ is bounded on $WX^{1/t}(\cx)$, and its operator
norm is independent of $\{f_j\}_{j\in\nn}$.
By this and the definition of $T$, we have
\begin{align*}
&\lf\|\lf\{\sum_{j\in\nn}\lf[
\cm\lf(f_j\r)\r]^s\r\}^{1/s}
\r\|_{WX^{1/t}(\cx)}\\
&\quad=\lf\|T\lf(\lf(\sum_{j\in\nn}
\lf|f_j\r|^s\r)^{1/s}\r)\r\|_{WX^{1/t}(\cx)}
\ls \lf\|\lf(\sum_{j\in\nn}
\lf|f_j\r|^s\r)^{1/s}\r\|_{WX^{1/t}(\cx)}
\end{align*}
with the implicit positive constant
independent of $\{f_j\}_{j\in\nn}$.
This finishes the proof of Theorem \ref{thwfs}.
\end{proof}

\begin{remark}\label{rem}
Let $X(\cx)$ be a BQBF
space satisfying Assumption \ref{asfs}
with $p_-\in(0,\fz)$. Then, for any $t\in(0,p_-)$,
$\cm$ is bounded on $X^{1/t}(\cx)$.
Moreover, by Theorem \ref{thwfs},
we conclude that, for any $t\in(0,p_-)$,
$\cm$ is bounded on $WX^{1/t}(\cx)$.
\end{remark}

Assumption \ref{asfs} also implies the
following useful proposition.

\begin{proposition}\label{prfs}
Let $X(\cx)$ and $p_-$ be as in
Assumption \ref{asfs}.
\begin{itemize}
\item[\textup{(i)}]
Let $t\in (0,\fz)$ and
$s\in(\max\{1,t/p_-\},\fz)$. Then there exists
a positive constant $C$ such that,
for any $\tau\in[1,\fz)$, any
$\{\lz_j\}_{j\in\nn}\subset (0,\fz)$,
and any sequence $\{B_j\}_{j\in\nn}$ of balls,
\begin{align*}
\lf\|\sum_{j\in\nn}\lz_j\mathbf{1}_{
\tau B_j}\r\|_{X^{1/t}(\cx)}
\le C\tau^{\oz s}\lf\|\sum_{j\in\nn}
\lz_j\mathbf{1}_{B_j}\r\|_{X^{1/t}(\cx)},
\end{align*}
where $\oz$ is as in \eqref{eqoz}.
\item[\textup{(ii)}]
Then $\|\mathbf{1}_{\cx}\|_{X(\cx)}=\fz$.
\end{itemize}
\end{proposition}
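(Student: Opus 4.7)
The plan is to establish (i) by combining a pointwise comparison of ball indicators to the Hardy--Littlewood maximal function (supplied by Lemma \ref{lees}(iii)) with the Fefferman--Stein vector-valued maximal inequality of Assumption \ref{asfs}, transported to the convexification $X^{s/t}(\cx)$; then to deduce (ii) from (i) via a contradiction argument that uses $\mu(\cx)=\fz$.

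For (i), first observe that for any ball $B=B(x,r)$, any $\tau\ge 1$, and any $y\in\cx$, one has $\mathbf{1}_{\tau B}(y)\le C\tau^{\oz}\cm(\mathbf{1}_{B})(y)$: on $B$ this follows from $\cm(\mathbf{1}_{B})\ge c$, on $\tau B\setminus B$ it comes from Lemma \ref{lees}(iii) together with $[r/d(x,y)]^{\oz}\ge\tau^{-\oz}$, and on $(\tau B)^{\complement}$ the left side vanishes. Raising to the $s$-th power (valid since $s\ge 1$ and $\mathbf{1}_{\tau B_j}^{s}=\mathbf{1}_{\tau B_j}$), multiplying by $\lz_j$, and summing yields
\begin{equation*}
\sum_{j\in\nn}\lz_j\mathbf{1}_{\tau B_j}(y)\le C^{s}\tau^{\oz s}\sum_{j\in\nn}\lz_j\lf[\cm(\mathbf{1}_{B_j})(y)\r]^{s}.
\end{equation*}
Taking $X^{1/t}(\cx)$-quasi-norm, it then suffices to control $\|\sum_j\lz_j[\cm(\mathbf{1}_{B_j})]^s\|_{X^{1/t}(\cx)}$ by $\|\sum_j\lz_j\mathbf{1}_{B_j}\|_{X^{1/t}(\cx)}$ up to a constant depending only on $t$, $s$, and $X(\cx)$. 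To this end, set $g_j:=\lz_j^{1/s}\mathbf{1}_{B_j}$; positive homogeneity of $\cm$ gives $[\cm(g_j)]^{s}=\lz_j[\cm(\mathbf{1}_{B_j})]^{s}$ and $|g_j|^s=\lz_j\mathbf{1}_{B_j}$, and the definition of convexification provides the identity $\|h^{s}\|_{X^{1/t}(\cx)}=\|h\|_{X^{s/t}(\cx)}^{s}$. Rewriting in this form, the desired inequality is precisely the Fefferman--Stein vector-valued maximal inequality applied to $\{g_j\}_{j\in\nn}$ in the convexification $X^{s/t}(\cx)$. The hypothesis $s>\max\{1,t/p_-\}$ rearranges to $s>1$ together with $t/s\in(0,p_-)$, which is exactly the range where Assumption \ref{asfs} applies with its parameter specialized to $t/s$; invoking it completes (i).

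For (ii), the plan is to argue by contradiction. If $\|\mathbf{1}_{\cx}\|_{X(\cx)}<\fz$, then $\mathbf{1}_{\cx}\in X^{1/t}(\cx)$ for every $t\in(0,p_-)$. A natural probe is the countable family of balls $\{B(z^0_{\az},C_{L})\}_{\az\in\ca_0}$ coming from the dyadic system of Lemma \ref{ledy}: they have bounded overlap (so their indicator sum has $X^{1/t}$-quasi-norm comparable to $\|\mathbf{1}_{\cx}\|_{X^{1/t}(\cx)}$), yet $\mu(\cx)=\fz$ forces $\mathcal{Z}_0$ to be infinite, so as $\tau\to\fz$ the dilated balls $\tau B(z^0_{\az},C_L)$ cover $\cx$ with pointwise multiplicity tending to infinity. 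The idea is then to feed these balls into (i) with appropriately chosen weights $\lz_{\az}$, exploiting that the pointwise growth of $\sum_{\az}\lz_{\az}\mathbf{1}_{\tau B(z^0_{\az},C_L)}$ dictated by the geometry of $\cx$ must outstrip the bound $C\tau^{\oz s}\|\sum_{\az}\lz_{\az}\mathbf{1}_{B(z^0_{\az},C_L)}\|_{X^{1/t}(\cx)}$ unless $\|\mathbf{1}_{\cx}\|_{X^{1/t}(\cx)}=\fz$. The main obstacle will be calibrating the weights $\lz_{\az}$ and the exponent $s$ (which can be taken arbitrarily close to $\max\{1,t/p_-\}$, using crucially that $p_-<\fz$) against the doubling dimension $\oz$, so that the growth on the left defeats the $\tau^{\oz s}$ factor on the right; this is where the absence of reverse doubling on $\cx$ prevents a more direct pointwise argument and forces one to route the contradiction through (i).
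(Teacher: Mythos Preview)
Your argument for (i) is correct and coincides with the paper's proof: both rest on the pointwise bound $\mathbf{1}_{\tau B_j}\le C\tau^{\oz}\cm(\mathbf{1}_{B_j})$ (the paper derives it directly from $\cm(\mathbf{1}_{B_j})\ge\mu(B_j)/\mu(\tau B_j)\ge C_{(\mu)}^{-1}\tau^{-\oz}$ on $\tau B_j$, you via Lemma~\ref{lees}(iii)), raise it to the $s$-th power, and then invoke Assumption~\ref{asfs} on $X^{s/t}(\cx)$ with parameter $t/s\in(0,p_-)$.

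Your plan for (ii), however, has a genuine gap. You propose to feed a family of dyadic balls into the inequality of (i) and argue that, under the hypothesis $\|\mathbf{1}_\cx\|_{X(\cx)}<\fz$, the left side must eventually exceed $C\tau^{\oz s}$ times the right side. But this cannot happen: part (i) has just been proved \emph{unconditionally}, so no additional hypothesis on $\|\mathbf{1}_\cx\|_{X(\cx)}$ can ever force a violation of it. Concretely, for any $c_0$-separated net $\{z^0_\az\}$ in a doubling space, the overlap $\sum_\az\mathbf{1}_{\tau B(z^0_\az,C_L)}(x)$ is bounded above by $C\tau^{\oz}$ (count the net points in a ball of radius $\sim\tau$ around $x$ via doubling), which is \emph{smaller} than the factor $\tau^{\oz s}$ since $s>1$. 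So the growth you need on the left simply is not there, regardless of how you tune the weights $\lz_\az$ or push $s$ toward $1$; routing the contradiction through (i) is a dead end.

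The paper's proof of (ii) does not use (i) at all. Instead it applies Assumption~\ref{asfs} directly to an infinite family of \emph{disjoint} annuli $A_j:=B(x_0,r_j)\setminus B(x_0,r_{j-1})$, with the radii $r_j$ chosen (using $\mu(\cx)=\fz$) so that $\mu(A_j)\ge\tfrac12\mu(B(x_0,r_j))$. This guarantees $\cm(\mathbf{1}_{A_j})(x)\ge\tfrac12$ for every $x\in B(x_0,r_0)$ and every $j$, whence $\bigl(\sum_j[\cm(\mathbf{1}_{A_j})]^s\bigr)^{1/s}=\fz$ on $B(x_0,r_0)$. Since the $A_j$ are disjoint, $\bigl(\sum_j\mathbf{1}_{A_j}^s\bigr)^{1/s}\le\mathbf{1}_\cx$, and the Fefferman--Stein inequality then forces
\[
\|\mathbf{1}_\cx\|_{X(\cx)}\gtrsim\bigl\|\bigl(\textstyle\sum_j[\cm(\mathbf{1}_{A_j})]^s\bigr)^{1/s}\bigr\|_{X^{1/t}(\cx)}^{1/t}\ge\fz\cdot\|\mathbf{1}_{B(x_0,r_0)}\|_{X^{1/t}(\cx)}^{1/t}=\fz.
\]
The key idea you are missing is this construction of annuli with uniformly large maximal function on a fixed ball; the divergence comes from having \emph{infinitely many} such sets, not from dilating a fixed family.
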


\begin{proof}
We first prove (i).
Let $X(\cx)$, $p_-$, $t$, $s$, $\tau$,
$\{\lz_j\}_{j\in\nn}$, and
$\{B_j\}_{j\in\nn}$ be as in (i).
By \eqref{eqoz}, we find that,
for any $j\in\nn$ and $x\in \tau B_j$,
$$
\cm\lf(\mathbf{1}_{B_j}\r)(x)\ge
\frac{\mu(B_j)}{\mu(\tau B_j)}\ge
\lf[C_{(\mu)}\r]^{-1}\tau^{-\oz}
$$
with $C_{(\mu)}$ and $\oz$ as in \eqref{eqoz}.
From this, Definition \ref{debb}(ii) with
$X(\cx)$ replaced by $X^{1/t}(\cx)$,
and Assumption \ref{asfs} with $t$
replaced by $t/s\in(0,p_-)$, we deduce that
\begin{align*}
\lf\|\sum_{j\in\nn}\lz_j\mathbf{1}_{
\tau B_j}\r\|_{X^{1/t}(\cx)}
\ls \lf\|\sum_{j\in\nn}\lz_j\lf[
\tau^{\oz}\cm\lf(\mathbf{1}_{B_j}\r)\r]
^s\r\|_{X^{1/t}(\cx)}
\ls\tau^{\oz s}\lf\|\sum_{j\in\nn}\lz_j
\mathbf{1}_{B_j}\r\|_{X^{1/t}(\cx)},
\end{align*}
where all the implicit positive constants
are independent of $\tau$, $\{\lz_j\}_{j\in\nn}$,
and $\{B_j\}_{j\in\nn}$.
This finishes the proof of (i).

Now, we prove (ii). Let $x_0\in\cx$ be the fixed
basepoint and
$r_0\in(0,\fz)$. From \cite[Lemma 2.23]{yhyy21a},
it follows that
there exists a sequence
$\{r_j\}_{j\in\nn}\subset (0,\fz)$
such that, for any $j\in\nn$,
$2r_{j-1}\le r_j$ and
\begin{equation}\label{eqadmiss}
2V_{r_{j-1}}(x_0)
\le V_{r_j}(x_0)
\le 2C_{(\mu)}V_{r_{j-1}}(x_0)
\end{equation}
with $C_{(\mu)}$ as in \eqref{eqoz}.
By the definition of $\cm$ and
 the first inequality of \eqref{eqadmiss},
we conclude that, for any
$x\in B(x_0,r_0)$ and $s\in (1,\fz)$,
\begin{align}\label{ala}
&\lf\{\sum_{j\in\nn}\lf[\cm\lf(\mathbf{1}_{B(x_0,r_j)
\setminus B(x_0,r_{j-1})}\r)(x)\r]^s\r\}^{1/s}\\
&\quad\ge \lf\{\sum_{j\in\nn}
\lf[\frac{\mu(B(x_0,r_j)\setminus B(x_0,r_{j-1}))}
{V_{r_j}(x_0)}\r]^s\r\}^{1/s}
\ge \lf[\sum_{j\in\nn}
\lf(\frac{1}{2}\r)^s\r]^{1/s}=\fz.\notag
\end{align}
Next, we choose a $t\in(0,p_-)$ and
an $s\in(1,\fz)$.
From Definition \ref{debb}(ii),
Assumption \ref{asfs}, \eqref{ala}, and
$\|\mathbf{1}_{B(x_0,r_0)}\|_{X^{1/t}(\cx)}>0$,
we deduce that
\begin{align*}
\|\mathbf{1}_{\cx}\|_{X(\cx)}
&\ge\lf\|\lf\{\sum_{j\in\nn}
\lf[\mathbf{1}_{B(x_0,r_j)
\setminus B(x_0,r_{j-1})}\r]^s\r\}^{1/s}
\r\|^{1/t}_{X^{1/t}(\cx)}\\
&\gtrsim \lf\|\lf\{\sum_{j\in\nn}
\lf[\cm\lf(\mathbf{1}_{B(x_0,r_j)
\setminus B(x_0,r_{j-1})}\r)\r]^s\r\}^{1/s}
\r\|^{1/t}_{X^{1/t}(\cx)}\\
&\gtrsim \lf\|\lf[\sum_{j\in\nn}
\lf(\frac{1}{2}\r)^s\r]^{1/s}
\mathbf{1}_{B(x_0,r_0)}
\r\|^{1/t}_{X^{1/t}(\cx)}=\fz.
\end{align*}
This finishes the proof of (ii), and
hence of Proposition \ref{prfs}.
\end{proof}

\begin{remark}
Observe that $L^\fz(\cx)$ is a BBF space
and $(L^\fz)^{1/t}(\cx)=L^\fz(\cx)$
for any given $t\in (0,\fz)$.
From Proposition \ref{prfs}(ii) and
$\|\mathbf{1}_{\cx}\|_{L^\fz(\cx)}=1$, we deduce
that Assumption \ref{asfs} does not
hold true when $X(\cx)=L^\fz(\cx)$; namely,
the Fefferman--Stein vector-valued maximal
inequality does not hold true for $L^\fz(\cx)$.
Otherwise, by Proposition \ref{prfs}(ii),
we have $\|\mathbf{1}_{\cx}\|_{L^\fz(\cx)}=\fz$,
which contradicts the fact that
$\|\mathbf{1}_\cx\|_{L^\fz(\cx)}=1$.
\end{remark}

Using Assumption \ref{asas}, we can
obtain the useful estimate
in the following proposition which
is just \cite[Lemma 4.3]{yhyy21a}; see also
 \cite[Lemma 4.8]{zyyw20} for the proof in the
Euclidean space case.

\begin{proposition}\label{pras}
Let $X(\cx)$ be a \emph{BQBF}
space satisfying Assumption \ref{asas}
with $s_0\in(0,\fz)$ and $p_0\in(s_0,\fz)$.
Then there exists a positive constant $C$ such that,
for any sequence $\{\lambda_j\}_{j\in\nn}\subset (0,\fz)$
and any sequence
$\{a_j\}_{j\in\nn}\subset L^{p_0}(\cx)$
supported, respectively, in balls
$\{B_j\}_{j\in\nn}$
with $\|a_j\|_{L^{p_0}(\cx)}\le [\mu(B_j)]^{1/p_0}$
for any $j\in\nn$,
\begin{equation*}
\lf\|\sum_{j\in\nn}\lf|\lambda_j
a_j\r|^{s_0}\r\|_{X^{1/s_0}(\cx)}
\le C\lf\|\sum_{j\in\nn}\lf|\lambda_j
\mathbf{1}_{B_j}\r|^{s_0}\r\|_{X^{1/s_0}(\cx)}.
\end{equation*}
\end{proposition}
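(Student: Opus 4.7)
The plan is to proceed by duality. Since Assumption \ref{asas} guarantees that $X^{1/s_0}(\cx)$ is a \emph{BBF} space, its norm admits the representation
\begin{equation*}
\|F\|_{X^{1/s_0}(\cx)}\sim \sup\lf\{\int_\cx |F(x)|g(x)\,d\mu(x):\ g\in(X^{1/s_0})'(\cx),\ g\ge0,\ \|g\|_{(X^{1/s_0})'(\cx)}\le 1\r\}
\end{equation*}
via the standard associate-space duality for \emph{BBF} spaces (cf.\ \cite[Lemma 2.19]{yhyy21a}). Hence it suffices to fix such a nonnegative test function $g$ and to estimate $\int_\cx\sum_{j\in\nn}|\lambda_j a_j|^{s_0}g\,d\mu$ by a constant multiple of $\|\sum_{j\in\nn}|\lambda_j\mathbf{1}_{B_j}|^{s_0}\|_{X^{1/s_0}(\cx)}$. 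The first step is to treat each summand separately: since $\supp a_j\subset B_j$ and $\|a_j\|_{L^{p_0}(\cx)}\le\mu(B_j)^{1/p_0}$, H\"older's inequality with exponents $p_0/s_0$ and $(p_0/s_0)'$, combined with the pointwise bound $\mu(B_j)^{-1}\int_{B_j}g^{(p_0/s_0)'}\,d\mu\le \cm(g^{(p_0/s_0)'})(x)$ valid for every $x\in B_j$, yields
\begin{equation*}
\int_\cx|a_j|^{s_0}g\,d\mu\le \mu(B_j)\lf[\frac{1}{\mu(B_j)}\int_{B_j}g^{(p_0/s_0)'}\,d\mu\r]^{1/(p_0/s_0)'}\le \int_{B_j}\lf[\cm\lf(g^{(p_0/s_0)'}\r)(x)\r]^{1/(p_0/s_0)'}d\mu(x).
\end{equation*}

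Multiplying this inequality by $|\lambda_j|^{s_0}$, summing over $j\in\nn$, and then applying the H\"older-type inequality between $X^{1/s_0}(\cx)$ and $(X^{1/s_0})'(\cx)$, I obtain
\begin{equation*}
\int_\cx\sum_{j\in\nn}|\lambda_j a_j|^{s_0}g\,d\mu\le \lf\|\lf[\cm\lf(g^{(p_0/s_0)'}\r)\r]^{1/(p_0/s_0)'}\r\|_{(X^{1/s_0})'(\cx)}\lf\|\sum_{j\in\nn}|\lambda_j\mathbf{1}_{B_j}|^{s_0}\r\|_{X^{1/s_0}(\cx)}.
\end{equation*}
The first factor is controlled by Assumption \ref{asas}: the definition of convexification yields, for any nonnegative $h\in\mathscr{M}(\cx)$, the identity
\begin{equation*}
\lf\|h^{1/(p_0/s_0)'}\r\|_{(X^{1/s_0})'(\cx)}^{(p_0/s_0)'}=\|h\|_{[(X^{1/s_0})']^{1/(p_0/s_0)'}(\cx)},
\end{equation*}
so applying this identity with $h=\cm(g^{(p_0/s_0)'})$ and with $h=g^{(p_0/s_0)'}$, together with the boundedness of $\cm$ on $[(X^{1/s_0})']^{1/(p_0/s_0)'}(\cx)$, gives $\|[\cm(g^{(p_0/s_0)'})]^{1/(p_0/s_0)'}\|_{(X^{1/s_0})'(\cx)}\ls \|g\|_{(X^{1/s_0})'(\cx)}\le 1$. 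Taking the supremum over admissible $g$ then completes the argument.

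The main technical obstacle lies not in any single step but in carefully orchestrating the interplay among the three spaces $X^{1/s_0}(\cx)$, $(X^{1/s_0})'(\cx)$, and the further convexification $[(X^{1/s_0})']^{1/(p_0/s_0)'}(\cx)$, while ensuring that the exponents balance correctly (note $s_0/p_0+1/(p_0/s_0)'=1$) so that Assumption \ref{asas} applies cleanly. The conceptual insight driving the argument is that the atomic size condition $\|a_j\|_{L^{p_0}(\cx)}\le\mu(B_j)^{1/p_0}$ is precisely dual, through H\"older, to a $(p_0/s_0)'$-average of the test function over $B_j$, and such averages are in turn pointwise bounded by the maximal function whose action Assumption \ref{asas} is designed to control.
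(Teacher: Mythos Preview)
Your proof is correct and follows exactly the standard duality argument that underlies the cited results \cite[Lemma 4.3]{yhyy21a} and \cite[Lemma 4.8]{zyyw20}; the paper itself does not give a proof but simply refers to these sources, and your write-up reproduces precisely that method. The only minor quibble is the citation for the norming formula $\|F\|_{X^{1/s_0}(\cx)}\sim\sup_{g}\int|F|g\,d\mu$: this is the Lorentz--Luxemburg identity $X=X''$ for ball Banach function spaces rather than \cite[Lemma 2.19]{yhyy21a} (which states that $X'$ is again a BBF space), but the result is standard and available in the same circle of references.
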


\section{Atomic Characterizations\label{sat}}
In this section, we establish the atomic characterization
of the weak Hardy space $WH_X(\cx)$ associated with
a given BQBF space $X(\cx)$.
To this end, we first recall the definition of atoms
from \cite[Definition 4.1]{yhyy21a}.
\begin{definition}\label{deat}
Let $X(\cx)$ be a BQBF space  and $q\in[1,\fz]$.
A $\mu$-measurable function $a$ on $\cx$ is
called an \emph{$(X(\cx),q)$-atom} if
there exists a ball $B\subset\cx$ such that
\begin{enumerate}
\item[(i)]
$\supp(a):=\{x\in\cx:\,a(x)\neq 0\}\subset B$;
\item[(ii)]
$\|a\|_{L^q(\cx)}\le [\mu(B)]^{1/q}
\|\mathbf{1}_B\|^{-1}_{X(\cx)}$;
\item[(iii)]
$\int_\cx a(x)\,d\mu(x)=0.$
\end{enumerate}
\end{definition}

The atomic characterization of $WH_X(\cx)$
mainly includes the reconstruction theorem and
the decomposition theorem, which are stated,
respectively, in Subsections \ref{ssc} and \ref{ssd}.
\subsection{Reconstruction \label{ssc}}
Now, we state the reconstruction theorem.

\begin{theorem}\label{tham}
Let $\oz$ be as in \eqref{eqoz},
$\eta$ as in \eqref{eqwa}, and $X(\cx)$
a \emph{BQBF} space.
Suppose that $X(\cx)$ satisfies Assumption \ref{asfs}
with $p_-\in(\oz/(\oz+\eta),\fz)$, and Assumption
\ref{asas} with $s_0\in(0,1)$ and
$p_0\in(s_0,\fz)$.
Fix $c\in (0,1]$, $A,\ \widetilde{A}\in(0,\fz)$,
$q\in(p_0,\fz]\cap[1,\fz]$, and
$\bz$, $\gz\in(\oz[1/\min\{1,p_-\}-1],\eta)$.
Let $\{a_{i,j}\}_{i\in\zz,\,j\in\nn}$
be a sequence of $(X(\cx),q)$-atoms supported,
respectively, in balls
$\{B_{i,j}\}_{i\in\zz,\,j\in\nn}$ such that
$\sum_{j\in\nn}\mathbf{1}_{cB_{i,j}}\leq A$
for any $i\in\zz$,
and $\sup_{i\in\zz}\{2^i\|\sum_{j\in\nn}
\mathbf{1}_{B_{i,j}}
\|_{X(\cx)}\}<\fz$.
Then
$$f:=\sum_{i\in\zz}\sum_{j\in\nn}\widetilde{A}2^i
\|\mathbf{1}_{B_{i,j}}\|_{X(\cx)}a_{i,j}$$
converges in $(\gs)'$,
$f\in WH_X(\cx)$, and
$$
\lf\|f\r\|_{WH_X(\cx)}\ls
\sup_{i\in\zz}\lf\{2^i\lf\|\sum_{j\in\nn}
\mathbf{1}_{B_{i,j}}\r\|_{X(\cx)}\r\}
$$
with the implicit positive constant independent of
both $\{a_{i,j}\}_{i\in\zz,j\in\nn}$ and
$\{B_{i,j}\}_{i\in\zz,j\in\nn}$.
\end{theorem}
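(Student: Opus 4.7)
My plan is to reduce the theorem to establishing the uniform estimate
$\lz\|\mathbf{1}_{\{f^*>\lz\}}\|_{X(\cx)}\ls \Lambda$ for every $\lz\in(0,\fz)$,
where $\Lambda:=\sup_{i\in\zz}\{2^i\|\sum_{j\in\nn}\mathbf{1}_{B_{i,j}}\|_{X(\cx)}\}$, and to obtain this via a level-set decomposition. Fixing $\lz\in(0,\fz)$, I pick $i_0\in\zz$ with $2^{i_0-1}<\lz\le 2^{i_0}$, set
$$g_\lz:=\sum_{i\le i_0}\sum_{j\in\nn}\widetilde{A}2^i\|\mathbf{1}_{B_{i,j}}\|_{X(\cx)}a_{i,j},\quad h_\lz:=f-g_\lz,$$
and bound $\|\mathbf{1}_{\{f^*>\lz\}}\|_{X(\cx)}$ by the contributions of $g_\lz^*$ and $h_\lz^*$, with Proposition \ref{prtz} (Aoki--Rolewicz) playing the role of the missing triangle inequality throughout.

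Convergence of the double sum in $(\gs)'$ is handled first. For each $\psi\in\gs$, the cancellation of $a_{i,j}$ combined with the regularity of $\psi$, Definition \ref{deat}(ii), and Lemma \ref{lees} produces an estimate $|\langle a_{i,j},\psi\rangle|\ls \|\mathbf{1}_{B_{i,j}}\|_{X(\cx)}^{-1}\Phi_\psi(B_{i,j})$, where $\Phi_\psi$ is an integrable kernel on balls. The weights $\widetilde{A}2^i\|\mathbf{1}_{B_{i,j}}\|_{X(\cx)}$ cancel the inverse $X(\cx)$-norm, and the packing hypothesis together with $\Lambda<\fz$ and the embedding $WX(\cx)\hookrightarrow L^t_w(\cx)$ (as in the proof of Theorem \ref{thmw}(i)) yield absolute summability after partitioning the balls into dyadic rings around the basepoint.

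For the bad part $h_\lz$, I introduce $E_\lz:=\bigcup_{i>i_0}\bigcup_{j\in\nn}(2A_0)B_{i,j}$. On $E_\lz$ itself, Definition \ref{debb}(ii) followed by Proposition \ref{prfs}(i) bounds $\|\mathbf{1}_{E_\lz}\|_{X(\cx)}$ by $\|\sum_{i>i_0,\,j}\mathbf{1}_{B_{i,j}}\|_{X(\cx)}$, and Proposition \ref{prtz} together with the hypothesis $2^i\|\sum_j\mathbf{1}_{B_{i,j}}\|_{X(\cx)}\le\Lambda$ gives a geometric $\tz_p$-power series in $i>i_0$ summing to the right order $\lz^{-1}\Lambda$. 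On $E_\lz^{\complement}$, the atom cancellation combined with the regularity of test functions yields the pointwise decay
$a_{i,j}^*(x)\ls [\cm(\mathbf{1}_{B_{i,j}})(x)]^{1+\ez}\|\mathbf{1}_{B_{i,j}}\|_{X(\cx)}^{-1}$
for a suitable $\ez>0$ determined by $\bz$, $\gz$, $\eta$, and $\oz$; selecting $s\in(1,(1+\ez)p_-)$ and invoking the weak-type Fefferman--Stein inequality (Theorem \ref{thwfs}), once more through Proposition \ref{prtz}, produces the matching geometric estimate. For the good part $g_\lz$, the key pointwise bound is $a_{i,j}^*\ls \cm(|a_{i,j}|^{s_0})^{1/s_0}$, available because $s_0<1$ and atoms are in $L^q$ with $q\ge s_0$; applying Proposition \ref{pras} to the $s_0$-powers and then the weak-type Fefferman--Stein inequality reduces the estimate to a geometric series in $i\le i_0$ that closes against $\Lambda$.

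The main obstacle lies in executing both halves simultaneously within the general BQBF framework. First, because $\|\cdot\|_{X(\cx)}$ is only a quasi-norm, every step that would classically invoke the triangle inequality (joining contributions over $i$, over $j$, or between $g_\lz^*$ and $h_\lz^*$) must be routed through Proposition \ref{prtz}; tracking the Aoki--Rolewicz exponent $\tz_p\in(0,1]$ so that the resulting $\tz_p$-power series in $|i-i_0|$ remain convergent is the most delicate bookkeeping and is what allows us to dispense with the Banach assumption on $X^{1/q}(\cx)$. Second, the novelty of admitting the endpoint $q=1$ precludes a duality approach via $L^q$-vector-valued maximal inequalities; the remedy is precisely to work in the $s_0$-convexification with $s_0<1$ from Assumption \ref{asas}, so that Proposition \ref{pras} converts the endpoint atomic estimate into the same weak-type vector-valued maximal inequality already used for the tail $h_\lz$.
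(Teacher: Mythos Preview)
Your treatment of the bad part $h_\lz$ (on $E_\lz$ via Proposition \ref{prfs} and Aoki--Rolewicz, off $E_\lz$ via the decay $a^*\ls[\cm(\mathbf{1}_B)]^{(\oz+\bz)/\oz}\|\mathbf{1}_B\|_X^{-1}$ and the vector-valued inequality) matches the paper's pieces $\mathrm{I}_2$, $\mathrm{I}_3$, $\mathrm{I}_4$ and is fine.

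The gap is in the good part. The pointwise bound $a_{i,j}^*\ls[\cm(|a_{i,j}|^{s_0})]^{1/s_0}$ you invoke is false for $s_0<1$: by Jensen's inequality for the concave map $t\mapsto t^{s_0}$ one has $[\cm(|a|^{s_0})]^{1/s_0}\le\cm(|a|)$, not the reverse, so nothing is gained by passing to $s_0$-powers inside $\cm$. Without this bound your scheme for $g_\lz$ collapses: you have no mechanism producing an exponent strictly larger than $1$ on the $\lz$-side, and hence no way to make $\sum_{i\le i_0}$ a convergent geometric series against $\Lambda$.

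The paper's route is different and uses Assumption \ref{asas} in a more structural way. One works with $\sum_{i,j}\lz_{i,j}(a_{i,j})^*$ throughout (so convergence in $(\gs)'$ is read off from the weak-norm estimate via $|\langle a_{i,j},\varphi\rangle|\ls\inf_{x\in B(x_0,1)}(a_{i,j})^*(x)\,\|\varphi\|_{\cg(\bz,\gz)}$, not by a separate summability argument). For the ``good, near the balls'' piece one applies Chebyshev with an auxiliary exponent $q_0\in(1,1/s_0]\cap(1,q/p_0)$, then uses that $X^{1/s_0}$ is a \emph{Banach} space (part of Assumption \ref{asas}) to split the $i$-sum by the honest triangle inequality, and finally applies Proposition \ref{pras} to the functions $[\|\mathbf{1}_{B_{i,j}}\|_{X}\cm(a_{i,j})]^{q_0}\mathbf{1}_{2A_0B_{i,j}}$. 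The required $L^{p_0}$ bound on these, $\ls[\mu(2A_0B_{i,j})]^{1/p_0}$, comes from H\"older and the $L^q$-boundedness of $\cm$ when $q>1$, and from the weak-$(1,1)$ bound for $\cm$ together with the layer-cake formula and $q_0p_0<1$ when $q=1$; this is exactly where the endpoint $q=1$ is accommodated. The factor $\lz^{-q_0}$ with $q_0>1$ then turns $\sum_{i<i_0}2^{iq_0s_0}2^{-is_0}$ into a convergent series of the right order.
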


To prove Theorem \ref{tham}, we need the following lemma.

\begin{lemma}\label{lea}
Let $A_0$ be as in \eqref{eqA0},
$\oz$ as in \eqref{eqoz}, and
$X(\cx)$ a \emph{BQBF} space.
Then there exists a positive constant $C$ such that,
for any given $q\in[1,\fz]$, and any $(X(\cx),q)$-atom $a$
supported in a ball $B$, and any $x\in\cx$,
\begin{equation*}
a^*(x)\le C\cm(a)(x)\mathbf{1}_{2A_0B}(x)
+C\lf\|\mathbf{1}_B\r\|^{-1}_{X(\cx)}\lf[
\cm(\mathbf{1}_B)(x)\r]^{\frac{\oz+\bz}{\oz}}
\mathbf{1}_{(2A_0B)^{\complement}}(x),
\end{equation*}
where $a$ is regarded as a distribution on
$\gs$ with $\bz,\ \gamma\in(0,\eta)$.
\end{lemma}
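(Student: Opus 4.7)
The plan is to bound $|\langle a,\phi\rangle|$ separately for $x\in 2A_0B$ and for $x\in(2A_0B)^{\complement}$, uniformly over all admissible test functions $\phi\in\gs$ with $\|\phi\|_{\cg(x,r,\bz,\gz)}\le 1$ for some $r\in(0,\fz)$. Let $x_B$ be the center of $B$ and $r_B$ its radius. For $x\in 2A_0B$, I would apply the size condition of Definition \ref{deg}(i) to obtain $|\langle a,\phi\rangle|\le\int_{\cx}|a(y)|G_{r,\gz}(x,y)\,d\mu(y)$ and decompose $\cx$ into $B(x,r)$ and the annuli $B(x,2^{k+1}r)\setminus B(x,2^kr)$ for $k\in\zz_+$. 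Using Lemma \ref{lees}(i) together with the doubling condition \eqref{eqoz}, the integrand is dominated by $C2^{-k\gz}/V_{2^kr}(x)$ on the $k$-th annulus, so the corresponding piece of the integral is at most $C2^{-k\gz}\cm(a)(x)$; summing the geometric series yields $|\langle a,\phi\rangle|\ls\cm(a)(x)$, which gives the first term after taking the supremum over $\phi$ and $r$.

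For $x\in(2A_0B)^{\complement}$, I would exploit the cancellation condition of Definition \ref{deat}(iii) to rewrite $\langle a,\phi\rangle=\int_Ba(y)[\phi(y)-\phi(x_B)]\,d\mu(y)$. For any $y\in B$, the quasi-triangle inequality \eqref{eqA0} yields $d(x,y)\ge d(x,x_B)/(2A_0)\ge r_B$, while Lemma \ref{lees}(i) gives $V_r(x)+V(x,y)\sim V_r(x)+V(x,x_B)$ and $d(x,y)\sim d(x,x_B)$. I would then split according to the size of $d(x,x_B)$ relative to $r_B$: when $d(x,x_B)\ge (2A_0)^2r_B$, the inequality $d(y,x_B)\le r_B\le (2A_0)^{-1}[r+d(x,y)]$ holds uniformly in $r$, so the regularity condition of Definition \ref{deg}(ii) applies and gives $|\phi(y)-\phi(x_B)|\le[r_B/(r+d(x,y))]^{\bz}G_{r,\gz}(x,y)\ls[r_B/d(x,x_B)]^{\bz}/V(x,x_B)$; when $2A_0r_B<d(x,x_B)<(2A_0)^2r_B$, Lemma \ref{lees}(iii) bounds $\cm(\mathbf{1}_B)(x)$ from below by a positive constant, and the crude estimate $|\phi(y)-\phi(x_B)|\le |\phi(y)|+|\phi(x_B)|\ls 1/V(x,x_B)$ is already sufficient. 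Integrating over $B$ and using the atom bound $\|a\|_{L^1(B)}\le \mu(B)\|\mathbf{1}_B\|_{X(\cx)}^{-1}$ (which comes from Definition \ref{deat}(ii) via H\"older's inequality) gives $|\langle a,\phi\rangle|\ls\|\mathbf{1}_B\|_{X(\cx)}^{-1}[\mu(B)/V(x,x_B)][r_B/d(x,x_B)]^{\bz}$. Invoking Lemma \ref{lees}(iii) for both $\mu(B)/V(x,x_B)\ls\cm(\mathbf{1}_B)(x)$ and $[r_B/d(x,x_B)]^{\oz}\ls\cm(\mathbf{1}_B)(x)$ then produces $[\cm(\mathbf{1}_B)(x)]^{(\oz+\bz)/\oz}$, and taking the supremum over $\phi$ and $r$ finishes the bound on $(2A_0B)^{\complement}$.

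The main obstacle I anticipate is controlling $|\phi(y)-\phi(x_B)|$ uniformly in the test-function scale $r$ on the transitional annulus $\{x:\ 2A_0r_B<d(x,x_B)<(2A_0)^2r_B\}$, where the regularity condition of $\gs$ cannot be applied in a single step comparing $y\in B$ with $y'=x_B$, since $d(y,x_B)=r_B$ may exceed $(2A_0)^{-1}[r+d(x,y)]$ when $r$ is small. The resolution is the observation above that $\cm(\mathbf{1}_B)(x)$ is already bounded below by a positive constant on this annulus, so the crude size estimate is compatible with the desired power $(\oz+\bz)/\oz$; alternatively, one may pass through a finite chain of intermediate points linking $y$ to $x_B$ at the maximal permissible step size $\sim (2A_0)^{-1}[r+d(x,y)]$ and apply the regularity condition to each step, which yields the same bound up to an absolute constant.
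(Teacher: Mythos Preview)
Your argument is correct and is precisely the standard route for estimating the grand maximal function of an atom: size condition plus an annular decomposition on $2A_0B$, and cancellation plus the regularity condition of $\cg(x,r,\bz,\gz)$ on $(2A_0B)^{\complement}$, followed by Lemma~\ref{lees}(iii) to convert $[\mu(B)/V(x,x_B)]\,[r_B/d(x,x_B)]^{\bz}$ into $[\cm(\mathbf{1}_B)(x)]^{(\oz+\bz)/\oz}$. Your treatment of the transitional annulus $\{x:\ 2A_0r_B\le d(x,x_B)<(2A_0)^2r_B\}$ via the lower bound $\cm(\mathbf{1}_B)(x)\gtrsim 1$ is a clean way to handle the case where the regularity hypothesis $d(y,x_B)\le(2A_0)^{-1}[r+d(x,y)]$ may fail for small $r$.

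Note, however, that the paper itself does not supply a proof of this lemma; it simply records that Lemma~\ref{lea} was established in \cite[(4.2) and (4.3)]{yhyy21a}. What you have written is essentially the argument that appears there (and, more generally, the classical proof going back to the atomic theory on $\rn$ and on spaces of homogeneous type), so there is no methodological difference to discuss.
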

Lemma \ref{lea} was proved in
\cite[(4.2) and (4.3)]{yhyy21a}.
Borrowing some ideas from the proof
of \cite[Theorem 4.7]{zyyw20},
and using the Aoki--Rolewicz theorem,
we now show Theorem \ref{tham}.

\begin{proof}[Proof of Theorem \ref{tham}]
Let all the symbols be as in the present theorem
and, for any $i\in\zz$ and $j\in\nn$, let
$\lz_{i,j}:=\widetilde{A}2^i\|
\mathbf{1}_{B_{i,j}}\|_{X(\cx)}$.
To obtain the convergence of $\sum_{i\in\zz}
\sum_{j\in\nn}\lz_{i,j}a_{i,j}$
in $(\gs)'$, we need to show that,
for any $\varphi\in \gs$,
\begin{equation}\label{t3eq3}
\sum_{i\in\zz}\sum_{j\in\nn}\lz_{i,j}
\lf|\lf\langle a_{i,j},\varphi\r\rangle\r|
\ls \|\varphi\|_{\cg(\bz,\gz)}.
\end{equation}
Indeed, from the estimate that
$\|\varphi\|_{\cg(x,1,\bz,\gz)}
\sim \|\varphi\|_{\cg(\bz,\gz)}$
for any $x\in B(x_0,1)$ and
$\varphi\in\gs$, Proposition \ref{prwb},
and Definition \ref{debb}(ii) with $X(\cx)$
replaced by $WX(\cx)$,
it follows that, for any $\varphi\in\gs$,
\begin{align}\label{t3a}
\sum_{i\in\zz}\sum_{j\in\nn}\lz_{i,j}
\lf|\lf\langle a_{i,j},\varphi\r\rangle\r|
&\ls \inf_{x\in B(x_0,1)}\lf\{
\sum_{i\in\zz}\sum_{j\in\nn}\lz_{i,j}
\lf(a_{i,j}\r)^*(x)\r\}\|\varphi\|_{\cg(\bz,\gz)}\\
&\ls \lf\|\mathbf{1}_{B(x_0,1)}\r\|_{WX(\cx)}
^{-1}\lf\|\sum_{i\in\zz}\sum_{j\in\nn}
\lz_{i,j}\lf(a_{i,j}\r)^*\r\|_{WX(\cx)}
\|\varphi\|_{\cg(\bz,\gz)}.\notag
\end{align}

Then we show that, for any $\lz\in (0,\fz)$,
\begin{equation}\label{t3eq1}
\lz\lf\|\mathbf{1}_{\lf\{x\in\cx:
	\ \sum_{i\in\zz}\sum_{j\in\nn}\lz_{i,j}
(a_{i,j})^*(x)>\lz\r\}}\r\|_{X(\cx)}\ls
\sup_{i\in\zz}\lf\{2^i\lf\|\sum_{j\in\nn}
\mathbf{1}_{B_{i,j}}\r\|_{X(\cx)}\r\}
\end{equation}
with the implicit positive constant
independent of $\lz$,
$\{a_{i,j}\}_{i\in\zz,j\in\nn}$, and
$\{B_{i,j}\}_{i\in\zz,j\in\nn}$.
To this end, we fix $\lz\in (0,\fz)$ and
let $i_0\in\zz$
be such that $2^{i_0}\le\lz<2^{i_0+1}$.
By \eqref{eqsigm} with $p=1$,
and Definition \ref{debb}(ii),
we find that
\begin{align}\label{t3eq2}
&(\sigma_1)^{-2}\lf\|\mathbf{1}_{\{x\in\cx:
\ \sum_{i\in\zz}\sum_{j\in\nn}\lz_{i,j}
(a_{i,j})^*(x)>\lz\}}\r\|_{X(\cx)}\\
&\quad\le\lf\|\mathbf{1}_{\{x\in\cx:
\ \sum^{i_0-1}_{i=-\fz}\sum_{j\in\nn}
\lz_{i,j}(a_{i,j})^*(x)\mathbf{1}_{2A_0B_{i,j}}
(x)>\lz/4\}}\r\|_{X(\cx)}\notag\\
&\quad\quad+\lf\|\mathbf{1}_{\{x\in\cx:
\ \sum^{i_0-1}_{i=-\fz}\sum_{j\in\nn}
\lz_{i,j}(a_{i,j})^*(x)\mathbf{1}
_{(2A_0B_{i,j})^{\complement}}(x)
>\lz/4\}}\r\|_{X(\cx)}\notag\\
&\quad\quad+\lf\|\mathbf{1}_{\{x\in\cx:
\ \sum^\fz_{i=i_0}\sum_{j\in\nn}
\lz_{i,j}(a_{i,j})^*(x)\mathbf{1}_{2A_0B_{i,j}}
(x)>\lz/4\}}\r\|_{X(\cx)}\notag\\
&\quad\quad+\lf\|\mathbf{1}_{\{x\in\cx:
\ \sum^\fz_{i=i_0}\sum_{j\in\nn}
\lz_{i,j}(a_{i,j})^*(x)\mathbf{1}
_{(2A_0B_{i,j})^{\complement}}(x)
>\lz/4\}}\r\|_{X(\cx)}\notag\\
&\quad=:\sum^4_{j=1}\mathrm{I}_j,\notag
\end{align}
where $\sigma_1\in[1,\fz)$ is as in
\eqref{eqsigm} with $p=1$.

We first estimate $\mathrm{I}_1$.
To achieve this,
let $q_0\in (1,1/s_0]\cap(1,q/p_0)$.
Next, we claim that, for any $i\in\zz$
and $j\in\nn$,
\begin{equation}\label{t3eq7}
\lf\|\lf[\lf\|\mathbf{1}_{B_{i,j}}
\r\|_{X(\cx)}\cm\lf(a_{i,j}\r)\r]^{q_0}
\mathbf{1}_{2A_0B_{i,j}}\r\|_{L^{p_0}(\cx)}
\ls\lf[\mu\lf(2A_0B_{i,j}\r)\r]^{1/p_0}.
\end{equation}
Indeed, if $q\in(1,\fz]$, from the H\"{o}lder inequality,
the boundedness of $\cm$ on $L^q(\cx)$, and
Definition \ref{deat}(ii) with $a=a_{i,j}$
and $B=B_{i,j}$,
it follows that, for any $i\in\zz$ and $j\in\nn$,
\begin{align*}
&\lf\|\lf[\lf\|\mathbf{1}_{B_{i,j}}
\r\|_{X(\cx)}\cm\lf(a_{i,j}\r)\r]^{q_0}
\mathbf{1}_{2A_0B_{i,j}}\r\|_{L^{p_0}(\cx)}\\
&\quad\le \lf\|\mathbf{1}_{B_{i,j}}
\r\|_{X(\cx)}^{q_0}\lf\|\cm\lf(a_{i,j}\r)
\r\|^{q_0}_{L^q(\cx)}\lf[\mu\lf(2A_0B_{i,j}\r)
\r]^{1/p_0-q_0/q}\\
&\quad\ls \lf\|\mathbf{1}_{B_{i,j}}
\r\|_{X(\cx)}^{q_0}\lf\|
a_{i,j}\r\|^{q_0}_{L^q(\cx)}
\lf[\mu\lf(2A_0B_{i,j}\r)\r]^{1/p_0-q_0/q}
\ls\lf[\mu\lf(2A_0B_{i,j}\r)\r]^{1/p_0}.
\end{align*}
If $q=1$, by the layer cake representation
(see, for instance, \cite[Proposition 1.1.4]{g14}),
the boundedness of $\cm$ from $L^1(\cx)$
to $WL^1(\cx)$, $q_0p_0\in(0,1)$, and
Definition \ref{deat}(ii) with $a=a_{i,j}$
and $B=B_{i,j}$, we conclude that,
for any $i\in\zz$ and $j\in\nn$,
\begin{align*}
&\lf\|\lf[\lf\|\mathbf{1}_{B_{i,j}}
\r\|_{X(\cx)}\cm\lf(a_{i,j}\r)\r]^{q_0}
\mathbf{1}_{2A_0B_{i,j}}
\r\|_{L^{p_0}(\cx)}^{p_0}\\
&\quad=\int^{\fz}_0q_0p_0
\lz^{q_0p_0-1}\mu\lf(\lf\{
x\in\cx:\ \lf\|\mathbf{1}_{B_{i,j}}
\r\|_{X(\cx)}\cm\lf(a_{i,j}\r)(x)
\mathbf{1}_{2A_0B_{i,j}}(x)>\lz\r\}\r)\,d\lz\\
&\quad\sim\int^{1}_0
\lz^{q_0p_0-1}\mu\lf(\lf\{
x\in 2A_0B_{i,j}:\ \cm\lf(a_{i,j}\r)(x)
>\frac{\lz}{\|\mathbf{1}_{B_{i,j}}\|_{X(\cx)}}
\r\}\r)\,d\lz+\int^\fz_1\cdots\\
&\quad\ls\int^1_0\lz^{q_0p_0-1}
\mu\lf(2A_0B_{i,j}\r)\,d\lz+
\int^{\fz}_1 \lz^{q_0p_0-2}
\lf\|\mathbf{1}_{B_{i,j}}
\r\|_{X(\cx)}\lf\|a_{i,j}\r\|
_{L^1(\cx)}\,d\lz\\
&\quad\ls \mu\lf(2A_0B_{i,j}\r).
\end{align*}
This finishes the proof of the above claim \eqref{t3eq7}.
Observe that
\begin{equation*}
(\lambda/4)^{q_0}\mathbf{1}_{\{x\in\cx:
\ \sum^{i_0-1}_{i=-\fz}\sum_{j\in\nn}
\lz_{i,j}(a_{i,j})^*(x)\mathbf{1}_{2A_0B_{i,j}}
(x)>\lz/4\}}
\le \lf[\sum^{i_0-1}_{i=-\fz}\sum_{j\in\nn}
\lz_{i,j}(a_{i,j})^*\mathbf{1}_{2A_0B_{i,j}}\r]^{q_0}.
\end{equation*}
By this, Definition \ref{debb}(ii),
Lemma \ref{lea} with $a=a_{i,j}$ and
$B=B_{i,j}$, Lemma \ref{lees}(iv) with $s=q_0s_0$,
the assumption that $X^{1/s_0}(\cx)$
is a Banach space, \eqref{t3eq7},
and Proposition \ref{pras} with
$B_j$ and $a_j$ replaced, respectively,
by $2A_0B_{i,j}$ and
a constant multiple of
$[\|\mathbf{1}_{B_{i,j}}\|_{X(\cx)}
\cm(a_{i,j})\mathbf{1}_{2A_0B_{i,j}}]^{q_0}$,
we conclude that
\begin{align*}
\mathrm{I}_1
&\ls \lambda^{-q_0}\lf\|\lf[
\sum^{i_0-1}_{i=-\fz}\sum_{j\in\nn}
\lz_{i,j}\lf(a_{i,j}\r)^*\mathbf{1}_{2A_0B_{i,j}}
\r]^{q_0}\r\|_{X(\cx)}\\
&\ls \lambda^{-q_0}
\lf\|\lf[\sum^{i_0-1}_{i=-\fz}\sum_{j\in\nn}
2^i\lf\|\mathbf{1}_{B_{i,j}}
\r\|_{X(\cx)}\cm\lf(a_{i,j}\r)
\mathbf{1}_{2A_0B_{i,j}}\r]^{q_0s_0}
\r\|_{X^{1/s_0}(\cx)}^{1/s_0}\\
&\ls \lambda^{-q_0}
\lf\|\sum^{i_0-1}_{i=-\fz}\sum_{j\in\nn}
\lf[2^i\lf\|\mathbf{1}_{B_{i,j}}
\r\|_{X(\cx)}\cm\lf(a_{i,j}\r)\r]^{q_0s_0}
\mathbf{1}_{2A_0B_{i,j}}\r\|_{X^{1/s_0}
(\cx)}^{1/s_0}\\
&\ls \lambda^{-q_0}\lf\{\sum^{i_0-1}
_{i=-\fz}2^{iq_0s_0}\lf\|
\sum_{j\in\nn}\lf[\lf\|
\mathbf{1}_{B_{i,j}}\r\|_{X(\cx)}
\cm\lf(a_{i,j}\r)\r]^{q_0s_0}\mathbf{1}_{2A_0B_{i,j}}\r\|
_{X^{1/s_0}(\cx)}\r\}^{1/s_0}\\
&\ls \lambda^{-q_0}\lf[
\sum^{i_0-1}_{i=-\fz}2^{iq_0s_0}\lf\|
\sum_{j\in\nn}\mathbf{1}_{2A_0B_{i,j}}\r\|
_{X^{1/s_0}(\cx)}\r]^{1/s_0}
\end{align*}
with all the implicit positive constants
independent of $\lz$,
$\{a_{i,j}\}_{i\in\zz,j\in\nn}$, and
$\{B_{i,j}\}_{i\in\zz,j\in\nn}$.
Using this, Proposition \ref{prfs}(i) with $t$,
$\tau$, $\lz_j$, and $B_j$ replaced, respectively,
by $s_0$, $2A_0/c$, $1$, and $cB_{i,j}$, the assumption that
$\sum_{j\in\nn}\mathbf{1}_{cB_{i,j}}\le A$
for any $i\in\zz$,
$c\in(0,1]$, $q_0\in(1,\fz)$, and $\lz\sim 2^{i_0}$,
we obtain
\begin{align*}
\mathrm{I}_1
&\ls \lambda^{-q_0}\lf[
\sum^{i_0-1}_{i=-\fz}2^{iq_0s_0}
\lf\|\sum_{j\in\nn}\mathbf{1}_{cB_{i,j}}
\r\|_{X^{1/s_0}(\cx)}\r]^{1/s_0}\\
&\ls \lambda^{-q_0}\lf[
\sum^{i_0-1}_{i=-\fz}2^{iq_0s_0}
\lf\|\sum_{j\in\nn}\mathbf{1}_{cB_{i,j}}\r\|
^{s_0}_{X(\cx)}\r]^{1/s_0}\\
&\ls \lambda^{-q_0}\lf[
\sum^{i_0-1}_{i=-\fz}2^{iq_0s_0}
\lf\|\sum_{j\in\nn}\mathbf{1}_{B_{i,j}}\r\|
^{s_0}_{X(\cx)}\r]^{1/s_0}\\
&\ls \lambda^{-q_0}\lf(
\sum^{i_0-1}_{i=-\fz}2^{iq_0s_0}
2^{-is_0}\r)^{1/s_0}\sup_{i\in\zz}\lf\{2^i
\lf\|\sum_{j\in\nn}\mathbf{1}_{B_{i,j}}
\r\|_{X(\cx)}\r\}\\
&\sim \lz^{-1}\sup_{i\in\zz}\lf\{2^i
\lf\|\sum_{j\in\nn}\mathbf{1}_{B_{i,j}}
\r\|_{X(\cx)}\r\},
\end{align*}
where all the implicit positive constants
are independent of $\lz$,
$\{a_{i,j}\}_{i\in\zz,j\in\nn}$, and
$\{B_{i,j}\}_{i\in\zz,j\in\nn}$.

Then it comes to estimate $\mathrm{I}_2$.
Let $q_1\in (\max\{\frac{\oz}
{(\oz+\bz)p_-},1\},\fz)$.
Observe that
\begin{equation*}
(\lz/4)^{q_1}\mathbf{1}_{\{x\in\cx:
\ \sum^{i_0-1}_{i=-\fz}\sum_{j\in\nn}
\lz_{i,j}(a_{i,j})^*(x)\mathbf{1}_{
(2A_0B_{i,j})^\complement}
(x)>\lz/4\}}
\le\lf[\sum^{i_0-1}_{i=-\fz}\sum_{j\in\nn}
\lz_{i,j}(a_{i,j})^*\mathbf{1}_{
(2A_0B_{i,j})^\complement}\r]^{q_1}.
\end{equation*}
From this, Definition \ref{debb}(ii),
Lemma \ref{lea} with $a=a_{i,j}$ and $B=B_{i,j}$,
Proposition \ref{prtz} with $p=q_1$, and
Assumption \ref{asfs} with
$t=\frac{\oz}{(\oz+\bz)q_1}\in(0,p_-)$
and $s=\frac{\oz+\bz}{\oz}\in(1,\fz)$,
we deduce that
\begin{align*}
\mathrm{I}_2
&\ls \lz^{-q_1}\lf\|\lf[\sum^{i_0-1}_{i=-\fz}
\sum_{j\in\nn}\lz_{i,j}\lf(a_{i,j}\r)^*
\mathbf{1}_{(2A_0B_{i,j})^{\complement}}\r]^{q_1}\r\|_{X(\cx)}\\
&\ls \lz^{-q_1}\lf\|\sum^{i_0-1}
_{i=-\fz}\sum_{j\in\nn}2^i
\lf[\cm\lf(\mathbf{1}_{B_{i,j}}\r)
\r]^{\frac{\oz+\bz}{\oz}}\r\|^{q_1}_{X^{q_1}(\cx)}\\
&\ls \lz^{-q_1}\lf\{\sum^{i_0-1}_{i=-\fz}2^{i\tz_{q_1}}
\lf\|\sum_{j\in\nn}\lf[\cm\lf(\mathbf{1}_{
B_{i,j}}\r)\r]^{\frac{\oz+\bz}{\oz}}
\r\|^{\tz_{q_1}}_{X^{q_1}(\cx)}\r\}^{q_1/\tz_{q_1}}\\
&\ls \lz^{-q_1}\lf[\sum^{i_0-1}_{i=-\fz}2^{i\tz_{q_1}}
\lf\|\sum_{j\in\nn}\mathbf{1}_{B_{i,j}}
\r\|^{\tz_{q_1}}_{X^{q_1}(\cx)}\r]^{q_1/\tz_{q_1}},
\end{align*}
where $\tz_{q_1}\in(0,1]$ is as in Proposition
\ref{prtz} with $p=q_1$, and
all the implicit positive constants are
independent of $\lz$,
$\{a_{i,j}\}_{i\in\zz,j\in\nn}$, and
$\{B_{i,j}\}_{i\in\zz,j\in\nn}$.
By this, Proposition \ref{prfs}(i) with $t$, $\tau$,
$\lz_j$, and $B_j$ replaced, respectively,
by $1/q_1$, $1/c$, $1$, and $cB_{i,j}$, the assumption
that $\sum_{j\in\nn}\mathbf{1}_{cB_{i,j}}\le A$
for any $i\in\zz$,
$c\in(0,1]$, $q_1\in(1,\fz)$, and $\lambda\sim 2^{i_0}$,
we conclude that
\begin{align*}
\mathrm{I}_2
&\ls \lz^{-q_1}\lf[\sum^{i_0-1}_{i=-\fz}2^{i\tz_{q_1}}
\lf\|\sum_{j\in\nn}\mathbf{1}_{cB_{i,j}}
\r\|^{\tz_{q_1}}_{X^{q_1}(\cx)}\r]^{q_1/\tz_{q_1}}\\
&\ls \lz^{-q_1}\lf[\sum^{i_0-1}_{i=-\fz}2^{i\tz_{q_1}}
\lf\|\sum_{j\in\nn}\mathbf{1}_{cB_{i,j}}
\r\|^{\tz_{q_1}/q_1}_{X(\cx)}\r]^{q_1/\tz_{q_1}}\\
&\ls \lz^{-q_1}\lf[\sum^{i_0-1}_{i=-\fz}
2^{i\tz_{q_1}(1-1/q_1)}\r]^{q_1/\tz_{q_1}}
\sup_{i\in\zz}\lf\{2^i\lf\|\sum_{j\in\nn}
\mathbf{1}_{B_{i,j}}\r\|_{X(\cx)}\r\}\\
&\sim \lambda^{-1}\sup_{i\in\zz}\lf\{2^{i}\lf\|
\sum_{j\in\nn}\mathbf{1}_{B_{i,j}}\r\|_{X(\cx)}\r\}
\end{align*}
with all the implicit positive constants
independent of $\lz$,
$\{a_{i,j}\}_{i\in\zz,j\in\nn}$, and
$\{B_{i,j}\}_{i\in\zz,j\in\nn}$.

Next, we estimate $\mathrm{I_3}$. Observe that
\begin{align*}
\mathbf{1}_{\{x\in\cx:
\ \sum^\fz_{i=i_0}\sum_{j\in\nn}
\lz_{i,j}(a_{i,j})^*(x)\mathbf{1}_{2A_0B_{i,j}}(x)>\lz/4\}}
\le \mathbf{1}_{\bigcup^\fz_{i=i_0}\bigcup_{j\in\nn}2A_0B_{i,j}}
\le \sum^\fz_{i=i_0}\sum_{j\in\nn}
\mathbf{1}_{2A_0B_{i,j}}.
\end{align*}
From this, Definition \ref{debb}(ii),
Proposition \ref{prtz} with $p=1$,
Proposition \ref{prfs}(i) with $t$,
$\tau$, $\lz_j$, and $B_j$ replaced, respectively,
by $1$, $2A_0$, $1$, and $B_{i,j}$,
and $\lz\sim 2^{i_0}$, we deduce that
\begin{align*}
\mathrm{I}_3
&\le \lf\|\sum^{\fz}_{i=i_0}\sum_{j\in\nn}
\mathbf{1}_{2A_0B_{i,j}}\r\|_{X(\cx)}
\ls \lf[\sum^{\fz}_{i=i_0}\lf\|\sum_{j\in\nn}
\mathbf{1}_{2A_0B_{i,j}}\r\|^{\tz_1}_{X(\cx)}
\r]^{1/\tz_1}\\
&\ls \lf[\sum^{\fz}_{i=i_0}\lf\|\sum_{j\in\nn}
\mathbf{1}_{B_{i,j}}\r\|^{\tz_1}_{X(\cx)}
\r]^{1/\tz_1}
\ls \lf(\sum^{\fz}_{i=i_0}2^{-i\tz_1}\r)
^{1/\tz_1}\sup_{i\in\zz}\lf\{2^i\lf\|
\sum_{j\in\nn}\mathbf{1}_{B_{i,j}}\r\|_{X(\cx)}\r\}\\
&\sim \lz^{-1} \sup_{i\in\zz}\lf\{2^i\lf\|
\sum_{j\in\nn}\mathbf{1}_{B_{i,j}}\r\|_{X(\cx)}\r\},
\end{align*}
where $\tz_1\in(0,1]$ is as in Proposition \ref{prtz}
with $p=1$,
and all the implicit positive constants
are independent of $\lz$,
$\{a_{i,j}\}_{i\in\zz,j\in\nn}$, and
$\{B_{i,j}\}_{i\in\zz,j\in\nn}$.

Finally, we estimate $\mathrm{I}_4$.
Let $q_2\in (\frac{\oz}{(\oz+\bz)\min\{1,p_-\}},1)$.
Observe that
\begin{equation*}
(\lz/4)^{q_2}\mathbf{1}_{\{x\in\cx:
\ \sum^\fz_{i=i_0}\sum_{j\in\nn}
\lz_{i,j}(a_{i,j})^*(x)\mathbf{1}
_{(2A_0B_{i,j})^{\complement}}(x)
>\lz/4\}}
\le \lf[\sum^{\fz}_{i=i_0}\sum_{j\in\nn}
\lz_{i,j}\lf(a_{i,j}\r)^*\mathbf{1}_{(2A_0
B_{i,j})^{\complement}}\r]^{q_2}.
\end{equation*}
By this, Definition \ref{debb}(ii),
Lemma \ref{lea} with $a=a_{i,j}$ and
$B=B_{i,j}$, Lemma \ref{lees}(iv) with $s=q_2$,
Proposition \ref{prtz} with $p=1$,
Assumption \ref{asfs} with
$t=\frac{\oz}{(\oz+\bz)q_2}\in(0,p_-)$
and $s=\frac{(\oz+\bz)q_2}{\oz}\in(1,\fz)$,
$q_2\in(0,1)$, and $\lz\sim 2^{i_0}$,
we have
\begin{align*}
\mathrm{I}_4
&\ls \lz^{-q_2}\lf\|\lf[\sum^{\fz}_{i=i_0}\sum_{j\in\nn}
\lz_{i,j}\lf(a_{i,j}\r)^*\mathbf{1}_{(2A_0
B_{i,j})^{\complement}}\r]^{q_2}\r\|_{X(\cx)}\\
&\ls \lz^{-q_2}\lf\|\lf\{
\sum^{\fz}_{i=i_0}\sum_{j\in\nn}2^i
\lf[\cm\lf(\mathbf{1}_{B_{i,j}}\r)
\r]^{\frac{(\oz+\bz)}{\oz}}\r\}^{q_2}
\r\|_{X(\cx)}\\
&\ls \lz^{-q_2}\lf\|\sum^{\fz}_{i=i_0}
2^{iq_2}\sum_{j\in\nn}
\lf[\cm\lf(\mathbf{1}_{B_{i,j}}\r)
\r]^{\frac{(\oz+\bz)q_2}{\oz}}
\r\|_{X(\cx)}\\
&\ls \lz^{-q_2}\lf\{\sum^{\fz}_{i=i_0}
2^{iq_2\tz_1}\lf\|\sum_{j\in\nn}
\lf[\cm\lf(\mathbf{1}_{B_{i,j}}\r)
\r]^{\frac{(\oz+\bz)q_2}{\oz}}
\r\|^{\tz_1}
_{X(\cx)}\r\}^{1/\tz_1}\\
&\ls \lz^{-q_2}\lf[\sum^{\fz}_{i=i_0}
2^{iq_2\tz_1-i\tz_1}2^{i\tz_1}\lf\|\sum_{j\in\nn}
\mathbf{1}_{B_{i,j}}
\r\|^{\tz_1}_{X(\cx)}\r]^{1/\tz_1}
\ls \lambda^{-1}\sup_{i\in\zz}\lf\{2^i\lf\|\sum_{j\in\nn}
\mathbf{1}_{B_{i,j}}\r\|_{X(\cx)}\r\},
\end{align*}
where $\tz_1\in(0,1]$ is as in Proposition \ref{prtz}
with $p=1$, and all the implicit positive constants
are independent of $\lz$,
$\{a_{i,j}\}_{i\in\zz,j\in\nn}$, and
$\{B_{i,j}\}_{i\in\zz,j\in\nn}$.

Using \eqref{t3eq2} and the estimates of
$\{\mathrm{I}_j\}^4_{j=1}$,
we obtain \eqref{t3eq1}, which, combined with
\eqref{t3a} and the definition of $WX(\cx)$,
implies \eqref{t3eq3}.
Thus, for any $\varphi\in\gs$,
$$
\langle f,\varphi\rangle:=
\sum_{i\in\zz}\sum_{j\in\nn}
\lf\langle\lz_{i,j}a_{i,j},\varphi\r\rangle
$$
is well defined and $f=\sum_{i\in\zz}\sum_{j\in\nn}
\lz_{i,j}a_{i,j}$ in $(\gs)'$. By the Fatou lemma,
it is easy to see that
$f^*\le \sum_{i\in\zz}\sum_{j\in\nn}
\lz_{i,j}(a_{i,j})^*$ which, together with
\eqref{t3eq1} and the definition of $WX(\cx)$,
further implies that
$f\in WH_X(\cx)$ and
\begin{equation*}
\|f\|_{WH_X(\cx)}
\le \lf\|\sum_{i\in\zz}\sum_{j\in\nn}
\lz_{i,j}\lf(a_{i,j}\r)^*\r\|_{WX(\cx)}
\ls \sup_{i\in\zz}\lf\{2^i\lf\|\sum_{j\in\nn}
\mathbf{1}_{B_{i,j}}\r\|_{X(\cx)}\r\},
\end{equation*}
where the implicit positive constant is
independent of $\{a_{i,j}\}_{i\in\zz,j\in\nn}$
and $\{B_{i,j}\}_{i\in\zz,j\in\nn}$.
This finishes the proof of Theorem \ref{tham}.
\end{proof}
\begin{remark}\label{ream}
In Theorem \ref{tham}, if $q=\fz$, then,
to obtain the conclusion of Theorem \ref{tham},
we only need to assume that $X(\cx)$ satisfies
Assumption \ref{asfs} with $p_-\in(\oz/(\oz+\eta),\fz)$.
To show this,
we use the same symbols as in the proof of
Theorem \ref{tham}. Indeed, this proof
is similar to that of Theorem \ref{tham} and
the only difference is the estimation of $\mathrm{I}_1$,
which we present now.
Let $q_3\in (1,\fz)$.
By Definition \ref{debb}(ii),
Lemma \ref{lea} with $a=a_{i,j}$ and
$B=B_{i,j}$, the estimate that
$\|\mathbf{1}_{B_{i,j}}\|_{X(\cx)}
\|\cm(a_{i,j})\|_{L^\fz(\cx)}\le
\|\mathbf{1}_{B_{i,j}}\|_{X(\cx)}
\|a_{i,j}\|_{L^\fz(\cx)}\le 1$
for any $i\in\zz$ and $j\in\nn$,
and Proposition \ref{prtz} with
$p=q_3$,
we conclude that
\begin{align*}
\mathrm{I}_1
&\ls \lambda^{-q_3}\lf\|\lf[
\sum^{i_0-1}_{i=-\fz}\sum_{j\in\nn}
\lz_{i,j}\lf(a_{i,j}\r)^*\mathbf{1}_{2A_0B_{i,j}}
\r]^{q_3}\r\|_{X(\cx)}\\
&\ls \lambda^{-q_3}
\lf\|\sum^{i_0-1}_{i=-\fz}\sum_{j\in\nn}
2^i\lf\|\mathbf{1}_{B_{i,j}}
\r\|_{X(\cx)}\cm\lf(a_{i,j}\r)
\mathbf{1}_{2A_0B_{i,j}}
\r\|^{q_3}_{X^{q_3}(\cx)}\\
&\ls \lambda^{-q_3}
\lf\|\sum^{i_0-1}_{i=-\fz}\sum_{j\in\nn}
2^i\mathbf{1}_{2A_0B_{i,j}}
\r\|^{q_3}_{X^{q_3}(\cx)}\\
&\ls \lambda^{-q_3}\lf[\sum^{i_0-1}_{i=-\fz}
2^{i\tz_{q_3}}\lf\|\sum_{j\in\nn}
\mathbf{1}_{2A_0B_{i,j}}
\r\|^{\tz_{q_3}}_{X^{q_3}(\cx)}
\r]^{q_3/\tz_{q_3}},
\end{align*}
where $\tz_{q_3}\in(0,1]$ is as in Proposition \ref{prtz}
with $p=q_3$, and all the implicit positive
constants are independent of $\lz$,
$\{a_{i,j}\}_{i\in\zz,j\in\nn}$, and
$\{B_{i,j}\}_{i\in\zz,j\in\nn}$.
Using this, Proposition \ref{prfs}(i) with $t$,
$\tau$, $\lz_j$, and $B_j$ replaced, respectively,
by $1/q_3$, $2A_0/c$, $1$, and $cB_{i,j}$, the assumption that
$\sum_{j\in\nn}\mathbf{1}_{cB_{i,j}}\le A$
for any $i\in\zz$,
$c\in(0,1]$, $q_3\in(1,\fz)$, and $\lz\sim 2^{i_0}$,
we obtain
\begin{align*}
\mathrm{I}_1
&\ls \lambda^{-q_3}\lf[\sum^{i_0-1}_{i=-\fz}
2^{i\tz_{q_3}}\lf\|\sum_{j\in\nn}
\mathbf{1}_{cB_{i,j}}
\r\|^{\tz_{q_3}}_{X^{q_3}(\cx)}\r]^{q_3/\tz_{q_3}}\\
&\ls \lambda^{-q_3}\lf[\sum^{i_0-1}_{i=-\fz}
2^{i\tz_{q_3}}\lf\|\sum_{j\in\nn}
\mathbf{1}_{cB_{i,j}}
\r\|^{\tz_{q_3}/q_3}_{X(\cx)}\r]^{q_3/\tz_{q_3}}\\
&\ls \lambda^{-q_3}\lf[\sum^{i_0-1}_{i=-\fz}
2^{i\tz_{q_3}(1-1/q_3)}\r]^{q_3/\tz_{q_3}}
\sup_{i\in\zz}\lf\{2^i
\lf\|\sum_{j\in\nn}\mathbf{1}_{B_{i,j}}
\r\|_{X(\cx)}\r\}\\
&\sim \lz^{-1}\sup_{i\in\zz}\lf\{2^i
\lf\|\sum_{j\in\nn}\mathbf{1}_{B_{i,j}}
\r\|_{X(\cx)}\r\},
\end{align*}
where all the implicit positive constants
are independent of $\lz$,
$\{a_{i,j}\}_{i\in\zz,j\in\nn}$, and
$\{B_{i,j}\}_{i\in\zz,j\in\nn}$.
This finishes the estimation of $\mathrm{I}_1$,
and hence the proof of the present remark.
\end{remark}

\subsection{Decomposition\label{ssd}}

The decomposition theorem is as follows.

\begin{theorem}\label{thma}
Let $\oz$ be as in \eqref{eqoz},
$\eta$ as in \eqref{eqwa},
and $X(\cx)$ a \emph{BQBF} space satisfying
Assumption \ref{asfs} with
$p_-\in(\oz/(\oz+\eta),\fz)$.
If $\bz$, $\gz\in (\oz[1/\min\{1,p_-\}-1],\eta)$,
then there exist positive constants
$A$, $\widetilde{A}$, and $C$ such that,
for any $f\in WH_X(\cx)\cap (\gs)'$, there exists
a sequence $\{a_{i,j}\}_{i\in\zz,j\in\nn}$
of $(X(\cx),\fz)$-atoms supported, respectively,
in balls $\{B_{i,j}\}_{i\in\zz,j\in\nn}$
such that
$\sum_{j\in\nn}\mathbf{1}_{B_{i,j}}\le A$
for any $i\in\zz$,
$f=\sum_{i\in\zz}\sum_{j\in\nn}
\widetilde{A}2^i
\|\mathbf{1}_{B_{i,j}}\|_{X(\cx)}
a_{i,j}$ in $(\gs)'$, and
$$
\sup_{i\in\zz}\lf\{2^i\lf\|\sum_{j\in\nn}
\mathbf{1}_{B_{i,j}}\r\|_{X(\cx)}\r\}
\le C\lf\|f\r\|_{WH_X(\cx)}.
$$
\end{theorem}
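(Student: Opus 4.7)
The plan is to perform a Calder\'on--Zygmund decomposition of $f$ at each dyadic level $2^i$ with $i \in \zz$, and to realize the atoms as localized telescoping differences of the resulting good parts. Throughout, the reproducing formula attached to an exp-ATI $\{E_k\}_{k \in \zz}$ as in Definition \ref{dee0} is the basic tool for localizing a distribution to bounded-support pieces carrying the mean-zero moment condition (iii) of Definition \ref{deat}.

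First I would introduce the open grand-maximal level sets $\Omega_i := \{x \in \cx :\ f^*(x) > 2^i\}$ for $i \in \zz$; since $f^* \in WX(\cx)$, each $\Omega_i$ has finite $\mu$-measure and $\bigcap_{i \in \zz} \Omega_i$ is $\mu$-negligible. A Whitney-type covering on $(\cx,d,\mu)$, produced from the dyadic system of Lemma \ref{ledy}, then supplies, for each $i$, a countable family of balls $\{B_{i,j}\}_{j \in \nn}$ with radii comparable to $d(B_{i,j}, \Omega_i^\complement)$ such that $\Omega_i = \bigcup_{j \in \nn} B_{i,j}$ and $\sum_{j \in \nn} \mathbf{1}_{B_{i,j}} \le A$ for a geometric constant $A$. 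A H\"older-regular partition of unity $\{\phi_{i,j}\}$ subordinate to dilates of $\{B_{i,j}\}$ is constructed in the usual way, and I would define bad parts $b_{i,j}$ supported in a fixed enlargement of $B_{i,j}$ with $\int_{\cx} b_{i,j}\,d\mu = 0$ and $\|b_{i,j}\|_{L^\fz(\cx)} \lesssim 2^i$; the last bound is inherited from $|f^*| \le 2^i$ off $\Omega_i$ together with the exponential decay built into the kernels of $\{E_k\}_{k \in \zz}$, as in the proof of \cite[Theorem 4.7]{zyyw20}. Setting $g_i := f - \sum_{j \in \nn} b_{i,j}$ in $(\gs)'$ and telescoping $f = \sum_{i \in \zz}(g_{i+1} - g_i)$, the atoms are then defined by the normalization
$$
a_{i,j} := \widetilde{A}^{\,-1}\, 2^{-i}\, \lf\| \mathbf{1}_{B_{i,j}} \r\|_{X(\cx)}^{-1}\, (g_{i+1} - g_i)_{i,j},
$$
where $(g_{i+1} - g_i)_{i,j}$ denotes the localized $j$-piece. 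Conditions (i)--(iii) of Definition \ref{deat} with $q = \fz$ are then straightforward to verify, and the required quasi-norm bound follows from $\sum_{j \in \nn} \mathbf{1}_{B_{i,j}} \le A\, \mathbf{1}_{\Omega_i}$, Definition \ref{debb}(ii), and the definition of $WX(\cx)$:
$$
2^i \lf\| \sum_{j \in \nn} \mathbf{1}_{B_{i,j}} \r\|_{X(\cx)}
\le A\, 2^i \lf\| \mathbf{1}_{\{x \in \cx:\, f^*(x) > 2^i\}} \r\|_{X(\cx)}
\le A\, \lf\|f^*\r\|_{WX(\cx)} = A\, \|f\|_{WH_X(\cx)}.
$$

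The principal obstacle is proving that the double series $\sum_{i \in \zz} \sum_{j \in \nn} \widetilde{A}\, 2^i \|\mathbf{1}_{B_{i,j}}\|_{X(\cx)}\, a_{i,j}$ converges to $f$ in $(\gs)'$, since $WX(\cx)$ offers no local integrability and $\|\cdot\|_{X(\cx)}$ lacks a triangle inequality. Here I would exploit the novelty emphasized in the introduction, namely the interplay between $WX(\cx)$ and the $p$-convexifications $X^p(\cx)$ encoded in Proposition \ref{prin}: pairing a partial sum against a test function $\varphi \in \gs$ and splitting the range of $i$ at a threshold $i_0$, the high-$i$ tail is controlled via Proposition \ref{prin}(i) applied to $f\, \mathbf{1}_{\{f^* > 2^{i_0}\}}$ for some exponent $p_1 \in (0,1)$ paired with the pointwise grand-maximal decay of $\varphi$, while the low-$i$ tail is controlled via Proposition \ref{prin}(ii), the atom bound $\|a_{i,j}\|_{L^\fz(\cx)} \le \|\mathbf{1}_{B_{i,j}}\|_{X(\cx)}^{-1}$, and the rapid decay of $\varphi$. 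The Aoki--Rolewicz-type estimate of Proposition \ref{prtz} is what upgrades these partial-sum bounds to genuine series convergence. Letting $i_0 \to \pm \fz$ then yields both convergence in $(\gs)'$ and identification of the sum with $f$, completing the proof.
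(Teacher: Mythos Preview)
Your outline captures the broad telescoping philosophy, but there are two genuine gaps that the paper's proof is specifically engineered to close.

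First, you treat $f$ as a function: you form $b_{i,j}$ by multiplying $f$ against a partition of unity and assert $\|b_{i,j}\|_{L^\fz(\cx)}\lesssim 2^i$. But $f$ is only an element of $(\gs)'$; there is no pointwise product $f\cdot\phi_{i,j}$ available, and in any case the bad part would be supported \emph{inside} $\Omega_i$, where $f^*$ exceeds $2^i$, so the claimed $L^\fz$ bound cannot come from ``$|f^*|\le 2^i$ off $\Omega_i$''. The paper resolves this by first replacing $f$ with the genuine function $P_m(f)$ for a $1$-exp-ATI $\{P_m\}$, carrying out the Calder\'on--Zygmund decomposition of $P_m(f)$ at each level $2^i$ to produce $h^{(m)}_{i,j}$ with uniform-in-$m$ support, moment, and $L^\fz$ bounds (the $L^\fz$ bound for $h^{(m)}_{i,j}$, not for $b^{(m)}_{i,j}$, comes from the telescoping correction in \eqref{al1}), and only then extracting the atoms $h_{i,j}$ as weak-$*$ limits in $L^\fz$ via the Alaoglu theorem and a diagonal argument. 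This regularize--decompose--pass-to-limit scheme is the heart of the argument and is missing from your proposal.

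Second, you take $\Omega_i=\{x:\ f^*(x)>2^i\}$ and assert it is open. The paper explicitly warns (just before Lemma~\ref{lefs}) that this level set need not be open for the grand maximal $f^*$ on a general space of homogeneous type; one must pass to the equivalent metric $\rho$ of Mac\'ias--Segovia and use the modified grand maximal $\fs$ of \eqref{eqfs}, for which Lemma~\ref{lefs} guarantees both openness of the level sets and $\fs\sim f^*$. Your Whitney covering step therefore needs $\fs$, not $f^*$. Finally, for the convergence of the series in $(\gs)'$, the paper does use Proposition~\ref{prin}(i) with an exponent $q_0\in(\frac{\oz}{p_-(\oz+\bz)},1)$ much as you suggest, but the actual mechanism is to bound $\sum_j[b^{(m)}_{N,j}]^*$ in $X^{q_0}(\cx)$ via the pointwise estimate \eqref{t2e7} and Assumption~\ref{asfs}, then invoke Proposition~\ref{prin}(i) on $\fs\mathbf{1}_{\Omega_{N+1}}$; your sketch of ``high-$i$'' versus ``low-$i$'' tails via Propositions~\ref{prin}(i) and (ii) does not match what is actually needed.
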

To prove this theorem, we need to make some preparation.
We first establish the following technical lemma which is a
slight different variant of \cite[Proposition 4.4]{hhllyy}.
\begin{lemma}\label{lewh1}
Let $A_0$ be as in \eqref{eqA0}, and $\Oz\subset\cx$ a proper
open subset. Then there exists a sequence
$\{x_j\}_{j\in\nn}\subset \Oz$ such that
\begin{enumerate}
\item[\textup{(i)}]
$\{B(x_j,(5A_0^3)^{-1}r_j)\}_{j\in\nn}$
is pairwise disjoint, where, for any $j\in\nn$,
$r_j:=(2A_0)^{-5}d(x_j,\Oz^{\complement})$;
\item[\textup{(ii)}]
$\bigcup_{j\in\nn}B(x_j,r_j)=\Oz$;
\item[\textup{(iii)}]
for any $j\in\nn$ and $x\in B(x_j,r_j)$,
$(2A_0)^4r_j\le d(x,\Oz^{\complement})\le 48A_0^6r_j$;
\item[\textup{(iv)}]
for any $j\in\nn$, there exists a
$y_j\in \Oz^\complement$ such that
$d(x_j,y_j)<48A^5_0r_j$;
\item[\textup{(v)}]
for any $j\in\nn$,
$\# \{l\in \nn:\ (2A_0)^4B(x_l.r_l)
\cap (2A_0)^4B(x_j,r_j)\neq\emptyset\}\le L_0$,
where $L_0$ is a positive integer
independent of $j$ and $\Oz$.
\end{enumerate}
\end{lemma}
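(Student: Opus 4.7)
The plan is to carry out a Whitney-type covering of $\Oz$ adapted to the quasi-metric $d$. For each $x\in\Oz$, I would define $r(x):=(2A_0)^{-5}d(x,\Oz^{\complement})$ and first establish, using the quasi-triangle inequality \eqref{eqA0}, that $r$ is slowly varying: if $d(x,y)\le r(x)$, then $d(y,\Oz^{\complement})$ is comparable to $d(x,\Oz^{\complement})$ with constants depending only on $A_0$. This slow-variation property is the source of both (iii) and (iv) once the centers $\{x_j\}_{j\in\nn}$ have been selected.

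Next, I would apply a Vitali-type selection argument to the family $\{B(x,(5A_0^3)^{-1}r(x))\}_{x\in\Oz}$. Since $(\cx,d,\mu)$ is separable (balls have finite positive measure and $\mu$ is Borel regular), a standard greedy exhaustion on a countable dense subset of $\Oz$ yields a countable, maximal, pairwise disjoint subfamily $\{B(x_j,(5A_0^3)^{-1}r_j)\}_{j\in\nn}$ with $r_j:=r(x_j)$; this gives (i). For (ii), the inclusion $\bigcup_j B(x_j,r_j)\subset\Oz$ is immediate from $r_j<d(x_j,\Oz^{\complement})$, while for the reverse, given $x\in\Oz$ maximality forces an index $j$ with $B(x,(5A_0^3)^{-1}r(x))\cap B(x_j,(5A_0^3)^{-1}r_j)\neq\emptyset$, and the factor $(5A_0^3)^{-1}$ is tuned so that, combined with \eqref{eqA0} and the slow-variation of $r$, one obtains $d(x,x_j)\le r_j$. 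Property (iii) is then a direct two-sided estimate coming from $r_j=(2A_0)^{-5}d(x_j,\Oz^{\complement})$ and iterated applications of \eqref{eqA0}, and (iv) follows by choosing $y_j\in\Oz^{\complement}$ within $r_j$ of the infimum $d(x_j,\Oz^{\complement})=(2A_0)^5 r_j$.

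The hard part will be the bounded overlap estimate (v). Suppose $(2A_0)^4 B(x_l,r_l)\cap(2A_0)^4 B(x_j,r_j)\neq\emptyset$; then (iii) together with \eqref{eqA0} forces $r_l\sim r_j$ with constants depending only on $A_0$, and consequently $B(x_l,(5A_0^3)^{-1}r_l)\subset B(x_j,Cr_j)$ for some $C=C(A_0)$. By the disjointness from (i) and the doubling condition \eqref{eqcu},
\begin{align*}
\sum_{l}\mu\lf(B\lf(x_l,(5A_0^3)^{-1}r_l\r)\r)
\le \mu\lf(B(x_j,Cr_j)\r)
\le C_1\mu\lf(B\lf(x_j,(5A_0^3)^{-1}r_j\r)\r)
\end{align*}
with $C_1=C_1(A_0,C_{(\mu)})$. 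Bounding each summand from below by a uniform fraction of $\mu(B(x_j,(5A_0^3)^{-1}r_j))$ via \eqref{eqoz} and $r_l\sim r_j$ then produces a uniform cardinality bound $L_0$ depending only on $A_0$ and $C_{(\mu)}$. The delicate part throughout is keeping careful track of the powers of $A_0$ at every application of \eqref{eqA0}, so that the bookkeeping yields precisely the stated numerical constants $(2A_0)^{-5}$, $(5A_0^3)^{-1}$, $(2A_0)^4$, $48A_0^6$, and $48A_0^5$ appearing in (i)--(iv).
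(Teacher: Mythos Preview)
Your proposal is correct and follows the standard Whitney-covering argument, which is exactly what the paper invokes: the paper does not write out a proof but cites \cite[Proposition~4.4]{hhllyy} and only adds the remark that the hypothesis $\mu(\Oz)<\infty$ there was used solely to ensure countability of the selected centers, which instead follows from the pairwise disjointness of the balls together with the doubling condition \eqref{eqcu}. Your route to countability via separability of $(\cx,d)$ is an equivalent observation (doubling implies separability), and the remaining steps you outline---slow variation of $r(\cdot)$, Vitali selection, and the bounded-overlap count via \eqref{eqoz}---are precisely the ingredients of the cited proof.
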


Lemma \ref{lewh1} when $\mu(\Oz)<\fz$ is just \cite[Proposition 4.4]{hhllyy}.
Observe that the condition that $\mu(\Oz)<\fz$
in the proof of \cite[Proposition 4.4]{hhllyy}
is only used to show
that the set of the chosen points,
which was denoted by $\{x_j\}_{j\in I}$
with $I$ being a set of indices therein,
is countable, but this
can be deduced directly from both the pairwise disjointness
of balls, $\{B(x_j,(5A_0^3)^{-1}r_j)\}_{j\in I}$, and the doubling
condition of $\mu$. This implies that Lemma \ref{lewh1}
still holds true even when $\Omega$
is just a proper subset of $\cx$.
We omit the details.

Using Lemma \ref{lewh1},
we can establish the following lemma which is a slight
different variant of \cite[Proposition 4.5]{hhllyy};
we also omit the details.
\begin{lemma}\label{lewh2}
Let $\Oz$, $\{x_j\}_{j\in\nn}$, and $\{r_j\}_{j\in\nn}$
be as in Lemma \ref{lewh1}. For any $j\in\nn$, let
$$B_j:=(2A_0)^4B(x_j,r_j).$$
Then there exists a sequence $\{\phi_j\}_{j\in\nn}$
of non-negative functions such that
\begin{enumerate}
\item[\textup{(i)}]
for any $j\in\nn$, $0\le \phi_j\le 1$ and
$\supp(\phi_j)\subset (2A_0)^{-3}B_j$;
\item[\textup{(ii)}]
$\sum_{j\in\nn}\phi_j=\mathbf{1}_{\Oz}$;
\item[\textup{(iii)}]
for any $j\in\nn$ and $x\in(2A_0)^{-4}B_j$,
$\phi_j(x)\ge L_0^{-1}$,
where $L_0$ is as in Lemma \ref{lewh1};
\item[\textup{(iv)}]
for any $j\in\nn$,
$\|\phi_j\|_{\cg(x_j,r_j,\eta,\eta)}\le CV_{r_j}(x_j)$,
where $C$ is a positive constant independent of
both $j$ and $\Oz$.
\end{enumerate}
\end{lemma}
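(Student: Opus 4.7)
The plan is to construct $\{\phi_j\}_{j\in\nn}$ as the normalization of H\"{o}lder continuous bumps adapted to the Whitney-type balls produced by Lemma \ref{lewh1}. For each $j\in\nn$, using \cite[Corollary 4.2]{ah13} (the same construction invoked in the proof of Proposition \ref{prd}), I produce $\psi_j:\cx\to[0,1]$ satisfying $\mathbf{1}_{(2A_0)^{-4}B_j}\le\psi_j\le\mathbf{1}_{(2A_0)^{-3}B_j}$ together with $|\psi_j(y)-\psi_j(y')|\le C[d(y,y')/r_j]^{\eta}$, where $C$ is independent of $j$. Since $(2A_0)^{-3}B_j=B(x_j,2A_0r_j)$ and $r_j=(2A_0)^{-5}d(x_j,\Oz^{\complement})$, the quasi-triangle inequality \eqref{eqA0} forces $\supp(\psi_j)\subset\Oz$. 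I then set $\Psi:=\sum_{l\in\nn}\psi_l$ and define $\phi_j:=\psi_j/\Psi$ on $\Oz$ and $\phi_j\equiv 0$ on $\Oz^{\complement}$.

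The key structural observation is that $1\le\Psi\le L_0$ on $\Oz$: the lower bound comes from Lemma \ref{lewh1}(ii), since every $x\in\Oz$ lies in some $B(x_l,r_l)=(2A_0)^{-4}B_l$ and hence $\psi_l(x)=1$; the upper bound comes from Lemma \ref{lewh1}(v), since at most $L_0$ indices $l$ can have $x\in(2A_0)^{-3}B_l$. From this, properties (i), (ii), and (iii) follow immediately: $\phi_j\le\psi_j\le 1$ together with the support inclusion for $\psi_j$ yields (i); $\sum_{j\in\nn}\phi_j=\Psi/\Psi=\mathbf{1}_{\Oz}$ yields (ii); and on $(2A_0)^{-4}B_j$, $\psi_j=1$ combined with $\Psi\le L_0$ forces $\phi_j\ge L_0^{-1}$, yielding (iii).

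The main technical step is (iv). For the size condition of $\cg(x_j,r_j,\eta,\eta)$, on $\supp(\phi_j)\subset B(x_j,2A_0r_j)$ the doubling condition \eqref{eqcu} gives both $V(x_j,y)\ls V_{r_j}(x_j)$ and $[r_j/(r_j+d(x_j,y))]^{\eta}\sim 1$, so $\phi_j(y)\le 1\ls V_{r_j}(x_j)G_{r_j,\eta}(x_j,y)$. For the regularity condition, when $y,\,y'\in\Oz$ I expand
\begin{equation*}
\phi_j(y)-\phi_j(y')=\frac{\psi_j(y)-\psi_j(y')}{\Psi(y)}+\psi_j(y')\,\frac{\Psi(y')-\Psi(y)}{\Psi(y)\Psi(y')},
\end{equation*}
apply $\Psi\ge 1$, the H\"{o}lder estimate on $\psi_j$, and the fact that $|\Psi(y)-\Psi(y')|$ is a sum of at most $L_0$ such estimates indexed by those $l$ with $\{y,y'\}\cap(2A_0)^{-3}B_l\neq\emptyset$, for each of which $r_l\sim r_j$; this comparability follows from the identity $r_l=(2A_0)^{-5}d(x_l,\Oz^{\complement})$ combined with \eqref{eqA0}, since the overlap of supports forces $x_l$ close to $x_j$ and therefore $d(x_l,\Oz^{\complement})\sim d(x_j,\Oz^{\complement})$. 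If instead $y\notin\Oz$, then $\psi_j(y)=0$ and $|\phi_j(y)-\phi_j(y')|=\phi_j(y')\le\psi_j(y')\le C[d(y,y')/r_j]^{\eta}$ follows directly from the H\"{o}lder estimate on $\psi_j$.

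In each case this gives $|\phi_j(y)-\phi_j(y')|\ls[d(y,y')/r_j]^{\eta}$, and converting this into the precise form $V_{r_j}(x_j)[d(y,y')/(r_j+d(x_j,y))]^{\eta}G_{r_j,\eta}(x_j,y)$ demanded by Definition \ref{deg} uses that, on the admissible range $d(y,y')\le(2A_0)^{-1}[r_j+d(x_j,y)]$, a short application of \eqref{eqA0} shows that both endpoints must lie within a bounded multiple of $r_j$ of $x_j$ whenever $\phi_j$ fails to vanish at both of them, whence $r_j+d(x_j,y)\sim r_j$ and $V_{r_j}(x_j)G_{r_j,\eta}(x_j,y)\sim 1$ in the effective region. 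The principal obstacle is the comparability $r_l\sim r_j$ for overlapping balls and the case analysis required to upgrade the H\"{o}lder bound to the weighted form of Definition \ref{deg}; both are managed by repeated use of \eqref{eqA0} and \eqref{eqcu}.
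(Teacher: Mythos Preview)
Your proof is correct and follows exactly the standard partition-of-unity construction (H\"{o}lder bumps from \cite[Corollary 4.2]{ah13}, normalized by their sum) to which the paper defers when it cites \cite[Proposition 4.5]{hhllyy} and omits the details. One tiny remark: in the regularity step the number of indices $l$ contributing to $|\Psi(y)-\Psi(y')|$ is at most $2L_0$ rather than $L_0$ (accounting for both $y$ and $y'$), but this is harmless.
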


Observe that, for any $f\in WH_X(\cx)$
and $\lz\in (0,\fz)$, the level set
$\left\{x\in\cx:\ f^*(x)>\lz\right\}$
is not necessarily open.
To overcome this difficulty,
we borrow some ideas from \cite[Theorem 2]{ms79a}
and \cite[Definition 2.5]{gly08}.
Indeed, from the proof of \cite[Theorem 2]{ms79a},
it follows that
there exist a metric $\rho$ and constants
$\ez\in(0,1]$ and $C\in(0,\fz)$ such that,
for any $x$, $y\in\cx$,
\begin{equation}\label{eqdr}
C^{-1}[d(x,y)]^{\varepsilon}
\le \rho(x,y)
\le C [d(x,y)]^{\varepsilon}.
\end{equation}
For any $x\in\cx$ and
$r\in (0,\fz)$, we define the \emph{$\rho$-ball}
$B_\rho(x,r):=\lf\{y\in\cx:\ \rho(x,y)<r\right\}$.
Then $(\cx,\rho,\mu)$ is a doubling metric measure space and,
for any $x,\ y\in\cx$ and $r\in (0,\fz)$, we have
\begin{equation}\label{eqbdr}
\mu(B(y,r+d(x,y)))
\sim\mu\lf(B_\rho\lf(y,[r+d(x,y)]^{\varepsilon}\r)\r)
\sim \mu\lf(B_\rho\lf(y,r^{\varepsilon}+\rho(x,y)\r)\r),
\end{equation}
where the positive equivalence constants are
independent of $x$, $y$, and $r$.
We now recall the following concept of test functions
in terms of the metric $\rho$;
see, for instance, \cite[Definition 2.5]{gly08}.
\begin{definition}
Let $x\in\cx$, $r\in(0,\fz)$, $\bz\in(0,1]$,
and $\gz\in (0,\fz)$.
The \emph{space $\cg_\rho(x,r,\bz,\gz)$ of test functions}
is defined to be the set of all the functions $f$ satisfying
that there exists a positive constant $C$ such that
\begin{enumerate}
\item[(i)] (the \emph{size condition})
for any $y\in\cx$,
\begin{equation*}
|f(y)|\le C\frac{1}{\mu(B_\rho(y,r+\rho(x,y)))}
\lf[\frac{r}{r+\rho(x,y)}\r]^{\gz};
\end{equation*}
\item[(ii)] (the \emph{regularity condition})
for any $y$, $y'\in\cx$
satisfying $\rho(y,y')\le [r+\rho(x,y)]/2$,
\begin{equation*}
\lf|f(y)-f(y')\r|\le C\lf[\frac{\rho(y,y')}
{r+\rho(x,y)}\r]^{\bz}
\frac{1}{\mu(B_\rho(y,r+\rho(x,y)))}
\lf[\frac{r}{r+\rho(x,y)}\r]^{\gz}.
\end{equation*}
\end{enumerate}
Moreover, for any $f\in \cg_\rho(x,r,\bz,\gz)$, let
\begin{equation*}
\|f\|_{\cg_\rho(x,r,\bz,\gz)}:=
\inf \{C\in (0,\fz):\ C\
\mathrm{satisfies}\ \hbox{(i) and (ii)}\}.
\end{equation*}
\end{definition}

By \eqref{eqdr}, \eqref{eqbdr}, and Lemma \ref{lees}(i),
we conclude that
\begin{equation}\label{eqgr}
\cg(x,r,\bz,\gz)
=\cg_\rho(x,r^\varepsilon,
\bz/\varepsilon,\gz/\varepsilon)
\end{equation}
with equivalent quasi-norms, where the positive
equivalence constants are
independent of both $x$ and $r$.
Recall that, for any $f\in(\gs)'$,
the \emph{modified grand maximal function $\fs$}
of $f$ is defined by setting, for any $x\in\cx$,
\begin{equation}\label{eqfs}
\fs(x):=\sup\lf\{|\langle f,\phi\rangle|:\
\phi\in \gs\ \mathrm{with}\
\|\phi\|_{\cg_\rho(x,r^\varepsilon,
\bz/\varepsilon,\gz/\varepsilon)}\le 1\
\mathrm{for\ some}\ r\in (0,\fz)\r\}.
\end{equation}
By \eqref{eqgr} and an argument
similar to that used in the proof of
\cite[Remark 2.9(iii)]{gly08},
 we can obtain the following lemma;
we omit the details here.

\begin{lemma}\label{lefs}
Let $\eta$ be as in \eqref{eqwa} and
$\bz$, $\gz\in(0,\eta)$.
Then, for any $x\in\cx$ and $f\in (\gs)'$,
$f^*(x)\sim \fs(x)$ with the positive
equivalence constants
independent of both $f$ and $x$.
Moreover, for any $f\in (\gs)'$
and $\lz\in(0,\fz)$, the level set
$\{x\in\cx:\ \fs(x)>\lz\}$
is open under the topology induced by $d$.
\end{lemma}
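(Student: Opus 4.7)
The statement bundles two separate assertions. The equivalence $f^*(x) \sim \fs(x)$ is immediate from \eqref{eqgr}, which provides $\cg(x, r, \bz, \gz) = \cg_\rho(x, r^\varepsilon, \bz/\varepsilon, \gz/\varepsilon)$ with equivalence constants for the norms independent of both $x$ and $r$. Picking universal $c_1, c_2 \in (0, \fz)$ with $c_1 \|\phi\|_{\cg(x, r, \bz, \gz)} \le \|\phi\|_{\cg_\rho(x, r^\varepsilon, \bz/\varepsilon, \gz/\varepsilon)} \le c_2 \|\phi\|_{\cg(x, r, \bz, \gz)}$, for any $\phi$ with $\|\phi\|_{\cg(x, r, \bz, \gz)} \le 1$ the rescaling $\phi/c_2$ satisfies $\|\phi/c_2\|_{\cg_\rho(x, r^\varepsilon, \bz/\varepsilon, \gz/\varepsilon)} \le 1$, whence $|\langle f, \phi\rangle| \le c_2 \fs(x)$; taking the supremum over admissible $(\phi, r)$ yields $f^*(x) \le c_2 \fs(x)$, and the reverse direction is symmetric. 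Since $c_1, c_2$ depend on neither $f$ nor $x$, the equivalence follows.

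For the openness of $\Oz_\lz := \{x \in \cx : \fs(x) > \lz\}$, observe that the $d$- and $\rho$-topologies on $\cx$ coincide by \eqref{eqdr}, so $\rho$-openness suffices. Given $x_0 \in \Oz_\lz$, pick $\phi_0 \in \gs$ and $r_0 \in (0, \fz)$ with $\|\phi_0\|_{\cg_\rho(x_0, r_0^\varepsilon, \bz/\varepsilon, \gz/\varepsilon)} \le 1$ and $|\langle f, \phi_0\rangle| > \lz$. The strict inequality permits fixing some $\tz \in (1, |\langle f, \phi_0\rangle|/\lz)$. The plan is to exhibit, for every $x$ in a sufficiently small $\rho$-ball around $x_0$, a radius $r_x > 0$ with
\[
\|\phi_0\|_{\cg_\rho(x, r_x^\varepsilon, \bz/\varepsilon, \gz/\varepsilon)} \le \tz,
\]
so that $\phi_x := \phi_0/\tz$ is an admissible test function for $\fs(x)$ and $|\langle f, \phi_x\rangle| = |\langle f, \phi_0\rangle|/\tz > \lz$, forcing $x \in \Oz_\lz$.

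To construct $r_x$, I would set $r_x^\varepsilon := r_0^\varepsilon + \rho(x, x_0)$ and exploit the fact that $\rho$ is a \emph{genuine} metric: the exact triangle inequality $|\rho(x, y) - \rho(x_0, y)| \le \rho(x, x_0)$ yields, for every $y \in \cx$,
\[
r_0^\varepsilon + \rho(x_0, y) \le r_x^\varepsilon + \rho(x, y) \le r_0^\varepsilon + \rho(x_0, y) + 2\rho(x, x_0).
\]
Combined with \eqref{eqbdr} and the doubling property of $\mu$, these two-sided bounds enable a pointwise comparison (in $y$) of the size and regularity kernels defining $\|\phi_0\|_{\cg_\rho(x, r_x^\varepsilon, \bz/\varepsilon, \gz/\varepsilon)}$ with those defining $\|\phi_0\|_{\cg_\rho(x_0, r_0^\varepsilon, \bz/\varepsilon, \gz/\varepsilon)}$. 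The main obstacle is the \emph{fine} tracking of the resulting multiplicative error, ensuring that it becomes strictly smaller than $\tz$ once $\rho(x, x_0)$ is small enough; this is precisely where the metric (rather than quasi-metric) property of $\rho$ is indispensable, since the constant-$1$ triangle inequality allows additive, rather than multiplicative, perturbations of the center. The slack $\tz > 1$ extracted from the strict inequality $|\langle f, \phi_0\rangle| > \lz$ then absorbs any remaining multiplicative loss, completing the argument along the lines of \cite[Remark 2.9(iii)]{gly08}.
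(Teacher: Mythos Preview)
Your proposal is correct and follows essentially the same route the paper indicates: the equivalence $f^*\sim f^\star$ via \eqref{eqgr}, and the openness of the level sets by passing to the genuine metric $\rho$ and recentering the test-function norm, exactly as in \cite[Remark 2.9(iii)]{gly08}. The paper omits the details and cites the same reference, so your sketch in fact supplies more than the paper does.
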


Using the above preparation, the
Calder\'{o}n--Zygmund decomposition, and
the relation between $WX(\cx)$ and the convexification
of $X(\cx)$, we can now show Theorem \ref{thma}.

\begin{proof}[Proof of Theorem \ref{thma}]
Let $X(\cx)$, $p_-$, $\bz$, and $\gz$
be as in the present theorem,
$\{P_m\}_{m\in\zz}$ a 1-exp-ATI as
in Definition \ref{dee1}, and
$f\in WH_X(\cx)\cap(\gs)'$.
By \cite[Lemma 3.2(iii)]{hhllyy},
we find that, for any $\varphi\in\gs$,
\begin{equation}\label{t2e3}
\lim_{m\to\fz}\langle P_m(f),\varphi\rangle
=\langle f,\varphi\rangle.
\end{equation}
Next, we divide the remainder of
the present proof into three steps.

Step 1) In this step, we show that, for any $m\in\nn$,
there exists a sequence
$\{h^{(m)}_{i,j}\}_{i\in\zz,j\in\nn}
\subset L^\fz(\cx)$ supported, respectively, in balls
$\{B_{i,j}\}_{i\in\zz,j\in\nn}$ such that, for any
$\psi\in \cg_{\mathrm{b}}(\eta,\eta)$,
\begin{equation*}
\lf\langle P_m(f),\psi\r\rangle
=\sum_{i\in\zz}\sum_{j\in\nn}
\lf\langle h^{(m)}_{i,j},\psi\r\rangle.
\end{equation*}
Moreover, for any $i\in\zz$ and $j\in\nn$,
$\int_{\cx}h^{(m)}_{i,j}(z)\,d\mu(z)=0$ and
$\|h^{(m)}_{i,j}\|_{L^{\fz}(\cx)}\le \widetilde{A}2^i$
with $\widetilde{A}$ being a positive constant
independent of $f$, $m$, $i$, and $j$.
Furthermore, for any $i\in\zz$,
$\sum_{j\in\nn}\mathbf{1}_{B_{i,j}}\le L_0$
with $L_0$ as in Lemma \ref{lewh1}.

By Lemma \ref{lefs}, we find that,
for any $i\in\zz$, the set
$$\Omega_i:=\lf\{x\in\cx: \fs(x)>2^i\r\}$$
is open, where $\fs$ is as in \eqref{eqfs}.
From the definition of $WX(\cx)$,
$\fs\sim f^*$, Proposition \ref{prwb},
Definition \ref{debb}(ii) with $X(\cx)$
replaced by $WX(\cx)$,
and $f^*\in WX(\cx)$,
it follows that, for any $i\in\zz$,
\begin{equation*}
\lf\|\mathbf{1}_{\Oz_i}\r\|_{X(\cx)}
\le 2^{-i}\lf\|\fs\r\|_{WX(\cx)}
\sim 2^{-i}\lf\|f^*\r\|_{WX(\cx)}
<\fz,
\end{equation*}
which, together with Proposition \ref{prfs}(ii), further
implies that $\Oz_i$ is a proper subset of $\cx$.
Using Lemmas \ref{lewh1} and \ref{lewh2} with
$\Oz$ replaced by $\Oz_i$,  we find that,
for any $i\in\zz$, there
exist a sequence of points,
$\{x_{i,j}\}_{j\in\nn}\subset\cx$, and
a sequence of non-negative functions,
$\{\phi_{i,j}\}_{j\in\nn}$, satisfying
all the conclusions of these lemmas.
For any $i\in\zz$ and $j\in\nn$, let
$$
r_{i,j}:=\lf(2A_0\r)^{-5}
d\lf(x_{i,j},\Omega_i^\complement\r)
\quad \mathrm{and}\quad
B_{i,j}:=(2A_0)^4B\lf(x_{i,j},r_{i,j}\r),
$$
where $A_0$ is as in \eqref{eqA0}.
Observe that, for any $i\in\zz$,
\begin{equation}\label{eqbl0}
\sum_{j\in\nn}\mathbf{1}_{B_{i,j}}\le L_0\mathbf{1}_{\Oz_i}
\end{equation}
with $L_0$ as in Lemma \ref{lewh1}.

For any $m$, $j$, $l\in\nn$, $i\in\zz$,
and $x\in\cx$, let
$$W^{(m)}_{i,j}:=\lf\|\phi_{i,j}\r\|^{-1}_{L^1(\cx)}
\int_{\cx}P_m(f)(z)\phi_{i,j}(z)\,d\mu(z),$$
\begin{equation*}
b^{(m)}_{i,j}(x):=\lf[P_m(f)(x)-W^{(m)}_{i,j}\r]
\phi_{i,j}(x),
\end{equation*}
and
\begin{equation*}
L^{(m)}_{i,j,l}:=\lf\|\phi_{i+1,l}\r\|_{L^1(\cx)}^{-1}
\int_{\cx}b^{(m)}_{i+1,l}(z)\phi_{i,j}(z)\,d\mu(z).
\end{equation*}

By some arguments similar to those used in the estimations
of \cite[(4.25) and (4.30)]{zhy20}, we find that,
for any $m$, $j$, $l\in\nn$ and $i\in\zz$,
\begin{equation}\label{t2eq3}
\lf|W^{(m)}_{i,j}\r|+\lf|L^{(m)}_{i,j,l}\r|
\ls 2^i
\end{equation}
with the implicit positive constant independent
of $f$, $m$, $i$, $j$, and $l$.
Applying an argument similar to that used
in the estimation of
\cite[(4.21)]{zhy20}, we conclude that,
for any $m$, $j\in\nn$ and $i\in\zz$,
\begin{align}\label{t2e7}
\lf[b^{(m)}_{i,j}\r]^*\ls
f^*\mathbf{1}_{B_{i,j}}+2^i
\frac{V_{r_{i,j}}(x_{i,j})}
{V_{r_{i,j}}(x_{i,j})+V(x_{i,j},x)}
\lf[\frac{r_{i,j}}{r_{i,j}+d(x_{i,j},x)}\r]^{\bz}
\mathbf{1}_{(B_{i,j})^{\complement}}
\end{align}
with the implicit positive constant independent
of $f$, $m$, $i$, and $j$.
From Lemma \ref{lewh2}(i) with $\Oz$ replaced by $\Oz_i$,
it follows that, for any $m$, $j\in\nn$ and $i\in\zz$,
\begin{equation}\label{eqsupp}
\supp(b^{(m)}_{i,j})
\subset \supp(\phi_{i,j})
\subset (2A_0)^{-3}B_{i,j}.
\end{equation}

By \eqref{eqbl0}, we find that,
for any $i\in\zz$ and $x\in\cx$,
\begin{equation*}
\#\mathcal{L}_i(x):
=\#\lf\{l\in\nn:\
x\in (2A_0)^{-3}B_{i+1,l}\r\}
\le L_0.
\end{equation*}
Using \eqref{eqsupp},
we conclude that, for any $m\in\nn$,
$i\in\zz$, $x\in\cx$,
and $l\in \nn\setminus \mathcal{L}_i(x)$,
\begin{equation*}
b^{(m)}_{i+1,l}(x)=\phi_{i+1,l}(x)=0.
\end{equation*}
Thus, for any $m$, $j\in\nn$,
$i\in\zz$, and $x\in\cx$,
\begin{align}\label{al2}
h^{(m)}_{i,j}(x):
&=b^{(m)}_{i,j}(x)-\sum_{l\in\nn}
\lf[b^{(m)}_{i+1,l}(x)\phi_{i,j}(x)-
L^{(m)}_{i,j,l}\phi_{i+1,l}(x)\r]\\
&=b^{(m)}_{i,j}(x)-\sum_{l\in\mathcal{L}_i(x)}
\lf[b^{(m)}_{i+1,l}(x)\phi_{i,j}(x)-
L^{(m)}_{i,j,l}\phi_{i+1,l}(x)\r]\notag
\end{align}
is well defined. From this, the definitions
of both $b^{(m)}_{i,j}$ and $b^{(m)}_{i+1,l}$,
$\sum_{l\in\nn}\phi_{i+1,l}=\mathbf{1}_{\Oz_{i+1}}$
for any $i\in\zz$,
$|P_m(f)|\ls f^*\sim\fs$ for any $m\in\nn$, and \eqref{t2eq3},
we find that,
for any $m$, $j\in\nn$, $i\in\zz$,
and $x\in\cx$,
\begin{align}\label{al1}
\lf|h^{(m)}_{i,j}(x)\r|
&=\lf|\lf[P_m(f)(x)-W^{(m)}_{i,j}\r]\phi_{i,j}(x)
-\sum_{l\in\mathcal{L}_{i}(x)}\lf[P_m(f)(x)
-W^{(m)}_{i+1,l}\r]\r.\\
&\quad\lf.\times\phi_{i+1,l}(x)\phi_{i,j}(x)
+\sum_{l\in\mathcal{L}_i(x)}L^{(m)}_{i,j,l}
\phi_{i+1,l}(x)\r|\notag\\
&=\lf|P_m(f)(x)\r|\mathbf{1}_{(\Oz_{i+1})^{\complement}}(x)
\phi_{i,j}(x)+\lf|W^{(m)}_{i,j}\r|\phi_{i,j}(x)\notag\\
&\quad+\sum_{l\in\mathcal{L}_{i}(x)}
\lf|W^{(m)}_{i+1,l}\r|\phi_{i+1,l}(x)\phi_{i,j}(x)+
\sum_{l\in\mathcal{L}_{i}(x)}
\lf|L^{(m)}_{i,j,l}\r|\phi_{i+1,l}(x)\notag\\
&\le \widetilde{A}2^{i},\notag
\end{align}
where $\widetilde{A}$ is a positive constant
independent of $f$, $m$, $i$, and $j$.
Thus, for any $m$, $j\in\nn$ and $i\in\zz$,
\begin{equation}\label{eqlfz}
\lf\|h^{(m)}_{i,j}\r\|_{L^{\fz}(\cx)}\le
\widetilde{A}2^i.
\end{equation}
Then we show that,
for any $m$, $j\in\nn$ and $i\in\zz$,
\begin{equation}\label{t2eq2}
\supp\lf(h^{(m)}_{i,j}\r)\subset B_{i,j}
\end{equation}
\ and
\begin{equation}\label{t2e6}
\int_{\cx}h^{(m)}_{i,j}(z)\,d\mu(z)=0.
\end{equation}

Let $m$, $j\in\nn$ and $i\in\zz$.
We first prove \eqref{t2eq2}.
By the definition of $L^{(m)}_{i,j,l}$,
and \eqref{eqsupp},
we find that, for any $l\in\nn$ with
$(2A_0)^{-3}B_{i+1,l}
\cap (2A_0)^{-3}B_{i,j}=\emptyset$,
\begin{equation}\label{t2e5}
L^{(m)}_{i,j,l}
=\lf\|\phi_{i+1,l}\r\|_{L^1(\cx)}^{-1}
\int_{\cx}b^{(m)}_{i+1,l}(z)
\mathbf{1}_{(2A_0)^{-3}B_{i+1,l}}(z)
\phi_{i,j}(z)
\mathbf{1}_{(2A_0)^{-3}B_{i,j}}(z)
\,d\mu(z)=0.
\end{equation}
We now claim that, for any $l\in\nn$ with
$(2A_0)^{-3}B_{i+1,l}
\cap (2A_0)^{-3}B_{i,j}
\neq\emptyset$,
\begin{equation}\label{t2eq5}
(2A_0)^{-3}B_{i+1,l}\subset B_{i,j}.
\end{equation}
Indeed, from \eqref{eqA0}, it follows that,
for some $y\in (2A_0)^{-3}B_{i+1,l}
\cap (2A_0)^{-3}B_{i,j}$,
\begin{equation}\label{t2eq1}
d\lf(x_{i+1,l},x_{i,j}\r)
\le A_0\lf[d\lf(x_{i+1,l},y\r)
+d\lf(y,x_{i,j}\r)\r]
\le 2A_0^2\lf(r_{i+1,l}+r_{i,j}\r).
\end{equation}
Using Lemma \ref{lewh1}(iv) with $\Oz$
replaced by $\Oz_i$, we find that
there exists some $y_{i,j}\in (\Oz_i)^{\complement}$
such that $d(x_{i,j},y_{i,j})<48A_0^5r_{i,j}$.
From this, the definition of $r_{i+1,l}$,
$y_{i,j}\in(\Oz_{i+1})^{\complement}$,
\eqref{eqA0},
and \eqref{t2eq1}, we deduce that
\begin{align*}
32A^5_0r_{i+1,l}
&=d\lf(x_{i+1,l},(\Oz_{i+1})^{\complement}\r)
\le d\lf(x_{i+1,l},y_{i,j}\r)
\le A_0\lf[d\lf(x_{i+1,l},x_{i,j}\r)+
d\lf(x_{i,j},y_{i,j}\r)\r]\\
&\le 2A^3_0\lf(r_{i+1,l}+r_{i,j}
+24A^3_0r_{i,j}\r),
\end{align*}
which implies that $r_{i+1,l}\le
\frac{1+24A_0^3}{16A_0^2-1}r_{i,j}
<2A_0r_{i,j}$.
By this, \eqref{eqA0}, and \eqref{t2eq1},
we find that, for any $x\in (2A_0)^{-3}B_{i+1,l}$,
\begin{align*}
d\lf(x,x_{i,j}\r)
&\le A_0\lf[d\lf(x,x_{i+l,l}\r)+d\lf(x_{i+1,l},x_{i,j}\r)\r]\\
&\le 2A_0^2\lf(r_{i+l,l}+A_0r_{i+1,l}
+A_0r_{i,j}\r)<16A_0^4r_{i,j},
\end{align*}
which completes the proof of the above claim \eqref{t2eq5}.
From the definition of $h^{(m)}_{i,j}$,
\eqref{eqsupp}, \eqref{t2e5}, and \eqref{t2eq5}, we deduce that,
for any $x\in (B_{i,j})^\complement$,
\begin{align*}
h^{(m)}_{i,j}(x)
&=b^{(m)}_{i,j}(x)-\sum_{l\in\nn}
\lf[b^{(m)}_{i+1,l}(x)\phi_{i,j}(x)-
L^{(m)}_{i,j,l}\phi_{i+1,l}(x)\r]\\
&=\sum_{\{l\in\nn:\
(2A_0)^{-3}B_{i+1,l}
\cap (2A_0)^{-3}B_{i,j}
\neq\emptyset\}}
L^{(m)}_{i,j,l}
\phi_{i+1,l}(x)=0,
\end{align*}
which completes the proof of \eqref{t2eq2}.

Let $m$, $j\in\nn$
and $i\in\zz$.
Next, we prove \eqref{t2e6}.
Observe that $\int_{\cx}b^{(m)}_{i,j}(z)\,d\mu(z)=0$
and, for any $l\in\nn$,
\begin{equation*}
\int_{\cx}\lf[b^{(m)}_{i+1,l}(z)\phi_{i,j}(z)
-L^{(m)}_{i,j,l}\phi_{i+1,l}(z)\r]\,d\mu(z)=0.
\end{equation*}
From this, the estimate that
$\sum^\fz_{l=1}|b^{(m)}_{i+1,l}\phi_{i,j}
-L^{(m)}_{i,j,l}\phi_{i+1,l}|\ls
|P_m(f)|\phi_{i,j}+2^i\mathbf{1}_{B_{i,j}}\in L^1(\cx)$,
and the dominated convergence theorem, we deduce that
$$
\int_{\cx}h^{(m)}_{i,j}(z)\,d\mu(z)
=\int_{\cx}b^{(m)}_{i,j}(z)\,d\mu(z)
-\sum_{l\in\nn}
\int_{\cx}\lf[b^{(m)}_{i+1,l}(z)\phi_{i,j}(z)
-L^{(m)}_{i,j,l}\phi_{i+1,l}(z)\r]\,d\mu(z)=0.
$$
This finishes the proof of \eqref{t2e6}.

Using \eqref{eqbl0},
we conclude that, for any
 $i\in\zz$ and $x\in\cx$,
\begin{equation*}
\#\mathcal{J}_i(x):
=\#\lf\{j\in\nn:\
x\in B_{i,j}\r\}\le L_0.
\end{equation*}
By \eqref{t2eq2}, we find that, for any $m\in\nn$,
$i\in\zz$, $x\in\cx$, and $j\in \mathcal{J}_i(x)$,
$$h^{(m)}_{i,j}(x)=0.$$
Thus, for any $m\in\nn$, $i\in\zz$, and $x\in\cx$,
\begin{equation}\label{t1}
h^{(m)}_i(x):
=\sum_{j\in\nn}h^{(m)}_{i,j}(x)
=\sum_{j\in\mathcal{J}_i(x)}h^{(m)}_{i,j}(x)
\end{equation}
is well defined.
From \eqref{al1}, \eqref{t2eq2}, and \eqref{eqbl0},
it follows that,
for any $m\in\nn$, $i\in\zz$, and $x\in\cx$,
\begin{equation*}
\lf|h^{(m)}_i(x)\r|
\le \sum_{j\in\nn}\lf|h^{(m)}_{i,j}(x)\r|
\le \widetilde{A}2^{i}\sum_{j\in\nn}
\mathbf{1}_{B_{i,j}}(x)
\le \widetilde{A}L_02^{i}.
\end{equation*}
By this and the dominated convergence theorem,
we have, for any $m\in\nn$,
$i\in\zz$, and $\psi\in\cg_{\mathrm{b}}(\eta,\eta)$,
\begin{align}\label{als}
\lf\langle h^{(m)}_i,\psi\r\rangle
&=\int_\cx h^{(m)}_i(z)\psi(z)\,d\mu(z)
=\int_{\cx}\lf[\sum_{j\in\nn}h^{(m)}_{i,j}(z)\psi(z)\r]\,d\mu(z)\\
&=\sum_{j\in\nn}\int_{\cx}h^{(m)}_{i,j}(z)\psi(z)\,d\mu(z)
=\sum_{j\in\nn}\lf\langle h^{(m)}_{i,j},\psi\r\rangle.\notag
\end{align}
Thus, to complete the proof of Step 1),
it remains to show that,
for any $m\in\nn$ and
$\psi\in\cg_{\mathrm{b}}(\eta,\eta)$,
\begin{equation}\label{t2eq4}
\lf\langle P_m(f),\psi\r\rangle
=\sum_{i\in\zz}\lf\langle h^{(m)}_i,\psi\r\rangle.
\end{equation}

To prove \eqref{t2eq4},
we claim that, for any $m\in\nn$,
$i\in\zz$, and $x\in\cx$,
\begin{equation}\label{t}
h^{(m)}_{i}(x)
=\sum_{j\in\nn}b^{(m)}_{i,j}(x)
-\sum_{l\in\nn}b^{(m)}_{i+1,l}(x)
\end{equation}
pointwisely.
Indeed, from \eqref{t1}, \eqref{al2},
$\sum_{j\in\mathcal{J}_i(x)}\phi_{i,j}(x)
=\sum_{j\in\nn}\phi_{i,j}(x)=\mathbf{1}_{\Oz_i}(x)$
for any $x\in\cx$ and any given $i\in\zz$,
$\supp(b^{(m)}_{i+1,l})\subset\Oz_i$
for any $m$, $l\in\nn$ and
$i\in\zz$, and
the definition of $L^{(m)}_{i,j,l}$,
it follows that, for any $m\in\nn$,
$i\in\zz$, and $x\in\cx$,
\begin{align}\label{eqal1}
h^{(m)}_i(x)
&=\sum_{j\in\mathcal{J}_i(x)}
\lf\{b^{(m)}_{i,j}(x)-
\sum_{l\in\mathcal{L}_i(x)}
\lf[b^{(m)}_{i+1,l}(x)\phi_{i,j}(x)-
L^{(m)}_{i,j,l}\phi_{i+1,l}(x)\r]\r\}\\
&=\sum_{j\in\mathcal{J}_i(x)}b^{(m)}_{i,j}(x)
-\sum_{l\in\mathcal{L}_i(x)}
b^{(m)}_{i+1,l}(x)\mathbf{1}_{\Oz_i}(x)
+\sum_{l\in\mathcal{L}_i(x)}
\sum_{j\in\mathcal{J}_i(x)}
L^{(m)}_{i,j,l}\phi_{i+1,l}(x)\notag\\
&=\sum_{j\in\nn}b^{(m)}_{i,j}(x)
-\sum_{l\in\nn}b^{(m)}_{i+1,l}(x)
+\sum_{l\in\mathcal{L}_i(x)}
\lf\|\phi_{i+1,l}\r\|_{L^1(\cx)}^{-1}\notag\\
&\quad \times\int_{\cx}\lf[
\sum_{j\in\mathcal{J}_i(x)}
b^{(m)}_{i+1,l}(z)\phi_{i,j}(z)\r]
\,d\mu(z)\phi_{i+1,l}(x).\notag
\end{align}
Then we show that, for any $m\in\nn$,
 $i\in\zz$, $x\in\cx$, and
$l\in\mathcal{L}_i(x)$,
\begin{equation}\label{eqa1}
\int_{\cx}\lf[\sum_{j\in\mathcal{J}_i(x)}
b^{(m)}_{i+1,l}(z)\phi_{i,j}(z)\r]
\,d\mu(z)=0.
\end{equation}
Indeed, using \eqref{t2eq5}, we find that,
for any $i\in\zz$, $x\in\cx$,
$l\in \mathcal{L}_i(x)$, and
$j\in\nn\setminus\mathcal{J}_i(x)$,
\begin{align*}
(2A_0)^{-3}B_{i+1,l}
\cap (2A_0)^{-3}B_{i,j}=\emptyset.
\end{align*}
From this and \eqref{eqsupp},
we deduce that, for any $m\in\nn$,
$i\in\zz$, $x\in\cx$,
and $l\in\mathcal{L}_i(x)$,
\begin{equation*}
\int_{\cx}\lf[
\sum_{j\in\nn\setminus\mathcal{J}_i(x)}
b^{(m)}_{i+1,l}(z)\phi_{i,j}(z)\r]
\,d\mu(z)=0.
\end{equation*}
By this, $\sum_{j\in\nn}\phi_{i,j}=\mathbf{1}_{\Oz_i}$
for any $i\in\zz$, and $\supp(b^{(m)}_{i+1,l})\subset \Oz_i$ and
$\int_\cx b^{(m)}_{i+1,l}(z)\,d\mu(z)=0$ for any $m$,
$l\in\nn$ and $i\in\zz$,
we find that, for any $m\in\nn$, $i\in\zz$, $x\in\cx$,
and $l\in\mathcal{L}_i(x)$,
\begin{align*}
&\int_{\cx}\lf[\sum_{j\in\mathcal{J}_i(x)}
b^{(m)}_{i+1,l}(z)\phi_{i,j}(z)\r]
\,d\mu(z)\\
&\quad=\int_{\cx}\lf[\sum_{j\in\nn}
b^{(m)}_{i+1,l}(z)\phi_{i,j}(z)\r]
\,d\mu(z)
=\int_{\cx}b^{(m)}_{i+1,l}(z)
\mathbf{1}_{\Oz_i}(z)\,d\mu(z)=0.
\end{align*}
This finishes the proof of \eqref{eqa1}.
By \eqref{eqal1} and \eqref{eqa1},
we complete the proof of the above claim \eqref{t}.

From Proposition \ref{prdc} combined
with Remark \ref{rewan},
the first equation of \eqref{als},
and \eqref{t},
we infer that,
for any $m$, $N\in\nn$ and
$\psi\in\cg_{\mathrm{b}}(\eta,\eta)$,
\begin{align}\label{t2e1}
&\lf\langle P_m(f),\psi\r\rangle-
\sum^{N}_{i=-N}\lf\langle h^{(m)}_i,\psi\r\rangle\\
&\quad=\int_\cx\lf[P_m(f)(z)-\sum^{N}_{i=-N}h^{(m)}_i(z)\r]
\psi(z)\,d\mu(z)\notag\\
&\quad=\int_\cx\lf[P_m(f)(z)-\sum_{j\in\nn} b^{(m)}_{-N,j}(z)
+\sum_{j\in\nn} b^{(m)}_{N+1,j}(z)\r]\psi(z)\,d\mu(z).\notag
\end{align}
By the H\"{o}lder inequality,
the definition of $b^{(m)}_{-N,j}$,
$\sum_{j\in\nn}\phi_{-N,j}=\mathbf{1}_{\Oz_{-N}}$
for any $N\in\nn$,
the size condition of $\cg(\eta,\eta)$ together with
Lemma \ref{lees}(ii), the estimate that
$|P_m(f)|\ls f^*\sim \fs$ for any $m\in\nn$, and \eqref{t2eq3},
we conclude that,
for any $m$, $N\in\nn$ and
$\psi\in\cg_{\mathrm{b}}(\eta,\eta)$,
\begin{align*}
&\lf|\int_\cx \lf[P_m(f)(z)-\sum_{j\in\nn}
b^{(m)}_{-N,j}(z)\r]\psi(z)\,d\mu(z)\r|\\
&\quad\le \lf\|P_m(f)-\sum_{j\in\nn}
\lf[P_m(f)-W^{(m)}_{-N,j}\r]\phi_{-N,j}
\r\|_{L^{\fz}(\cx)}\|\psi\|_{L^1(\cx)}\\
&\quad\ls\lf[\lf\|P_m(f)\mathbf{1}_{(\Oz_{-N})
^{\complement}}\r\|_{L^{\fz}(\cx)}
+\lf\|\sum_{j\in\nn}W^{(m)}_{-N,j}
\phi_{-N,j}\r\|_{L^\fz(\cx)}\r]
\|\psi\|_{\cg(\bz,\gz)}\\
&\quad\ls 2^{-N}\|\psi\|_{\cg(\bz,\gz)}
\end{align*}
with all the implicit positive constants independent of
$m$, $N$, and $\psi$. Thus,
we have, for any $m\in\nn$ and $\psi\in
\cg_{\mathrm{b}}(\eta,\eta)$,
$$
\lim_{N\to\fz}
\lf|\int_\cx \lf[P_m(f)(z)-\sum_{j\in\nn}
b^{(m)}_{-N,j}(z)\r]\psi(z)\,d\mu(z)\r|=0.
$$
By this and $\eqref{t2e1}$,
to finish the proof of $\eqref{t2eq4}$,
it suffices to show that, for any
$m\in\nn$ and $\psi\in\cg_{\mathrm{b}}(\eta,\eta)$,
\begin{equation}\label{t5}
\lim_{N\to\fz}\lf|\int_\cx\lf[\sum_{j\in\nn}
b^{(m)}_{N+1,j}(z)\psi(z)\r]\,d\mu(z)\r|=0.
\end{equation}

Now, for any given $m$, $N\in\nn$ and
$\psi\in \cg_{\mathrm{b}}(\eta,\eta)$, let
$$
\mathrm{I}:=\int_\cx\lf[\sum_{j\in\nn}
b^{(m)}_{N+1,j}(z)\psi(z)\r]\,d\mu(z).
$$
From the estimate that $\sum_{j\in\nn}|b^{(m)}_{N+1,j}\psi|
\ls |P_m(f)\psi|+2^{N}|\psi|\in L^1(\cx)$,
the dominated convergence theorem,
and $b^{(m)}_{N+1,j}\in (\gs)'$, it follows that
\begin{equation}\label{eqi}
\mathrm{I}=\sum_{j\in\nn}\int_{\cx}
b^{(m)}_{N+1,j}(z)\psi(z)\,d\mu(z)
=\sum_{j\in\nn}\lf\langle b^{(m)}_{N+1,j},
\psi\r\rangle.
\end{equation}
Let $q_0\in(\frac{\oz}{p_-(\oz+\bz)},1)$.
By \eqref{eqi},
$\|\psi\|_{\cg(x,1,\bz,\gz)}\sim
\|\psi\|_{\cg(\bz,\gz)}$
for any $x\in B(x_0,1)$, Definition \ref{debb}(ii)
with $X(\cx)$ replaced by $X^{q_0}(\cx)$,
\eqref{t2e7} and $\eqref{eqbl0}$ with $i=N+1$,
and Remark \ref{recon} with $p$ replaced by $q_0$,
we conclude that
\begin{align*}
|\mathrm{I}|
&\le \sum_{j\in\nn}\lf|\lf\langle
b^{(m)}_{N+1,j},\psi\r\rangle\r|
\ls \inf_{x\in B(x_0,1)}
\lf\{\sum_{j\in\nn}
\lf[b^{(m)}_{N+1,j}\r]^*(x)
\r\}\|\psi\|_{\cg(\bz,\gz)}\\
&\ls \lf\|\mathbf{1}_{B(x_0,1)}
\r\|_{X^{q_0}(\cx)}^{-1}
\lf\|\sum_{j\in\nn}\lf[b^{(m)}_{N+1,j}\r]^*
\r\|_{X^{q_0}(\cx)}\|\psi\|_{\cg(\bz,\gz)}\\
&\ls \lf\|\sum_{j\in\nn}
f^*\mathbf{1}_{B_{N+1,j}}(x)+2^N
\sum_{j\in\nn}\frac{V_{r_{N+1,j}}(x_{N+1,j})}
{V_{r_{N+1,j}}(x_{N+1,j})+V(x_{N+1,j},x)}\r.\\
&\quad\lf.\times\lf[
\frac{r_{N+1,j}}{r_{N+1,j}+d(x_{N+1,j},x)}\r]^{\bz}
\mathbf{1}_{(B_{N+1,j})^{\complement}}(x)
\r\|_{X^{q_0}(\cx)}\|\psi\|_{\cg(\bz,\gz)}\\
&\ls \lf\{\lf\|f^*\mathbf{1}_{\Oz_{N+1}}\r\|_{X^{q_0}(\cx)}
+2^{N}\lf\|\sum_{j\in\nn}\frac{V_{r_{N+1,j}}(x_{N+1,j})}
{V_{r_{N+1,j}}(x_{N+1,j})+V(x_{N+1,j},x)}\r.\r.\\
&\quad\lf.\lf.\times\lf[\frac{r_{N+1,j}}
{r_{N+1,j}+d(x_{N+1,j},x)}\r]^{\bz}
\mathbf{1}_{(B_{N+1,j})^{\complement}}(x)
\r\|_{X^{q_0}(\cx)}\r\}\|\psi\|_{\cg(\bz,\gz)},
\end{align*}
where all the implicit positive constants are
independent of $m$, $N$, and $\psi$.
From this,
Definition \ref{debb}(ii) with $X(\cx)$
replaced by $X^{q_0}(\cx)$,
Lemma \ref{lees}(iii) with $B$
replaced by $(2A_0)^{-4}B_{N+1,j}$,
Assumption \ref{asfs} with
$t=\frac{\oz}{q_0(\oz+\bz)}\in (0,p_-)$
and $s=\frac{\oz+\bz}{\oz}\in(1,\fz)$,
\eqref{eqsupp}, $f^*\sim \fs$,
and Proposition \ref{prin}(i) with
$p=1$ and $p_1=q_0$, we deduce that
\begin{align*}
|\mathrm{I}|
&\ls \lf\{\lf\|f^*\mathbf{1}_{\Oz_{N+1}}\r\|_{X^{q_0}(\cx)}
+2^{N}\lf\|\sum_{j\in\nn}\lf[\cm\lf(\mathbf{1}
_{(2A_0)^{-4}B_{N+1,j}}\r)\r]^{\frac{\oz+\bz}
{\oz}}\r\|_{X^{q_0}(\cx)}\r\}\|\psi\|_{\cg(\bz,\gz)}\\
&\ls \lf[\lf\|f^*\mathbf{1}_{\Oz_{N+1}}\r\|_{X^{q_0}(\cx)}
+2^{N}\lf\|\sum_{j\in\nn}\mathbf{1}_{(2A_0)^{-4}B_{N+1,j}}
\r\|_{X^{q_0}(\cx)}\r]\|\psi\|_{\cg(\bz,\gz)}\\
&\ls \lf[\lf\|f^*\mathbf{1}_{\Oz_{N+1}}\r\|_{X^{q_0}(\cx)}
+2^{N}\lf\|\mathbf{1}_{\Oz_{N+1}}\r\|_{X^{q_0}(\cx)}\r]
\|\psi\|_{\cg(\bz,\gz)}\\
&\ls \lf\|\fs\mathbf{1}_{\Oz_{N+1}}\r\|_{X^{q_0}(\cx)}
\|\psi\|_{\cg(\bz,\gz)}
\ls 2^{(1-1/q_0)N}\lf\|\fs\r\|^{1/q_0}_{WX(\cx)}
\|\psi\|_{\cg(\bz,\gz)}\\
&\sim 2^{(1-1/q_0)N}\|f\|^{1/q_0}_{WH_X(\cx)}
\|\psi\|_{\cg(\bz,\gz)}
\end{align*}
with all the implicit positive constants
independent of $m$, $N$, and $\psi$.
Thus, we have, for
any $m\in\nn$ and $\psi\in\cg_{\mathrm{b}}(\eta,\eta)$,
\begin{equation*}
\lim_{N\to\fz}\lf|\lf\langle
\sum_{j\in\nn} b^{(m)}_{N,j},\psi\r\rangle\r|=0,
\end{equation*}
which completes the proof of \eqref{t5},
and hence of Step 1).

Step 2) In this step, we show that there
exists a sequence $\{a_{i,j}\}_{i\in\zz,j\in\nn}$
of $(X(\cx),\fz)$-atoms supported, respectively,
in balls $\{B_{i,j}\}_{i\in\zz,j\in\nn}$ such that,
for any $\psi\in\cg_{\mathrm{b}}(\eta,\eta)$,
\begin{equation}\label{t2e4}
\langle f,\psi\rangle
=\sum_{i\in\zz}\sum_{j\in\nn}
\widetilde{A}2^i\lf\|
\mathbf{1}_{B_{i,j}}\r\|_{X(\cx)}
\lf\langle a_{i,j},\psi\r\rangle,
\end{equation}
where $\widetilde{A}$ and
$\{B_{i,j}\}_{i\in\zz,j\in\nn}$
are as in Step 1).

Let $\{h^{(m)}_{i,j}\}_{i\in\zz,j\in\nn}$
be as in Step 1). For any given
$i\in\zz$ and $j\in\nn$, from the Alaoglu
theorem (see, for instance, \cite[Theorem 3.17]{r91}) and
the boundedness of
$\{\|h^{(m)}_{i,j}\|_{L^\fz(\cx)}\}_{m\in\nn}$,
we deduce that there exist a sequence
$\{m^{(i,j)}_l\}_{l\in\nn}\subset \nn$ and a function
$h^{(i,j)}\in L^\fz(\cx)$ such that
$m^{(i,j)}_l\to\fz$ as $l\to\fz$, and,
for any $g\in L^1(\cx)$,
$$
\lim_{l\to\fz}\lf\langle
h^{(m^{(i,j)}_l)}_{i,j},g\r\rangle
=\lf\langle h^{(i,j)},g\r\rangle.
$$
Using this and the diagonal rule for series, we
find a sequence
$\{m_l\}_{l\in\nn}\subset \nn$
and a sequence
$\{h_{i,j}\}_{i\in\zz,j\in\nn}\subset L^{\fz}(\cx)$
such that $m_l\to\fz$ as $l\to\fz$, and,
for any $i\in\zz$, $j\in\nn$,
and $g\in L^1(\cx)$,
\begin{equation}\label{t2eq8}
\lim_{l\to\fz}\lf\langle h^{(m_l)}_{i,j},g\r\rangle
=\lf\langle h_{i,j},g\r\rangle,
\end{equation}
which, together with \eqref{eqlfz}, \eqref{t2eq2},
and \eqref{t2e6}, implies that
$\|h_{i,j}\|_{L^{\fz}(\cx)}\le \widetilde{A} 2^i$,
$\supp(h_{i,j})\subset B_{i,j}$, and
$\int_{\cx}h_{i,j}(z)\,d\mu(z)=0$.
Thus, for any $i\in\zz$ and
$j\in\nn$, $a_{i,j}:=(\widetilde{A}2^i\|
\mathbf{1}_{B_{i,j}}\|_{X(\cx)})^{-1}h_{i,j}$ is
an $(X(\cx),\fz)$-atom supported in $B_{i,j}$.

Next, to finish the proof of
Step 2), it suffices to show that, for any
$\psi\in\cg_{\mathrm{b}}(\eta,\eta)$,
\begin{equation}\label{t2e2}
\lf\langle f,\psi\r\rangle
=\sum_{i\in\zz}\sum_{j\in\nn}
\lf\langle h_{i,j},\psi\r\rangle.
\end{equation}
To this end, fix $\psi\in\cg_{\mathrm{b}}(\eta,\eta)$.
Moreover, from \eqref{eqlfz}, it follows that,
for any $l$, $j\in\nn$ and $i\in\zz$,
$$
\lf|\lf\langle
h^{(m_l)}_{i,j},\psi\r\rangle\r|
\le \widetilde{A}2^i\int_{B_{i,j}}|
\psi(z)|\,d\mu(z).
$$
Using this, the estimate that
$\sum_{j\in\nn}\int_{B_{i,j}}|\psi(z)|\,d\mu(z)
\le L_0\|\psi\|_{L^1(\cx)}$ for any $i\in\zz$,
the dominated convergence theorem for series,
and \eqref{t2eq8} with $g=\psi$,
we conclude that, for any $i\in\zz$,
\begin{equation}\label{t2eq9}
\lim_{l\to\fz}\sum_{j\in\nn}\lf\langle
h^{(m_l)}_{i,j},\psi\r\rangle
=\sum_{j\in\nn}\lim_{l\to\fz}\lf\langle
h^{(m_l)}_{i,j},\psi\r\rangle
=\sum_{j\in\nn}\lf\langle h_{i,j},\psi\r\rangle.
\end{equation}
By the proof of \eqref{t2eq4}, we find that
there exists a positive constant $C$
such that, for any $l$, $N\in\nn$,
\begin{align*}
&\lf|\lf\langle P_{m_l}(f),\psi\r\rangle-
\sum^{N}_{i=-N}\sum_{j\in\nn}
\lf\langle h^{(m_l)}_{i,j},\psi\r\rangle\r|
\le C\|\psi\|_{\cg(\bz,\gz)}
\lf[2^{-N}+2^{(1-1/q_0)N}
\lf\|f\r\|^{1/q_0}_{WH_X(\cx)}\r],
\end{align*}
where $q_0\in(\frac{\oz}{p_-(\oz+\bz)},1)$.
From this, we deduce that, for any $\ez\in(0,\fz)$,
there exists an $N_{(\ez)}\in\nn$
such that, for any $l\in\nn$ and
$N\in\nn\cap[N_{(\ez)},\fz)$,
\begin{equation}\label{t2es1}
\lf|\lf\langle P_{m_l}(f),\psi\r\rangle-
\sum^{N}_{i=-N}\sum_{j\in\nn}
\lf\langle h^{(m_l)}_{i,j},\psi\r\rangle\r|<\ez,
\end{equation}
which further implies that, for any
$\ez\in(0,\fz)$, $l\in\nn$,
and $N_1$, $N_2\in\nn\cap[N_{(\ez)},\fz)$,
\begin{align*}
\lf|\sum^{N_1}_{i=-N_1}\sum_{j\in\nn}
\lf\langle h^{(m_l)}_{i,j},\psi\r\rangle-
\sum^{N_2}_{i=-N_2}\sum_{j\in\nn}
\lf\langle h^{(m_l)}_{i,j},\psi\r\rangle\r|<2\ez.
\end{align*}
Letting $l\to\fz$ and using \eqref{t2eq9},
we obtain, for any $\ez\in(0,\fz)$
and $N_1$, $N_2\in\nn\cap[N_{(\ez)},\fz)$,
\begin{align*}
\lf|\sum^{N_1}_{i=-N_1}\sum_{j\in\nn}
\lf\langle h_{i,j},\psi\r\rangle-
\sum^{N_2}_{i=-N_2}\sum_{j\in\nn}
\lf\langle h_{i,j},\psi\r\rangle\r|\le2\ez,
\end{align*}
which implies that
$\sum_{i\in\zz}\sum_{j\in\nn}
\langle h_{i,j},\psi\rangle$ converges
and, for any $\ez\in(0,\fz)$
and $N\in\nn\cap[N_{(\ez)},\fz)$,
\begin{align}\label{t2es2}
\lf|\sum^N_{i=-N}\sum_{j\in\nn}
\lf\langle h_{i,j},\psi\r\rangle-
\sum_{i\in\zz}\sum_{j\in\nn}
\lf\langle h_{i,j},\psi\r\rangle\r|\le2\ez.
\end{align}
From \eqref{t2eq9},
we deduce that, for any $\ez\in(0,\fz)$,
there exists an $L_{(\ez)}\in\nn$ such that,
for any $l\in \nn\cap [L_{(\ez)},\fz)$,
\begin{equation*}
\lf|\sum^{N_{(\ez)}}_{i=-N_{(\ez)}}
\sum_{j\in\nn}
\lf\langle h^{(m_l)}_{i,j},\psi\r\rangle-
\sum^{N_{(\ez)}}_{i=-N_{(\ez)}}\sum_{j\in\nn}
\lf\langle h_{i,j},\psi\r\rangle\r|<\ez.
\end{equation*}
By this, and \eqref{t2es1} and \eqref{t2es2}
with $N$ replaced by $N_{(\ez)}$,
 we conclude that, for any $\ez\in(0,\fz)$
and $l\in \nn\cap [L_{(\ez)},\fz)$,
\begin{align*}
&\lf|\langle P_{m_l}(f),\psi\rangle
-\sum_{i\in\zz}\sum_{j\in\nn}
\lf\langle h_{i,j},\psi\r\rangle\r|\\
&\quad\le\lf|\langle P_{m_l}(f),\psi\rangle
-\sum^{N_{(\ez)}}_{i=-N_{(\ez)}}\sum_{j\in\nn}
\lf\langle h^{(m_l)}_{i,j},\psi\r\rangle\r|
+\lf|\sum^{N_{(\ez)}}_{i=-N_{(\ez)}}\sum_{j\in\nn}
\lf\langle h^{(m_l)}_{i,j},\psi\r\rangle
-\sum^{N_{(\ez)}}_{i=-N_{(\ez)}}\sum_{j\in\nn}
\lf\langle h_{i,j},\psi\r\rangle\r|\\
&\quad\quad+\lf|\sum^{N_{(\ez)}}_{i=-N_{(\ez)}}
\sum_{j\in\nn}
\lf\langle h_{i,j},\psi\r\rangle
-\sum_{i\in\zz}\sum_{j\in\nn}
\lf\langle h_{i,j},\psi\r\rangle\r|\\
&\quad<4\ez.
\end{align*}
Thus, $\lim_{l\to\fz}
\langle P_{m_l}(f),\psi\rangle
=\sum_{i\in\zz}\sum_{j\in\nn}
\langle h_{i,j},\psi\rangle$,
which, combined with \eqref{t2e3}, further implies that
\begin{align*}
\langle f,\psi\rangle
=\lim_{l\to\fz}
\lf\langle P_{m_l}(f),\psi\r\rangle
=\sum_{i\in\zz}\sum_{j\in\nn}
\lf\langle h_{i,j},\psi\r\rangle.
\end{align*}
This finishes the proof of \eqref{t2e2},
and hence of Step 2).

Step 3) In this step, we show that
$f=\sum_{i\in\zz}\sum_{j\in\nn}\widetilde{A}2^i\|
\mathbf{1}_{B_{i,j}}\|_{X(\cx)}a_{i,j}$ in $(\gs)'$,
where $\widetilde{A}$ and $\{B_{i,j}\}_{i\in\zz,j\in\nn}$
are as in Step 1), and
$\{a_{i,j}\}_{i\in\zz,j\in\nn}$ as in Step 2).

Using Definition \ref{debb}(ii) together
with \eqref{eqbl0},
the definition of $WX(\cx)$,
and $f^*\sim\fs$,
we find that, for any $i\in\zz$,
\begin{align*}
2^i\lf\|\sum_{j\in\nn}
\mathbf{1}_{B_{i,j}}\r\|_{X(\cx)}
\ls 2^i\lf\|
\mathbf{1}_{\Oz_i}\r\|_{X(\cx)}
\ls \lf\|\fs\r\|_{WX(\cx)}
\sim \lf\|f^*\r\|_{WX(\cx)}
\end{align*}
with the implicit positive constants
independent of $f$.
Let $A:=L_0$ and $c:=1$. From Remark \ref{ream},
we deduce that $\sum_{i\in\zz}\sum_{j\in\nn}\widetilde{A}2^i\|
\mathbf{1}_{B_{i,j}}\|_{X(\cx)}a_{i,j}$
converges in $(\gs)'$. Using this,
\eqref{t2e4}, Proposition \ref{prd}, and
a density argument,
we conclude that, for any $\varphi\in\gs$,
$$
\langle f,\varphi\rangle
=\sum_{i\in\zz}\sum_{j\in\nn}
\widetilde{A}2^i\lf\|
\mathbf{1}_{B_{i,j}}\r\|_{X(\cx)}
\lf\langle a_{i,j},\varphi\r\rangle.
$$
This finishes the proof of Step 3),
and hence of Theorem \ref{thma}.
\end{proof}
\begin{remark}\label{reha}
\begin{itemize}
\item[(i)]
In both Theorems \ref{tham} and \ref{thma},
we need that the index $p_-$ in Assumption \ref{asfs}
is greater than $\oz/(\oz+\eta)$,
which might be the known best possible.
Some applications of Theorems \ref{tham} and \ref{thma}
are given in Section \ref{sap} below.

\item[(ii)]
If $\cx=\rn$, then the corresponding result of
Theorem \ref{tham} can be found in
\cite[Theorem 4.7]{zyyw20}.
The assumptions needed in Theorem \ref{tham}
are less than those needed in \cite[Theorem 4.7]{zyyw20};
for instance, in Theorem \ref{tham}, we use the
Aoki--Rolewicz theorem to remove
the assumption that, for any $r\in(0,\min\{1,p_-\})$,
$X^{1/r}(\cx)$ is a BBF space,
which is needed in \cite[Theorem 4.7]{zyyw20}
for the Euclidean space case.
Moreover, in Theorem \ref{tham},
we obtain the result even when
$q\in(p_0,\fz]\cap [1,\fz]$, which is better
than $q\in (p_0,\fz]\cap(1,\fz]$ needed
in \cite[Theorem 4.7]{zyyw20}.
\item[(iii)]
If $\cx=\rn$, then
the corresponding result of Theorem
\ref{thma} can be found in
\cite[Theorem 4.2]{zyyw20}.
We only have one
assumption in Theorem \ref{thma},
while, in \cite[Theorem 4.2]{zyyw20},
one needs, except an analogue one,
more additional assumptions.
\end{itemize}
\end{remark}

\section{Real Interpolation\label{sr}}

In this section, for a given BQBF space
$X(\cx)$, we obtain two real interpolation theorems:
one is between $X(\cx)$ and $L^\fz(\cx)$,
and the other one between $H_X(\cx)$ and $L^\fz(\cx)$.

Now, we state some basic concepts on the
real interpolation; see, for instance,
\cite{bl76}. For any quasi-Banach spaces
$X_1$ and $X_2$, the pair $(X_1,X_2)$ is said to be
\emph{compatible} if $X_1$ and $X_2$ continuously embed
into a Hausdorff topological vector space $X$.
Furthermore, for the compatible pair $(X_1,X_2)$, let
\begin{equation*}
X_1+X_2:=\lf\{f\in X:\
f=f_1+f_2\ \mathrm{with}\
f_1\in X_1\ \mathrm{and}\ f_2\in X_2\r\}.
\end{equation*}
For any $t\in (0,\fz)$ and
$f\in X_1+X_2$,
the \emph{Peetre K-functional}
$K(t,f;X_1,X_2)$ is defined by setting
\begin{equation*}
K(t,f;X_1,X_2):=\inf\lf\{\|f_1\|_{X_1}+
t\|f_2\|_{X_2}:\ f_1\in X_1,\
f_2\in X_2,\ \mathrm{and}\ f=f_1+f_2\r\}.
\end{equation*}
For any $\tz\in(0,1)$,
the \emph{real interpolation space
$(X_1,X_2)_{\tz,\fz}$} is defined by setting
\begin{equation*}
(X_1,X_2)_{\tz,\fz}:=\lf\{f\in X_1+X_2:\
\|f\|_{(X_1,X_2)_{\tz,\fz}}<\fz\r\},
\end{equation*}
where, for any $f\in X_1+X_2$,
$$
\|f\|_{(X_1,X_2)_{\tz,\fz}}:
=\sup_{t\in (0,\fz)}
\lf\{t^{-\tz}K(t,f;X_1,X_2)\r\}.
$$

Borrowing some ideas from the proof of
\cite[Theorem 4.1]{kv14}
(see also the proof of \cite[Theorem 4.1]{wyyz21}),
we obtain the following real interpolation theorem.
\begin{theorem}\label{thin}
Let $\tz\in (0,1)$ and $X(\cx)$ be a \emph{BQBF} space. Then
\begin{equation*}
(X(\cx), L^{\fz}(\cx))_{\tz,\fz}=
WX^{1/(1-\tz)}(\cx)
\end{equation*}
with equivalent quasi-norms.
\end{theorem}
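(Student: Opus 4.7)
\noindent
\textbf{Proof Proposal for Theorem \ref{thin}.}
Set $p:=1/(1-\tz)\in(1,\fz)$, so that $\tz=1-1/p$ and $WX^{1/(1-\tz)}(\cx)=WX^p(\cx)$. Since $X(\cx)\subset\mathscr{M}(\cx)$ and $L^\fz(\cx)\subset\mathscr{M}(\cx)$, the pair $(X(\cx),L^\fz(\cx))$ is compatible inside $\mathscr{M}(\cx)$ (topologized, say, by local convergence in measure). Recalling that $\|g\|_{WX^p(\cx)}=\sup_{\lz\in(0,\fz)}\lz\|\mathbf{1}_{\{|g|>\lz\}}\|_{X(\cx)}^{1/p}$, the goal is to prove the two-sided continuous embedding between $WX^p(\cx)$ and $(X(\cx),L^\fz(\cx))_{\tz,\fz}$.

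\emph{Embedding} $WX^p(\cx)\hookrightarrow(X(\cx),L^\fz(\cx))_{\tz,\fz}$. Fix $f\in WX^p(\cx)$ and $t\in(0,\fz)$. For any $\lz\in(0,\fz)$, I split $f=f_{1,\lz}+f_{2,\lz}$ by the truncation
\begin{equation*}
f_{2,\lz}:=\mathrm{sgn}(f)\min\{|f|,\lz\},\qquad f_{1,\lz}:=f-f_{2,\lz},
\end{equation*}
so that $\|f_{2,\lz}\|_{L^\fz(\cx)}\le\lz$ and $|f_{1,\lz}|\le|f|\mathbf{1}_{\{|f|>\lz\}}$. Applying Proposition \ref{prin}(i) with $p_1:=1\in(0,p)$ (which is permitted since $p>1$) then yields $\|f_{1,\lz}\|_{X(\cx)}\ls\lz^{1-p}\|f\|_{WX^p(\cx)}^p$, whence
\begin{equation*}
K(t,f;X(\cx),L^\fz(\cx))\le\|f_{1,\lz}\|_{X(\cx)}+t\|f_{2,\lz}\|_{L^\fz(\cx)}\ls\lz^{1-p}\|f\|_{WX^p(\cx)}^p+t\lz.
\end{equation*}
Choosing $\lz:=\|f\|_{WX^p(\cx)}\,t^{-1/p}$ balances the two terms, and a direct calculation using $p(1-1/p)=p-1$ gives $K(t,f;X(\cx),L^\fz(\cx))\ls t^{1-1/p}\|f\|_{WX^p(\cx)}=t^\tz\|f\|_{WX^p(\cx)}$. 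Taking the supremum over $t\in(0,\fz)$ proves the first embedding.

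\emph{Embedding} $(X(\cx),L^\fz(\cx))_{\tz,\fz}\hookrightarrow WX^p(\cx)$. Fix $f\in(X(\cx),L^\fz(\cx))_{\tz,\fz}$ and $\lz\in(0,\fz)$. Setting $t:=(4\|f\|_{(X(\cx),L^\fz(\cx))_{\tz,\fz}}/\lz)^p$, I select a decomposition $f=f_1+f_2$ with $f_1\in X(\cx)$, $f_2\in L^\fz(\cx)$, and
\begin{equation*}
\|f_1\|_{X(\cx)}+t\|f_2\|_{L^\fz(\cx)}\le 2K(t,f;X(\cx),L^\fz(\cx))\le 2t^\tz\|f\|_{(X(\cx),L^\fz(\cx))_{\tz,\fz}}.
\end{equation*}
The choice of $t$, combined with $\tz-1=-1/p$ and $p\tz=p-1$, forces $\|f_2\|_{L^\fz(\cx)}\le\lz/2$, so $\{x\in\cx:\,|f(x)|>\lz\}\subset\{x\in\cx:\,|f_1(x)|>\lz/2\}$. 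Since $(\lz/2)\mathbf{1}_{\{|f_1|>\lz/2\}}\le|f_1|$ pointwise, Definition \ref{debb}(ii) yields $\|\mathbf{1}_{\{|f|>\lz\}}\|_{X(\cx)}\le(2/\lz)\|f_1\|_{X(\cx)}$, and plugging in the upper bound for $\|f_1\|_{X(\cx)}$ and raising to the power $1/p$ gives $\lz\|\mathbf{1}_{\{|f|>\lz\}}\|_{X(\cx)}^{1/p}\ls\|f\|_{(X(\cx),L^\fz(\cx))_{\tz,\fz}}$. Taking the supremum over $\lz\in(0,\fz)$ proves the second embedding and hence the claimed equality with equivalent quasi-norms.

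The only nontrivial ingredient is the quantitative truncation bound Proposition \ref{prin}(i) (used with $p_1=1$), which is exactly where the hypothesis $\tz\in(0,1)$, i.e., $p>1$, is consumed. I do not expect a genuine obstacle: the whole argument is a weak-type analogue of the classical $L^p$--$L^\fz$ real interpolation and the verification reduces to balancing one free parameter in each direction.
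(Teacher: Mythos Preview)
Your proof is correct. Both embeddings go through as written; the only tacit assumption is that $f\neq 0$ when you invoke the explicit choice of $\lz$ (first embedding) or of $t$ (second embedding), and the degenerate case $f=0$ is trivial.

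Your route is genuinely more direct than the paper's. For the embedding $WX^p(\cx)\hookrightarrow (X(\cx),L^\fz(\cx))_{\tz,\fz}$ the paper introduces the auxiliary function $\Lambda(t):=\inf\{\lz>0:\ \lz^{-1}\|f\mathbf{1}_{\{|f|>\lz\}}\|_{X(\cx)}\le t\}$ and works with its monotonicity and limit behaviour, whereas you simply balance the two terms by the explicit choice $\lz=\|f\|_{WX^p(\cx)}\,t^{-1/p}$; both arguments rest on Proposition \ref{prin}(i) with $p_1=1$. For the reverse embedding the paper first proves the exact formula $K(t,f;X(\cx),L^\fz(\cx))=\inf_{\tau\ge 0}\bigl[\|\max\{|f|-\tau,0\}\|_{X(\cx)}+t\|\min\{|f|,\tau\}\|_{L^\fz(\cx)}\bigr]$ and then argues by cases on $\tau$, while you bypass this representation entirely and pick a near-optimal decomposition at the single value $t=(4\|f\|_{(X(\cx),L^\fz(\cx))_{\tz,\fz}}/\lz)^p$. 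Your argument is shorter and closer to the classical $L^p$--$L^\fz$ interpolation template; the paper's approach has the mild advantage of yielding the explicit $K$-functional formula along the way and the sharp constant $2$ in the reverse inequality.
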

\begin{proof}
Let all the symbols be as in the present theorem.
We first show that
\begin{equation}\label{tieq0}
WX^{1/(1-\tz)}(\cx)\subset
(X(\cx), L^{\fz}(\cx))_{\tz,\fz}.
\end{equation}
To this end,
let $f\in WX^{1/(1-\tz)}(\cx)$ and,
without loss of generality, we may assume
that $f$ is a non-zero function.
By Definition \ref{debb}(ii) and
Proposition \ref{prin}(i) with $p=1/(1-\tz)$
and $p_1=1$,
we conclude that $\lz^{-1}\|f\mathbf{1}_{
\{x\in\cx:\ |f(x)|>\lz\}}\|_{X(\cx)}$ is a
decreasing function on $\lz\in(0,\fz)$, and
$\lim_{\lz\to\fz}\lz^{-1}\|f\mathbf{1}_{
\{x\in\cx:\ |f(x)|>\lz\}}\|_{X(\cx)}=0$.
For any $t\in(0,\fz)$, let
\begin{equation*}
\Lambda(t):=\inf\lf\{\lz\in(0,\fz):\
\lz^{-1}\lf\|f\mathbf{1}_{
\{x\in\cx:\ |f(x)|>\lz\}
}\r\|_{X(\cx)}\le t\r\}.
\end{equation*}
Using Definition \ref{debb}(iii)
and the definition of $\Lambda(t)$,
we conclude that, for any $t\in(0,\fz)$,
\begin{align}\label{11}
&\lf\|f\mathbf{1}_{\{x\in\cx:
\ |f(x)|>\Lambda(t)\}}\r\|_{X(\cx)}\\
&\quad=\lim_{\lz\in(\Lambda(t),\fz),\lz\to \Lambda(t)}
\lf\|f\mathbf{1}_{\{x\in\cx:
\ |f(x)|>\lz\}}\r\|_{X(\cx)}
\le \lim_{\lz\in(\Lambda(t),\fz),\lz\to \Lambda(t)}
t\lz= t\Lambda(t).\notag
\end{align}
Observe that, for any $t\in(0,\fz)$,
$\Lambda(t)\neq 0$ and
\begin{equation}\label{eql2}
[\Lambda(t)/2]^{-1}
\lf\|f\mathbf{1}_{\{x\in\cx:\
|f(x)|>\Lambda(t)/2\}}
\r\|_{X(\cx)}>t.
\end{equation}
From the definition of
$K(t,f;X(\cx),L^{\fz}(\cx))$, \eqref{11}, \eqref{eql2},
and Proposition \ref{prin}(i) with $p$, $p_1$, and
 $\lz$ replaced, respectively, by $1/(1-\tz)$, $1$,
and $\Lambda(t)/2$, we infer that,
for any $t\in (0,\fz)$,
\begin{align*}
&t^{-\tz}K(t,f;X(\cx),L^{\fz}(\cx))\\
&\quad\le t^{-\tz}\lf[\lf\|f\mathbf{1}_{\{x\in\cx:\
|f(x)|>\Lambda(t)\}}\r\|_{X(\cx)}
+t\lf\|f\mathbf{1}_{\{x\in\cx:\
|f(x)|\le\Lambda(t)\}}\r\|_{L^{\fz}(\cx)}\r]
\ls t^{1-\tz}\Lambda(t)\\
&\quad\ls \lf\{[\Lambda(t)/2]^{-1}
\lf\|f\mathbf{1}_{\{x\in\cx:\
|f(x)|>\Lambda(t)/2\}}
\r\|_{X(\cx)}\r\}^{1-\tz}\Lambda(t)
\ls \|f\| _{WX^{1/(1-\tz)}(\cx)},
\end{align*}
where all the implicit positive constants
are independent of both $t$ and $f$.
Taking the supremum over $t\in(0,\fz)$,
we find that
$f\in (X(\cx), L^\fz(\cx))_{\tz,\fz}$ and
$\|f\|_{(X(\cx), L^\fz(\cx))_{\tz,\fz}}
\ls \|f\|_{WX^{1/(1-\tz)}(\cx)}$
with the implicit positive constant
independent of $f$.
This finishes the proof of \eqref{tieq0}.

Next, we prove
\begin{equation}\label{tieq2}
(X(\cx), L^{\fz}(\cx))_{\tz,\fz}
\subset WX^{1/(1-\tz)}(\cx).
\end{equation}
Before proving \eqref{tieq2},
we first claim that, for any $t\in(0,\fz)$
and $f\in (X(\cx), L^{\fz}(\cx))_{\tz,\fz}$,
\begin{equation}\label{tieq3}
K(t,f;X(\cx),L^\fz(\cx))
=\inf_{\tau\in[0,\fz)}
N_{t,f}(\tau),
\end{equation}
where, for any $\tau\in [0,\fz)$,
\begin{equation}\label{eqn}
N_{t,f}(\tau):=
\lf\|\max\{|f|-\tau,
0\}\r\|_{X(\cx)}
+t\lf\|\min\{|f|,\tau\}\r\|_{L^\fz(\cx)}.
\end{equation}
To this end, fix $t\in(0,\fz)$
and $f\in (X(\cx), L^{\fz}(\cx))_{\tz,\fz}$.
Now, we show that the left-hand side of \eqref{tieq3}
is not greater than the right-hand side.
Indeed, for any $\tau\in[0,\fz)$ and $x\in\cx$, let
\begin{align*}
f_{1,\tau}(x):=
\begin{cases}
\displaystyle\frac{f(x)}{|f(x)|}[|f(x)|-\tau]
\ \ &\text{if}\ \ |f(x)|>\tau,\\
\displaystyle 0
\ \ &\text{if}\ \ |f(x)|\le \tau,
\end{cases}
\end{align*}
and $f_{2,\tau}(x):=f(x)-f_{1,\tau}(x)$.
Observe that, for any $\tau\in[0,\fz)$,
\begin{align*}
K(t,f;X(\cx),L^\fz(\cx))
\le \lf\|f_{1,\tau}\r\|_{X(\cx)}
+t\lf\|f_{2,\tau}\r\|_{L^\fz(\cx)}
=N_{t,f}(\tau).
\end{align*}
Taking the infimum over $\tau\in[0,\fz)$, we obtain
the desired conclusion. On the other hand,
we show that the right-hand side of \eqref{tieq3} is not
greater than the left-hand side. Observe that,
for any $f_1\in X(\cx)$
and $f_2\in L^\fz(\cx)$ satisfying $f=f_1+f_2$,
\begin{align*}
\inf_{\tau\in[0,\fz)} N_{t,f}(\tau)
\le N_{t,f}(\|f_2\|_{L^\fz(\cx)})
\le \|f_1\|_{X(\cx)}+t\|f_2\|_{L^\fz(\cx)}.
\end{align*}
Taking the infimum over $f_1$ and $f_2$
as above, we obtain the desired conclusion.
Using these two estimates, we then complete the
proof of the above claim \eqref{tieq3}.

Next, we show that, for any given $\lz\in(0,\fz)$ and
$f\in (X(\cx), L^{\fz}(\cx))_{\tz,\fz}$,
\begin{equation}\label{eqdlz}
D_f(\lz):=\lf\|\mathbf{1}_{\{x\in\cx:\
|f(x)|>\lz\}}\r\|_{X(\cx)}<\fz.
\end{equation}
If this is not true, then there exists a $\lz_0\in (0,\fz)$
such that $D_f(\lz_0)=\fz$.
Observe that, for any $t\in (0,\fz)$,
 if $\tau\in[0,\lz_0)$, then
$$
N_{t,f}(\tau)
\ge\|\max\{|f|-\tau,0\}\|_{X(\cx)}
\ge (\lz_0-\tau)D_f(\lz_0)=\fz;
$$
if $\tau\in[\lz_0,\fz)$, then
$$
N_{t,f}(\tau)
\ge t\|\min\{|f|,\tau\}\|_{L^\fz(\cx)}\ge t\lz_0.
$$
Using \eqref{tieq3} and taking the infimum
over $\tau\in[0,\fz)$,
we conclude that, for any $t\in (0,\fz)$,
$$
K(t,f;X(\cx),L^\fz(\cx))
=\inf_{\tau\in [0,\fz)}N_{t,f}(\tau)\ge t\lz_0.
$$
Thus, $\|f\|_{(X(\cx),L^\fz(\cx))_{\tz,\fz}}
\ge\sup_{t\in(0,\fz)}t^{-\tz}t\lz_0=\fz$,
which is a contradiction.
This finishes the proof of \eqref{eqdlz}.

Observe that, for any
$f\in (X(\cx), L^{\fz}(\cx))_{\tz,\fz}$,
\begin{equation}\label{eqwtz}
\|f\|_{WX^{1/(1-\tz)}(\cx)}=\sup_{\lz\in(0,\fz)}
\lf\{\lz\lf[D_f(\lz)\r]^{1-\tz}\r\}.
\end{equation}
Now, we show that, for any
$f\in (X(\cx), L^{\fz}(\cx))_{\tz,\fz}$
and $\lz\in(0,\fz)$,
\begin{equation}\label{tieq1}
\lz\lf[D_f(\lz)\r]^{1-\tz}
\le 2\|f\|_{(X(\cx),L^\fz(\cx))_{\tz,\fz}}.
\end{equation}
To this end, without loss of generality,
we may assume that $D_f(\lz)\neq 0$.
If $\tau\in[0,\lz/2)$, from the definition of $D_f(\lz)$,
and Definition \ref{debb}(ii),
it follows that
\begin{align*}
\lz\lf[D_f(\lz)\r]^{1-\tz}
&\le \lz\lf[D_f(\lz)\r]^{-\tz}
\lf\|\mathbf{1}_{\{x\in\cx:\
|f(x)|>\tau+\lz/2\}}\r\|_{X(\cx)}\\
&= 2\lf[D_f(\lz)\r]^{-\tz}[\lz/2]
\lf\|\mathbf{1}_{\{x\in\cx:\
\max\{|f(x)|-\tau,0\}>\lz/2\}}\r\|_{X(\cx)}\\
&\le 2\lf[D_f(\lz)\r]^{-\tz}
\lf\|\max\{|f|-\tau,0\}\r\|_{X(\cx)}
\le 2\lf[D_f(\lz)\r]^{-\tz}N_{D_f(\lz),f}(\tau),
\end{align*}
where $N_{D_f(\lz),f}$ is as in \eqref{eqn}
with $t$ replaced by $D_f(\lz)$;
if $\tau\in [\lz/2,\fz)$,
from $D_f(\lz)\neq 0$, it follows that
$\mu(\{x\in\cx:\ |f(x)|>\lz\})\neq 0$ and
\begin{align*}
\lz\lf[D_f(\lz)\r]^{1-\tz}
\le 2\lf[D_f(\lz)\r]^{1-\tz}
\lf\|\min\{f,\tau\}\r\|_{L^\fz(\cx)}
\le 2\lf[D_f(\lz)\r]^{-\tz}N_{D_f(\lz),f}(\tau),
\end{align*}
where $N_{D_f(\lz),f}$ is as in \eqref{eqn}
with $t$ replaced by $D_f(\lz)$.
Taking the infimum over $\tau\in[0,\fz)$, and
using \eqref{tieq3}
with $t$ replaced by $D_f(\lz)$, we conclude that
\begin{align*}
\lz\lf[D_f(\lz)\r]^{1-\tz}
&\le 2\lf[D_f(\lz)\r]^{-\tz}
\inf_{\tau\in[0,\fz)}N_{D_f(\lz),f}(\tau)\\
&= 2\lf[D_f(\lz)\r]^{-\tz}
K(D_f(\lz),f;X(\cx),L^\fz(\cx))
\le 2\|f\|_{(X(\cx),L^\fz(\cx))_{\tz,\fz}},
\end{align*}
which implies \eqref{tieq1}.
From \eqref{eqwtz} and \eqref{tieq1},
we deduce that $f\in WX^{1/(1-\tz)}(\cx)$ and
$$\|f\|_{WX^{1/(1-\tz)}(\cx)}\le
2\|f\|_{(X(\cx),L^\fz(\cx))_{\tz,\fz}}.$$
This finishes the proof of \eqref{tieq2}, and
hence of Theorem \ref{thin}.
\end{proof}

Next, we establish the real interpolation
between the Hardy space $H_X(\cx)$ and the
Lebesgue space $L^\fz(\cx)$.

\begin{theorem}\label{thhin}
Let $\tz\in(0,1)$,
$\oz$ be as in \eqref{eqoz},
$\eta$ as in \eqref{eqwa}, and
$X(\cx)$  a \emph{BQBF} space
satisfying Assumption \ref{asfs} with $p_-
\in (\oz/(\oz+\eta),\fz)$, and Assumption \ref{asas}
with $s_0\in(\oz/(\oz+\eta),\min\{p_-,1\})$
and $p_0\in(s_0,\fz)$. Then
\begin{equation*}
(H_X(\cx), L^{\fz}(\cx))_{\tz,\fz}=
WH_{X^{1/(1-\tz)}}(\cx)
\end{equation*}
with equivalent quasi-norms as subspaces of
$(\gs)'$ with $\bz$, $\gz\in(\oz[1/s_0-1],\eta)$.
\end{theorem}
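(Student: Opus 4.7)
The strategy proves the two set-theoretic inclusions of the equality separately.

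For the direction $(H_X(\cx), L^\fz(\cx))_{\tz,\fz} \subset WH_{X^{1/(1-\tz)}}(\cx)$, I use sublinearity of the grand maximal function. Given any decomposition $f = f_1 + f_2$ with $f_1 \in H_X(\cx)$ and $f_2 \in L^\fz(\cx)$, one has $f^* \le f_1^* + f_2^*$, with $\|f_1^*\|_{X(\cx)} = \|f_1\|_{H_X(\cx)}$ and, via the pointwise bound $f_2^*(x) \ls \cm(f_2)(x) \le \|f_2\|_{L^\fz(\cx)}$ (using the uniform $L^1$-integrability of normalized test functions from Lemma \ref{lees}(ii)), $\|f_2^*\|_{L^\fz(\cx)} \ls \|f_2\|_{L^\fz(\cx)}$. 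Taking the infimum over such decompositions yields $K(t, f^*; X(\cx), L^\fz(\cx)) \ls K(t, f; H_X(\cx), L^\fz(\cx))$, so $\|f^*\|_{(X(\cx), L^\fz(\cx))_{\tz,\fz}} \ls \|f\|_{(H_X(\cx), L^\fz(\cx))_{\tz,\fz}}$. Theorem \ref{thin} then identifies $\|f^*\|_{(X(\cx), L^\fz(\cx))_{\tz,\fz}} = \|f^*\|_{WX^{1/(1-\tz)}(\cx)} = \|f\|_{WH_{X^{1/(1-\tz)}}(\cx)}$, giving this inclusion.

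For the reverse inclusion $WH_{X^{1/(1-\tz)}}(\cx) \subset (H_X(\cx), L^\fz(\cx))_{\tz,\fz}$, first verify that $X^{1/(1-\tz)}(\cx)$ satisfies Assumption \ref{asfs} with critical exponent $p_-/(1-\tz)$, so Theorem \ref{thma} yields an atomic decomposition $f = \sum_{i,j} \widetilde{A}\,2^i \|\mathbf{1}_{B_{i,j}}\|_{X^{1/(1-\tz)}(\cx)}\,a_{i,j}$ with $(X^{1/(1-\tz)}(\cx), \fz)$-atoms $a_{i,j}$ supported in $B_{i,j}$ satisfying $\sum_j \mathbf{1}_{B_{i,j}} \le A$. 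The identity $\|\mathbf{1}_B\|_{X^{1/(1-\tz)}(\cx)} = \|\mathbf{1}_B\|_{X(\cx)}^{1-\tz}$ lets me rewrite this as $f = \sum_{i,j} \mu_{i,j}\,b_{i,j}$ where $b_{i,j} := a_{i,j}/\|\mathbf{1}_{B_{i,j}}\|_{X(\cx)}^{\tz}$ are $(X(\cx), \fz)$-atoms and $\mu_{i,j}/\|\mathbf{1}_{B_{i,j}}\|_{X(\cx)} = \widetilde{A}\,2^i$. For each $K \in \zz$, set
$$g_K := \sum_{i \le K}\sum_{j} \mu_{i,j}\,b_{i,j}, \qquad h_K := f - g_K.$$
The telescoping $\sum_i h_i^{(m)} = P_m(f)$ from the proof of Theorem \ref{thma} (where $\sum_j h_{i,j}^{(m)} = \sum_j b_{i,j}^{(m)} - \sum_\ell b_{i+1,\ell}^{(m)}$) identifies $\sum_{i \le K} h_i^{(m)} = g_{K+1}^{(m)}$ with the good part of the Calder\'{o}n--Zygmund decomposition of $P_m(f)$ at level $2^{K+1}$; since $|g_{K+1}^{(m)}| \ls 2^K$ uniformly in $m$ (because $|P_m(f)| \ls f^* \le 2^{K+1}$ on $\Oz_{K+1}^\complement$ and $|W^{(m)}_{K+1,j}| \ls 2^K$), passing to the weak-$*$ limit along the subsequence from the proof of Theorem \ref{thma} gives $\|g_K\|_{L^\fz(\cx)} \ls 2^K$.

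For $h_K = \sum_{i > K, j} \mu_{i,j}\,b_{i,j}$, invoke the atomic reconstruction theorem for $H_X(\cx)$ from \cite{yhyy21a} to obtain, for an appropriate $\underline{p} \in (0, 1]$,
$$\|h_K\|_{H_X(\cx)} \ls \lf\|\lf\{\sum_{i > K} (\widetilde{A}\,2^i)^{\underline{p}} \sum_j \mathbf{1}_{B_{i,j}}\r\}^{1/\underline{p}}\r\|_{X(\cx)}.$$
Using $\sum_j \mathbf{1}_{B_{i,j}} \le A \mathbf{1}_{\Oz_i^\ast}$ with $\Oz_i^\ast := \bigcup_j B_{i,j} \subset \{\cm(\mathbf{1}_{\Oz_i}) \ge c\}$ (since $B(x_{i,j}, r_{i,j}) \subset \Oz_i$ occupies a fixed fraction of $B_{i,j}$ by \eqref{eqoz} and Lemma \ref{lewh1}(ii)), together with $\Oz_i = \{f^* > 2^i\}$ and the scaling $\|\mathbf{1}_{\Oz_i}\|_{X(\cx)} = \|\mathbf{1}_{\Oz_i}\|_{X^{1/(1-\tz)}(\cx)}^{1/(1-\tz)} \ls (2^{-i} M)^{1/(1-\tz)}$ with $M := \|f\|_{WH_{X^{1/(1-\tz)}}(\cx)}$, an Aoki--Rolewicz argument in $X^{1/\underline{p}}(\cx)$ (or equivalently Proposition \ref{prin}(i) with $p = 1/(1-\tz)$ and $p_1 = 1$ applied to $f^*$) gives $\|h_K\|_{H_X(\cx)} \ls 2^{-K\tz/(1-\tz)} M^{1/(1-\tz)}$. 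Combining the two bounds, $K(t, f; H_X(\cx), L^\fz(\cx)) \ls 2^{-K\tz/(1-\tz)} M^{1/(1-\tz)} + t\,2^K$, and optimizing at $2^K \sim M/t^{1-\tz}$ balances both terms at $M t^\tz$, yielding $\|f\|_{(H_X(\cx), L^\fz(\cx))_{\tz,\fz}} \ls M$. The principal obstacle is the $L^\fz$ bound on $g_K$: achieving the sharp scaling $\ls 2^K$ (rather than the divergent termwise pointwise bound $\sum_{i \le K} 2^i$) requires identifying the finite partial sum of the atomic decomposition from Theorem \ref{thma} with the good part of the underlying Calder\'{o}n--Zygmund decomposition of $P_m(f)$, which demands careful tracking of the weak-$*$ limit argument and exploitation of the monotone decay of the level sets $\Oz_i$.
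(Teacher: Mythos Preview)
Your overall strategy matches the paper's proof almost exactly: the inclusion $(H_X,L^\fz)_{\tz,\fz}\subset WH_{X^{1/(1-\tz)}}$ is obtained by comparing $K$-functionals via $f^*\le f_1^*+f_2^*$ and then invoking Theorem~\ref{thin}, and the reverse inclusion is obtained by splitting the atomic decomposition from Theorem~\ref{thma} at a dyadic level $i_0$ into a bounded ``good'' part and an $H_X$-controlled ``bad'' part (this is precisely the content of Lemma~\ref{lehin} in the paper, followed by the same $\Lambda(t)$-style optimization).

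There is one point where you have manufactured a difficulty that does not exist. You describe the $L^\fz$ bound on $g_K$ as the ``principal obstacle'' and claim the termwise pointwise bound $\sum_{i\le K}2^i$ is ``divergent,'' forcing you to identify $g_K$ with the good part of the Calder\'on--Zygmund decomposition of $P_m(f)$ and pass through weak-$*$ limits. But $\sum_{i=-\fz}^{K}2^i=2^{K+1}$ is a convergent geometric series. The paper simply uses $|h_{i,j}|\le\widetilde A\,2^i\mathbf 1_{B_{i,j}}$ together with the finite overlap $\sum_j\mathbf 1_{B_{i,j}}\le A\mathbf 1_{\Oz_i}$ to get
\[
\|g_K\|_{L^\fz(\cx)}\ls\Bigl\|\sum_{i\le K}2^i\mathbf 1_{\Oz_i}\Bigr\|_{L^\fz(\cx)}\le\sum_{i\le K}2^i\sim 2^K,
\]
with no appeal to the internal structure of the Calder\'on--Zygmund decomposition or to weak-$*$ limits. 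Your more elaborate argument would also work, but it is unnecessary. Apart from this, your $H_X$ estimate on $h_K$ and the final optimization are correct and agree with the paper (the paper phrases the optimization via the auxiliary function $\Lambda(t)$ rather than a dyadic choice of $K$, but the two are equivalent).
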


To prove this theorem, we need the atomic characterization
of $H_X(\cx)$; see, for instance, \cite[Section 4]{yhyy21a}.
Recall that, for any $\bz$, $\gz\in(0,\eta)$,
$q\in [1,\fz]$, $s\in(0,1]$, and any
BQBF space $X(\cx)$,
the \emph{atomic Hardy space}
$H^{X,q,s}_{\mathrm{at}}(\cx)$ is defined to be the set of
all functionals $f$ in $(\gs)'$ satisfying that there
exist  a sequence $\{\lz_j\}_{j\in\nn}\subset (0,\fz)$ and
a sequence $\{a_j\}_{j\in\nn}$ of
$(X(\cx),q)$-atoms supported,
respectively, in balls $\{B_j\}_{j\in\nn}$ such that
$$
\lf\|\lf\{\sum_{j\in\nn}\lf[\frac{\lz_j}
{\|\mathbf{1}_{B_j}\|_{X(\cx)}}\r]^{s}\mathbf{1}_{B_j}\r\}^{1/s}
\r\|_{X(\cx)}<\fz$$
and $f=\sum_{j\in\nn}\lz_ja_j$ in $(\gs)'$.
Furthermore, for any
$f\in H^{X,q,s}_{\mathrm{at}}(\cx)$, let
\begin{equation*}
\|f\|_{H^{X,q,s}_{\mathrm{at}}(\cx)}
:=\inf\lf\|\lf\{\sum_{j\in\nn}\lf[\frac{\lz_j}
{\|\mathbf{1}_{B_j}\|_{X(\cx)}}
\r]^{s}\mathbf{1}_{B_j}\r\}^{1/s}
\r\|_{X(\cx)},
\end{equation*}
where the infimum is taken over all the
decompositions of $f$ as above.
The following lemma is just
\cite[Theorem 4.19]{yhyy21a} which reveals
the relation between $H_X(\cx)$ and
$H^{X,q,s}_{\mathrm{at}}(\cx)$.

\begin{lemma}\label{leha}
Let $X(\cx)$, $p_-$, $s_0$, $p_0$, $\bz$, and
$\gz$ be as in Theorem \ref{thhin}, and $q\in(
\max\{1,p_0\},\fz]$. Then
$H_X(\cx)=H^{X,q,s_0}_{\mathrm{at}}(\cx)$
with equivalent quasi-norms
as subspaces of $(\gs)'$.
\end{lemma}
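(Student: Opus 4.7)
The plan is to prove the two set inclusions separately. The easier direction, $(H_X(\cx), L^\fz(\cx))_{\tz,\fz} \subset WH_{X^{1/(1-\tz)}}(\cx)$, will follow directly from Theorem \ref{thin}. For any decomposition $f = f_1 + f_2$ with $f_1 \in H_X(\cx)$ and $f_2 \in L^\fz(\cx)$, sublinearity of the grand maximal function gives $f^* \le f_1^* + f_2^*$ with $\|f_1^*\|_{X(\cx)} = \|f_1\|_{H_X(\cx)}$ and $\|f_2^*\|_{L^\fz(\cx)} \ls \|f_2\|_{L^\fz(\cx)}$; the latter follows because the size condition of test functions together with Lemma \ref{lees}(ii) yield $\|\phi\|_{L^1(\cx)} \ls 1$ whenever $\|\phi\|_{\cg(x,r,\bz,\gz)} \le 1$. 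Taking the infimum over admissible decompositions, then the weighted supremum in $t$, and finally applying Theorem \ref{thin} produces
\begin{equation*}
\|f\|_{WH_{X^{1/(1-\tz)}}(\cx)} = \|f^*\|_{WX^{1/(1-\tz)}(\cx)} \ls \|f^*\|_{(X(\cx), L^\fz(\cx))_{\tz,\fz}} \ls \|f\|_{(H_X(\cx), L^\fz(\cx))_{\tz,\fz}}.
\end{equation*}

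For the reverse inclusion, my strategy is to apply Theorem \ref{thma} to $f \in WH_{X^{1/(1-\tz)}}(\cx)$. I would first verify that $X^{1/(1-\tz)}(\cx)$ inherits Assumption \ref{asfs} from $X(\cx)$ with shifted index $p_-/(1-\tz) > \oz/(\oz+\eta)$, so Theorem \ref{thma} produces a decomposition $f = \sum_{i \in \zz, j \in \nn} \widetilde{A}\, 2^i \|\mathbf{1}_{B_{i,j}}\|_{X^{1/(1-\tz)}(\cx)}\, a_{i,j}$ in $(\gs)'$, where $a_{i,j}$ are $(X^{1/(1-\tz)}(\cx), \fz)$-atoms supported in balls $B_{i,j}$ with bounded overlap $\sum_j \mathbf{1}_{B_{i,j}} \le A$ and the norm control $\sup_i 2^i\|\sum_j\mathbf{1}_{B_{i,j}}\|_{X^{1/(1-\tz)}(\cx)} \ls \|f\|_{WH_{X^{1/(1-\tz)}}(\cx)}$. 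For each $t \in (0,\fz)$ I would split $f = g_t + b_t$ at a threshold $i_0 \in \zz$, with $g_t$ summing atoms over $i \le i_0$ and $b_t$ over $i > i_0$. The $L^\fz$ size bound on atoms together with the bounded overlap provides the pointwise estimate $\|g_t\|_{L^\fz(\cx)} \ls 2^{i_0}$; agreement between this distribution and its pointwise realization follows by dominated convergence.

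To estimate $b_t$ in $H_X(\cx)$, I would exploit $\|\mathbf{1}_B\|_{X^{1/(1-\tz)}(\cx)} = \|\mathbf{1}_B\|_{X(\cx)}^{1-\tz}$ to rewrite each $(X^{1/(1-\tz)}(\cx), \fz)$-atom $a_{i,j}$ as $\|\mathbf{1}_{B_{i,j}}\|_{X(\cx)}^{\tz}$ times an $(X(\cx), \fz)$-atom, producing a decomposition of $b_t$ with new coefficients $\mu_{i,j} = \widetilde{A}\, 2^i \|\mathbf{1}_{B_{i,j}}\|_{X(\cx)}$. Lemma \ref{leha} (whose admissible regularity window $\bz,\gz \in (\oz[1/s_0-1], \eta)$ coincides with the assumption of the theorem) then yields
\begin{equation*}
\|b_t\|_{H_X(\cx)} \ls \lf\|\lf\{\sum_{i > i_0} 2^{i s_0} \sum_{j \in \nn} \mathbf{1}_{B_{i,j}}\r\}^{1/s_0}\r\|_{X(\cx)}.
\end{equation*}
Since $F_i := \sum_j \mathbf{1}_{B_{i,j}}$ is integer-valued and bounded by $A$, pointwise comparison gives $F_i^{1/s_0} \ls F_i^{1/(1-\tz)}$ with constant depending on $A$, $s_0$, $\tz$; the $WH_{X^{1/(1-\tz)}}$ control then translates to $\|F_i\|_{X^{1/s_0}(\cx)} \ls 2^{-is_0/(1-\tz)} \|f\|_{WH_{X^{1/(1-\tz)}}(\cx)}^{s_0/(1-\tz)}$. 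Summing the resulting geometric series via the triangle inequality in the BBF space $X^{1/s_0}(\cx)$ (supplied by Assumption \ref{asas}) produces $\|b_t\|_{H_X(\cx)} \ls 2^{-i_0\tz/(1-\tz)} \|f\|_{WH_{X^{1/(1-\tz)}}(\cx)}^{1/(1-\tz)}$. The balanced choice $2^{i_0} \sim \|f\|_{WH_{X^{1/(1-\tz)}}(\cx)}/t^{1-\tz}$ then gives $K(t, f; H_X(\cx), L^\fz(\cx)) \ls t^{\tz}\, \|f\|_{WH_{X^{1/(1-\tz)}}(\cx)}$ uniformly in $t$, completing the inclusion. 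The hardest part will be this $H_X$ estimation of $b_t$: one must simultaneously transfer atoms across the two convexifications while respecting Lemma \ref{leha}'s regularity window, and convert the $WH_{X^{1/(1-\tz)}}$ bound on $\sum_j \mathbf{1}_{B_{i,j}}$ into the required $X^{1/s_0}(\cx)$ estimate through the integer-valued, bounded character of $F_i$.
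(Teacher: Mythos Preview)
Your proposal does not address the stated lemma at all. Lemma~\ref{leha} asserts the atomic characterization $H_X(\cx)=H^{X,q,s_0}_{\mathrm{at}}(\cx)$ with equivalent quasi-norms; the paper does not prove this from scratch but simply records it as a restatement of \cite[Theorem~4.19]{yhyy21a}. What you have written is instead a proof sketch of Theorem~\ref{thhin}, the real interpolation identity $(H_X(\cx),L^\fz(\cx))_{\tz,\fz}=WH_{X^{1/(1-\tz)}}(\cx)$. You even invoke Lemma~\ref{leha} as an ingredient in your argument, which would be circular if you were actually proving Lemma~\ref{leha}.

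If your intent was to prove Theorem~\ref{thhin}, then your outline is broadly in line with the paper's approach (split at level $i_0$, bound $g_\lz$ in $L^\fz$, bound $b_\lz$ in $H_X$ via Lemma~\ref{leha}), though the paper packages the $H_X$ estimate of the tail through Lemma~\ref{lehin} and controls the level-set structure via $\fs$ and $\Oz_i$ rather than your integer-valued $F_i$ trick. But as a proof of the statement you were asked to address, the proposal is simply off target.
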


Mainly using the above lemma and Theorem \ref{thma},
we can now prove the next lemma.

\begin{lemma}\label{lehin}
Let $X(\cx)$, $p_-$, $s_0$, $p_0$, $\bz$, and
$\gz$ be as in Theorem \ref{thhin}, $\tz\in(0,1)$,
and $f\in (\gs)'\cap WH_{X^{1/(1-\tz)}}(\cx)$.
Then, for any $\lz\in(0,\fz)$, there exist a
$g_{\lz}\in L^{\fz}(\cx)$ and a $b_{\lz}\in H_X(\cx)$
such that $f=g_\lz+b_\lz$ in $(\gs)'$. Furthermore,
$\|g_\lz\|_{L^{\fz}(\cx)}\le C\lz$ and
$$\|b_\lz\|_{H_X(\cx)}\le C\lf\|\fs\mathbf{1}
_{\{x\in\cx:\ \fs(x)>\lz\}}\r\|_{X(\cx)},$$
where $\fs$ is as in \eqref{eqfs} and
$C$ a positive constant
independent of both $f$ and $\lz$.
\end{lemma}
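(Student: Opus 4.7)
The plan is to apply the atomic decomposition of $WH_{X^{1/(1-\tz)}}(\cx)$ provided by Theorem~\ref{thma}, split the resulting atomic series at the dyadic level $\lz$, and bound the low-level tail in $L^\fz(\cx)$ and the high-level tail in $H_X(\cx)$. First, I would verify that $X^{1/(1-\tz)}(\cx)$ satisfies Assumption~\ref{asfs}: the Fefferman--Stein inequality on $(X^{1/(1-\tz)})^{1/t}(\cx)=X^{1/[t(1-\tz)]}(\cx)$ holds for $t(1-\tz)<p_-$, so the new lower index is $p_-/(1-\tz)>\oz/(\oz+\eta)$; moreover $\bz,\gz\in(\oz[1/s_0-1],\eta)$ lies in the admissible range of Theorem~\ref{thma} for $X^{1/(1-\tz)}(\cx)$ because $s_0<\min\{1,p_-\}\le\min\{1,p_-/(1-\tz)\}$. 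Applying Theorem~\ref{thma} to $f$ yields
$$
f=\sum_{i\in\zz}\sum_{j\in\nn}\widetilde{A}2^i\lf\|\mathbf{1}_{B_{i,j}}\r\|_{X^{1/(1-\tz)}(\cx)}a_{i,j}\quad\text{in}\ (\gs)',
$$
with each $a_{i,j}$ an $(X^{1/(1-\tz)}(\cx),\fz)$-atom supported in a ball $B_{i,j}$, $\sum_j\mathbf{1}_{B_{i,j}}\le A$ for every $i\in\zz$, and (by inspecting the Whitney construction in the proof of Theorem~\ref{thma}) $B_{i,j}\subset\Oz_i:=\{x\in\cx:\fs(x)>2^i\}$.

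Pick $i_0\in\zz$ with $2^{i_0}\le\lz<2^{i_0+1}$ and set
$$
g_\lz:=\sum_{i\le i_0}\sum_{j\in\nn}\widetilde{A}2^i\lf\|\mathbf{1}_{B_{i,j}}\r\|_{X^{1/(1-\tz)}(\cx)}a_{i,j},\qquad b_\lz:=f-g_\lz.
$$
The size bound $\|a_{i,j}\|_{L^\fz(\cx)}\le\|\mathbf{1}_{B_{i,j}}\|_{X^{1/(1-\tz)}(\cx)}^{-1}$, the support condition $\supp(a_{i,j})\subset B_{i,j}$, and the finite-overlap estimate $\sum_j\mathbf{1}_{B_{i,j}}\le A$ together show that the pointwise sum defining $g_\lz$ converges absolutely with $\|g_\lz\|_{L^\fz(\cx)}\le A\widetilde{A}\sum_{i\le i_0}2^i\ls 2^{i_0}\le\lz$. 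Since test functions are integrable by the size condition and Lemma~\ref{lees}(ii), $L^\fz(\cx)$ embeds into $(\gs)'$; hence $g_\lz\in(\gs)'$ and $b_\lz=f-g_\lz\in(\gs)'$ as well.

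For the Hardy-space estimate on $b_\lz$, I would recast the tail through the atomic space $H^{X,\fz,s_0}_{\mathrm{at}}(\cx)$ of Lemma~\ref{leha}. The rescaling $\tilde a_{i,j}:=(\|\mathbf{1}_{B_{i,j}}\|_{X^{1/(1-\tz)}(\cx)}/\|\mathbf{1}_{B_{i,j}}\|_{X(\cx)})a_{i,j}$ turns each $a_{i,j}$ into an $(X(\cx),\fz)$-atom supported in $B_{i,j}$, so
$$
b_\lz=\sum_{i>i_0}\sum_{j\in\nn}\widetilde{A}2^i\lf\|\mathbf{1}_{B_{i,j}}\r\|_{X(\cx)}\tilde a_{i,j},
$$
and by Lemma~\ref{leha} it suffices to bound $\Phi:=\|(\sum_{i>i_0}\sum_{j\in\nn}2^{is_0}\mathbf{1}_{B_{i,j}})^{1/s_0}\|_{X(\cx)}$. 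Because $\fs\sim f^*\in WX^{1/(1-\tz)}(\cx)$, Remark~\ref{refz}(iii) forces $\fs<\fz$ $\mu$-almost everywhere, so $\{\Oz_k\setminus\Oz_{k+1}\}_{k\in\zz}$ is a disjoint decomposition of $\bigcup_k\Oz_k$ modulo $\mu$-null sets. Using $\sum_j\mathbf{1}_{B_{i,j}}\le A\mathbf{1}_{\Oz_i}$, the expansion $\mathbf{1}_{\Oz_i}=\sum_{k\ge i}\mathbf{1}_{\Oz_k\setminus\Oz_{k+1}}$, and swapping the two summations,
$$
\sum_{i>i_0}2^{is_0}\mathbf{1}_{\Oz_i}=\sum_{k>i_0}\mathbf{1}_{\Oz_k\setminus\Oz_{k+1}}\sum_{i=i_0+1}^{k}2^{is_0}\ls\sum_{k>i_0}2^{ks_0}\mathbf{1}_{\Oz_k\setminus\Oz_{k+1}}.
$$
Disjointness lets the $1/s_0$-power pass inside the sum, and $2^k<\fs$ on $\Oz_k\setminus\Oz_{k+1}$ together with $\Oz_{i_0+1}=\{\fs>2^{i_0+1}\}\subset\{\fs>\lz\}$ give
$$
\lf(\sum_{k>i_0}2^{ks_0}\mathbf{1}_{\Oz_k\setminus\Oz_{k+1}}\r)^{1/s_0}=\sum_{k>i_0}2^k\mathbf{1}_{\Oz_k\setminus\Oz_{k+1}}\le\fs\mathbf{1}_{\Oz_{i_0+1}}\le\fs\mathbf{1}_{\{x\in\cx:\fs(x)>\lz\}},
$$
so $\Phi\ls\|\fs\mathbf{1}_{\{\fs>\lz\}}\|_{X(\cx)}$ by Definition~\ref{debb}(ii), and Lemma~\ref{leha} yields the required bound on $\|b_\lz\|_{H_X(\cx)}$.

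The main obstacle is the series-rearrangement step: to identify $b_\lz=f-g_\lz$ with the tail series $\sum_{i>i_0,j}\widetilde{A}2^i\|\mathbf{1}_{B_{i,j}}\|_{X(\cx)}\tilde a_{i,j}$ in $(\gs)'$. The tail converges in $(\gs)'$ to some element of $H_X(\cx)$ by Lemma~\ref{leha} and the finiteness of $\Phi$; adding it to $g_\lz$ must recover the full decomposition of Theorem~\ref{thma} thanks to the absolute pointwise convergence of the partial sums defining $g_\lz$, which allows the split against any $\varphi\in\gs$ via the dominated convergence theorem.
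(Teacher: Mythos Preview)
Your proposal is correct and follows essentially the same route as the paper: apply Theorem~\ref{thma} to $X^{1/(1-\tz)}(\cx)$, split the atomic series at the dyadic level determined by $\lz$, bound the low part in $L^\fz(\cx)$ via the finite-overlap estimate $\sum_j\mathbf{1}_{B_{i,j}}\le A\mathbf{1}_{\Oz_i}$, and bound the high part in $H_X(\cx)$ through Lemma~\ref{leha} and the layer-cake computation $\sum_{i>i_0}2^{is_0}\mathbf{1}_{\Oz_i}\ls\sum_{k>i_0}2^{ks_0}\mathbf{1}_{\Oz_k\setminus\Oz_{k+1}}$. The only cosmetic differences are that the paper works with the un-normalized blocks $h_{i,j}$ from the proof of Theorem~\ref{thma} (with $\|h_{i,j}\|_{L^\fz}\le\widetilde{A}2^i$) rather than rescaling atoms, and it chooses $i_0$ so that $2^{i_0-1}\le\lz<2^{i_0}$; neither affects the argument.
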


\begin{proof}
Let all the symbols be as in the present lemma.
Observe that $X^{1/(1-\tz)}(\cx)$ satisfies
Assumption \ref{asfs} with $X(\cx)$ and $p_-$ replaced,
respectively, by $X^{1/(1-\tz)}(\cx)$ and $p_-/(1-\tz)$.
Using Theorem \ref{thma} with $X(\cx)$, $WX(\cx)$, and
$\widetilde{A}2^i\|\mathbf{1}_{B_{i,j}}\|_{X(\cx)}a_{i,j}$
replaced, respectively, by $X^{1/(1-\tz)}(\cx)$,
$WX^{1/(1-\tz)}(\cx)$, and $h_{i,j}$, we obtain
\begin{equation*}
f=\sum_{i\in\zz}\sum_{j\in\nn}h_{i,j}
\end{equation*}
in $(\gs)'$, where, for any $i\in\zz$ and $j\in\nn$,
$h_{i,j}$ supports in a ball $B_{i,j}$,
and $\|h_{i,j}\|_{L^\fz(\cx)}\le \widetilde{A}2^i$
with $\widetilde{A}$ being a positive constant
independent of $f$, $i$, and $j$;
furthermore, we have, for any $i\in\zz$,
\begin{equation}\label{lheq2}
\bigcup_{j\in\nn}B_{i,j}=\lf\{x\in\cx:\
\fs(x)>2^i\r\}=:\Omega_i
\end{equation}
with $\fs$ as in \eqref{eqfs}, and
\begin{equation}\label{theq1}
\sum_{j\in\nn}\mathbf{1}_{B_{i,j}}
\le A\mathbf{1}_{\Oz_i},
\end{equation}
where $A$ is a positive constant
independent of $i$ and $f$.

Now, we fix $\lz\in(0,\fz)$, let
$i_0\in\zz$ be such that
$2^{i_0-1}\le \lz<2^{i_0}$,
and decompose
\begin{equation*}
f=\sum^{i_0-1}_{i=-\fz}\sum_{j\in\nn}
h_{i,j}+
\sum^{\fz}_{i=i_0}\sum_{j\in\nn}
h_{i,j}
=:g_\lz+b_\lz
\end{equation*}
in $(\gs)'$.

We first estimate $\|g_\lz\|_{L^\fz(\cx)}$.
Indeed, from the estimate that $|h_{i,j}|\le
\widetilde{A}2^i\mathbf{1}_{B_{i,j}}$
for any $i\in\zz$ and $j\in\nn$,
\eqref{theq1}, and $2^{i_0}\sim\lz$,
it follows that
\begin{align*}
\lf\|g_\lz\r\|_{L^{\fz}(\cx)}
\ls \lf\|\sum^{i_0-1}_{i=-\fz}
\sum_{j\in\nn}2^i
\mathbf{1}_{B_{i,j}}\r\|
_{L^{\fz}(\cx)}
\ls \lf\|\sum^{i_0-1}_{i=-\fz}2^i
\mathbf{1}_{\Oz_i}\r\|
_{L^{\fz}(\cx)}
\ls \sum^{i_0-1}_{i=-\fz}2^{i}\sim \lz
\end{align*}
with all the implicit positive constants
independent of both $f$ and $\lz$.

Next, we estimate $\|b_\lz\|_{H_X(\cx)}$.
For any $i\in\zz$ and $j\in\nn$, let
$\lz_{i,j}:=\widetilde{A}2^i\|\mathbf{1}_{B_{i,j}}
\|_{X(\cx)}$. Observe that $(\lz_{i,j})^{-1}h_{i,j}$
is an $(X(\cx),\fz)$-atom supported in $B_{i,j}$.
By this, Lemma \ref{leha},
 \eqref{theq1}, Definition \ref{debb}(ii),
\eqref{lheq2},
and $\lz<2^{i_0}$, we conclude that
\begin{align*}
\lf\|b_\lz\r\|_{H_X(\cx)}
&\ls \lf\|\lf\{\sum^{\fz}_{i=i_0}
\sum_{j\in\nn}
\lf[\frac{\lz_{i,j}}{\|\mathbf{1}
_{B_{i,j}}\|_{X(\cx)}}\r]^{s_0}
\mathbf{1}_{B_{i,j}}
\r\}^{1/s_0}\r\|_{X(\cx)}\\
&\sim \lf\|\lf(\sum^{\fz}_{i=i_0}
\sum_{j\in\nn}2^{is_0}
\mathbf{1}_{B_{i,j}}
\r)^{1/s_0}\r\|_{X(\cx)}
\ls \lf\|\lf(\sum^{\fz}_{i=i_0}
2^{is_0}\mathbf{1}_{\Omega_i}
\r)^{1/s_0}\r\|_{X(\cx)}\\
&\sim \lf\|\lf(\sum^{\fz}_{i=i_0}
\sum^{\fz}_{j=i}
2^{is_0}\mathbf{1}_{\Omega_j
\setminus \Omega_{j+1}}
\r)^{1/s_0}\r\|_{X(\cx)}
\ls \lf\|\lf(\sum^{\fz}_{j=i_0}
2^{js_0}\mathbf{1}_{\Omega_j
\setminus \Omega_{j+1}}
\r)^{1/s_0}\r\|_{X(\cx)}\\
&\ls \lf\|\fs\mathbf{1}
_{\{x\in\cx:\ \fs (x)> 2^{i_0}\}}
\r\|_{X(\cx)}
\ls \lf\|\fs\mathbf{1}
_{\{x\in\cx:\ \fs (x)>\lz\}}
\r\|_{X(\cx)},
\end{align*}
where all the implicit positive constants
are independent of both $f$ and $\lz$.
This finishes
the proof of Lemma \ref{lehin}.
\end{proof}

Now, we use Lemma \ref{lehin}
and Theorem \ref{thin} to
prove Theorem \ref{thhin}.

\begin{proof}[Proof of Theorem \ref{thhin}]
Let all the symbols be as in the present theorem.
We first show that
\begin{equation}\label{tie0}
WH_{X^{1/(1-\tz)}}(\cx)\subset
(H_X(\cx), L^{\fz}(\cx))
_{\tz,\fz}.
\end{equation}
To this end, let $f\in WH_{X^{1/(1-\tz)}}(\cx)$ and,
without loss of generality,
we may assume that $f$ is a non-zero functional.
Recall that $\fs$ is as in \eqref{eqfs}.
From Definition \ref{debb}(ii) and
Proposition \ref{prin}(i) with $p$, $p_1$, and
$f$ replaced, respectively, by $1/(1-\tz)$, $1$, and $\fs$,
we deduce that
$$
\lz^{-1}\lf\|\fs\mathbf{1}_{
\{x\in\cx:\ |\fs(x)|>\lz\}}\r\|_{X(\cx)}
$$
is a decreasing function on $\lz\in(0,\fz)$, and
$\lim_{\lz\to\fz}\lz^{-1}\|\fs\mathbf{1}_{
\{x\in\cx:\ |\fs(x)|>\lz\}}\|_{X(\cx)}=0.$
For any $t\in(0,\fz)$, let
\begin{equation*}
\Lambda(t):=\inf\lf\{\lz\in(0,\fz):\
\lz^{-1}\lf\|\fs\mathbf{1}_{
\{x\in\cx:\ |\fs(x)|>\lz\}
}\r\|_{X(\cx)}\le t\r\}.
\end{equation*}
Observe that, for any $t\in(0,\fz)$,
we have $\|\fs\mathbf{1}_{
\{x\in\cx:\ |\fs(x)|>\Lambda(t)\}
}\|_{X(\cx)}\le t\Lambda(t)$,
$\Lambda(t)>0$, and
\begin{equation*}
[\Lambda(t)/2]^{-1}
\lf\|\fs\mathbf{1}_{\{x\in\cx:\
|\fs(x)|>\Lambda(t)/2\}}
\r\|_{X(\cx)}>t.
\end{equation*}
Using this, the definition of
$K(t,f;H_X(\cx),L^{\fz}(\cx))$,
Lemma \ref{lehin} with
$\lz$ replaced by $\Lambda(t)$,
Proposition \ref{prin}(i) with $p$, $p_1$, $\lz$,
and $f$ replaced, respectively, by $1/(1-\tz)$,
$1$, $\Lambda(t)/2$, and $\fs$, and
$f^*\sim \fs$,
we conclude that, for any $t\in(0,\fz)$,
\begin{align*}
&t^{-\tz}K(t,f;H_X(\cx),L^{\fz}(\cx))\\
&\quad\le t^{-\tz}\lf\|b_{\Lambda(t)}\r\|_{H_X(\cx)}
+t^{1-\tz}\lf\|g_{\Lambda(t)}\r\|_{L^{\fz}(\cx)}\\
&\quad\ls t^{-\tz}\lf\|\fs\mathbf{1}_{
\{x\in\cx:\ |\fs(x)|>\Lambda(t)\}
}\r\|_{X(\cx)}+t^{1-\tz}\Lambda(t)
\ls t^{1-\tz}\Lambda(t)\\
&\quad\ls
\lf\{\lf[\Lambda(t)/2\r]^{-1}
\lf\|\fs\mathbf{1}_{\{x\in\cx:\
|\fs(x)|>\Lambda(t)/2\}}\r\|_{X(\cx)}
\r\}^{1-\tz}\Lambda(t)\\
&\quad\ls \lf\|\fs\r\| _{WX^{1/(1-\tz)}(\cx)}
\sim \|f\| _{WH_{X^{1/(1-\tz)}}(\cx)},
\end{align*}
where $b_{\Lambda(t)}$ and
$g_{\Lambda(t)}$ are as
in Lemma \ref{lehin} with
$\lz$ replaced by $\Lambda(t)$, and
all the implicit positive constants
independent of both $f$ and $t$.
Taking the supremum over $t\in(0,\fz)$,
we obtain $f\in (H_X(\cx),L^\fz(\cx))_{\tz,\fz}$
and
$$
\|f\|_{(H_X(\cx),L^\fz(\cx))_{\tz,\fz}}
\ls \|f\| _{WH_{X^{1/(1-\tz)}}(\cx)}
$$
with
the implicit positive constant independent
of $f$. This finishes the proof of \eqref{tie0}.

Next, we show that
\begin{equation}\label{tie1}
(H_X(\cx), L^{\fz}(\cx))_{\tz,\fz}
\subset WH_{X^{1/(1-\tz)}}(\cx).
\end{equation}
Indeed, for any $f\in
(H_X(\cx), L^{\fz}(\cx))_{\tz,\fz}$, let
$$
\mathcal{F}_f:=
\lf\{(f_1,f_2):\ f_1\in H_X(\cx),\
f_2\in L^{\fz}(\cx),\ f=f_1+f_2\r\}.
$$
From the definition of
$K(t,f^*;X(\cx),L^{\fz}(\cx))$,
the estimate that
$f^*\le (f_1)^*+(f_2)^*$ for any
$f\in (H_X(\cx), L^{\fz}(\cx))_{\tz,\fz}$
and any $(f_1,f_2)\in\mathcal{F}_f$,
Definition \ref{debb}(ii),
and the estimate that $\|f^*\|_{L^{\fz}(\cx)}
\ls \|f\|_{L^{\fz}(\cx)}$ for any $f\in L^{\fz}(\cx)$,
it follows that, for any $t\in (0,\fz)$,
$f\in (H_X(\cx), L^{\fz}(\cx))_{\tz,\fz}$,
and $(f_1,f_2)\in\mathcal{F}_f$,
\begin{align*}
&K(t,f^*;X(\cx),L^{\fz}(\cx))\\
&\quad\le \lf\|f^*\mathbf{1}_{\{x\in\cx:
\ f^*(x)\le 2(f_1)^*(x)\}}\r\|_{X(\cx)}
+t\lf\|f^*\mathbf{1}_{\{x\in\cx:\
f^*(x)> 2(f_1)^*(x)\}}\r\|_{L^{\fz}(\cx)}\\
&\quad\le \lf\|f^*\mathbf{1}_{\{x\in\cx:
\ f^*(x)\le 2(f_1)^*(x)\}}\r\|_{X(\cx)}
+t\lf\|f^*\mathbf{1}_{\{x\in\cx:\
f^*(x)\le 2(f_2)^*(x)\}}\r\|_{L^{\fz}(\cx)}\\
&\quad\ls\lf\|(f_1)^*\r\|_{X(\cx)}+
t\lf\|(f_2)^*\r\|_{L^{\fz}(\cx)}
\ls \lf\|f_1\r\|_{H_X(\cx)}
+t\lf\|f_2\r\|_{L^{\fz}(\cx)}.
\end{align*}
Taking the infimum over $(f_1,f_2)\in\mathcal{F}_f$,
we obtain,
for any $t\in (0,\fz)$
and $f\in (H_X(\cx), L^{\fz}(\cx))_{\tz,\fz}$,
\begin{equation*}
K(t,f^*;X(\cx),L^{\fz}(\cx))
\ls K(t,f;H _X(\cx),L^{\fz}(\cx)),
\end{equation*}
which further implies that, for any
$f\in (H_X(\cx), L^{\fz}(\cx))_{\tz,\fz}$,
$$
\|f^*\|_{(X(\cx), L^\fz(\cx))_{\tz,\fz}}
\ls\|f\|_{(H_X(\cx), L^\fz(\cx))_{\tz,\fz}}.
$$
From this and Theorem \ref{thin},
we infer that, for any
$f\in (H_X(\cx), L^{\fz}(\cx))_{\tz,\fz}$,
 $f^*\in WX^{1/(1-\tz)}(\cx)$
and
$$\|f\|_{WH_{X^{1/(1-\tz)}}(\cx)}
\ls \|f\|_{(H_X(\cx),L^\fz(\cx))_{\tz,\fz}}.$$
This finishes the proof of \eqref{tie1}, and hence
of Theorem \ref{thhin}.
\end{proof}

\begin{remark}
\begin{itemize}
\item[(i)]
We refer the reader to
\cite[Theorem 4.1]{wyyz21} for
the corresponding result of
Theorem \ref{thin} in the Euclidean space case. The result obtained
in \cite[Theorem 4.1]{wyyz21}
is a special case of Theorem \ref{thin} when $\cx=\rn$.
Furthermore, in the proof
of Theorem \ref{thin}, we use the
Aoki--Rolewicz theorem and hence show that
the assumption that ``$X^{1/r}$ is
a BBF space'' of
\cite[Theorem 4.1]{wyyz21} is superfluous.
Moreover, Theorem \ref{thin} holds true
if $X(\cx)$ is replaced by any BQBF space given
in Section \ref{sap}.
\item[(ii)]
We refer the reader to
\cite[Theorem 4.5]{wyyz21} for the
corresponding result of
Theorem \ref{thhin} in the Euclidean space case. We assume that
$p_-\in(\oz/(\oz+\eta),\fz)$
in Theorem \ref{thhin}, which might be the known best possible.
\end{itemize}
\end{remark}

\section{Calder\'on--Zygmund Operators\label{sc}}

In this section, we discuss the boundedness
of Calder\'on--Zygmund operators from
the Hardy space $H_X(\cx)$ to the weak BQBF space $WX(\cx)$,
or to the weak Hardy space $WH_X(\cx)$.
We first recall the concept of
Calder\'on--Zygmund operators on
$\cx$; see, for instance,
\cite[Definition 12.1]{ah13}.

\begin{definition}\label{deke}
Let $\ez\in (0,1]$. A function
$K:\cx\times\cx\setminus\{(x,x):
\ x\in\cx\}\to\cc$ is called an
\emph{$\ez$-Calder\'on--Zygmund
kernel} if there exists a
positive constant $C_K$ such that
\begin{enumerate}
\item[(i)]
for any $x,\ y\in\cx$ with $x\neq y$,
\begin{equation*}
|K(x,y)|\le C_K\frac{1}{V(x,y)};
\end{equation*}
\item[(ii)] for any $x,\ x',\ y\in\cx$
with $x\neq y$ and
$d(x,x')\leq (2A_0)^{-1}d(x,y)$,
\begin{equation*}
\lf|K(x,y)-K(x',y)\r|+\lf|K(y,x)-K(y,x')\r|
\leq C_K\left[
\frac{d(x,x')}{d(x,y)}\right
]^{\ez}\frac{1}{V(x,y)}.
\end{equation*}
\end{enumerate}
A linear operator $T$,
which is bounded on $L^2(\cx)$,
is called
an \emph{$\ez$-Calder\'on--Zygmund operator} if
\begin{itemize}
\item[(iii)]
there exists an
$\ez$-Calder\'on--Zygmund kernel $K$
such that, for any $f\in L^2(\cx)$ with
bounded support, and $x\notin \supp(f)$,
\begin{equation*}
T(f)(x)=\int_{\cx}K(x,y)f(y)\,d\mu(y).
\end{equation*}
\end{itemize}
\end{definition}
For any
$\ez$-Calder\'on--Zygmund operator $T$
with $\ez\in(0,1]$,
we sometimes need to assume $T^*1=0$
which means that, for any $a\in L^2(\cx)$ with
bounded support, and $\int_{\cx}a(z)\,d\mu(z)=0$, one has
$$\int_{\cx}T(a)(z)\,d\mu(z)=0.$$
We also need the following assumption to obtain the main
theorems of this section.
\begin{assumption}\label{asw}
Let $\tau\in(0,\fz)$, $\oz$ be as in \eqref{eqoz},
 and $X(\cx)$ a BQBF space. Assume that
there exists a positive constant $C$ such that, for any sequence
$\{f_j\}_{j\in\nn}\subset \mathscr{M}(\cx)$,
\begin{equation*}
\lf\|\lf\{\sum_{j\in\nn}\lf[\cm\lf(f_j\r)\r]
^{\frac{\oz+\tau}{\oz}}\r\}
^\frac{\oz}{\oz+\tau}\r\|
_{WX^{\frac{\oz+\tau}{\oz}}(\cx)}
\le C\lf\|\lf(\sum_{j\in\nn}\lf|f_j\r|
^{\frac{\oz+\tau}{\oz}}\r)
^{\frac{\oz}{\oz+\tau}}\r\|
_{X^{\frac{\oz+\tau}{\oz}}(\cx)}.
\end{equation*}
\end{assumption}
\begin{remark}\label{rew}
By \cite[Theorem 1.2]{gly09}, we find that,
if $X(\cx):=L^{\oz/(\oz+\tau)}(\cx)$
in Assumption \ref{asw},
then Assumption \ref{asw} holds true.
Furthermore, see Section \ref{sap} for more
function spaces satisfying Assumption \ref{asw}.
\end{remark}
Now, we have the following boundedness of
Calder\'{o}n--Zygmund operators.
\begin{theorem}\label{thcz}
Let $\oz$ be as in \eqref{eqoz},
$\eta$ as in \eqref{eqwa}, $\ez\in(0,\eta)$,
$T$ an $\ez$-Calder\'on--Zygmund operator,
and $X(\cx)$ a \emph{BQBF} space.
Suppose that $X(\cx)$ has an
absolutely continuous quasi-norm
and satisfies Assumption \ref{asfs} with
$p_-=\oz/(\oz+\ez)$,
Assumption \ref{asas} with
$s_0\in(\oz/(\oz+\eta),p_-)$
and $p_0\in(s_0,\fz)$, and
Assumption \ref{asw} with $\tau=\ez$.
\begin{itemize}
\item[\textup{(i)}]
Then $T$ can be extended to a unique bounded
linear operator from $H_X(\cx)$ to $WX(\cx)$.
\item[\textup{(ii)}]
If $T^*1=0$,
then $T$ can be extended to a unique bounded
linear operator from $H_X(\cx)$
to $WH_X(\cx)$.
\end{itemize}
\end{theorem}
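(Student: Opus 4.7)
The plan is to reduce to an atomic decomposition via Lemma \ref{leha} and then carry out a level-set decomposition at each threshold $\lz$, following the critical-case machinery from \cite[Theorem 8.13]{zhy20}. First, I would use Lemma \ref{leha} to identify $H_X(\cx)$ with $H^{X,q,s_0}_{\mathrm{at}}(\cx)$ for some $q\in(\max\{1,p_0\},\fz]$ on which $T$ is bounded, so that any $f\in H_X(\cx)$ admits an atomic decomposition $f=\sum_{j\in\nn}\lz_j a_j$ in $(\gs)'$. The absolute continuity of $\|\cdot\|_{X(\cx)}$ will be used to show that partial sums converge in the $H_X(\cx)$ quasi-norm, so that $T(f):=\sum_j\lz_j T(a_j)$ is unambiguously defined on a dense subspace and admits a unique extension.

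The key technical input is an atom-like pointwise bound: for any $(X(\cx),q)$-atom $a$ supported in a ball $B$ and any $x\in\cx$,
\begin{equation*}
|T(a)(x)|\ls \cm(a)(x)\mathbf{1}_{2A_0B}(x)+\lf\|\mathbf{1}_B\r\|^{-1}_{X(\cx)}\lf[\cm(\mathbf{1}_B)(x)\r]^{(\oz+\ez)/\oz}\mathbf{1}_{(2A_0B)^{\complement}}(x).
\end{equation*}
This mirrors Lemma \ref{lea} but with the kernel's H\"older exponent $\ez$ in place of $\bz$. The near-field term comes from the standard pointwise domination of $T$ by $\cm$ on $L^q$-functions; the far-field bound uses $\int a\,d\mu=0$ together with the H\"older regularity of $K$ in Definition \ref{deke}(ii), $\|a\|_{L^1(\cx)}\le\mu(B)\|\mathbf{1}_B\|_{X(\cx)}^{-1}$, and Lemma \ref{lees}(iii) to convert $r_B^\ez/[d(x,x_B)]^\ez$ and $\mu(B)/V(x,x_B)$ into the stated power of $\cm(\mathbf{1}_B)$.

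For part (i) ($H_X(\cx)\to WX(\cx)$), I would fix $\lz\in(0,\fz)$, split the atomic sum at the dyadic threshold matching $\lz$, and then estimate $\lz\|\mathbf{1}_{\{|T(f)|>\lz\}}\|_{X(\cx)}$ by controlling the two pieces of the atom-like bound separately. The near-field $\cm(a_j)\mathbf{1}_{2A_0B_j}$ pieces are summed via Assumption \ref{asfs} (Fefferman--Stein, through Theorem \ref{thwfs}) combined with Proposition \ref{pras} from Assumption \ref{asas}, while the far-field tail $[\cm(\mathbf{1}_{B_j})]^{(\oz+\ez)/\oz}$ pieces are summed \emph{only in the weak sense} via Assumption \ref{asw} with $\tau=\ez$; this is precisely the borderline weak-type Fefferman--Stein inequality that is unavoidable at the critical exponent $p_-=\oz/(\oz+\ez)$. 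Proposition \ref{prtz} (Aoki--Rolewicz) then sums across the dyadic layers without the full triangle inequality. For part (ii) ($H_X(\cx)\to WH_X(\cx)$), the hypothesis $T^*1=0$ gives $\int_\cx T(a_j)\,d\mu=0$, so that, after the atom-like bound of Step~2 and an appropriate rescaling, the functions $T(a_j)$ become $(X(\cx),q)$-atoms up to a harmless tail. Regrouping $\sum_j\lz_j T(a_j)$ into dyadic $2^i$-levels matches the hypotheses of the reconstruction Theorem \ref{tham}, yielding $T(f)\in WH_X(\cx)$ with the desired quasi-norm control.

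The main obstacle is the criticality of $p_-=\oz/(\oz+\ez)$: the strong-type bound $H_X(\cx)\to H_X(\cx)$ fails, so the far-field tail $\sum_j\lz_j\|\mathbf{1}_{B_j}\|_{X(\cx)}^{-1}[\cm(\mathbf{1}_{B_j})]^{(\oz+\ez)/\oz}$ cannot be summed in any strong-type space and must instead be absorbed weakly via Assumption \ref{asw}; controlling this summation simultaneously with the level-set decomposition of the atoms is the technical heart of the argument. A secondary difficulty is ensuring the extension of $T$ is well defined independently of the chosen atomic representation, which is exactly why the absolute continuity of $\|\cdot\|_{X(\cx)}$ is assumed; when this fails, one is forced into the fallback embedding argument of Theorem \ref{thc}.
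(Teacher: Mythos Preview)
Your overall framework is close, but there are two concrete deviations from the paper's argument, one of which is a genuine gap.

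First, the paper does \emph{not} work with the full atomic space via Lemma \ref{leha}, nor does it introduce any dyadic level splitting at the threshold $\lz$. Instead it invokes Lemma \ref{lefin}: absolute continuity of $\|\cdot\|_{X(\cx)}$ makes the \emph{finite} atomic space $H^{X,q_0,s_0}_{\mathrm{fin}}(\cx)$ dense in $H_X(\cx)$ for $q_0\in(p_0,\fz)\cap[2,\fz)$, so one only needs the a priori estimate on finite sums $f=\sum_{j=1}^n\lz_j a_j\in L^2(\cx)$, where $T(f)$ is already defined. The estimate itself is then a single near/far split
\[
\|T(f)\|_{WX(\cx)}\ls \Bigl\|\textstyle\sum_{j=1}^n\lz_j|T(a_j)|\mathbf{1}_{4A_0^2B_j}\Bigr\|_{X(\cx)}+\Bigl\|\textstyle\sum_{j=1}^n\lz_j|T(a_j)|\mathbf{1}_{(4A_0^2B_j)^\complement}\Bigr\|_{WX(\cx)},
\]
with the first term controlled by Proposition \ref{pras} (after $\|T(a_j)\mathbf{1}_{4A_0^2B_j}\|_{L^{p_0}}\ls[\mu(4A_0^2B_j)]^{1/p_0}\|\mathbf{1}_{B_j}\|_{X(\cx)}^{-1}$ via H\"older and the $L^{q_0}$-boundedness of $T$) and the second by your far-field pointwise bound plus Assumption \ref{asw}. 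No Aoki--Rolewicz summation over dyadic layers is needed, because the atomic sum is finite. Your proposed level-set splitting of an infinite single-indexed decomposition $\sum_j\lz_j a_j$ has no natural ``$2^i$-layer'' structure to exploit, so that route is more awkward than necessary.

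Second, and more seriously, your plan for part (ii) has a gap. The functions $T(a_j)$ are \emph{not} supported in balls---even with $T^*1=0$---so they cannot be fed into Theorem \ref{tham}, whose hypotheses require genuine atoms with $\supp(a_{i,j})\subset B_{i,j}$ and bounded overlap $\sum_j\mathbf{1}_{cB_{i,j}}\le A$ at each level. The ``harmless tail'' you mention is exactly the far-field part whose control is only weak-type via Assumption \ref{asw}, so it cannot be absorbed into an atomic structure. The paper instead argues directly: from $[T(f)]^*\le\sum_{j=1}^n\lz_j[T(a_j)]^*$ it performs the same near/far split on $[T(a_j)]^*$. The near-field uses $[T(a_j)]^*\ls\cm(T(a_j))$ and then H\"older plus $L^{q_0}$-boundedness of $\cm\circ T$; the far-field uses $T^*1=0$ together with the kernel regularity (citing \cite[pp.\,74--75]{zhy20}) to obtain
\[
[T(a_j)]^*(x)\ls \|\mathbf{1}_{B_j}\|_{X(\cx)}^{-1}[\cm(\mathbf{1}_{B_j})(x)]^{(\oz+\ez)/\oz}\quad\text{for }x\notin 4A_0^2B_j,
\]
after which Assumption \ref{asw} finishes exactly as in part (i). Theorem \ref{tham} plays no role.
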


To prove this theorem, we need to recall the concept of
the \emph{finite atomic Hardy space}.
For any $q\in [1,\fz]$, $s\in (0,1]$,
and any BQBF space
 $X(\cx)$,
the \emph{finite atomic Hardy space}
$H^{X,q,s}_{\mathrm{fin}}(\cx)$ associated
with $X(\cx)$ is defined to be
the set of all the functions $f\in L^q(\cx)$
satisfying that there exist an $n\in\nn$, a sequence
$\{\lz_j\}^n_{j=1}\subset(0,\fz)$, and a sequence
$\{a_j\}^n_{j=1}$ of $(X(\cx),q)$-atoms
supported, respectively, in balls
$\{B_j\}^n_{j=1}$ such that $f=\sum^n_{j=1}\lz_ja_j$.
Moreover, let
\begin{align*}
\|f\|_{H^{X,q,s}_{\mathrm{fin}}(\cx)}
&:=\inf\lf\|\lf\{\sum^n_{j=1}\lf[\frac{\lz_j}
{\|\mathbf{1}_{B_j}\|_{X(\cx)}}
\r]^{s}\mathbf{1}_{B_j}\r\}^{1/s}
\r\|_{X(\cx)},
\end{align*}
where the infimum is taken over all
the decompositions
of $f$ as above. The following lemma is a
part of \cite[Theorem 4.23 and Lemma 6.4]{yhyy21a}.
\begin{lemma}\label{lefin}
Let $\oz$ be as in \eqref{eqoz}, $\eta$ as in \eqref{eqwa},
and $X(\cx)$ a \emph{BQBF} space.
Suppose that $X(\cx)$
satisfies Assumption \ref{asfs} with $p_-
\in (\oz/(\oz+\eta),\fz)$, and Assumption \ref{asas}
with $s_0\in(\oz/(\oz+\eta),\min\{p_-,1\})$
and $p_0\in(s_0,\fz)$.
Let $q\in(\max\{1,p_0\},\fz)$. Then
$\|\cdot\|_{H_X(\cx)}$ and
$\|\cdot\|_{H^{X,q,s_0}_{\mathrm{fin}}(\cx)}$
are equivalent on ${H^{X,q,s_0}_{\mathrm{fin}}(\cx)}$.
Furthermore, if $X(\cx)$ has an absolutely continuous quasi-norm,
then $H^{X,q,s_0}_{\mathrm{fin}}(\cx)$
is dense in $H_X(\cx)$.
\end{lemma}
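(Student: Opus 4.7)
The plan is to follow the strategy of \cite[Theorem 4.23 and Lemma 6.4]{yhyy21a}, which establishes the analogous finite-atomic norm equivalence and density on general BQBF spaces over spaces of homogeneous type under precisely the structural hypotheses appearing in the present lemma; here we sketch the main steps. For the norm equivalence on $H^{X,q,s_0}_{\mathrm{fin}}(\cx)$, one direction $\|f\|_{H_X(\cx)}\ls \|f\|_{H^{X,q,s_0}_{\mathrm{fin}}(\cx)}$ is the easy one: any finite atomic representation is, after padding with zeros, also a countable one, so the bound follows from the strong-type atomic reconstruction (the $H_X$-analog of Theorem \ref{tham}) which is proved in \cite{yhyy21a} under our hypotheses.

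The reverse inequality $\|f\|_{H^{X,q,s_0}_{\mathrm{fin}}(\cx)}\ls \|f\|_{H_X(\cx)}$ is the technical core. Given $f\in H^{X,q,s_0}_{\mathrm{fin}}(\cx)$, we exploit the fact that $f\in L^q(\cx)$ and $\supp(f)$ is contained in the finite union of the support balls of any defining decomposition. We then apply a Calder\'on--Zygmund decomposition of $f$ indexed by the dyadic level sets $\Oz_i:=\{f^*>2^i\}$ of the grand maximal function, exactly as in the proof of Theorem \ref{thma}, to obtain an infinite atomic representation $f=\sum_{i\in\zz}\sum_{j\in\nn}\lz_{i,j}a_{i,j}$ with $(X(\cx),q)$-atoms (here with $q<\fz$, using the Calder\'on--Zygmund kernel of a 1-exp-ATI averaging rather than $L^\fz$-bounds). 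For large $i\ge i_1$, boundedness of $f$ in $L^q$ together with the $L^q$-boundedness of the maximal operator forces only finitely many $i$ to contribute. For small $i\le i_0$, the low-index tail is grouped into a single large $(X(\cx),q)$-atom supported on a fixed ball $B_f$ containing $\supp(f)$, whose $L^q$-normalization is controlled through $\|f\|_{L^q(\cx)}$ and a constant depending on $i_0$; choosing $i_0\sim \log_2\|f^*\|_{WX}$ and invoking Proposition \ref{pras} to control the associated $X^{1/s_0}$-quantity by $\|f^*\|_{X(\cx)}=\|f\|_{H_X(\cx)}$ closes the estimate.

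For the density assertion under absolute continuity of $\|\cdot\|_{X(\cx)}$, we take any $f\in H_X(\cx)$ and invoke Lemma \ref{leha} to produce $f=\sum_{j\in\nn}\lz_ja_j$ in $(\gs)'$ with atoms $a_j$ in balls $B_j$ and
$$g:=\lf\{\sum_{j\in\nn}\lf[\frac{\lz_j}{\|\mathbf{1}_{B_j}\|_{X(\cx)}}\r]^{s_0}\mathbf{1}_{B_j}\r\}^{1/s_0}\in X(\cx),\qquad \|g\|_{X(\cx)}\ls \|f\|_{H_X(\cx)}.$$
Set the finite truncations $f_N:=\sum_{j=1}^N\lz_ja_j\in H^{X,q,s_0}_{\mathrm{fin}}(\cx)$; then $f-f_N=\sum_{j>N}\lz_ja_j$, and the reconstruction side of Lemma \ref{leha} gives $\|f-f_N\|_{H_X(\cx)}\ls \|g_N\|_{X(\cx)}$ with $g_N:=\{\sum_{j>N}[\lz_j/\|\mathbf{1}_{B_j}\|_{X(\cx)}]^{s_0}\mathbf{1}_{B_j}\}^{1/s_0}$. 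Since $g_N\le g$ pointwise with $g\in X(\cx)$, and $g_N\to 0$ $\mu$-almost everywhere, the absolute continuity of $\|\cdot\|_{X(\cx)}$ (passed to the $s_0$-convexification, which inherits this property) yields $\|g_N\|_{X(\cx)}\to 0$, hence $f_N\to f$ in $H_X(\cx)$.

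The hard step will be the low-index collection argument in the norm equivalence: one must show that, after absorbing infinitely many low-level atoms into a single large $(X(\cx),q)$-atom on a bounded set, the resulting $L^q$-normalization condition is uniform and the atomic functional $\|\cdot\|_{H^{X,q,s_0}_{\mathrm{fin}}(\cx)}$ indeed dominates $\|f\|_{H_X(\cx)}$. The delicate point is the interaction between the Aoki--Rolewicz quasi-triangle constant $\sigma_{s_0}$ (used in Proposition \ref{prtz}) and the geometric-decay estimates coming from the $(X^{1/s_0})'$-boundedness of $\cm$ in Assumption \ref{asas}, since both must conspire to give a bound independent of the number of absorbed atoms.
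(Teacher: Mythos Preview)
The paper does not prove Lemma \ref{lefin}; it simply records that the statement is a part of \cite[Theorem 4.23 and Lemma 6.4]{yhyy21a}. Your proposal is therefore an attempt to reconstruct that external argument, and the overall architecture (easy direction via atomic reconstruction; hard direction via a Calder\'on--Zygmund decomposition adapted to the level sets of $f^\ast$; density via truncation plus absolute continuity) is indeed the standard one.

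Your density argument is correct: with $g_N\downarrow 0$ $\mu$-a.e.\ and $g_N\le g\in X(\cx)$, absolute continuity of the quasi-norm (equivalently, the dominated convergence property it implies) forces $\|g_N\|_{X(\cx)}\to 0$, hence $f_N\to f$ in $H_X(\cx)$.

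There is, however, a genuine gap in your sketch of the reverse norm inequality. The assertion ``for large $i\ge i_1$, boundedness of $f$ in $L^q$ together with the $L^q$-boundedness of $\cm$ forces only finitely many $i$ to contribute'' is false for $q<\infty$: membership $f\in L^q(\cx)$ gives $f^\ast\in L^q(\cx)$ and hence $\mu(\Omega_i)\to 0$, but it does \emph{not} make $\Omega_i$ empty for large $i$ (and even for fixed $i$ the Whitney family $\{B_{i,j}\}_j$ need not be finite). The correct mechanism is different: after fixing $i_0$ so that $\Omega_i\subset CB_0$ for $i>i_0$, one truncates the bad part to a \emph{finite} subfamily $F$ and shows that the remainder $r_F:=\sum_{(i,j)\notin F,\ i>i_0}\lambda_{i,j}a_{i,j}$ is itself a constant multiple of a single $(X(\cx),q)$-atom supported in $CB_0$, with coefficient controlled by $\|r_F\|_{L^q(\cx)}$; the latter can be made as small as one wishes because $|r_F|\ls f^\ast\mathbf 1_{\Omega_{i_0}}\in L^q(CB_0)$ and the tail goes to zero in $L^q$ by dominated convergence. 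This is where $q<\infty$ is used, and it is what actually produces a \emph{finite} atomic decomposition.

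A second imprecision: your choice ``$i_0\sim\log_2\|f^\ast\|_{WX}$'' is not the right threshold (and $WX$ should be $X$ in any case). In the standard argument $i_0$ is determined geometrically, as the level at which $\Omega_i$ first sits inside a fixed dilate $CB_0$ of the support ball; the bound $2^{i_0}\|\mathbf 1_{CB_0}\|_{X(\cx)}\lesssim\|f\|_{H_X(\cx)}$ for the good-part coefficient then comes from the off-support decay estimate for $f^\ast$ combined with the normalization $\|f^\ast\|_{X(\cx)}=\|f\|_{H_X(\cx)}$, not from an a priori choice of $i_0$ in terms of $\|f\|_{H_X(\cx)}$. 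Your closing paragraph correctly flags the low-index step as delicate, but the actual difficulty you should anticipate is the interplay between this geometric choice of $i_0$ and the single-atom packaging of both the good part and the high-index remainder.
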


\begin{proof}[Proof of Theorem \ref{thcz}]
Let $\ez$, $T$, $X(\cx)$, $p_-$, $s_0$, and $p_0$
be as in the present theorem, and
$q_0\in (p_0,\fz)\cap[2,\fz)$. Obviously,
for any $f\in H^{X,q_0,s_0}_{\mathrm{fin}}(\cx)$,
we find that
$f\in L^2(\cx)$ and hence $T(f)$ is well defined
and $T(f)\in L^2(\cx)\subset (\gs)'$ with
$\bz$, $\gz\in(0,\eta)$.
From Lemma \ref{lefin}, it follows that
$H^{X,q_0,s_0}_{\mathrm{fin}}(\cx)$
is dense in $H_X(\cx)$.
By this density, to prove the present theorem,
we only need to show that,
for any $f\in H^{X,q_0,s_0}_{\mathrm{fin}}(\cx)$,
 $\|T(f)\|_{WX(\cx)}\ls \|f\|_{H_X(\cx)}$ [resp.,
$\|T(f)\|_{WH_X(\cx)}\ls \|f\|_{H_X(\cx)}$].

Now, fix $f\in H^{X,q_0,s_0}_{\mathrm{fin}}(\cx)$.
Using Lemma \ref{lefin}, we find that
there exist an $n\in\nn$, a sequence
$\{\lz_j\}^n_{j=1}\subset (0,\fz)$, and
a sequence $\{a_j\}^n_{j=1}$ of
$(X(\cx),q_0)$-atoms
supported, respectively, in balls
$\{B_j\}^n_{j=1}$ such that $f=\sum^n_{j=1}\lz_ja_j$ and
\begin{equation}\label{tceq6}
\|f\|_{H_X(\cx)}
\sim \lf\|\lf\{\sum^n_{j=1}\lf[\frac{\lz_j}
{\|\mathbf{1}_{B_j}\|_{X(\cx)}}
\r]^{s_0}\mathbf{1}_{B_j}\r\}^{1/s_0}
\r\|_{X(\cx)}
\end{equation}
with the positive equivalence constants
independent of $f$.
For any $j\in\{1,\ldots,n\}$, let $x_j$ and $r_j$
denote, respectively, the center and the radius of $B_j$.

To prove (i), by
\eqref{tceq6}, it suffices to show that
\begin{equation}\label{tceq8}
\|T(f)\|_{WX(\cx)}\ls
\lf\|\lf\{\sum^n_{j=1}\lf[\frac{\lz_j}
{\|\mathbf{1}_{B_j}\|_{X(\cx)}}
\r]^{s_0}\mathbf{1}_{B_j}\r\}^{1/s_0}
\r\|_{X(\cx)},
\end{equation}
where the implicit positive constant is
independent of $f$.
From  the definition of $WX(\cx)$,
the linearity of $T$, \eqref{eqsigm} with $p=1$,
and Definition \ref{debb}(ii), we deduce that
\begin{align}\label{tceq7}
\|T(f)\|_{WX(\cx)}
&=\sup_{\lz\in(0,\fz)}\lf\{
\lz\lf\|\mathbf{1}_{\{x\in\cx:\
|T(f)(x)|>\lz\}}\r\|_{X(\cx)}\r\}\\
&\le\sigma_1\lf[\sup_{\lz\in(0,\fz)}\lf\{
\lz\lf\|\mathbf{1}_{\{x\in\cx:\
\sum^n_{j=1}\lz_j|T(a_j)(x)|
\mathbf{1}_{4A_0^2B_j}(x)
>\lz/2\}}\r\|_{X(\cx)}\r\}\r.\notag\\
&\quad+\lf.\sup_{\lz\in(0,\fz)}\lf\{
\lz\lf\|\mathbf{1}_{\{x\in\cx:\
\sum^n_{j=1}\lz_j|T(a_j)(x)|
\mathbf{1}_{(4A_0^2B_j)^{\complement}}(x)
>\lz/2\}}\r\|_{X(\cx)}\r\}\r]\notag\\
&\le 2\sigma_1\lf[\lf\|\sum^n_{j=1}\lz_j\lf|T\lf(a_j\r)\r|
\mathbf{1}_{4A_0^2B_j}\r\|_{X(\cx)}
+\lf\|\sum^n_{j=1}\lz_j\lf|T(a_j)\r|
\mathbf{1}_{(4A_0^2B_j)^{\complement}}
\r\|_{WX(\cx)}\r]\notag\\
&=:2\sigma_1(\mathrm{I}_1+\mathrm{I}_2),\notag
\end{align}
where $\sigma_1\in[1,\fz)$ is as in
\eqref{eqsigm} with $p=1$.

We first estimate $\mathrm{I}_1$.
By the H\"{o}lder inequality,
the boundedness of $T$ on $L^{q_0}(\cx)$
(see, for instance, \cite[Chapter III, Theorem 4.2]{cw71}),
and Definition \ref{deat}(ii) with $a=a_j$ and $B=B_j$,
we find that, for any
$j\in \{1,\ldots,n\}$,
\begin{align*}
&\lf\|T\lf(a_j\r)\mathbf{1}_{4A_0^2B_j}
\r\|_{L^{p_0}(\cx)}\\
&\quad\le\lf[\mu\lf(4A_0^2B_j\r)\r]^{1/p_0-1/q_0}
\lf\|T\lf(a_j\r)\mathbf{1}_{4A_0^2B_j}
\r\|_{L^{q_0}(\cx)}\\
&\quad\ls \lf[\mu\lf(4A_0^2B_j\r)\r]^{1/p_0-1/q_0}
\lf\|a_j\r\|_{L^{q_0}(\cx)}
\ls\lf[\mu\lf(4A_0^2B_j\r)\r]^{1/p_0}
\lf\|\mathbf{1}_{B_j}\r\|^{-1}_{X(\cx)}.
\end{align*}
From this, Lemma \ref{lees}(iv) with $s=s_0$,
Definition \ref{debb}(ii),
Proposition \ref{pras} with $\lz_j$, $a_j$,
and $B_j$ replaced, respectively, by
$\lz_j/\|\mathbf{1}_{B_j}\|_{X(\cx)}$,
a constant multiple of
$\|\mathbf{1}_{B_j}\|_{X(\cx)}T(a_j)
\mathbf{1}_{4A_0^2B_j}$, and $4A_0^2B_j$, and
Proposition \ref{prfs}(i) with $t$,
$\tau$, and $\lz_j$
replaced, respectively, by $s_0$,
$4A_0^2$, and
$[\lz_j/\|\mathbf{1}_{B_j}\|_{X(\cx)}]^{s_0}$, it follows that
\begin{align}\label{tceq4}
\mathrm{I}_1
&\le \lf\|\sum^n_{j=1}\lf[
\lz_j\lf|T\lf(a_j\r)\r|\mathbf{1}_{4A_0^2
B_j}\r]^{s_0}\r\|^{1/s_0}_{X^{1/s_0}(\cx)}\\
&\ls \lf\|\sum^n_{j=1}\lf[\frac{\lz_j}
{\|\mathbf{1}_{B_j}\|_{X(\cx)}}\r]^{s_0}
\mathbf{1}_{4A_0^2B_j}\r\|^{1/s_0}_{X^{1/s_0}(\cx)}
\ls \lf\|\lf\{\sum^n_{j=1}\lf[\frac{\lz_j}
{\|\mathbf{1}_{B_j}\|_{X(\cx)}}\r]^{s_0}
\mathbf{1}_{B_j}\r\}^{1/s_0}\r\|_{X(\cx)},\notag
\end{align}
where all the implicit positive constants
are independent of $f$.

Now, we estimate $\mathrm{I}_2$.
By  $\int_{\cx}a_j(z)\,d\mu(z)=0$ and
$\supp(a_j)\subset B_j$ for any $j\in\{1,\ldots,n\}$,
Definition \ref{deke}(ii),
the estimate that $\int_{\cx}|a_j(z)|\,d\mu(z)
\le \mu(B_j)\|\mathbf{1}_{B_j}\|_{X(\cx)}^{-1}$
for any $j\in\{1,\ldots,n\}$,
and Lemma \ref{lees}(iii) with $B$
replaced by $B_j$,
we conclude that, for any $j\in\{1,\ldots,n\}$ and
$x\in (4A_0^2B_j)^{\complement}$,
\begin{align*}
\lf|T\lf(a_j\r)(x)\r|
&\le \int_{\cx}\lf|K(x,z)-K(x,x_j)\r|\lf|a_j(z)\r|
\,d\mu(z)\\
&\ls \int_{B_j}\lf[\frac{d(x_j,z)}
{d(x_j,x)}\r]^{\ez}\frac{1}{V(x_j,x)}
\lf|a_j(z)\r|\,d\mu(z)\\
&\ls \lf[\frac{r_j}{d(x_j,x)}\r]^{\ez}
\frac{\mu(B_j)}{V(x_j,x)}\lf\|\mathbf{1}
_{B_j}\r\|^{-1}_{X(\cx)}
\ls \lf[\cm\lf(\mathbf{1}_{B_j}\r)\r]^{
\frac{\oz+\ez}{\oz}}\lf\|\mathbf{1}
_{B_j}\r\|^{-1}_{X(\cx)}.
\end{align*}
From this, Definition \ref{debb}(ii)
with $X(\cx)$ replaced by $WX(\cx)$,
Assumption \ref{asw} with $\tau=\ez$, and
Lemma \ref{lees}(iv) with $s=s_0$,
we deduce that
\begin{align*}
\mathrm{I}_2
&\ls \lf\|\sum^n_{j=1}
\frac{\lz_j}{\|\mathbf{1}_{B_j}\|_{X(\cx)}}
\lf[\cm\lf(\mathbf{1}_{B_j}\r)\r]^{\frac
{\oz+\ez}{\oz}}\r\|_{WX(\cx)}\\
&\ls \lf\|\sum^n_{j=1}
\frac{\lz_j}{\|\mathbf{1}_{B_j}\|
_{X(\cx)}}\mathbf{1}_{B_j}\r\|_{X(\cx)}
\ls \lf\|\lf\{\sum^n_{j=1}\lf[\frac{\lz_j}
{\|\mathbf{1}_{B_j}\|_{X(\cx)}}\r]^{s_0}
\mathbf{1}_{B_j}\r\}^{1/s_0}\r\|_{X(\cx)},
\end{align*}
where all the implicit positive constants
are independent of $f$.
By this, \eqref{tceq4}, and \eqref{tceq7},
we obtain \eqref{tceq8} and hence
complete the proof of (i).

Next, we prove (ii).
Recall that $T(f)\in (\gs)'$ with
$\bz$, $\gz\in(\ez,\eta)$.
 By this and \eqref{tceq6}, to prove (ii),
 it suffices to show that
\begin{equation}\label{tceq1}
\lf\|\lf[T(f)\r]^*\r\|_{WX(\cx)}
\ls \lf\|\lf\{\sum^n_{j=1}\lf[\frac{\lz_j}
{\|\mathbf{1}_{B_j}\|_{X(\cx)}}
\r]^{s_0}\mathbf{1}_{B_j}\r\}^{1/s_0}
\r\|_{X(\cx)},
\end{equation}
where the implicit positive constant is independent
of $f$.

Indeed, from the definition of $WX(\cx)$,
the linearity of $T$,
\eqref{eqsigm} with $p=1$, and
Definition \ref{debb}(ii),
it follows that, for any $\lz\in(0,\fz)$,
\begin{align}\label{tceq3}
&\lf\|\lf[T(f)\r]^*\r\|_{WX(\cx)}\\
&\quad=\sup_{\lz\in(0,\fz)}\lf\{
\lz\lf\|\mathbf{1}_{\{x\in\cx:\
[T(f)]^*(x)>\lz\}}\r\|_{X(\cx)}\r\}\notag\\
&\quad\le\sigma_1\lf[\sup_{\lz\in(0,\fz)}\lf\{
\lz\lf\|\mathbf{1}_{\{x\in\cx:\
\sum^n_{j=1}\lz_j[T(a_j)]^*(x)
\mathbf{1}_{4A_0^2B_j}(x)
>\lz/2\}}\r\|_{X(\cx)}\r\}\r.\notag\\
&\quad\quad\lf.+\sup_{\lz\in(0,\fz)}\lf\{
\lz\lf\|\mathbf{1}_{\{x\in\cx:\
\sum^n_{j=1}\lz_j[T(a_j)]^*(x)
\mathbf{1}_{(4A_0^2B_j)
^{\complement}}(x)
>\lz/2\}}\r\|_{X(\cx)}\r\}\r]\notag\\
&\quad\le 2\sigma_1\lf\{\lf\|
\sum^n_{j=1}\lz_j\lf[T\lf(a_j\r)\r]^*
\mathbf{1}_{4A_0^2B_j}
\r\|_{X(\cx)}\notag
+\lf\|\sum^n_{j=1}\lz_j\lf[T(a_j)\r]^*
\mathbf{1}_{(4A_0^2B_j)^\complement}
\r\|_{WX(\cx)}\r\}\notag\\
&\quad=:2\sigma_1(\mathrm{J}_1+\mathrm{J}_2),\notag
\end{align}
where $\sigma_1\in[1,\fz)$ is as in
\eqref{eqsigm} with $p=1$.

We first estimate $\mathrm{J}_1$.
From the H\"{o}lder inequality,
the estimate that
$[T(a_j)]^*\ls\cm(T(a_j))$
for any $j\in \{1,\ldots,n\}$
(see, for instance,
\cite[Proposition 3.9]{gly08}),
the boundedness of both $\cm$ and $T$
on $L^{q_0}(\cx)$, and Definition \ref{deat}(ii)
with $a=a_j$ and $B=B_j$,
we deduce that, for any
$j\in \{1,\ldots,n\}$,
\begin{align*}
&\lf\|\lf[T\lf(a_j\r)\r]^*
\mathbf{1}_{4A_0^2B_j}
\r\|_{L^{p_0}(\cx)}\\
&\quad\le \lf[\mu\lf(4A_0^2B_j\r)\r]^{1/p_0-1/q_0}
\lf\|\lf[T\lf(a_j\r)\r]^*
\mathbf{1}_{4A_0^2B_j}\r\|_{L^{q_0}(\cx)}\\
&\quad\ls  \lf[\mu\lf(4A_0^2B_j\r)\r]^{1/p_0-1/q_0}
\lf\|\cm\lf(T\lf(a_j\r)\r)\r\|_{L^{q_0}(\cx)}\\
&\quad\ls \lf[\mu\lf(4A_0^2B_j\r)\r]^{1/p_0-1/q_0}
\lf\|a_j\r\|_{L^{q_0}(\cx)}
\ls \lf[\mu\lf(4A_0^2B_j\r)\r]^{1/p_0}
\lf\|\mathbf{1}_{B_j}\r\|_{X(\cx)}^{-1}.
\end{align*}
By this, Lemma \ref{lees}(iv) with $s=s_0$,
Definition \ref{debb}(ii),
Proposition \ref{pras} with $\lz_j$, $a_j$, and
$B_j$ replaced, respectively, by
$\lz_j/\|\mathbf{1}_{B_j}\|_{X(\cx)}$,
a constant multiple of
$\|\mathbf{1}_{B_j}\|_{X(\cx)}[T(a_j)]^*
\mathbf{1}_{4A_0^2B_j}$,
and $4A_0^2B_j$, and Proposition \ref{prfs}(i)
with $t$, $\tau$, and $\lz_j$ replaced,
respectively, by $s_0$, $4A_0^2$, and $[\lz_j/
\|\mathbf{1}_{B_j}\|_{X(\cx)}]^{s_0}$, we have
\begin{align}\label{tceq2}
\mathrm{J}_1
&\le \lf\|\sum^n_{j=1}\lf\{
\lz_j\lf[T\lf(a_j\r)\r]^*\mathbf{1}_{4A_0^2
B_j}\r\}^{s_0}\r\|^{1/s_0}_{X^{1/s_0}(\cx)}\\
&\ls \lf\|\sum^n_{j=1}\lf[\frac{\lz_j}
{\|\mathbf{1}_{B_j}\|_{X(\cx)}}\r]^{s_0}
\mathbf{1}_{4A_0^2B_j}\r\|^{1/s_0}_{X^{1/s_0}(\cx)}
\ls \lf\|\lf\{\sum^n_{j=1}\lf[\frac{\lz_j}
{\|\mathbf{1}_{B_j}\|_{X(\cx)}}\r]^{s_0}
\mathbf{1}_{B_j}\r\}^{1/s_0}\r\|_{X(\cx)},\notag
\end{align}
where all the implicit positive constants
are independent of $f$.

Now, we estimate $\mathrm{J}_2$. By
$T^*1=0$ and some arguments similar to those
used in \cite[pp.\,74--75]{zhy20},
we conclude that,
for any $j\in\{1,\ldots,n\}$ and
$x\in (4A_0^2B_j)^{\complement}$,
\begin{equation*}
\lf[T\lf(a_j\r)\r]^*(x)\ls \lf\|\mathbf{1}_{B_j}
\r\|_{X(\cx)}^{-1}\frac{\mu(B_j)}{V(x_j,x)}
\lf[\frac{r_j}{d(x_j,x)}\r]^\ez,
\end{equation*}
which, combined with Lemma \ref{lees}(iii)
with $B$ replaced by $B_j$,
implies that, for any $j\in\{1,\ldots,n\}$ and
$x\in (4A_0^2B_j)^{\complement}$,
\begin{equation*}
\lf[T\lf(a_j\r)\r]^*(x)\ls \lf\|\mathbf{1}_{B_j}
\r\|_{X(\cx)}^{-1}\lf[\cm\lf(\mathbf{1}
_{B_j}\r)(x)\r]^{\frac{\oz+\ez}{\oz}}.
\end{equation*}
Using this, Definition \ref{debb}(ii) with
$X(\cx)$ replaced by $WX(\cx)$,
Assumption \ref{asw} with $\tau=\ez$,
and Lemma \ref{lees}(iv) with $s=s_0$,
we obtain
\begin{align*}
\mathrm{J}_2
&\ls \lf\|\sum^n_{j=1}
\frac{\lz_j}{\|\mathbf{1}_{B_j}\|
_{X(\cx)}}\lf[\cm\lf(\mathbf{1}_{B_j}\r)
\r]^{\frac{\oz+\ez}{\oz}}\r\|_{WX(\cx)}\\
&\ls \lf\|\sum^n_{j=1}
\frac{\lz_j}{\|\mathbf{1}_{B_j}\|
_{X(\cx)}}\mathbf
{1}_{B_j}\r\|_{X(\cx)}
\ls \lf\|\lf\{\sum^n_{j=1}
\lf[\frac{\lz_j}{\|\mathbf{1}_{B_j}\|
_{X(\cx)}}\r]^{s_0}\mathbf
{1}_{B_j}\r\}^{1/s_0}\r\|_{X(\cx)},
\end{align*}
where all the implicit positive constants
are independent of $f$.
By this, \eqref{tceq3}, and \eqref{tceq2},
we then complete the proof of \eqref{tceq1},
and hence of Theorem \ref{thcz}.
\end{proof}

The concept of $A_1(\cx)$-weights is given
in Subsection \ref{sswe} below.
If a BQBF space $X(\cx)$
does not have an absolutely continuous quasi-norm
but embeds into a weighted Lebesgue space
with an $A_1(\cx)$-weight,
we obtain the following theorem on the boundedness of
Calder\'{o}n--Zygmund operators.

\begin{theorem}\label{thc}
Let $\oz$ be as in \eqref{eqoz},
$\eta$ as in \eqref{eqwa}, $\ez\in(0,\eta)$,
$T$ an $\ez$-Calder\'on--Zygmund operator, and
$X(\cx)$ a \emph{BQBF} space.
Suppose that $X(\cx)$
satisfies Assumption \ref{asfs} with
$p_-=\oz/(\oz+\ez)$,
Assumption \ref{asw} with $\tau=\ez$,
and the following assumption that
there exist an $s_-\in(0,p_-)$ and a $p_1\in(p_-,\fz)$ such that,
for any $s\in(s_-,p_-)$,
$X^{1/s}(\cx)$ is a \emph{BBF} space,
the Hardy--Littlewood maximal operator $\cm$
is bounded on the $\frac{1}{(p_1/s)'}$-convexification of
the associate space $(X^{1/s})'(\cx)$ with
its \emph{operator norm} denoted by $\|\cm\|_{(s)}$
for simplicity, and
\begin{equation}\label{eqm1}
\limsup_{s\in(s_-,p_-),\ s\to p_-}
\|\cm\|_{(s)}
+\limsup_{s\in(s_-,p_-),\ s\to p_-}
\lf\|\mathbf{1}_{B(x_0,1)}\r\|_{(X^{1/s})'(\cx)}<\fz,
\end{equation}
where $x_0\in\cx$ is the fixed basepoint.
\begin{itemize}
\item[\textup{(i)}]
Then $T$ can be extended to a bounded
linear operator from $H_X(\cx)$
to $WX(\cx)$.
\item[\textup{(ii)}]
If $T^*1=0$,
then $T$ can be extended to a bounded
linear operator from $H_X(\cx)$
to $WH_X(\cx)$.
\end{itemize}
\end{theorem}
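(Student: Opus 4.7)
The plan is to imitate the proof of Theorem \ref{thcz}, replacing its appeal to the density of $H^{X,q,s_0}_{\mathrm{fin}}(\cx)$ in $H_X(\cx)$ (which required the absolutely continuous quasi-norm through Lemma \ref{lefin}) by the construction of an ambient weighted Lebesgue space into which $X(\cx)$ embeds and on which $T$ is automatically well defined.

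First, I would exploit \eqref{eqm1} to produce an $A_1(\cx)$-weight $w$ together with a constant $C_0\in(0,\fz)$ such that
\begin{equation*}
\|f\|_{L^{p_1}_w(\cx)}\le C_0\|f\|_{X(\cx)}
\end{equation*}
for every $f\in X(\cx)$. The argument fixes $s\in(s_-,p_-)$ close to $p_-$ and performs a Rubio de Francia style iteration of $\cm$ on the $\frac{1}{(p_1/s)'}$-convexification of $(X^{1/s})'(\cx)$, applied to $\mathbf{1}_{B(x_0,1)}$. The first term in \eqref{eqm1} controls the geometric series constants (and hence the $A_1$-characteristic of the resulting weight) uniformly as $s\uparrow p_-$, while the second term guarantees that $\mathbf{1}_{B(x_0,1)}$ has bounded norm in the convexification, so that the output weight is nondegenerate. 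Classical duality between $X^{1/s}(\cx)$ and $(X^{1/s})'(\cx)$ then converts this into the stated embedding $X(\cx)\hookrightarrow L^{p_1}_w(\cx)$; passing to a limit as $s\uparrow p_-$ produces a single weight $w\in A_1(\cx)$ that works in the limit.

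Since $w\in A_1(\cx)\subset A_{p_1}(\cx)$ and $p_1>1$, the $\ez$-Calder\'on--Zygmund operator $T$ extends boundedly to $L^{p_1}_w(\cx)$ by the standard weighted theory (see, for instance, \cite{ah13}), so in particular $T$ acts on every element of $H_X(\cx)\hookrightarrow L^{p_1}_w(\cx)$ and $T(f)\in(\gs)'$ for each $f\in H_X(\cx)$. Given such an $f$, I apply Lemma \ref{leha} at some $s_0\in(\oz/(\oz+\eta),p_-)\cap(s_-,p_-)$ (which is non-empty since $p_-=\oz/(\oz+\ez)>\oz/(\oz+\eta)$) to extract an atomic decomposition $f=\sum_{j\in\nn}\lz_ja_j$ in $(\gs)'$ satisfying
\begin{equation*}
\lf\|\lf\{\sum_{j\in\nn}\lf[\frac{\lz_j}{\|\mathbf{1}_{B_j}\|_{X(\cx)}}\r]^{s_0}\mathbf{1}_{B_j}\r\}^{1/s_0}\r\|_{X(\cx)}\lesssim\|f\|_{H_X(\cx)}.
\end{equation*}
The embedding into $L^{p_1}_w(\cx)$ guarantees that the partial sums converge there to $f$, so $T(f)=\sum_{j\in\nn}\lz_jT(a_j)$ both in $L^{p_1}_w(\cx)$ and in $(\gs)'$. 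The quantitative bounds $\|T(f)\|_{WX(\cx)}\lesssim\|f\|_{H_X(\cx)}$ in (i), and $\|T(f)\|_{WH_X(\cx)}\lesssim\|f\|_{H_X(\cx)}$ in (ii) under $T^*1=0$, then follow verbatim from the atomic estimates $\mathrm{I}_1,\mathrm{I}_2$ (respectively $\mathrm{J}_1,\mathrm{J}_2$) in the proof of Theorem \ref{thcz}, since those estimates use only Assumptions \ref{asfs}, \ref{asas}, and \ref{asw} together with Propositions \ref{prfs} and \ref{pras} -- all of which are accessible at the chosen parameter $s_0$.

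The principal obstacle will be the first step: constructing the $A_1(\cx)$-weight $w$ and the embedding $X(\cx)\hookrightarrow L^{p_1}_w(\cx)$ from \eqref{eqm1}. The subtlety lies in keeping the $A_1$-characteristic of $w$ uniformly bounded as $s\uparrow p_-$: a naive fixed-$s$ construction could yield a family of weights whose characteristics degenerate, and the uniform bound in \eqref{eqm1} is exactly the device needed to prevent this. A secondary technical point is that the atomic sum must truly converge in $L^{p_1}_w(\cx)$ so that $T(f)$ is unambiguous, which will follow from the absolute continuity of $\|\cdot\|_{L^{p_1}_w(\cx)}$ together with the size and cancellation properties of the atoms. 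Once these two points are settled, the remainder is a routine transcription of the atomic estimates from the proof of Theorem \ref{thcz}.
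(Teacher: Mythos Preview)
Your overall architecture is right, but the first step contains a real error that undermines the rest. The duality/H\"older argument you sketch does \emph{not} produce an embedding $X(\cx)\hookrightarrow L^{p_1}_w(\cx)$; it produces $X(\cx)\hookrightarrow L^{s}_w(\cx)$ for each $s\in(s_-,p_-)$, and after letting $s\uparrow p_-$ via Fatou one lands in $L^{p_-}_w(\cx)$. Concretely, H\"older on $X^{1/s}(\cx)$ against $(X^{1/s})'(\cx)$ controls $\int|g|^s w\,d\mu$, i.e.\ $\|g\|_{L^s_w}^s$, not $\|g\|_{L^{p_1}_w}^{p_1}$; the parameter $p_1$ only enters through the exponent of the weight and the convexification of the associate space, never as the target Lebesgue exponent. (Incidentally, nothing in the hypotheses forces $p_1>1$ either.) Since $p_-=\oz/(\oz+\ez)<1$, the $\ez$-Calder\'on--Zygmund operator $T$ is \emph{not} bounded on $L^{p_-}_w(\cx)$, so you cannot define $T(f)$ for general $f\in H_X(\cx)$ this way, and the ``$T(f)=\sum_j\lz_jT(a_j)$ in $L^{p_1}_w$'' step collapses.

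The paper's remedy is to stay in the weighted \emph{Hardy} scale: the embedding $X(\cx)\hookrightarrow L^{p_-}_w(\cx)$ immediately gives $H_X(\cx)\hookrightarrow H^{p_-}_w(\cx)$ (via $f^*$), and one then invokes Theorem \ref{thwe}(iii) --- the already-proved weighted instance of Theorem \ref{thcz}, where $L^{p_-}_w(\cx)$ \emph{does} have an absolutely continuous quasi-norm --- to get $T$ bounded from $H^{p_-}_w(\cx)$ to $WL^{p_-}_w(\cx)$ (resp.\ $WH^{p_-}_w(\cx)$). Convergence of the atomic series in $H^{p_-}_w(\cx)$ then justifies $T(f)=\sum_j\lz_jT(a_j)$, after which your transcription of the $\mathrm{I}_1,\mathrm{I}_2,\mathrm{J}_1,\mathrm{J}_2$ estimates goes through. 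Also note that the paper obtains the $A_1$ weight directly as $w=[\cm(\mathbf{1}_{B(x_0,1)})]^{1/(p_1/s_-)'}$ rather than by a Rubio de Francia iteration; this is simpler and already yields the needed uniform-in-$s$ bounds from \eqref{eqm1}.
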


\begin{proof}
Let $\ez$, $T$, $X(\cx)$, $p_-$, $s_-$,
$p_1$, and $\|\cm\|_{(s)}$
 be as in the present theorem,
$s_1\in(\max\{s_-,
\oz/(\oz+\eta)\},p_-)$, and
$q_1\in (p_1,\fz)\cap[2,\fz)$.
For any $f\in H_X(\cx)$,
$f$ might not be in $L^2(\cx)$
and hence $T(f)$ might not be well defined.
To extend $T$ to $H_X(\cx)$,
we embed $H_X(\cx)$ into a
weighted Hardy space.

Let $w:=[\cm(\mathbf{1}_{B(x_0,1)})]^{1/(p_1/s_-)'}$.
By  \cite[Theorem 2.17]{yhyy21a},
we find that $w$ is an $A_1(\cx)$-weight.
Then we claim that $X(\cx)$ continuously embeds into
$L^{p_-}_w(\cx)$, where $L^{p_-}_w(\cx)$ is defined
as in \eqref{eqweight} with $p$ replaced by $p_-$.
By the estimate that
$\cm(\mathbf{1}_{B(x_0,1)})\le 1$,
we find that, for any $s\in(s_-,p_-)$,
\begin{align*}
\lf[\cm\lf(\mathbf{1}_{B(x_0,1)}\r)\r]^{\frac{1}{(p_1/s_-)'}}
\le \lf[\cm\lf(\mathbf{1}_{B(x_0,1)}\r)\r]^{\frac{1}{(p_1/s)'}}.
\end{align*}
From this, the definition of $L^{p_-}_w(\cx)$,
the H\"{o}lder inequality on $X^{1/s}(\cx)$
(see, for instance, \cite[Lemma 2.18]{yhyy21a}), and
the boundedness of $\cm$ on
the $\frac{1}{(p_1/s)'}$-convexification of
$(X^{1/s})'(\cx)$,
we deduce that, for any $s\in (s_-,p_-)$
and $g\in X(\cx)$,
\begin{align*}
\|g\|^s_{L^s_w(\cx)}
&=\int_{\cx}|g(z)|^s\lf[\cm\lf(\mathbf{1}_{
B(x_0,1)}\r)(z)\r]^{\frac{1}{(p_1/s_-)'}}\,d\mu(z)\\
&\le \lf\||g|^s\r\|_{X^{1/s}(\cx)}
\lf\|\lf[\cm\lf(\mathbf{1}_{B(x_0,1)}\r)\r]^{
\frac{1}{(p_1/s_-)'}}\r\|_{
(X^{1/s})'(\cx)}\\
&\le \|g\|^s_{X(\cx)}
\lf\|\lf[\cm\lf(\mathbf{1}_{B(x_0,1)}\r)\r]^{
\frac{1}{(p_1/s)'}}\r\|_{(X^{1/s})'(\cx)}\\
&\le \lf[\|\cm\|_{(s)}\r]^{\frac{1}{(p_1/s)'}}
\lf\|\mathbf{1}_{B(x_0,1)}\r\|_{(X^{1/s})'(\cx)}
\|g\|^s_{X(\cx)}.
\end{align*}
Letting $s\to p_-$ and using the Fatou lemma
combined with \eqref{eqm1}, we obtain,
for any $g\in X(\cx)$,
$$
\|g\|_{L^{p_-}_w(\cx)} \ls \|g\|_{X(\cx)},
$$
which completes the proof of the above claim.
Let $WL^{p_-}_w(\cx)$, $H^{p_-}_w(\cx)$, and $WH^{p_-}_w(\cx)$
be, respectively, $WX(\cx)$, $H_X(\cx)$, and $WH_X(\cx)$
with $X(\cx)$ replaced by $L^{p_-}_w(\cx)$.

Next, we fix an $f\in H_X(\cx)$.
By the embedding $X(\cx)\subset L^{p_-}_w(\cx)$,
we conclude that
\begin{align}\label{t4eq3}
\|f\|_{H^{p_-}_w(\cx)}
=\lf\|f^*\r\|_{L^{p_-}_w(\cx)}
\ls\lf\|f^*\r\|_{X(\cx)}
\sim \|f\|_{H_X(\cx)}<\fz
\end{align}
and $f\in H^{p_-}_w(\cx)$.
On the other hand, by Lemma \ref{leha},
we find that there exist a sequence
$\{\lz_j\}_{j\in\nn}\subset (0,\fz)$ and
a sequence $\{a_j\}_{j\in\nn}$ of
$(X(\cx),q_1)$-atoms
supported, respectively, in balls $\{B_j\}_{j\in\nn}$
such that $f=\sum_{j\in\nn}\lz_ja_j$ in
$(\gs)'$ with $\bz$, $\gz\in(\oz[1/s_1-1],\eta)$, and
\begin{equation}\label{t4eq1}
\lf\|\lf\{\sum_{j\in\nn}\lf[\frac{\lz_j}
{\|\mathbf{1}_{B_j}\|_{X(\cx)}}\r]^{s_1}
\mathbf{1}_{B_j}\r\}^{1/s_1}\r\|_{X(\cx)}
\sim \|f\|_{H_X(\cx)}
\end{equation}
with the positive equivalence constants
independent of $f$.
Then we show that
\begin{equation}\label{t4eq2}
f=\sum_{j\in\nn}\lz_ja_j
\end{equation}
in $H^{p_-}_w(\cx)$.
Observe that
$\|\mathbf{1}_{B_j}\|^{-1}_{L^{p_-}_w(\cx)}
\|\mathbf{1}_{B_j}\|_{X(\cx)}a_j$
is an $(L^{p_-}_w(\cx),q_1)$-atom
for any $j\in\nn$.
Using this, Lemma \ref{leha} with
$H_X(\cx)$ replaced by $H^{p_-}_w(\cx)$,
and the embedding $X(\cx)\subset L^{p_-}_w(\cx)$
combined with \eqref{t4eq1},
we find that, for any $n\in\nn$,
\begin{align*}
&\lf\|f-\sum^n_{j=1}\lz_ja_j
\r\|_{H^{p_-}_w(\cx)}\\
&\quad=\lf\|\sum^\fz_{j=n+1}\frac{\lz_j
\|\mathbf{1}_{B_j}\|_{L^{p_-}_w(\cx)}}
{\|\mathbf{1}_{B_j}\|_{X(\cx)}}
\lf[\lf\|\mathbf{1}_{B_j}\r\|^{-1}_{L^{p_-}_w(\cx)}
\lf\|\mathbf{1}_{B_j}\r\|_{X(\cx)}a_j\r]
\r\|_{H^{p_-}_w(\cx)}\\
&\quad\ls\lf\|\lf\{\sum^\fz_{j=n+1}\lf[\frac{\lz_j}
{\|\mathbf{1}_{B_j}\|_{X(\cx)}}\r]^{s_1}
\mathbf{1}_{B_j}\r\}^{1/s_1}\r\|_{L^{p_-}_w(\cx)}\\
&\quad\ls\lf\|\lf\{\sum^\fz_{j=n+1}\lf[\frac{\lz_j}
{\|\mathbf{1}_{B_j}\|_{X(\cx)}}\r]^{s_1}
\mathbf{1}_{B_j}\r\}^{1/s_1}\r\|_{X(\cx)}<\fz.
\end{align*}
Letting $n\to\fz$ and using the dominated convergence
theorem, we then complete the proof of
the above claim \eqref{t4eq2}.

Now, we prove (i). By Theorem \ref{thwe}(iii) below,
we find that $T$ can be extended to a bounded linear
operator from $H^{p_-}_w(\cx)$ to $WL^{p_-}_w(\cx)$.
Using this, \eqref{t4eq3}, and \eqref{t4eq2},
we conclude that $T(f)\in WL^{p_-}_w(\cx)$ and
$T(f)=\sum_{j\in\nn}\lz_jT(a_j)$
in $WL^{p_-}_w(\cx)$. From this and
\cite[Proposition 1.1.9(ii) and Theorem 1.1.11]{g14},
we deduce that there exists a subsequence
$\{j_l\}_{l\in\nn}\subset \nn$ such that
$j_l\to\fz$ as $l\to\fz$, and, for
$\mu$-almost every
point $x\in\cx$,
$$
T(f)(x)=\lim_{l\to\fz}\sum^{j_l}_{l=1}
\lz_lT(a_l)(x),
$$
 which further implies that
$|T(f)|\le \sum_{j\in\nn}\lz_j|T(a_j)|$
$\mu$-almost everywhere.
By this and some arguments similar to those
used in the estimation
of \eqref{tceq8}, we obtain
\begin{equation*}
\|T(f)\|_{WX(\cx)}
\le \lf\|\sum_{j\in\nn}\lz_j\lf|T(a_j)\r|\r\|_{WX(\cx)}
\ls\lf\|\lf\{\sum_{j\in\nn}\lf[\frac{\lz_j}
{\|\mathbf{1}_{B_j}\|_{X(\cx)}}
\r]^{s_1}\mathbf{1}_{B_j}\r\}^{1/s_1}
\r\|_{X(\cx)}
\end{equation*}
with the implicit positive constant independent
of $f$. Using this and \eqref{t4eq1},
we then complete the proof of (i).

To prove (ii), by Theorem
\ref{thwe}(iii) below, we find that
$T$ can be extended to a bounded linear
operator from $H^{p_-}_w(\cx)$ to $WH^{p_-}_w(\cx)$.
Using this,
\eqref{t4eq3}, and \eqref{t4eq2},
we find that $T(f)\in WH^{p_-}_w(\cx)$ and
$T(f)=\sum_{j\in\nn}\lz_jT(a_j)$
in $WH^{p_-}_w(\cx)$. From this and Remark \ref{rede}(i)
with $WH_X(\cx)$ replaced by $WH^{p_-}_w(\cx)$,
it follows that $T(f)=\sum_{j\in\nn}\lz_jT(a_j)$ in $(\gs)'$
with $\bz$, $\gz\in(\ez,\eta)$,
which further implies that
$$[T(f)]^*\le \sum_{j\in\nn}\lz_j\lf[T(a_j)\r]^*.$$
By this and some arguments similar to those
used in the estimation of
\eqref{tceq1}, we conclude that
\begin{align*}
\|T(f)\|_{WH_X(\cx)}
&=\lf\|[T(f)]^*\r\|_{WX(\cx)}
\le \lf\|\sum_{j\in\nn}\lz_j\lf[T(a_j)\r]^*\r\|_{WX(\cx)}\\
&\ls\lf\|\lf\{\sum_{j\in\nn}\lf[\frac{\lz_j}
{\|\mathbf{1}_{B_j}\|_{X(\cx)}}
\r]^{s_1}\mathbf{1}_{B_j}\r\}^{1/s_1}
\r\|_{X(\cx)},
\end{align*}
where all the implicit positive constants are
independent of $f$. Using this and \eqref{t4eq1},
we then complete the proof of (ii),
and hence of Theorem \ref{thc}.
\end{proof}

\begin{remark}
\begin{itemize}\label{recz}
\item[(i)]
Let $\ez\in (0,\eta)$.
By the fact that $L^{\oz/(\oz+\ez)}(\cx)$ has
an absolutely continuous quasi-norm and Remarks \ref{reas}
and \ref{rew}, we find that  $L^{\oz/(\oz+\ez)}(\cx)$
satisfies all the assumptions needed in Theorem \ref{thcz}.
Moreover, see Section \ref{sap} for more
function spaces satisfying all the assumptions
needed in Theorem \ref{thcz}.
\item[(ii)]
In the above proof of Theorem \ref{thc},
we borrow some ideas from the proof
of \cite[Theorem 6.3]{zyyw20}.
Observe that Theorem \ref{thc} can be applied to the
BQBF space
without an absolutely continuous quasi-norm, and now
we give such an example.
Recall that, for any $q\in(0,\fz)$ and $p\in[q,\fz)$,
the \emph{Morrey space $M^p_q(\rn)$}
is defined to be the set of all the measurable
functions $f$ on $\rn$ such that
$$
\|f\|_{M^p_q(\rn)}:=
\sup_B\lf\{|B|^{1/p-1/q}\|f\|_{L^q(B)}\r\}<\fz,
$$
where the supremum is taken over all balls $B$ of $\rn$,
and $|B|$ denotes the Lebesgue measure of $B$.
Observe that the Morrey space may not have an
absolutely continuous quasi-norm
(see, for instance, \cite[Remark 3.4]{wyy20}).
Checking \cite[Remark 2.7(e)]{wyy20}
and its reference,
we find that, in Theorem \ref{thc},
if $X(\cx):=M^p_q(\rn)$
with $q=n/(n+\ez)$ and $p\in[q,\fz)$,
then all the assumptions needed
in Theorem \ref{thc} and hence Theorem \ref{thc} itself hold true.

\item[(iii)]
Using the results from \cite[Section 8]{yhyy21b},
we find that, in Theorems \ref{thcz} and \ref{thc},
if $p_-\in(\oz/(\oz+\ez),\fz)$, then
$T$ can be extended to a bounded linear operator from
$H_X(\cx)$ to $X(\cx)$, or to $H_X(\cx)$.
By this and Proposition \ref{prwb},
we find that Theorems \ref{thcz} and \ref{thc}
still hold true if $p_-\in (\oz/(\oz+\ez),\fz)$.
However, if $p_-=\oz/(\oz+\ez)$ as in
Theorems \ref{thcz} and \ref{thc},
we only obtain the conclusions of Theorems \ref{thcz}
and \ref{thc}, namely, $T$ may not be bounded
from $H_X(\cx)$ to $X(\cx)$, or
to $H_X(\cx)$ in this case.
In this sense, $p_-=\oz/(\oz+\ez)$ is
called the \emph{critical case}
or the \emph{endpoint case}.
\end{itemize}
\end{remark}

\section{Applications\label{sap}}

In this section, we apply the main results
obtained in the above sections to
some specific BQBF
spaces, such as Lebesgue spaces, weighted Lebesgue spaces,
Orlicz spaces, and variable Lebesgue spaces, which
shows the wide generality of the results
obtained in the above sections.
We discuss these spaces in the
following subsections in turn.

\subsection{Lebesgue Spaces}

Let $p\in(0,\fz)$.
Observe that the Lebesgue space $L^p(\cx)$
is a quasi-Banach function space and
hence a BQBF space; see, for instance,
\cite[Remark 2.7(i)]{yhyy21a}.
Then we can apply the theorems obtained in
the previous sections to Lebesgue spaces.

\begin{theorem}\label{thle}
\begin{enumerate}
\item[\textup{(i)}]
Let $p\in(1,\fz)$. Then the conclusion of Theorem \ref{thmw} holds true
with $X(\cx)$ replaced by $L^p(\cx)$.
\item[\textup{(ii)}]
Let $p\in(\oz/(\oz+\eta),\fz)$ with $\oz$ as in
\eqref{eqoz} and $\eta$ as in \eqref{eqwa}.
Then the conclusions of Theorems \ref{thm}, \ref{tham}, \ref{thma},
and \ref{thhin} hold true with $X(\cx)$ replaced by $L^p(\cx)$.
\item[\textup{(iii)}]
Let $\ez\in (0,\eta)$ and $p=\oz/(\oz+\ez)$.
Then the conclusion of Theorem \ref{thcz}
holds true with $X(\cx)$ replaced
by $L^p(\cx)$.
\end{enumerate}
\end{theorem}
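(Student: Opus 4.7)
The strategy is a bookkeeping exercise: for each of the three parts, I need to verify that $L^p(\cx)$ satisfies the hypotheses of the abstract theorem being invoked. Throughout, I will use the identity $X^{1/t}(\cx)=L^{p/t}(\cx)$ and $WX^{1/t}(\cx)=WL^{p/t}(\cx)$, valid for all $t\in(0,\fz)$, so that every ``BQBF-space'' assumption reduces to a standard fact about (weak) Lebesgue spaces.

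For (i), the only hypothesis of Theorem \ref{thmw} is the existence of some $t\in(1,\fz)$ such that $\cm$ is bounded on $WL^{p/t}(\cx)$. Since $p>1$, I would pick any $t\in(1,p)$ so that $p/t>1$; the classical boundedness of $\cm$ on $L^{p/t}(\cx)$ combined with the embedding $L^{p/t}(\cx)\hookrightarrow WL^{p/t}(\cx)$ of Proposition \ref{prwb} and Definition \ref{debb}(ii) yields the required bound (alternatively, via Lemma \ref{lewi} interpolating the strong-type bound at two exponents straddling $p/t$). For (ii), Theorem \ref{thm} needs a $\underline{p}\in(\oz/(\oz+\eta),1]$ such that $\cm$ is bounded on $WL^{p/t}(\cx)$ for every $t\in(0,\underline{p})$; I would take $\underline{p}=1$ when $p\ge1$, and $\underline{p}\in(\oz/(\oz+\eta),p)$ when $p<1$, so that $p/t>1$ in either case, and conclude as in (i). For Theorems \ref{tham}, \ref{thma}, and \ref{thhin}, I invoke Remark \ref{reas}: Assumption \ref{asfs} holds with $p_-=p$ (the classical Fefferman--Stein vector-valued maximal inequality), and Assumption \ref{asas} is verified by choosing $s_0\in(0,\min\{p,1\})$ together with any $p_0\in(\max\{s_0,p\},\fz)$, since then $L^{p/s_0}(\cx)$ is a BBF space with associate space $L^{(p/s_0)'}(\cx)$, whose $\frac{1}{(p_0/s_0)'}$-convexification is a Lebesgue space on which $\cm$ is bounded. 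For Theorem \ref{thhin} one needs the sharper constraint $s_0\in(\oz/(\oz+\eta),\min\{p,1\})$, which is a non-empty interval thanks to the standing hypothesis $p>\oz/(\oz+\eta)$.

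For (iii), the critical case $p=\oz/(\oz+\ez)\in(\oz/(\oz+\eta),1)$: the Lebesgue space $L^p(\cx)$ has an absolutely continuous quasi-norm (a standard fact, e.g.\ via dominated convergence); Assumption \ref{asfs} holds with $p_-=p$ by Remark \ref{reas}; Assumption \ref{asw} with $\tau=\ez$ holds by Remark \ref{rew}, since $p=\oz/(\oz+\ez)$ is precisely the exponent covered there; and Assumption \ref{asas} is satisfied by choosing $s_0$ in the non-empty interval $(\oz/(\oz+\eta),\oz/(\oz+\ez))$ (non-empty because $\ez<\eta$) together with any $p_0\in(s_0,\fz)$ larger than $p$, with the boundedness on the convexification of the associate space following as in (ii). Theorem \ref{thcz} then applies verbatim.

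No single step is a serious obstacle; the only delicate point is part (iii), where the exponent $p_-=p=\oz/(\oz+\ez)$ exactly saturates the lower bound $\oz/(\oz+\eta)$ required by the range of $s_0$ in Assumption \ref{asas}, so one must record the strict inequality $\ez<\eta$ as the reason the admissible interval for $s_0$ is non-empty. Otherwise, the proof amounts to choosing parameters and citing Remarks \ref{reas} and \ref{rew} together with the abstract theorems.
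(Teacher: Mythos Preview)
Your approach is essentially the same as the paper's: verify the abstract hypotheses of each theorem by choosing parameters and invoking Remarks \ref{reas} and \ref{rew} together with the standard boundedness of $\cm$ on (weak) Lebesgue spaces. The paper is terser, citing an external lemma for the weak-type boundedness of $\cm$ and Remark \ref{recz}(i) for part (iii), but the substance is identical.

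One slip to correct: in part (i) (and the analogous step in (ii) for Theorem \ref{thm}), the first argument --- ``strong-type boundedness of $\cm$ on $L^{p/t}(\cx)$ combined with the embedding $L^{p/t}(\cx)\hookrightarrow WL^{p/t}(\cx)$'' --- does \emph{not} yield boundedness of $\cm$ on $WL^{p/t}(\cx)$. It only gives $\cm\colon L^{p/t}(\cx)\to WL^{p/t}(\cx)$, whereas Theorem \ref{thmw} requires $\cm\colon WL^{p/t}(\cx)\to WL^{p/t}(\cx)$. Your parenthetical alternative via Lemma \ref{lewi} (interpolating the strong-type bounds at two exponents $p_1,p_2>1$ straddling $p/t$) is correct and is precisely the mechanism recorded in Remark \ref{rem}; just drop the first argument and keep the second. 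With that fix, the proof is complete.
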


\begin{proof}
By \cite[Lemma 3.5]{zhy20}, we find that
all the assumptions needed in Theorem \ref{thmw} with $X(\cx)$ replaced
by $L^p(\cx)$ with $p\in(1,\fz)$ hold true, which further
implies (i).

From Remark \ref{reas} and \cite[Lemma 3.5]{zhy20}, we infer that,
for any $p\in(\oz/(\oz+\eta),\fz)$,
$L^p(\cx)$ satisfies all the assumptions
needed in Theorems \ref{thm}, \ref{tham}, \ref{thma},
and \ref{thhin}, which completes the proof of (ii).

Using Remark \ref{recz}(i),
we conclude that the Lebesgue space
$L^{\oz/(\oz+\ez)}(\cx)$ with $\ez$ as in (iii)
satisfies all the assumptions needed in Theorem \ref{thcz}.
This finishes the proof of (iii), and hence of Theorem \ref{thle}.
\end{proof}

\begin{remark}
Wu et al. \cite{ww12} and Ding et al. \cite{dw10}
studied weak Hardy spaces on RD-spaces.
Recently, Zhou et al. \cite{zhy20}
investigated Hardy--Lorentz spaces
on $\cx$. The results of Theorem \ref{thle},
except the atomic reconstruction,
are included in \cite[Theorems 3.6 and 4.6,
Lemma 8.3, Corollary 8.5, and Theorems 8.12 and 8.13]{zhy20}.
Comparing the atomic reconstruction of weak Hardy spaces
obtained in \cite[Theorem 4.4]{zhy20} with
that in Theorem \ref{thle}(ii),
we find that, in \cite[Theorem 4.4]{zhy20},
$p\in(\oz/(\oz+\eta),1]$ and
$q\in(1,\fz]$, while,
in Theorem \ref{thle}(ii),
$p\in(\oz/(\oz+\eta),\fz)$ and
$q\in(p,\fz]\cap[1,\fz]$.
\end{remark}
\subsection{Weighted Lebesgue Spaces}\label{sswe}
To start this subsection,
we recall the concept of weights on $\cx$;
see, for instance, \cite[Chapter I]{st89}.
Let the symbol $\mathbb{B}$ denote the
set of all balls of $\cx$.
A locally integrable function $w:\cx\to[0,\fz)$
is called an \emph{$A_p(\cx)$-weight}
with $p\in[1,\fz)$ if
\begin{equation}\label{eqawei}
[w]_{A_p(\cx)}:=
\sup_{B\in\mathbb{B}}
\lf\{\lf[\mu(B)\r]^{-p}\|w\|_{L^1(B)}
\lf\|w^{-1}\r\|_{L^{1/(p-1)}(B)}\r\}<\fz,
\end{equation}
where $1/(p-1):=\fz$ when $p=1$.
Let $A_\fz(\cx):=\cup_{
p\in[1,\fz)}A_p(\cx)$. For any
$w\in A_\fz(\cx)$, the \emph{critical index $q_w$}
of $w$ is defined by setting
\begin{equation}\label{eqqw}
q_w:=\inf\lf\{p\in[1,\fz):\
w\in A_p(\cx)\r\}.
\end{equation}

Let $p\in (0,\fz)$ and $w\in A_\fz(\cx)$.
The \emph{weighted Lebesgue space} $L^p_w(\cx)$
is defined as in \eqref{eqweight}.
Observe that $L^p_w(\cx)$
is a BQBF space (see, for instance, \cite[Remark 2.7(iii)]{yhyy21a}),
but not necessarily a quasi-Banach function space
(see, for instance, \cite[Remark 5.22(ii)]{wyyz21} on the Euclidean space case).
If $X(\cx)=L^p_w(\cx)$,
then $WX(\cx)$ as in Definition \ref{dewb}
becomes the \emph{weighted weak
Lebesgue space $WL^p_w(\cx)$}.
The following theorem is an application of
the results obtained in the
above sections to weighted
Lebesgue spaces.

\begin{theorem}\label{thwe}
\begin{enumerate}
\item[\textup{(i)}]
Let $w\in A_{\fz}(\cx)$ and $p\in(q_w,\fz)$
with $q_w$ as in \eqref{eqqw}.
Then the conclusion of Theorem \ref{thmw} holds true
with $X(\cx)$ replaced by $L^p_w(\cx)$.
\item[\textup{(ii)}]
Let $w\in A_\fz(\cx)$ and $p\in(q_w\oz/(\oz+\eta),\fz)$
with $q_w$ as in \eqref{eqqw},
$\oz$ as in \eqref{eqoz}, and $\eta$ as in \eqref{eqwa}.
Then the conclusions of Theorems
\ref{thm}, \ref{tham}, \ref{thma},
and \ref{thhin} hold true with
$X(\cx)$ replaced by $L^p_w(\cx)$.
\item[\textup{(iii)}]
Let $\ez\in (0,\eta)$, $w\in A_1(\cx)$,
and $p=\oz/(\oz+\ez)$.
Then the conclusion of Theorem
\ref{thcz} holds true with $X(\cx)$
replaced by $L^p_w(\cx)$.
\end{enumerate}
\end{theorem}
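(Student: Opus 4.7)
The plan is to verify, for each of the three parts, the hypotheses of the general theorem being invoked, and then to quote those theorems directly. The essential input is the convexification identity
\begin{equation*}
\lf(L^p_w(\cx)\r)^{1/t}=L^{p/t}_w(\cx)\quad\text{for every }t\in(0,\fz),
\end{equation*}
together with the Muckenhoupt characterization $\cm:L^r_w(\cx)\to L^r_w(\cx)$ bounded $\Leftrightarrow w\in A_r(\cx)$, its weak-type endpoint, and the weighted vector-valued Fefferman--Stein inequality: for $w\in A_r(\cx)$ with $r\in(1,\fz)$ and $s\in(1,\fz)$,
\begin{equation*}
\lf\|\lf(\sum_{j\in\nn}\lf[\cm(f_j)\r]^s\r)^{1/s}\r\|_{L^r_w(\cx)}\le C\lf\|\lf(\sum_{j\in\nn}|f_j|^s\r)^{1/s}\r\|_{L^r_w(\cx)}.
\end{equation*}
I will also use the self-improvement (openness) of $A_r(\cx)$ and the duality identity $w\in A_r(\cx)\Leftrightarrow w^{1-r'}\in A_{r'}(\cx)$, both of which are standard on spaces of homogeneous type.

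For part (i), since $p>q_w$, pick any $t\in(1,p/q_w)$; then $p/t>q_w$, so $w\in A_{p/t}(\cx)$, and the weak-type $(p/t,p/t)$ estimate for $\cm$ gives its boundedness on $WL^{p/t}_w(\cx)$. Theorem \ref{thmw} then yields the conclusion. For part (ii), set $p_-:=p/q_w$, which lies in $(\oz/(\oz+\eta),\fz)$ by hypothesis. Assumption \ref{asfs} with this $p_-$ is exactly the weighted vector-valued Fefferman--Stein inequality on $L^{p/t}_w(\cx)$ for every $t\in(0,p/q_w)$, which holds by the openness of $A_{p/t}(\cx)$. For Assumption \ref{asas}, choose $s_0\in(\oz/(\oz+\eta),\min\{p/q_w,1\})$ so that $p/s_0>\max\{q_w,1\}$, which makes $L^{p/s_0}_w(\cx)$ a BBF space; its associate space equals $L^{(p/s_0)'}_{w^{1-(p/s_0)'}}(\cx)$, and duality gives $w^{1-(p/s_0)'}\in A_{(p/s_0)'}(\cx)$. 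By openness there exists $\sigma_0<(p/s_0)'$ with $w^{1-(p/s_0)'}\in A_{\sigma_0}(\cx)$; then I pick $p_0$ so large that $(p/s_0)'/(p_0/s_0)'\in[\sigma_0,(p/s_0)')$, which makes $\cm$ bounded on the required $\frac{1}{(p_0/s_0)'}$-convexification of the associate space. With both assumptions verified, Theorems \ref{thm}, \ref{tham}, \ref{thma}, and \ref{thhin} apply.

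For part (iii), $w\in A_1(\cx)$ gives $q_w=1$, hence $p_-=p/q_w=p=\oz/(\oz+\ez)$, which matches exactly the hypothesis $p_-=\oz/(\oz+\ez)$ of Theorem \ref{thcz}. Absolute continuity of $\|\cdot\|_{L^p_w(\cx)}$ follows from the dominated convergence theorem applied to the $\sigma$-finite measure $w\,d\mu$. Assumptions \ref{asfs} and \ref{asas} are verified exactly as in (ii). The remaining ingredient is Assumption \ref{asw} with $\tau=\ez$: because $p(\oz+\ez)/\oz=1$, the target space becomes $WL^1_w(\cx)$ and the required estimate is the weak-type $(1,1)$ vector-valued Fefferman--Stein inequality for $\cm$ with $w\in A_1(\cx)$, which is the standard endpoint result. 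Theorem \ref{thcz} then applies.

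The main obstacle will be checking Assumption \ref{asas}: one must identify the associate space of $L^{p/s_0}_w(\cx)$, track how its convexification interacts with the weight, and then exploit both the openness of Muckenhoupt classes and the freedom to enlarge $p_0$ in order to conclude that $\cm$ is bounded on the resulting weighted Lebesgue space. All other verifications reduce to a direct translation of the abstract hypotheses into boundedness statements that are either Muckenhoupt's theorem or its (weak, vector-valued) refinements on $(\cx,d,\mu)$.
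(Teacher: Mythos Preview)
Your approach is correct and matches the paper's proof essentially step for step: verify Assumption~\ref{asfs} with $p_-=p/q_w$ via the weighted Fefferman--Stein inequality, verify Assumption~\ref{asas} by identifying $(L^{p/s_0}_w)'(\cx)=L^{(p/s_0)'}_{w^{1-(p/s_0)'}}(\cx)$ and exploiting duality plus openness of $A_r(\cx)$ to manufacture a suitable $p_0$, check absolute continuity via dominated convergence, and handle Assumption~\ref{asw} as the weak-type weighted vector-valued endpoint.

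One imprecision to fix in part~(i): the phrase ``the weak-type $(p/t,p/t)$ estimate for $\cm$ gives its boundedness on $WL^{p/t}_w(\cx)$'' conflates the map $L^{p/t}_w\to WL^{p/t}_w$ with the map $WL^{p/t}_w\to WL^{p/t}_w$; Theorem~\ref{thmw} requires the latter. This does hold, but not for the reason stated: since $p/t>q_w$ you have strong boundedness of $\cm$ on $L^{p/t\pm\delta}_w(\cx)$ for small $\delta>0$ (via openness), and then Lemma~\ref{lewi} (or equivalently the route taken in the paper via Remark~\ref{rem}, which packages this interpolation) yields boundedness on $WL^{p/t}_w(\cx)$.
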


\begin{proof}
To prove the present theorem,  we only need to show that
$L^p_w(\cx)$ satisfies all the assumptions needed,
respectively, in
Theorems \ref{thmw}, \ref{thm}, \ref{tham},
\ref{thma}, \ref{thhin}, and \ref{thcz}, which then further
implies the desired conclusions of the present theorem.

Let $w\in A_\fz(\cx)$ and $p\in(q_w\oz/(\oz+\eta),\fz)$.
By \cite[Theorem 6.5]{fmy20} (see
\cite[Theorem 3.1]{aj80} for a detailed proof
of the Euclidean space case),
we find that Assumption \ref{asfs}
holds true when $X(\cx)=L^p_w(\cx)$
and $p_-=p/q_w$.

Let $w\in A_\fz(\cx)$ and $p\in(q_w,\fz)$.
From Remark \ref{rem}
with $X(\cx)$ replaced by $L^p_w(\cx)$,
we deduce that there exists a $t\in(1,\fz)$
such that $\cm$ is bounded on $WL^{p/t}_w(\cx)$.

Let $w\in A_\fz(\cx)$, $p\in(q_w\oz/(\oz+\eta),\fz)$,
and $s_0\in (\oz/(\oz+\eta),\min\{1,p/q_w\})$.
It is easy to show that $L^{p/s_0}_w(\cx)$ is
a BBF space;
see, for instance, \cite[Remark 4.21(iii)]{yhyy21a}.
Observe that
the associate space of $L^{p/s_0}_w(\cx)$
is $L^{(p/s_0)'}_{w^{1-(p/s_0)'}}(\cx)$.
Using $w\in A_{p/s_0}(\cx)$ and \eqref{eqawei},
we conclude that $w^{1-(p/s_0)'}\in A_{(p/s_0)'}(\cx)$.
By this and \cite[p.\,5, Lemma 8]{st89},
we find that there exists some
$p_0\in (s_0,\fz)$ such that
$w^{1-(p/s_0)'}\in A_{(p/s_0)'/(p_0/s_0)'}(\cx)$,
which further implies that $\cm$ is bounded on the
$\frac{1}{(p_0/s_0)'}$-convexification
of $L^{(p/s_0)'}_{w^{1-(p/s_0)'}}(\cx)$.
Thus, Assumption \ref{asas} holds true
with $X(\cx)$ replaced by $L^p_w(\cx)$.

Let $\ez\in(0,\eta)$, $w\in A_1(\cx)$, and
$p=\oz/(\oz+\ez)$. From the dominated
convergence theorem, it follows that $L^p_w(\cx)$ has an
absolutely continuous quasi-norm. Using \cite[Theorem 6.5]{fmy20},
we find that Assumption \ref{asw} holds true when
$X(\cx)=L^p_w(\cx)$  and $\tau=\ez$.

This finishes
the proof of Theorem \ref{thwe}.
\end{proof}

\begin{remark}
The atomic characterization of weighted weak Hardy spaces
$WH^p_w(\cx)$ was obtained in
\cite[Theorem 1.6]{wj12} if $d$ satisfies the following
additional regular assumption  that there exist constants
$\widetilde{\eta}\in(0,1]$ and $C\in(0,\fz)$
such that, for any $x$, $y$, $z\in\cx$,
\begin{equation}\label{eqd}
\lf|d(x,y)-d\lf(z,y\r)\r|
\le C[d(x,z)]^{\widetilde{\eta}}
[d(x,y)+d(z,y)]^{1-\widetilde{\eta}}.
\end{equation}
However, we obtain the atomic decomposition
without \eqref{eqd} in Theorem \ref{thwe}(ii).
To the best of our knowledge, we find that
Theorem \ref{thwe} is new even when
$\cx$ is an RD-space.
\end{remark}

\subsection{Orlicz Spaces}

We first recall the concepts of both
Orlicz functions and Orlicz spaces;
see, for instance, \cite{rr91}.
A function $\Phi:[0,\fz)\to[0,\fz)$
is called an \emph{Orlicz function}
if $\Phi$ is non-decreasing,
$\Phi(0)=0$,
$\Phi(t)>0$ with $t\in (0,\fz)$,
and $\lim_{t\to\fz}\Phi(t)=\fz$.
For any $p\in (-\fz,\fz)$, an
Orlicz function $\Phi$ is said to
be of \emph{lower} (resp., \emph{upper})
\emph{type} $p$ if there exists a
positive constant $C_{(p)}$, depending on
$p$, such that, for any $t\in (0,\fz)$
and $s\in (0,1)$ (resp., $s\in [1,\fz)$),
\begin{equation*}
\Phi(st)\le C_{(p)}s^p\Phi(t).
\end{equation*}
In what follows, for any given $s\in (0,\fz)$,
let $\Phi_s(t):=\Phi(t^s)$ for any $t\in(0,\fz)$.
Now, we introduce the concept
of the Orlicz space on $\cx$.
\begin{definition}\label{deos}
Let $\Phi$ be an Orlicz function of both lower
type $p^-_\Phi\in (0,\fz)$ and upper type
$p^+_\Phi\in[p^-_\Phi,\fz)$. The \emph{Orlicz space}
$L^\Phi(\cx)$ is defined to be the set of
all the functions
$f\in\mathscr{M}(\cx)$ such that
\begin{equation*}
\|f\|_{L^\Phi(\cx)}:=
\inf\lf\{\lz\in(0,\fz):\
\int_{\cx}\Phi\lf(\frac{|f(x)|}{\lz}\r)
\,d\mu(x)\le 1\r\}<\fz.
\end{equation*}
\end{definition}

\begin{remark}\label{reor}
Let all the symbols be as in Definition \ref{deos}.
\begin{enumerate}
\item[\textup{(i)}]
By \cite[Lemma 2.5]{zyyw19}, without loss of generality,
we may always assume
that $\Phi$ is continuous and strictly increasing.
\item[\textup{(ii)}]
Let $s\in (0,\fz)$. It is easy to show that $\Phi_s$ is
of lower type $sp^-_\Phi$ and upper type $sp^+_\Phi$.
 Furthermore, observe that the $s$-convexification
$(L^\Phi)^s(\cx)$ of $L^\Phi(\cx)$ is $L^{\Phi_s}(\cx)$.
\item[\textup{(iii)}]
Let $\lz$, $A\in(0,\fz)$. If
$\int_{\cx}\Phi(|f(z)|/\lz)\,d\mu(z)\le A$,
then there exists a positive constant $C$,
depending only on $A$ and $p^-_\Phi$, such that
$\|f\|_{L^\Phi(\cx)}\le C\lz$ (see \cite[Lemma 2.7]{zyyw19}
for the Euclidean space case); here we omit its proof.
\end{enumerate}
\end{remark}

Let $\Phi$ be as in Definition \ref{deos}.
Observe that, for any $\mu$-measurable set $E\subset \cx$
with $\mu(E)<\fz$,
$$
\int_{\cx}\Phi\lf(\mathbf{1}_E(z)\r)\,d\mu(z)
=\mu(E)\Phi(1)<\fz,
$$
which implies that $\mathbf{1}_E\in L^\Phi(\cx)$.
It is easy to show that $L^\Phi(\cx)$ also
satisfies (i) through (iii) of Definition \ref{debb}.
From the above results, we
infer that $L^\Phi(\cx)$ is a quasi-Banach function
space and hence a BQBF space. If $X(\cx)=L^\Phi(\cx)$,
then $WX(\cx)$ as in Definition \ref{dewb}
becomes the \emph{weak Orlicz space $WL^\Phi(\cx)$}.
Borrowing some ideas from the proof of
\cite[Theorem 1.3.1]{kk91},
we obtain the following Fefferman--Stein vector-valued
maximal inequality.
\begin{proposition}\label{proz}
Let $s\in(1,\fz)$ and $\Phi$ be an Orlicz function
of both lower type
$p^-_\Phi\in[1,\fz)$ and upper type
$p^+_\Phi\in[p^-_\Phi,\fz)$. Then there
exists a positive constant $C$ such that, for
any sequence $\{f_j\}_{j\in\nn}\subset \mathscr{M}(\cx)$,
\begin{equation*}
\lf\|\lf\{\sum_{j\in\nn}\lf[\cm\lf(f_j\r)\r]^s\r\}
^{1/s}\r\|_{WL^\Phi(\cx)}
\le C\lf\|\lf(\sum_{j\in\nn}\lf|f_j\r|^s\r)
^{1/s}\r\|_{L^\Phi(\cx)}.
\end{equation*}
\end{proposition}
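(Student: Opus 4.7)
My plan is to prove Proposition \ref{proz} directly, by combining a truncation argument with the classical scalar Fefferman--Stein vector-valued inequalities on $\cx$, rather than by invoking Theorem \ref{thwfs} (which would require a stronger hypothesis $p^-_\Phi>1$ and thus miss the borderline case $p^-_\Phi=1$ allowed here). By homogeneity, it suffices to show that, if $\|F\|_{L^\Phi(\cx)}\le1$ with $F:=(\sum_{j\in\nn}|f_j|^s)^{1/s}$, then $\lambda\|\mathbf{1}_{\{T>\lambda\}}\|_{L^\Phi(\cx)}\ls1$ uniformly in $\lambda\in(0,\fz)$, where $T:=(\sum_{j\in\nn}[\cm(f_j)]^s)^{1/s}$. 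Since $\|\mathbf{1}_E\|_{L^\Phi(\cx)}=[\Phi^{-1}(1/\mu(E))]^{-1}$, the plan reduces to establishing the distributional estimate $\mu(\{T>\lambda\})\ls 1/\Phi(\lambda/2)$; the conclusion then follows from the lower type of $\Phi$, since one may choose a constant $C'\ge 2$ so large that $C_{(p^-_\Phi)}(2/C')^{p^-_\Phi}\cdot C\le1$, which forces $\mu(\{T>\lambda\})\Phi(\lambda/C')\le1$ and hence $\|\mathbf{1}_{\{T>\lambda\}}\|_{L^\Phi(\cx)}\le C'/\lambda$.

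To obtain the distributional estimate I fix $\lambda\in(0,\fz)$ and split $f_j=g_j+h_j$ with $g_j:=f_j\mathbf{1}_{\{F>\lambda/2\}}$, writing $\vec g:=(g_j)_{j\in\nn}$ and $\vec h:=(h_j)_{j\in\nn}$. The sublinearity of $\cm$ and the triangle inequality on $\ell^s$ give $T\le T(\vec g)+T(\vec h)$ pointwise, so that
$$
\mu(\{T>\lambda\})\le\mu(\{T(\vec g)>\lambda/2\})+\mu(\{T(\vec h)>\lambda/2\}).
$$
For the first summand I invoke the weak-$(1,1)$ vector-valued Fefferman--Stein inequality on $\cx$, which is obtained from a Calder\'on--Zygmund decomposition of $(\sum_j|g_j|^s)^{1/s}=F\mathbf{1}_{\{F>\lambda/2\}}$ adapted to the Hyt\"onen--Kairema dyadic cubes in Lemma \ref{ledy} (cf.\ \cite[Theorem 1.2]{gly09}), yielding $\mu(\{T(\vec g)>\lambda/2\})\ls \lambda^{-1}\int_{\{F>\lambda/2\}}F\,d\mu$. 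For the second summand I fix $p_2\in(\max\{1,p^+_\Phi\},\fz)$ and apply the strong-$(p_2,p_2)$ Fefferman--Stein inequality of \cite[Theorem 1.2]{gly09} to get $\mu(\{T(\vec h)>\lambda/2\})\ls\lambda^{-p_2}\int_{\{F\le\lambda/2\}}F^{p_2}\,d\mu$.

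The remaining task is to convert both integrals into bounds in terms of $\Phi(\lambda/2)$ using the type conditions and the normalization $\int_\cx\Phi(F)\,d\mu\le1$. On $\{F>\lambda/2\}$, the lower type applied with $s:=(\lambda/2)/F\le1$ gives $(F/(\lambda/2))^{p^-_\Phi}\le C\Phi(F)/\Phi(\lambda/2)$; since $\Phi(F)/\Phi(\lambda/2)\ge1$ and $1/p^-_\Phi\le1$, raising both sides to the power $1/p^-_\Phi$ and using $u^{1/p^-_\Phi}\le u$ for $u\ge1$ yields $F\le C(\lambda/2)\Phi(F)/\Phi(\lambda/2)$ (this elementary trick is the whole reason we get a \emph{uniform} argument for $p^-_\Phi\in[1,\fz)$ without losing at the boundary). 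Integrating, $\lambda^{-1}\int_{\{F>\lambda/2\}}F\,d\mu\ls1/\Phi(\lambda/2)$. Analogously, on $\{F\le\lambda/2\}$, the upper type applied with $s:=(\lambda/2)/F\ge1$ delivers $F^{p^+_\Phi}\le C(\lambda/2)^{p^+_\Phi}\Phi(F)/\Phi(\lambda/2)$, so $F^{p_2}\le C(\lambda/2)^{p_2}\Phi(F)/\Phi(\lambda/2)$ after multiplying by $F^{p_2-p^+_\Phi}\le(\lambda/2)^{p_2-p^+_\Phi}$, and hence $\lambda^{-p_2}\int_{\{F\le\lambda/2\}}F^{p_2}\,d\mu\ls1/\Phi(\lambda/2)$. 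Combining the two estimates produces the desired $\mu(\{T>\lambda\})\ls1/\Phi(\lambda/2)$, finishing the proof.

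The main obstacle is the uniform treatment of the two subcases $p^-_\Phi=1$ and $p^-_\Phi>1$ in the estimate on $\{F>\lambda/2\}$: the lower type inequality naturally controls $F^{p^-_\Phi}$, not $F$ itself, and a na\"ive rearrangement leads to a factor $\Phi(F)^{1/p^-_\Phi}$ that is not controlled by $\int\Phi(F)\,d\mu\le1$ unless $p^-_\Phi=1$. The observation that $\Phi(F)/\Phi(\lambda/2)\ge1$ on this level set, combined with $1/p^-_\Phi\le1$, circumvents this difficulty and allows the same argument to pass all the way down to $p^-_\Phi=1$.
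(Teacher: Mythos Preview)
Your proof is correct and follows essentially the same strategy as the paper: truncate $\{f_j\}$ at the level $\lambda$, apply the vector-valued Fefferman--Stein inequality at a low exponent to the large part and at a high exponent to the bounded part, and convert via the type conditions of $\Phi$. The only difference is your choice of exponents $(1,p_2)$ with $p_2>p^+_\Phi$ versus the paper's $(p^-_\Phi,p^+_\Phi)$: by applying the weak $(p^-_\Phi,p^-_\Phi)$ inequality directly, the paper obtains $\Phi(\alpha)\alpha^{-p^-_\Phi}\int_{\{F>\alpha\}}F^{p^-_\Phi}\,d\mu\lesssim\int\Phi(F)\,d\mu$ immediately from the lower-type inequality, so the ``obstacle'' you identify and the trick $u^{1/p^-_\Phi}\le u$ for $u\ge1$ never arise.
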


\begin{proof}
Let $s$, $\Phi$, $p^-_\Phi$, and $p^+_\Phi$
be as in the present proposition.
For any $f:=\{f_j\}_{j\in\nn}\subset\mathscr{M}(\cx)$
and $x\in\cx$, we then write
$\cm(f)(x):=\{\cm(f_j)(x)\}_{j\in\nn}$
and $\|f(x)\|_{l^s}:=[\sum_{j\in\nn}|f_j(x)|^s]^{1/s}$
for the simplicity of the representation.

Next, we claim that, for any $\az\in(0,\fz)$ and
$f:=\{f_j\}_{j\in\nn}\subset\mathscr{M}(\cx)$,
\begin{equation}\label{eqo0}
\Phi(\az)\,\mu\lf(\lf\{x\in\cx:\ \lf\|
\cm(f)(x)\r\|_{l^s}>\az\r\}\r)
\ls\int_{\cx}\Phi\lf(\lf\|f(x)\r\|_{l^s}\r)\,d\mu(x).
\end{equation}
To this end, let
$$
_\az f:=\lf\{f_j\mathbf{1}_{\{
x\in\cx:\ \|f(x)\|_{l^s}>\az\}}\r\}_{j\in\nn}
\quad\mathrm{and}\quad
^\az f:=\lf\{f_j\mathbf{1}_{\{
x\in\cx:\ \|f(x)\|_{l^s}\le \az\}}
\r\}_{j\in\nn}.
$$
By the Minkowski inequality and the
sublinearity of $\cm$, we find that
\begin{align}\label{eqo1}
&\Phi(\az)\,\mu\lf(\lf\{x\in\cx:\ \lf\|
\cm(f)(x)\r\|_{l^s}>\az\r\}\r)\\
&\quad\le \Phi(\az)\,\mu\lf(\lf\{x\in\cx:\ \lf\|\cm(
_\az f)(x)\r\|_{l^s}>\az/2\r\}\r)\notag\\
&\quad\quad+\Phi(\az)\,
\mu\lf(\lf\{x\in\cx:\ \lf\|
\cm(^\az f)(x)\r\|_{l^s}>\az/2\r\}\r)\notag\\
&\quad=:\mathrm{I}_1+\mathrm{I}_2.\notag
\end{align}
Using the Fefferman--Stein vector-valued
maximal inequality from $L^{p^-_\Phi}(\cx)$
to $WL^{p^-_\Phi}(\cx)$ (see, for instance,
\cite[Theorem 1.2]{gly09}), the definition of $_\az f$, and
 the lower type $p^-_\Phi$ property of $\Phi$, we obtain
\begin{align*}
\mathrm{I}_1
&\ls \frac{\Phi(\az)}{\az^{p^-_\Phi}}
\int_{\cx}\lf[\lf\| _\az f(x)\r\|_{l^s}
\r]^{p^-_\Phi}\,d\mu(x)\\
&\sim\frac{\Phi(\az)}{\az^{p^-_\Phi}}\int_{\cx}
\lf[\lf\|f(x)\r\|_{l^s}\r]^{p^-_\Phi}\mathbf{1}_{\{
x\in\cx:\ \|f(x)\|_{l^s}>\az\}}(x)\,d\mu(x)\\
&\ls \int_{\cx}\Phi\lf(
\lf\|f(x)\r\|_{l^s}\r)\,d\mu(x)
\end{align*}
with the implicit positive constants independent
of both $\az$ and $f$.
By the Fefferman--Stein vector-valued
maximal inequality
from $L^{p^+_\Phi}(\cx)$
to $WL^{p^+_\Phi}(\cx)$ (see, for instance,
\cite[Theorem 1.2]{gly09}), the definition of $^\az f$,
and the upper type $p^+_\Phi$ property of $\Phi$,
we conclude that
\begin{align*}
\mathrm{I}_2
&\ls \frac{\Phi(\az)}{\az^{p^+_\Phi}}
\int_{\cx}\lf[\lf\| ^\az f(x)\r\|_{l^s}
\r]^{p^+_\Phi}\,d\mu(x)\\
&\sim\frac{\Phi(\az)}{\az^{p^+_\Phi}}\int_{\cx}
\lf[\lf\|f(x)\r\|_{l^s}\r]^{p^+_\Phi}\mathbf{1}_{\{
x\in\cx:\ \|f(x)\|_{l^s}\le\az\}}(x)\,d\mu(x)\\
&\ls \int_{\cx}\Phi\lf(
\lf\|f(x)\r\|_{l^s}\r)\,d\mu(x)
\end{align*}
with all the implicit positive constants
independent of both $\az$ and $f$.
Combining $\eqref{eqo1}$ and
estimates of $\mathrm{I}_1$ and $\mathrm{I}_2$,
we obtain \eqref{eqo0}.

By \eqref{eqo0} with $f$ and $\az$
replaced, respectively,
by $f/\lz$ and $\az/\lz$, we
conclude that there exists a positive constant $A$
such that, for any $\az\in(0,\fz)$,
$f:=\{f_j\}_{j\in\nn}\subset\mathscr{M}(\cx)$, and
$\lz\in(\|\,\|f\|_{l^s}\|_{L^\Phi(\cx)},\fz)$,
\begin{align*}
&\int_\cx\Phi\lf(\frac{\az\mathbf{1}_{\{x\in\cx: \
\|\cm(f)(x)\|_{l^s}>\az\}}(x)}{\lz}\r)\,d\mu(x)\\
&\quad=\Phi(\az/\lz)\, \,\mu\lf(\lf\{x\in\cx:\ \lf\|
\cm(f/\lz)(x)\r\|_{l^s}>\az/\lz\r\}\r)\\
&\quad\le A\int_{\cx}\Phi\lf(\frac{\|f(x)
\|_{l^s}}{\lz}\r)\,d\mu(x)\le A,
\end{align*}
where $f/\lz:=\{f_j/\lz\}_{j\in\nn}$.
Letting $\lz\to \|\,\|f\|_{l^s}\|_{L^\Phi(\cx)}$,
using this and Remark \ref{reor}(iii) with $f$ replaced by
$\az\mathbf{1}_{\{x\in\cx: \
\|\cm(f)(x)\|_{l^s}>\az\}}$,
we find that, for any $\az\in(0,\fz)$ and
$f:=\{f_j\}_{j\in\nn}\subset\mathscr{M}(\cx)$,
\begin{equation*}
\lf\|\az\mathbf{1}_{\{x\in\cx: \
\|\cm(f)(x)\|_{l^s}>\az\}}\r\|_{L^\Phi(\cx)}
\ls \lf\|\lf\|f\r\|_{l^s}\r\|_{L^\Phi(\cx)},
\end{equation*}
which, together with the definition of
$WL^\Phi(\cx)$, implies that, for any
$f:=\{f_j\}_{j\in\nn}\subset\mathscr{M}(\cx)$,
$$
\lf\|\lf\|\cm(f)\r\|_{l^s}\r\|_{WL^\Phi(\cx)}
\ls \lf\|\lf\|f\r\|_{l^s}\r\|_{L^\Phi(\cx)}.
$$
This finishes the proof of Proposition \ref{proz}.
\end{proof}

Now, we apply all the main theorems obtained
in the previous sections to Orlicz spaces.

\begin{theorem}\label{thor}
Let $\Phi$ be as in Definition \ref{deos}
with $p^-_\Phi\in(0,\fz)$
and $p^+_\Phi\in[p^-_\Phi,\fz)$.
\begin{enumerate}
\item[\textup{(i)}]
If $p^-_\Phi\in(1,\fz)$, then the conclusion
of Theorem \ref{thmw} holds true
with $X(\cx)$ replaced by $L^\Phi(\cx)$.
\item[\textup{(ii)}]
If $p^-_\Phi\in(\oz/(\oz+\eta),\fz)$ with $\oz$ as in
\eqref{eqoz} and $\eta$ as in \eqref{eqwa},
then the conclusions of Theorems
\ref{thm}, \ref{tham}, \ref{thma},
and \ref{thhin} hold true with $X(\cx)$
replaced by $L^\Phi(\cx)$.
\item[\textup{(iii)}]
If $\ez\in (0,\eta)$ and $p^-_\Phi=\oz/(\oz+\ez)$,
then the conclusion of Theorem \ref{thcz}
holds true with $X(\cx)$ replaced
by $L^\Phi(\cx)$.
\end{enumerate}
\end{theorem}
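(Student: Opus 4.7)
My plan is to verify, for each part, that $L^\Phi(\cx)$ satisfies every structural hypothesis of the abstract theorems invoked, after which those theorems will deliver the conclusions directly. The whole argument rests on Remark \ref{reor}(ii): for any $s\in (0,\fz)$, the $s$-convexification $(L^\Phi)^s(\cx)$ coincides with $L^{\Phi_s}(\cx)$, and $\Phi_s$ has lower type $sp^-_\Phi$ and upper type $sp^+_\Phi$. Consequently, each assumption on a convexification will reduce to a statement about the type exponents of an Orlicz function, for which classical Orlicz-space mapping theorems apply.

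For part (i), I would choose $t\in (1,p^-_\Phi)$; then $\Phi_{1/t}$ has lower type $p^-_\Phi/t>1$, so $\cm$ is bounded on $L^{\Phi_{1/t}}(\cx)$ by the standard boundedness of $\cm$ on Orlicz spaces of lower type strictly greater than $1$, and hence, via Proposition \ref{prwb}, also on $WL^{\Phi_{1/t}}(\cx)$; Theorem \ref{thmw} then yields (i). For part (ii), I would first verify Assumption \ref{asfs} with $p_-:=p^-_\Phi$ by adapting the argument of Proposition \ref{proz}: for $t\in (0,p^-_\Phi)$ and $s\in (1,\fz)$, decompose $f$ as $_\az f+{}^\az f$ and apply the strong-type Fefferman--Stein inequality on $L^{p^-_\Phi/t}(\cx)$ and $L^{p^+_\Phi/t}(\cx)$, whose indices both exceed $1$ because $t<p^-_\Phi$, and then integrate the resulting level-set estimates against $\Phi_{1/t}$ using its type bounds. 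For Assumption \ref{asas}, I would choose $s_0\in (\oz/(\oz+\eta),\min\{1,p^-_\Phi\})$, so that $L^{\Phi_{1/s_0}}(\cx)$ has lower type $p^-_\Phi/s_0>1$ and is therefore a BBF space whose associate space equals $L^{\widetilde{\Phi_{1/s_0}}}(\cx)$, with $\widetilde\Psi$ denoting the complementary Young function of $\Psi$; the complementary function $\widetilde{\Phi_{1/s_0}}$ has lower type $(p^+_\Phi/s_0)'$ and upper type $(p^-_\Phi/s_0)'$, and a short calculation shows that, for $p_0\in (p^+_\Phi,\fz)$, the $\frac{1}{(p_0/s_0)'}$-convexification of $L^{\widetilde{\Phi_{1/s_0}}}(\cx)$ is an Orlicz space of lower type strictly greater than $1$, so $\cm$ is bounded on it. Theorems \ref{thm}, \ref{tham}, \ref{thma}, and \ref{thhin} will then deliver (ii).

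For part (iii), besides Assumptions \ref{asfs} and \ref{asas} (already handled in (ii), and available since $p^-_\Phi=\oz/(\oz+\ez)>\oz/(\oz+\eta)$ admits the required choice of $s_0$), Theorem \ref{thcz} additionally requires the absolute continuity of $\|\cdot\|_{L^\Phi(\cx)}$ and Assumption \ref{asw} with $\tau=\ez$. Absolute continuity I would derive from $p^+_\Phi<\fz$: if $E_n\downarrow\emptyset$ and $f\in L^\Phi(\cx)$, then for each $\lz\in (0,\fz)$ the integrand $\Phi(|f\mathbf{1}_{E_n}|/\lz)$ tends to $0$ pointwise and is dominated by $\Phi(|f|/\lz)\in L^1(\cx)$, so the dominated convergence theorem gives $\int_\cx\Phi(|f\mathbf{1}_{E_n}|/\lz)\,d\mu\to 0$, and Remark \ref{reor}(iii) then yields $\|f\mathbf{1}_{E_n}\|_{L^\Phi(\cx)}\to 0$. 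For Assumption \ref{asw}, I would observe that $(L^\Phi)^{(\oz+\ez)/\oz}(\cx)=L^{\Phi_{(\oz+\ez)/\oz}}(\cx)$ and $\Phi_{(\oz+\ez)/\oz}$ has lower type $p^-_\Phi(\oz+\ez)/\oz=1$, so Proposition \ref{proz} applied to the Orlicz function $\Phi_{(\oz+\ez)/\oz}$ with $s:=(\oz+\ez)/\oz$ will produce precisely the required weak-type Fefferman--Stein inequality. The main obstacle, though mild, will be the careful bookkeeping of lower and upper type exponents through both the convexification and the complementary-Young-function operations in Assumption \ref{asas}; once the choice $p_0>p^+_\Phi$ is identified, the remainder of the verification is routine.
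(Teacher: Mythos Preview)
Your overall strategy coincides with the paper's: verify Assumptions \ref{asfs}, \ref{asas}, \ref{asw}, and the absolute continuity of the quasi-norm, then invoke the abstract theorems. The treatment of Assumption \ref{asas} (choice of $s_0\in(\oz/(\oz+\eta),\min\{1,p^-_\Phi\})$, $p_0\in(p^+_\Phi,\fz)$, identification of the associate space with the complementary Orlicz space of lower type $(p^+_\Phi/s_0)'$) and the verification of Assumption \ref{asw} and absolute continuity in part (iii) are exactly as in the paper.

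There is, however, a genuine gap in part (i). Proposition \ref{prwb} only gives the embedding $\|g\|_{WX(\cx)}\le\|g\|_{X(\cx)}$, so from the strong boundedness $\|\cm(f)\|_{L^{\Phi_{1/t}}(\cx)}\ls\|f\|_{L^{\Phi_{1/t}}(\cx)}$ you can conclude $\|\cm(f)\|_{WL^{\Phi_{1/t}}(\cx)}\ls\|f\|_{L^{\Phi_{1/t}}(\cx)}$, but not the hypothesis actually required by Theorem \ref{thmw}, namely that $\cm$ be bounded \emph{on} $WL^{\Phi_{1/t}}(\cx)$ (i.e., $\|\cm(f)\|_{WL^{\Phi_{1/t}}(\cx)}\ls\|f\|_{WL^{\Phi_{1/t}}(\cx)}$ for $f$ merely in the weak space). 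The paper closes this by first establishing Assumption \ref{asfs} (your part (ii) already does this, and $p^-_\Phi>1$ certainly implies $p^-_\Phi>\oz/(\oz+\eta)$) and then invoking Remark \ref{rem}, which rests on the interpolation Lemma \ref{lewi}. You could equivalently apply Lemma \ref{lewi} directly to $\cm$ with two convexification levels $1/t_1<1/t<1/t_2$, both with lower type exceeding $1$.

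A smaller point: for Assumption \ref{asfs} the paper simply cites the known modular inequality \cite[Theorem 6.6]{fmy20} and passes to norms via Remark \ref{reor}(iii). Your plan to re-derive it by the decomposition $f={}_\az f+{}^\az f$ and layer-cake integration is the standard proof of that modular inequality, but if you run it at the exact type exponents $p^-_\Phi/t$ and $p^+_\Phi/t$ the $\az$-integrals will diverge logarithmically; you must use auxiliary exponents strictly inside $(1,p^-_\Phi/t)$ and $(p^+_\Phi/t,\fz)$, which is possible since an Orlicz function of lower type $p$ is automatically of lower type $p'$ for every $p'<p$.
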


\begin{proof}
Let $\Phi$, $p^-_\Phi$, and $p^+_\Phi$ be
as in the present theorem.
To prove the present theorem, it suffices to show that
the Orlicz space satisfies all the assumptions needed in
Theorems \ref{thmw}, \ref{thm}, \ref{tham},
\ref{thma}, \ref{thhin}, and \ref{thcz}, which then further
implies the desired conclusions.

Let $p^-_\Phi\in(\oz/(\oz+\eta),\fz)$,
$t\in(0,p^-_\Phi)$, and $s\in(1,\fz)$.
By Remark \ref{reor}(ii) and
\cite[Theorem 6.6]{fmy20} (see also
\cite[Theorem 2.10]{lhy12} for the Euclidean space case),
we find that, for any
$\{f_j\}_{j\in\nn}\subset\mathscr{M}(\cx)$,
$$
\int_\cx\Phi_{1/t}\lf(\lf\{\sum_{j\in\nn}
\lf[\cm\lf(f_j\r)(x)\r]^s\r\}^{1/s}\r)\,d\mu(x)
\ls\int_\cx\Phi_{1/t}\lf(\lf[\sum_{j\in\nn}
\lf|f_j(x)\r|^s\r]^{1/s}\r)\,d\mu(x).
$$
Using this with $\{f_j\}_{j\in\nn}$ replaced by
$\{f_j/\lz\}_{j\in\nn}$,
we conclude that there exists a positive constant
$A$ such that, for any
$\{f_j\}_{j\in\nn}\subset\mathscr{M}(\cx)$
and $\lz\in(\|(\sum_{j\in\nn}
|f_j|^s)^{1/s}\|_{L^{\Phi_{1/t}}(\cx)},\fz)$,
$$
\int_\cx\Phi_{1/t}\lf(\frac{\{\sum_{j\in\nn}
[\cm(f_j)(x)]^s\}^{1/s}}{\lz}\r)\,d\mu(x)
\le A\int_\cx\Phi_{1/t}\lf(\frac{[\sum_{j\in\nn}
|f_j(x)|^s]^{1/s}}{\lz}\r)\,d\mu(x)\le A.
$$
Letting $\lz\to \|(\sum_{j\in\nn}
|f_j|^s)^{1/s}\|_{L^{\Phi_{1/t}}(\cx)}$,
from Remark \ref{reor}(iii),
we deduce that, for any
$\{f_j\}_{j\in\nn}\subset\mathscr{M}(\cx)$,
$$
\lf\|\lf\{\sum_{j\in\nn}\lf[\cm\lf(f_j\r)
\r]^s\r\}^{1/s}\r\|_{L^{\Phi_{1/t}}(\cx)}
\ls \lf\|\lf(\sum_{j\in\nn}
\lf|f_j\r|^s\r)^{1/s}\r\|_{L^{\Phi_{1/t}}(\cx)},
$$
which implies that Assumption \ref{asfs}
holds true with $X(\cx)=L^\Phi(\cx)$
and $p_-=p^-_\Phi$.

Let $p^-_\Phi\in(1,\fz)$. By Remark \ref{rem}
with $X(\cx)$ replaced by $L^\Phi(\cx)$,
we find that there exists a $t\in(1,\fz)$ such
that $\cm$ is bounded on $WL^{\Phi_{1/t}}(\cx)$.

Let $p^-_\Phi\in(\oz/(\oz+\eta),\fz)$,
$s_0\in (\oz/(\oz+\eta),\min\{1,p^-_\Phi\})$,
and $p_0\in (p^+_\Phi,\fz)$. Using Remark \ref{reor}(ii)
and \cite[Lemma 2.16]{zyyw19}, without loss of
generality, we may regard
 $\Phi_{1/s_0}$ as a continuous convex Orlicz
function of both lower type
$p^-_\Phi/s_0$ and upper type $p^+_\Phi/s_0$.
Observe that $L^{\Phi_{1/s_0}}(\cx)$ is a
BBF space; see, for instance, \cite[Remark 4.21(iv)]{yhyy21a}.
By \cite[p.\,61, Proposition 4 and p.\,100,
Proposition 1]{rr91}, we conclude that the associate
space of $L^{\Phi_{1/s_0}}(\cx)$
is $L^\Psi(\cx)$,
where, for any $t\in (0,\fz)$,
$$\Psi(t):=\sup_{u\in(0,\fz)}
\lf\{tu-\Phi_{1/s_0}(u)\r\}.$$
From \cite[Proposition 7.8]{shyy17}, we deduce
that $\Psi$ is of lower type $(p^+_\Phi/s_0)'$.
By this and $p_0\in(p^+_\Phi,\fz)$, we find that
$\cm$ is bounded on the $\frac{1}{(p_0/s_0)'}$-convexification
of $L^\Psi(\cx)$.
Thus, Assumption \ref{asas} holds true with
$X(\cx)$ replaced by $L^\Phi(\cx)$.

Let $\ez\in(0,\eta)$ and $p^-_\Phi=\oz/(\oz+\ez)$.
By the dominated convergence theorem and Remark \ref{thor}(i),
we find that $L^\Phi(\cx)$ has an
absolutely continuous quasi-norm.
From Proposition \ref{proz}, we deduce that
Assumption \ref{asw} holds true when
$X(\cx)=L^\Phi(\cx)$ and $\tau=\ez$.

This finishes the proof
of Theorem \ref{thor}.
\end{proof}

\begin{remark}
To the best of our knowledge, Theorem \ref{thor} is new
even when $\cx$ is an RD-space.
\end{remark}

\subsection{Variable Lebesgue Spaces}

We first recall the concept of variable Lebesgue spaces
and refer the reader to \cite{cf13, dhhr11} for more details.
A $\mu$-measurable function $p(\cdot):\cx\to(0,\fz)$
is called a \emph{variable exponent}. For any
variable exponent $p(\cdot)$, let
$p^-$ denote its essential infimum
and $p^+$ its essential supremum.

\begin{definition}\label{deva}
Let $p(\cdot)$ be a variable exponent
with $0<p^-\le p^+<\fz$.
The \emph{variable Lebesgue space $L^{p(\cdot)}(\cx)$}
is defined to be the set of all the
functions $f\in\mathscr{M}(\cx)$ such that
\begin{equation*}
\|f\|_{L^{p(\cdot)}(\cx)}:=\inf
\lf\{\lz\in (0,\fz):\ \int_{\cx}
\lf[\frac{|f(x)|}{\lz}\r]^{p(x)}\,d\mu(x)
\le 1\r\}<\fz.
\end{equation*}
\end{definition}

Let $p(\cdot)$ be as in Definition \ref{deva}.
Observe that $L^{p(\cdot)}(\cx)$ is a
quasi-Banach function space and
hence a BQBF space; see, for instance,
\cite[Remark 2.7(iv)]{yhyy21a}.
If $X(\cx)=L^{p(\cdot)}(\cx)$, then
$WX(\cx)$ as in Definition \ref{dewb}
becomes the \emph{variable weak Lebesgue space
$WL^{p(\cdot)}(\cx)$}.

Let $x_0\in\cx$ be the fixed basepoint.
Recall that a variable exponent $p(\cdot)$
is said to be \emph{globally log-H\"{o}lder
continuous} if there exist constants
$C\in (0,\fz)$ and $p_\fz\in \rr$
such that, for any $x$, $y\in\cx$,
$$
|p(x)-p(y)|\le C\frac{1}{\log(e+1/d(x,y))}
\quad \mathrm{and}\quad
|p(x)-p_\fz|\le C\frac{1}{\log(e+d(x,x_0))}.
$$

Borrowing some ideas from the
proofs of both \cite[Theorem 5.24]{cf13}
(the extrapolation method)
and \cite[Proposition 7.8]{yyyz16}, we obtain
the following proposition.

\begin{proposition}\label{prv}
Let $s\in(1,\fz)$ and $p(\cdot)$ be a globally log-H\"{o}lder
continuous variable exponent with $1\le p^-\le p^+<\fz$.
Then there
exists a positive constant $C$ such that, for
any sequence $\{f_j\}_{j\in\nn}\subset \mathscr{M}(\cx)$,
\begin{equation*}
\lf\|\lf\{\sum_{j\in\nn}\lf[\cm\lf(f_j\r)\r]^s\r\}
^{1/s}\r\|_{WL^{p(\cdot)}(\cx)}
\le C\lf\|\lf(\sum_{j\in\nn}\lf|f_j\r|^s\r)
^{1/s}\r\|_{L^{p(\cdot)}(\cx)}.
\end{equation*}
\end{proposition}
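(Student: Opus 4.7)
The plan is to combine the classical weighted weak-type vector-valued Fefferman--Stein inequality with the Rubio de Francia extrapolation scheme adapted to variable Lebesgue spaces, following \cite[Theorem 5.24]{cf13} and the approach of \cite[Proposition 7.8]{yyyz16}.

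First, I would record the weighted endpoint estimate: for each $w\in A_1(\cx)$ and any sequence $\{f_j\}_{j\in\nn}\subset\mathscr{M}(\cx)$,
\[
\lf\|\lf\{\sum_{j\in\nn}\lf[\cm(f_j)\r]^s\r\}^{1/s}\r\|_{WL^1_w(\cx)}\le C_{[w]_{A_1}}\lf\|\lf(\sum_{j\in\nn}|f_j|^s\r)^{1/s}\r\|_{L^1_w(\cx)},
\]
with $C_{[w]_{A_1}}$ depending only on the $A_1(\cx)$-constant of $w$. This weighted endpoint bound can be obtained by combining the weighted strong-type Fefferman--Stein inequality for $p_0\in(1,\fz)$ and $w\in A_{p_0}(\cx)$ on spaces of homogeneous type (see, for instance, \cite{fmy20}) with the Marcinkiewicz interpolation argument at the endpoint $p_0=1$, taking advantage of the $A_1$ hypothesis on the weight.

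Next, I would apply a weak-type version of Rubio de Francia's extrapolation theorem for variable Lebesgue spaces. Since $p(\cdot)$ is globally log-H\"older continuous with $1\le p^-\le p^+<\fz$, the operator $\cm$ is bounded on both $L^{p(\cdot)}(\cx)$ and on $L^{p'(\cdot)}(\cx)$. Using this, one constructs, via the Rubio de Francia iteration algorithm, for each non-negative $g\in L^{p'(\cdot)}(\cx)$ of unit norm, a weight $w_g\in A_1(\cx)$ with uniformly bounded $A_1$-constant and with $g\le w_g$ pointwise. Inserting $w_g$ into the weighted endpoint estimate of the first step, and invoking the duality $\|h\|_{L^{p(\cdot)}(\cx)}\sim\sup\{\int_{\cx}|h|g\,d\mu:\ \|g\|_{L^{p'(\cdot)}(\cx)}\le 1\}$ together with the H\"older inequality for variable Lebesgue spaces, the weighted weak-type $L^1_w$-bound is transferred to the desired bound in $WL^{p(\cdot)}(\cx)$.

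The main obstacle is the verification that the Rubio de Francia extrapolation machinery, originally developed on $\rn$, extends faithfully to an arbitrary space of homogeneous type $\cx$ and, moreover, respects the weak-type quasi-norm in $WL^{p(\cdot)}(\cx)$. The key ingredients required for this transfer are: the Muckenhoupt weight theory on $\cx$ with its standard self-improvement properties, the boundedness of $\cm$ on both $L^{p(\cdot)}(\cx)$ and $L^{p'(\cdot)}(\cx)$ under log-H\"older regularity, and the H\"older inequality for variable Lebesgue spaces on $\cx$. All three are either classical or directly available in the literature on spaces of homogeneous type, and once they are recorded, the extrapolation argument proceeds without essential modification, yielding the desired weak-type Fefferman--Stein vector-valued inequality.
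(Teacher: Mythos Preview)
Your proposal is essentially the same approach as the paper's: both use the Rubio de Francia iteration on $L^{p'(\cdot)}(\cx)$ to manufacture an $A_1(\cx)$-weight dominating a test function, invoke the weighted weak-type $(1,1)$ Fefferman--Stein vector-valued inequality (\cite[Theorem~6.5]{fmy20}), and then close via duality and the H\"older inequality for variable Lebesgue spaces. One small correction: you assert that $\cm$ is bounded on $L^{p(\cdot)}(\cx)$, but since the hypothesis allows $p^-=1$ this need not hold; fortunately only the boundedness of $\cm$ on $L^{p'(\cdot)}(\cx)$ (guaranteed by $p^+<\infty$) is actually used in the iteration algorithm, so the argument goes through unchanged once you drop that superfluous claim.
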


\begin{proof}
Let all the symbols be as in the present proposition.
For any $x\in\cx$, let $p'(x)$ denote the conjugate index
of $p(x)$; namely, $1/p(x)+1/p'(x)=1$.
For any $f:=\{f_j\}_{j\in\nn}\subset\mathscr{M}(\cx)$
and $x\in\cx$, we write
$\cm(f)(x):=\{\cm(f_j)(x)\}_{j\in\nn}$
and $\|f(x)\|_{l^s}:=[\sum_{j\in\nn}|f_j(x)|^s]^{1/s}$
for the simplicity of the representation.

By \cite[Lemma 2.9]{zsy16} (see also \cite[Theorem 9.2]{ins14}
for the Euclidean space case), we find that,
for any $\az\in(0,\fz)$ and
$f:=\{f_j\}_{j\in\nn}\subset\mathscr{M}(\cx)$,
\begin{equation}\label{eqv0}
\lf\|\az\mathbf{1}_{\{x\in\cx:\
\|\cm(f)(x)\|_{l^s}>\az\}}
\r\|_{L^{p(\cdot)}(\cx)}
\sim \sup_{h\in\Delta}\lf\{
\int_\cx \az\mathbf{1}_{\{x\in\cx:\
\|\cm(f)(x)\|_{l^s}>\az\}}(x)\lf|h(x)\r|\,d\mu(x)\r\}
\end{equation}
with the positive equivalence constants independent
of $\az$ and $f$, where
$$
\Delta:=\lf\{h\in L^{p'(\cdot)}(\cx):\
\|h\|_{L^{p'(\cdot)}(\cx)}=1\r\}.
$$

Next, we use the extrapolation method.
By \cite[Theorem 1.1]{cs18}, we find that $\cm$
is bounded on $L^{p'(\cdot)}(\cx)$ with its operator
norm denoted by
$\|\cm\|_{L^{p'(\cdot)}(\cx)\to L^{p'(\cdot)}(\cx)}$.
For any $h\in L^{p'(\cdot)}(\cx)$ and $x\in\cx$, let
$$
\mathcal{L}(h)(x):=|h(x)|+\sum^\fz_{k=1}\frac{\cm^k(h)(x)}
{2^k\|\cm\|^k_{L^{p'(\cdot)}(\cx)\to L^{p'(\cdot)}(\cx)}},
$$
where, for any $k\in\nn$,
$\cm^k$ is the $k$ iterations of $\cm$. Observe that
the following facts hold true:
\begin{itemize}
\item[\textup{(i)}]
for any $x\in\cx$, $|h(x)|\le \mathcal{L}(h)(x)$;
\item[\textup{(ii)}]
$\mathcal{L}$ is bounded on $L^{p'(\cdot)}(\cx)$
and $\|\mathcal{L}(h)\|_{L^{p'(\cdot)}(\cx)}
\le 2\|h\|_{L^{p'(\cdot)}(\cx)}$;
\item[\textup{(iii)}]
$\mathcal{L}(h)$ is an $A_1(\cx)$-weight.
\end{itemize}

By \eqref{eqv0}, (i), (iii),
the weighted Fefferman--Stein vector-valued
maximal inequality (see, for instance, \cite[Theorem 6.5]{fmy20}),
the H\"{o}lder inequality for variable Lebesgue
spaces (see, for instance,
\cite[Remark 2.3(iii)]{zsy16}), and (ii), we conclude that,
for any $\az\in(0,\fz)$ and
$f:=\{f_j\}_{j\in\nn}\subset\mathscr{M}(\cx)$,
\begin{align*}
&\lf\|\az\mathbf{1}_{\{x\in\cx:\
\|\cm(f)(x)\|_{l^s}>\az\}}
\r\|_{L^{p(\cdot)}(\cx)}\\
&\quad\ls \sup_{h\in \Delta}\lf[
\int_\cx \az\mathbf{1}_{\{x\in\cx:\
\|\cm(f)(x)\|_{l^s}>\az\}}(x)
\mathcal{L}(h)(x)\,d\mu(x)\r]\\
&\quad\ls \sup_{h\in \Delta}\lf\{
\int_\cx \|f(x)\|_{l^s}
\mathcal{L}(h)(x)\,d\mu(x)\r\}\\
&\quad\ls \lf\|\lf\|f\r\|_{l^s}\r\|_{L^{p(\cdot)}(\cx)}
\sup_{h\in \Delta}\lf\{
\lf\|\mathcal{L}(h)\r\|_{L^{p'(\cdot)}(\cx)}\r\}
\ls \lf\|\lf\|f\r\|_{l^s}\r\|_{L^{p(\cdot)}(\cx)},
\end{align*}
where all the implicit positive constants are
independent of both $\az$ and $f$.
Combining this and the definition of $WL^{p(\cdot)}(\cx)$,
we find that, for any
$f:=\{f_j\}_{j\in\nn}\subset\mathscr{M}(\cx)$,
$$
\lf\|\lf\|\cm(f)\r\|_{l^s}\r\|_{WL^{p(\cdot)}(\cx)}
\ls \lf\|\lf\|f\r\|_{l^s}\r\|_{L^{p(\cdot)}(\cx)}.
$$
This finishes the proof of Proposition \ref{prv}.
\end{proof}

Now, we apply the main results obtained in the
previous sections to variable Lebesgue spaces as follows.

\begin{theorem}\label{thva}
Let $p(\cdot)$ be a globally log-H\"{o}lder
continuous variable exponent with $0< p^-\le p^+<\fz$.
\begin{enumerate}
\item[\textup{(i)}]
If $p^-\in(1,\fz)$, then the conclusion
of Theorem \ref{thmw} holds true
with $X(\cx)$ replaced by $L^{p(\cdot)}(\cx)$.
\item[\textup{(ii)}]
If $p^-\in(\oz/(\oz+\eta),\fz)$ with $\oz$ as in
\eqref{eqoz} and $\eta$ as in \eqref{eqwa},
then the conclusions of Theorems \ref{thm},
\ref{tham}, \ref{thma},
and \ref{thhin} hold true with $X(\cx)$
replaced by $L^{p(\cdot)}(\cx)$.
\item[\textup{(iii)}]
If $\ez\in (0,\eta)$ and $p^-=\oz/(\oz+\ez)$,
then the conclusion of Theorem \ref{thcz}
holds true with $X(\cx)$ replaced
by $L^{p(\cdot)}(\cx)$.
\end{enumerate}
\end{theorem}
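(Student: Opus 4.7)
The strategy follows the same pattern as the proofs of Theorems \ref{thle}, \ref{thwe}, and \ref{thor}: it suffices to verify that the variable Lebesgue space $L^{p(\cdot)}(\cx)$ satisfies all the assumptions required, respectively, by Theorems \ref{thmw}, \ref{thm}, \ref{tham}, \ref{thma}, \ref{thhin}, and \ref{thcz}. The essential preliminary observation is that, for any $s\in(0,\fz)$, the $1/s$-convexification satisfies $(L^{p(\cdot)})^{1/s}(\cx)=L^{p(\cdot)/s}(\cx)$, and $p(\cdot)/s$ remains globally log-H\"{o}lder continuous with essential infimum $p^-/s$ and essential supremum $p^+/s$.

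First I would verify Assumption \ref{asfs} with $p_-=p^-$. For any $t\in(0,p^-)$, the exponent $p(\cdot)/t$ is globally log-H\"{o}lder continuous with $(p(\cdot)/t)^->1$; hence the strong Fefferman--Stein vector-valued inequality on $L^{p(\cdot)/t}(\cx)$ is available (see, for instance, \cite[Theorem 1.1]{cs18} combined with the extrapolation approach used in the proof of Proposition \ref{prv}, or the RD-space analogues of \cite[Corollary 2.1]{cs18}). This yields Assumption \ref{asfs} for $L^{p(\cdot)}(\cx)$. In particular, when $p^->1$, choosing $t\in(1,p^-)$ one obtains the boundedness of $\cm$ on $L^{p(\cdot)/t}(\cx)$ and hence (by Theorem \ref{thwfs} or directly by Proposition \ref{prv}) on $WL^{p(\cdot)/t}(\cx)$, which is exactly the assumption of Theorem \ref{thmw}; this gives (i).

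Next I would verify Assumption \ref{asas}. Fix $s_0\in(\oz/(\oz+\eta),\min\{1,p^-\})$; then $p(\cdot)/s_0$ is globally log-H\"{o}lder continuous with $(p(\cdot)/s_0)^->1$, so $L^{p(\cdot)/s_0}(\cx)$ is a \emph{BBF} space (see, for instance, \cite[Remark 4.21(v)]{yhyy21a}) whose associate space coincides with $L^{(p(\cdot)/s_0)'}(\cx)$. Pick $p_0\in(s_0,\fz)$ with $p_0>p^+$ sufficiently large so that $\frac{1}{(p_0/s_0)'}[(p(\cdot)/s_0)']^->1$; this is possible because $(p(\cdot)/s_0)'$ is bounded below by $(p^+/s_0)'>1$ and $\frac{1}{(p_0/s_0)'}\to 1$ as $p_0\to\fz$. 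Then $\frac{1}{(p_0/s_0)'}(p(\cdot)/s_0)'$ is again a globally log-H\"{o}lder continuous variable exponent bounded below by a constant greater than $1$, so \cite[Theorem 1.1]{cs18} yields the boundedness of $\cm$ on the $\frac{1}{(p_0/s_0)'}$-convexification of the associate space. Combined with the verification of Assumption \ref{asfs} above, this gives (ii) via Theorems \ref{thm}, \ref{tham}, \ref{thma}, and \ref{thhin}.

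Finally, for (iii), with $\ez\in(0,\eta)$ and $p^-=\oz/(\oz+\ez)$: the dominated convergence theorem together with the Luxemburg-type definition of $\|\cdot\|_{L^{p(\cdot)}(\cx)}$ immediately yields that $L^{p(\cdot)}(\cx)$ has an absolutely continuous quasi-norm, Assumptions \ref{asfs} and \ref{asas} hold by the above reasoning, and Assumption \ref{asw} with $\tau=\ez$ follows from Proposition \ref{prv} applied to the exponent $\frac{\oz+\ez}{\oz}p(\cdot)$, whose essential infimum equals $1$ and which remains globally log-H\"{o}lder continuous; invoking Theorem \ref{thcz} completes the proof. The main obstacle is the careful choice of parameters in the verification of Assumption \ref{asas}: one must simultaneously ensure that $s_0$ lies in the allowed window and that $p_0$ can be selected so large that the $\frac{1}{(p_0/s_0)'}$-convexification of the associate space still possesses a variable exponent bounded below by some constant strictly greater than $1$, so that \cite[Theorem 1.1]{cs18} is applicable. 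I expect all other steps to be routine.
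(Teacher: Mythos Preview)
Your proposal is correct and follows essentially the same route as the paper's proof: verify Assumption~\ref{asfs} with $p_-=p^-$ (the paper cites \cite[Theorem 2.7]{zsy16} directly rather than extrapolation), deduce the hypothesis of Theorem~\ref{thmw} via Remark~\ref{rem}, verify Assumption~\ref{asas} with $s_0\in(\oz/(\oz+\eta),\min\{1,p^-\})$ and $p_0\in(p^+,\infty)$, and verify absolute continuity and Assumption~\ref{asw} via dominated convergence and Proposition~\ref{prv}. Your ``main obstacle'' is simpler than you suggest: since the essential infimum of $(p(\cdot)/s_0)'$ is $(p^+/s_0)'$, the condition $(p^+/s_0)'/(p_0/s_0)'>1$ reduces to $p_0>p^+$, so any such $p_0$ works immediately without a limiting argument.
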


\begin{proof}
Let $p(\cdot)$, $p^-$, and $p^+$ be as in the present theorem.
To prove this theorem, we only need to
show that $L^{p(\cdot)}(\cx)$
satisfies all the assumptions needed in
Theorems \ref{thmw}, \ref{thm}, \ref{tham},
\ref{thma}, \ref{thhin}, and \ref{thcz}, which then
further implies the desired conclusion.

Let $p^-\in(\oz/(\oz+\eta),\fz)$.
From \cite[Theorem 2.7]{zsy16}, it follows
that Assumption \ref{asfs} holds true with
$X(\cx)$ and $p_-$ replaced, respectively, by
$L^{p(\cdot)}(\cx)$ and $p^-$.

Let $p^-\in(1,\fz)$.
By Remark \ref{rem} with $X(\cx)$
replaced by $L^{p(\cdot)}(\cx)$, we
conclude that there exists a $t\in(1,\fz)$
such that $\cm$
is bounded on $WL^{p(\cdot)/t}(\cx)$.

Let $p^-\in(\oz/(\oz+\eta),\fz)$,
$s_0\in (\oz/(\oz+\eta),\min\{1,p^-\})$,
and $p_0\in (p^+,\fz)$. It is easy to show that
$L^{p(\cdot)/s_0}(\cx)$ is a BBF space;
see, for instance, \cite[Remark 4.21(v)]{yhyy21a}.
By \cite[Lemma 2.9]{zsy16}
(see also \cite[Theorem 9.2]{ins14} for the Euclidean space case), we
find that the associate space of
$L^{p(\cdot)/s_0}(\cx)$ is
$L^{(p(\cdot)/s_0)'}(\cx)$, where
$\frac{1}{(p(x)/s_0)'}+\frac{1}{p(x)/s_0}=1$
for any $x\in\cx$. Observe that the essential infimum
of $(p(\cdot)/s_0)'$ is $(p^+/s_0)'$.
Using this and \cite[Theorem 1.1]{cs18},
we find that $\cm$ is bounded on the
$\frac{1}{(p_0/s_0)'}$-convexification of
$L^{(p(\cdot)/s_0)'}(\cx)$. Thus,
Assumption \ref{asas} holds true with
$X(\cx)$ replaced by $L^{p(\cdot)}(\cx)$.

Let $\ez\in (0,\eta)$ and $p^-=\oz/(\oz+\ez)$.
By the dominated convergence theorem, we find that
$L^{p(\cdot)}(\cx)$ has an absolutely continuous quasi-norm.
From Proposition \ref{prv}, it follows that
Assumption \ref{asw} holds true
with $X(\cx)=L^{p(\cdot)}(\cx)$
and $\tau=\ez$.

This finishes the proof of Theorem \ref{thva}.
\end{proof}
\begin{remark}
To the best of our knowledge, Theorem \ref{thva}
is new even when $\cx$ is an RD-space.
\end{remark}

\bigskip

\noindent Jingsong Sun, Dachun Yang (Corresponding author) and Wen Yuan

\medskip

\noindent Laboratory of Mathematics and Complex Systems
(Ministry of Education of China),
School of Mathematical Sciences, Beijing Normal University,
Beijing 100875, People's Republic of China

\smallskip

\noindent {\it E-mails}: \texttt{jssun@mail.bnu.edu.cn} (J. Sun)

\noindent\phantom{{\it E-mails:}} \texttt{dcyang@bnu.edu.cn} (D. Yang)

\noindent\phantom{{\it E-mails:}} \texttt{wenyuan@bnu.edu.cn} (W. Yuan)
\end{document}